\documentclass[10pt,letterpape,reqno]{amsart}
\usepackage[margin=1.2in]{geometry}
\usepackage{amsmath, amssymb}
\usepackage{xcolor}
\usepackage{bm, bbm}
\usepackage{mathabx} 
\usepackage{mathrsfs}
\usepackage{microtype}
\usepackage{bbold}
\usepackage[mathscr]{euscript}

\let\euscr\mathscr
\newcommand{\esc}{\euscr}
\usepackage{upgreek}

\usepackage{amsfonts}
\usepackage{yfonts}
\usepackage{mathtools}

\usepackage{tikz-cd}
\usepackage[all,cmtip]{xy}

\numberwithin{equation}{section}
\usepackage{soul}
\usepackage{comment}

\usepackage{hyperref}
\hypersetup{colorlinks}
\definecolor{darkred}{rgb}{0.5,0,0}
\definecolor{darkgreen}{rgb}{0,0.5,0}
\definecolor{darkblue}{rgb}{0,0,0.5}
\hypersetup{colorlinks, linkcolor=darkblue, filecolor=darkgreen, urlcolor=darkred, citecolor=darkblue}
\makeatletter 
\@addtoreset{equation}{section}
\makeatother  

\numberwithin{equation}{section}

\newtheorem{thm}{Theorem}[section]
\newtheorem{cor}[thm]{Corollary}

\newtheorem{prop}[thm]{Proposition}
\newtheorem{lemma}[thm]{Lemma}
\newtheorem{step}{Step}

\theoremstyle{definition}
\newtheorem{defn}[thm]{Definition}
\theoremstyle{remark}
\newtheorem{rem}[thm]{Remark}

\newtheorem{notation}[thm]{Notation}

\newcommand{\beq}{\begin{equation}}
\newcommand{\eeq}{\end{equation}}
\newcommand{\beqn}{\begin{equation*}}
\newcommand{\eeqn}{\end{equation*}}
\newcommand{\ov}{\overline}
\newcommand{\mb}{\mathbb}
\newcommand{\mt}{\mathtt}

\newcommand{\mc}{\mathcal}
\newcommand{\mf}{\mathfrak}
\newcommand{\ms}{\mathscr}

\newcommand{\pst}{\uds{\bf POSET}^\dagger}

\newcommand{\wt}{\widetilde}
\newcommand{\wh}{\widehat}

\newcommand{\uds}[1]{\underline{\smash{#1}}}

\renewcommand{\subset}{\subseteq}

\newcommand{\ev}{{\rm ev}}

\makeatletter
\newcommand{\colim@}[2]{%
  \vtop{\m@th\ialign{##\cr
    \hfil$#1\operator@font colim$\hfil\cr
    \noalign{\nointerlineskip\kern1.5\ex@}#2\cr
    \noalign{\nointerlineskip\kern-\ex@}\cr}}%
}
\newcommand{\colim}{%
  \mathop{\mathpalette\colim@{\rightarrowfill@\textstyle}}\nmlimits@
}
\makeatother

\newcommand{\gx}[1]{{\color{magenta}{#1}}}

\title[Reduced GW without ghost bubble censorship]{Reduced Gromov--Witten invariants without ghost bubble censorship}

\author{Guangbo Xu}
\address{Department of Mathematics, Rutgers University, Hill Center 
 -  Busch Campus,
110 Frelinghuysen Road, Piscataway, NJ 08854-8019, USA}
\email{gx49@math.rutgers.edu}

\date{\today}

\begin{document}

\begin{abstract}
We give a definition of all-genus reduced Gromov--Witten invariants of symplectic manifolds by using effectively supported multivalued perturbations on derived orbifold/Kuranishi charts, which bypasses the hard analytical result of sharp compactification/ghost bubble censorship of Zinger \cite{Zinger_sharp, Zinger_reduced}, Doan--Walpuski \cite{Doan_Walpuski_embedded}, and Ekholm--Shende \cite{Ekholm_Shende_ghost, Ekholm_Shende_bare}. 
\end{abstract}

\maketitle

\setcounter{tocdepth}{1}
\tableofcontents

\section{Introduction}


The definition of Gromov--Witten invariants only requires a modest amount of structures on moduli spaces of stable maps. Many important geometric problems ask for more refined invariants, such as {\it relative} Gromov--Witten invariants \cite{Ionel_Parker_relative} or {\it reduced} Gromov--Witten invariants \cite{Zinger_reduced}. The traditional approaches towards these refinements are closely tied with the specific geometry and rely crucially on the corresponding variants of Gromov compactness. 

We introduce a new topological approach where the refined invariants can be extracted from two extra data on the moduli spaces: a natural stratification and a (stable) complex structure. In particular, one works entirely with the stable map compactification. We demonstrate this approach by defining all-genus reduced Gromov--Witten invariants for a general compact symplectic manifold, a long-standing challenge in symplectic topology beyond the genus one case by Zinger.

\subsection{Main result}

The moduli spaces of stable maps contain configurations which are not limits of smooth curves. One typical case is  a smooth curve attached with a higher genus ($g\geq 1$) ghost component. The contributions from these degenerate curves distort the curve counting meaning of Gromov--Witten invariants. 
The genus $g$ {\it reduced Gromov--Witten invariants} can be viewed as the count of maps with smooth domains while contributions from those ``fake configurations'' are removed, which has only been defined rigorously in genus one by Zinger \cite{Zinger_reduced}. 
The main result of this paper is the definition of reduced Gromov--Witten invariants in all genera.

\begin{thm} \label{thm11} (Theorem \ref{thm52}) 
Let $(X, \omega)$ be a compact symplectic manifold and $A \in H_2(X; {\mb Z})$. Then for all $g, n \geq 0$, there are  well-defined homology classes 
\beqn
[\ov{\mc M}{}_{g,n}(X, J, A)]^{\rm red} \in H_*( \ov{\mc M}{}_{g,n} \times X^n; {\mb Q})
\eeqn
which only depends on the symplectic deformation class of $\omega$. Moreover, when $g = 0$, it coincides with the Gromov--Witten virtual fundamental class.
\end{thm}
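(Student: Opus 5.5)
The plan is to build the reduced class from a global (derived orbifold) Kuranishi chart $(\mc U, \mc E, \mc S)$ for $\ov{\mc M}{}_{g,n}(X,J,A)$, of the kind produced by the global chart constructions (Abouzaid--McLean--Smith, Hirschi--Swaminathan style). The chart is chosen so that $\mc U$ carries two extra structures. The first is a stratification indexed by dual graphs, recording which ghost components of positive genus are present. The second is a stable complex structure on $\mc U$ and $\mc E$, compatible with this stratification.

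\textbf{Defining the reduced class.} Let $\mc U^{\rm main} \subset \mc U$ be the open dense stratum where the domain has no contracted component of genus $\geq 1$; it is the closure of the locus of maps with smooth domains. On each ``ghost'' stratum $\mc U_\Gamma$, the obstruction bundle splits. One summand is $\mb E^\vee \otimes T X$, coming from the Hodge bundle of the ghost components, of rank $\geq g_\Gamma \cdot \dim_{\mb C} X$. The remaining summand is the normal-type data. I would then choose an \emph{effectively supported} multivalued section $s$ of $\mc E$ over $\mc U$ with two properties. First, it is transverse to $\mc S$ on each stratum. Second, on each ghost stratum it is ``straightened'' so that its zeros in the closure of the ghost stratum have dimension strictly below the virtual dimension, by at least 2. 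The key input is a dimension count: the stratum $\mc U_\Gamma$ minus the rank of the Hodge-twisted obstruction summand drops dimension. This is the topological substitute for ghost bubble censorship. Since the perturbation is only required to be transverse stratumwise, and is not required to extend the main stratum's zero set sharply, we never need the analytic statement that limits of smooth curves avoid these strata.

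The reduced cycle is then the closure of $s^{-1}(0) \cap \mc U^{\rm main}$. Its boundary lies in codimension $\geq 2$ strata, so it defines a rational pseudocycle, or a class via Čech/Steenrod homology. Pushing forward along $\mathrm{st} \times \ev$ gives $[\ov{\mc M}{}_{g,n}(X,J,A)]^{\rm red}$.

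\textbf{Independence of choices.} This follows by running the same construction on chart cobordisms. Two global charts are related by stabilization and germ equivalence, and the stratification and stable complex structure are preserved under these moves. A family of $J_t$ and $\omega_t$ across a symplectic deformation gives a parametrized chart. An effectively supported perturbation in the family then yields a cobordism between the two reduced cycles, again with boundary defects in codimension $\geq 2$.

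\textbf{Genus zero case and main obstacle.} For $g=0$ there are no positive-genus ghosts. So $\mc U^{\rm main}=\mc U$, and the construction reduces to the usual virtual class.

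The main obstacle I expect is the middle step: constructing multivalued perturbations that are simultaneously transverse on every stratum and compatible across strata closures. This requires normal forms near each ghost stratum, where the complex structure lets one identify the obstruction as the pullback of $\mb E^\vee\otimes TX$, the genus-one analogue being $\ker$ versus $\mathrm{coker}$ of $\bar\partial$. It also requires an inductive construction over the stratification poset. My first attempt would be to build these perturbations stratum by stratum, in order of decreasing depth, using complex-linear extensions in the normal directions. The dimension estimate then follows from complex codimension counting.
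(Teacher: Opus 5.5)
You have a genuine gap at the step you call the ``key input'': the dimension count on ghost strata either proves nothing new or is false.

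\textbf{The two readings of the dimension count.}

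\emph{Unconstrained perturbation.} If $s$ is a generic perturbation, its zeros do meet the lower strata (all of real codimension $\geq 2$) in dimension $\leq \dim^{\rm vir}-2$. But then the closure of $s^{-1}(0)\cap\mc U^{\rm main}$ is all of $s^{-1}(0)$, and you have only recovered the ordinary virtual class. This is exactly the mechanism of Proposition \ref{prop424}, which the paper uses only in genus $0$, where Lemma \ref{lemma537} supplies the bound.

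\emph{Effectively supported perturbation.} Suppose instead $s$ vanishes in the ghost directions on the strata where those ghosts are switched off, which is needed for the answer to differ from the ordinary one. Then on such a stratum $\alpha$ the zero set has the refined virtual dimension $\dim^{\rm vir}_\alpha=\dim^{\rm vir}-\operatorname{codim}_{\mb R}B^*_\Gamma+\operatorname{rank}_{\mb R}H^1_\alpha$. In higher genus this usually exceeds $\dim^{\rm vir}$: in genus one for a Calabi--Yau threefold, rational curves with an elliptic ghost tail give $2n+4$ against $2n$.

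\textbf{What actually has to be controlled.} It is not the zero set inside ghost strata but the frontier of the main-stratum zero set, and stratumwise transversality does not control it. Take a stratified section $s(x,v)=A(x)v+O(|v|^2)$ with $v$ in a real normal plane, values in $\mb R^2$, and $s(x,0)=0$. Its main zeros accumulate on $\{\det A(x)=0\}$, which has codimension one unless $A(x)$ is complex linear.

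Your closing remark about complex-linear normal extensions points the right way, but it must become a precise transversality notion. This is NCS transversality (Section \ref{section3}):
\begin{itemize}
\item Read through a straightening and parallel transport, the perturbation is lifted near each stratum to complex stratified polynomial maps in the normal directions.
\item The graph of the lift is required to be transverse to the canonical complex-analytic Whitney stratification of $Z^d(\mb V,\mb W)=\{(v,P):P(v)=0\}$.
\item The notion is independent of the lift and degree, and compatible with restriction to $\mb V_{\geq\alpha}$, splittings and stabilization (Propositions \ref{prop_degree_change}, \ref{prop39}, \ref{prop311}, \ref{prop313}). This makes it achievable inductively across strata and coordinate changes.
\item Lemma \ref{lemma_zero_stratification} then gives that adjacent strata of the perturbed zero locus differ by even codimension.
\end{itemize}
That even-codimension property is what makes the main-stratum count compact in virtual dimension $0$, and what makes the $1$-dimensional cobordisms free of interior boundary.

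\textbf{The setup needs more than a dual-graph stratification of a global chart.} Because ghost loci are obstructed, the strata must record which ghost vertices have their thickening switched off ($e_{\phi,v}=0$; refined map types, Definition \ref{defn_refined_map_type}). Only then is each $U^*_\alpha$ a clean submanifold, with normal bundle $N_\phi B^*_\Gamma\oplus\ker(W^{\rm off}_\alpha\to H^1_\alpha)$. The complex structures on these two pieces are what produce the NC structure.

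The Hodge-type bundle enters as the cokernel $H^1_\alpha$, a quotient of $W^{\rm off}_\alpha$. It is not a natural summand of a chart's obstruction bundle.

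Switching off perturbations component by component requires component-refined obstruction spaces. These exist only locally, since automorphisms can permute components. The paper states it does not know how to build a global chart with this stratification. It instead builds a Kuranishi atlas of AMS-type charts indexed by sets of map types, with stratified flat coordinate changes, compatible straightenings and $r$-regular multisections.

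\textbf{Two smaller points.} Your criterion ``no contracted component of genus $\geq 1$'' misses positive-genus ghost subcurves built from rational components. And with the paper's main stratum (the top stratum), the genus-zero statement is not automatic: it is Lemma \ref{lemma537} combined with Proposition \ref{prop424}.
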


\begin{rem}
In fact, similar to the main result of \cite{Bai_Xu_integer}, one can define a collection of refinements of Gromov--Witten invariants labelled by combinatorial types of curves. The reduced invariant is the one from the main stratum of smooth curves. 
\end{rem}

An existing method of constructing reduced Gromov--Witten invariants, which goes back to Zinger \cite{Zinger_sharp}, is to geometrically identify the limits of smooth curves (the {\it sharp compactification}) and then construct the virtual fundamental class on the sharp compactification. 
In genus $1$ case, Zinger provided a simple criterion for stable maps to be limits of smooth curves 
and defined the reduced invariants in genus $1$ (see \cite{Zinger_reduced}). Following the same ideas, there are the works of Wang \cite{Wei_Wang_reduced} and Niu \cite{Niu_thesis} aiming at constructing (symplectic) reduced genus-two Gromov--Witten invariants. Recently Ekholm--Shende gave a new approach for higher genus reduced Gromov--Witten invariants for Calabi--Yau threefolds (\cite{Ekholm_Shende_bare} (for both closed and open curves). Their approach, based on polyfolds, depends crucially on their {\it ghost bubble censorship} \cite{Ekholm_Shende_ghost} (for general symplectic manifolds), a higher genus extension of Zinger's sharp compactification. 

There are parallel algebraic approaches towards reduced Gromov--Witten invariants. 
 The basic idea is to resolve singularities in the moduli spaces of curves to
separate the ``main'' component from other components, and construct virtual fundamental class on this component. See \cite{Vakil_Zinger_2008}\cite{Hu_Li_2010}\cite{Hu_Li_Niu}\cite{RSW_2019}\cite{Hu_Niu_2019, Hu_Niu_2020}\cite{Battistella_Carocci_2023}\cite{Rabano_Mann_Manolache_Picciotto} for works in this spirit, including algebraic definitions of reduced Gromov--Witten invariants in various cases. 


We will prove in a forthcoming paper that our definition agrees with Zinger's definition in genus one. One important ingredient will be to choose perturbations with certain ``forgetful map property.''

\begin{thm}\cite{Xu_reduced_comparison}
The genus 1 reduced Gromov--Witten invariants defined in this paper agree with those defined by Zinger \cite{Zinger_reduced}.
\end{thm}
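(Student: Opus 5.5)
The plan is to compare both constructions on a single global chart. Zinger's definition works with the closure $\ov{\mc M}{}^0_{1,n}(X,A)$ of the main stratum: the sharp compactification, consisting of limits of smooth genus one maps, characterized by his criterion that the derivatives on the ghost genus one core satisfy a linear dependence. Zinger shows that for generic (effectively supported) perturbations this space carries a virtual class. In particular, his reduced class is the pushforward of a virtual class supported near the main stratum.

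\textbf{Step 1: perturbations with the forgetful map property.} I would choose a derived orbifold chart $(\mc U,\mc E,\mc S)$ for $\ov{\mc M}{}_{1,n}(X,J,A)$ with its natural stratification by combinatorial type. I would then take perturbations that are compatible with forgetting marked points and, on the strata carrying a ghost genus one core, are pulled back from the genus zero data on the nonconstant components. This is the ``forgetful map property'' mentioned above.

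\textbf{Step 2: locate the perturbed zero locus.} For such perturbations I would show that the perturbed zero locus meeting the main stratum has closure contained in the regular part of Zinger's sharp compactification. This uses Zinger's local obstruction analysis, the description of the obstruction bundle over the ghost locus via $\mathbb{E}^\vee\otimes T_{x}X$. The effective support condition from the main construction (Theorem~\ref{thm52}) already forces the reduced class to be the fundamental class of the closure of the main-stratum part of the zero set. It therefore remains to identify this closure with Zinger's perturbed sharp moduli space.

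\textbf{Step 3: identify the orientations.} I would check that the orientations agree. Both come from the complex structure on the linearized operator, so they should match on the open dense main stratum.

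\textbf{Step 4: independence of choices.} Using the deformation invariance of both definitions, I would show the identification does not depend on the choices made. This is done by a cobordism over the space of compatible perturbations.

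\textbf{Main obstacle.} The hardest part will be Step 1 combined with Step 2: constructing multivalued perturbations that satisfy the forgetful property and are still transverse, and showing that no zero locus escapes into the ghost strata except through Zinger's sharp boundary. My first attempt would be to build the perturbation on genus zero charts and pull it back along the stabilization map. I would then verify transversality stratum by stratum, using Zinger's gluing estimates for the leading-order term of the obstruction map.
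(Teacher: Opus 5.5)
The paper does not prove this statement. It is deferred to \cite{Xu_reduced_comparison}, and the only hint given is that perturbations with a ``forgetful map property'' will be an important ingredient. Your Step~1 agrees with that hint, but the outline has genuine gaps, starting with the setup.

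\textbf{The setup in Step~1 is not available as stated.}
\begin{itemize}
\item The paper says explicitly that it does not know how to build a single global chart carrying the required stratification, because automorphisms permuting components obstruct a global component-wise splitting of the obstruction bundle. That is why Section~\ref{section5} builds an atlas instead, and your perturbations must be compatible with its combed coordinate changes.
\item The stratification must be the refined one of Definition~\ref{defn_refined_map_type}, together with the cosheaf map ${\mc O}^U\to{\mc O}^E$ that forces the switched-off ghost summands of the section to vanish. If you stratify by combinatorial type alone, with no constraint on the obstruction fibres, each non-main stratum has virtual dimension $\dim^{\rm vir}-2\#\{\text{nodes}\}$. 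The argument of Proposition~\ref{prop424} then shows the main-stratum class is just the ordinary one.
\item ``Compatible with forgetting marked points'' has no direct meaning in the AMS framework. Since $d=k(\Omega(A)+2g-2+n)$, both $B_{g,n,d}$ and $G=U(d-g+1)$ change with $n$. Relating charts for $n$ and $n-1$ markings therefore needs an additional comparison, e.g.\ a double framing.
\end{itemize}

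\textbf{The decisive gap is Step~2, which conflates two different regularizations.}
\begin{itemize}
\item The perturbations in Theorem~\ref{thm52} are only required to be stratified and NCS transverse. On the open dense main stratum the section is unconstrained, and its fibre contains the summands pushed forward from ghost vertices. A generic perturbation does move these summands.
\item Consequently the main-stratum zeros solve $\bar\partial_J u+\lambda_I(e)=0$ with $e$ a nonzero, point-dependent, multivalued value whose ghost-originated components are nonzero. These zeros are not $J$-holomorphic, so they cannot lie in Zinger's sharp compactification. They are also not solutions of an effectively supported problem in Zinger's sense.
\item Your claim that ``the effective support condition already forces'' the answer therefore assumes what is to be proved. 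Likewise, ``identify this closure with Zinger's perturbed sharp moduli space'' is the theorem itself.
\end{itemize}

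\textbf{What is missing is an explicit bridge between the two frameworks.} One possible route is the following.
\begin{itemize}
\item Construct a multisection that, near the ghost-core strata, keeps the ghost-originated component of the section equal to the coordinate $e_{\rm gho}$ and perturbs only the effective summands. Its main-stratum zero set is then literally an effectively supported perturbed main stratum.
\item Prove that this multisection is NCS transverse (Definition~\ref{defn_manifold_transversality}) for some straightening. This is the real analytic content.
\item Concretely, consider the normal directions to a ghost-core stratum: the gluing parameters and $\ker(W^{\rm off}_\alpha\to H^1_\alpha)$. You must show that Zinger's leading-order obstruction term, together with its error terms, is an NCS bundle map whose lift is transverse to the canonical Whitney stratification of $Z^d$. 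That term is built from the derivatives of the first-level bubbles at the nodes and takes values in $\mathbb E^\vee\otimes T_xX$.
\item This must be done in the paper's reparametrized gluing coordinates $v\mapsto -v/(|v|\log|v|)$, not in Zinger's coordinates.
\end{itemize}

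Checking ``transversality stratum by stratum'' does not suffice here. It is transversality to the Whitney refinement of $Z^d$ that prevents main-stratum zeros from accumulating on the excess-dimensional ghost locus.

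Once such a bridge exists, cobordism invariance of the main count finishes the argument. Step~4 is then unnecessary: both classes are already independent of choices, so it is enough to exhibit one set of choices on which the two counts agree.
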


\begin{rem}
There is a conjectural relation between standard and reduced Gromov--Witten invariants in all genera. The genus one comparison was completely proved by Zinger \cite{Zinger_comparison}. In higher genus, for Calabi--Yau threefold, there is a general conjecture describing the relation (cf. \cite[Theorem 1.4]{Ekholm_Shende_bare}).
Potentially, one can reduce the verification of such a formula to model cases calculated by Pandharipande \cite{Pandharipande_1999} via the cluster argument of Ionel--Parker \cite{IP-GV}. A similar argument was used in the proof of \cite[Theorem 9.3]{Ekholm_Shende_bare}.
\end{rem}

\begin{rem}
An important property of Zinger's genus one reduced Gromov--Witten invariants is that they satisfy the hyperplane property (see \cite{Li_Zinger_2009}) while the ordinary ones do not. Using this property, Zinger proved the genus one mirror symmetry for the quintic \cite{Zinger_JAMS}. One should be able to follow the same symplectic approach of \cite{Li_Zinger_2009} to prove the hyperplane property of the reduced invariants defined in this paper. 
\end{rem}

\begin{rem}
In the case of open curves in Calabi--Yau threefolds with Maslov zero Lagrangian boundary condition, Ekholm--Shende \cite{Ekholm_Shende_ghost, Ekholm_Shende_bare} were still able to define their bare curve counting. It is not yet clear to the author whether the method developed in the current paper can produce the same result or not for open curve counting. 
\end{rem}

This work originates from a discussion with Shaoyun Bai, who asked whether Ekholm--Shende's new approach can be reproduced using global Kuranishi charts invented by Abouzaid--McLean--Smith \cite{AMS} (with higher genus extensions by Hirschi--Swaminathan \cite{Hirschi_Swaminathan_2024} and themselves \cite{AMS2}). 
The author then realized the specific perturbations Zinger and Ekholm--Shende used need to respect a natural stratification on the stable map moduli spaces; meanwhile, the Kuranishi adaptation of their perturbations is very similar to the FOP (Fukaya--Ono \cite{Fukaya_Ono_integer} and Parker \cite{BParker_integer}) perturbations in the construction of integer-valued Gromov--Witten invariants by Bai and the author \cite{Bai_Xu_integer}.

A key ingredient in both \cite{Bai_Xu_integer} and the current paper is that one needs the ``normal complex structure'' on the moduli spaces to manage the failure of transversality: they allow us to use specific perturbations (in Kuranishi charts) to define virtual fundamental cycles of the ``main stratum'' instead of the whole moduli space. In some sense, the hard analytical results on sharp compactification/ghost bubble censorship are bypassed by the soft differential topological argument.



\begin{rem}
A slight complication towards finishing the construction is that the author does not know how to construct a global Kuranishi chart with the stratification we need. More precisely, one expects to decompose the obstruction spaces into summands which are supported on individual components of the curves. However, as a curve may have automorphisms which interchange components, such a decomposition may only be possible locally. To achieve our goal, instead of a global chart, we choose to construct a Kuranishi atlas with local charts and coordinate changes preserving such stratifications. We still use some ingredient of the global Kuranishi chart approach to simplify the construction of the atlas. On the other hand, it is possible to use the original approach of Kuranishi structures of Fukaya {\it et al.} \cite{Fukaya_Ono, FOOO_Book, FOOO_Kuranishi} to carry out the construction. 
\end{rem}

\begin{rem}
The idea of this paper can also be applied to situations where the stratification comes from other geometric setup. We briefly discuss such a possibility in the relative/degeneration situation. It was predicted in \cite[Section 10]{Tian_CDM} that via degeneration Gromov--Witten invariants can be decomposed into relative invariants of components meeting along normal crossing divisors. However, the usual stable map compactification was regarded not suitable for this problem as curves contained in the divisor could be obstructed and their contributions are difficult to identify. Corresponding relative Gromov--Witten invariants \cite{Ionel_Parker_relative} or log Gromov--Witten invariants \cite{Gross_Siebert_log}\cite{Chen_log_1}\cite{Abramovich_Chen_log_2} were then defined and corresponding degeneration/sum formula were proved (see \cite{Ionel_Parker_sum}\cite{Li_Ruan}\cite{ACGS_glue}). In those works, one needs to consider different compactifications of curves only meeting the divisors are isolated points (see also \cite{Tehrani_log}\cite{Tehrani_Swaminathan} and \cite{Daemi_Fukaya_complement_1, Daemi_Fukaya_complement_2, Daemi_Fukaya_complement_3}) and carry out the corresponding virtual construction. The technique developed in this paper suggests that one only needs to consider the ordinary stable map compactification whose main stratum consists of curves without components contained in the divisor; the relative or log compactifications can be viewed as certain geometric ``blowups'' of the boundary of the main stratum. This  would provide a conceptually simpler approach to the degeneration formula in the symplectic setting which would be also useful in Floer theory.
\end{rem}

\subsection{Outline}

In Section \ref{section2} we set up notions related to stratification, normal complex structure, and the type of perturbations we will consider in Kuranishi models. In Section \ref{section3}, we define a new notion of transversality designed for the problem; it is parallel to the corresponding part of \cite{Bai_Xu_integer}. In Section \ref{section4} we set up a version of Kuranishi atlas theory and provide the abstract construction of the virtual fundamental classes of the main stratum. In Section \ref{section5}, we equip a stable map moduli space with a Kuranishi atlas, define the reduced Gromov--Witten invariants, and prove Theorem \ref{thm11}.

\subsection{Acknowledgements}

I would like to thank Shaoyun Bai for bringing his attention to \cite{Ekholm_Shende_bare} and for encouragements. I would also like to thank Vivek Shende, Young-Hoon Kiem, Mark McLean for helpful discussions and thank Aleksey Zinger for encouragements. The work is partially supported by NSF DMS-2345030, DMS-2506403, and Simons Foundation Travel Grant for Mathematicians.

\section{Topological Setup}\label{section2}

The purpose of this section is to introduce a class of multisections of an orbifold vector bundle which has a certain stratification-like structure.

\subsection{Topological stratifications}

We first recall the basic notion of stratifications of topological spaces. Later we will need to consider Whitney stratifications in smooth manifolds. A {\bf partition} of a topological space $X$ is a set ${\mf X} = \{ X_\alpha^*\ |\ \alpha \in {\mc A} \}$ of {\it nonempty} subsets of $X$ such that
\beqn
X = \bigsqcup_\alpha X_\alpha^*.
\eeqn
Here $\bigsqcup$ means disjoint union. A partition is called a {\bf stratification} if 
\begin{enumerate}

\item The partition is locally finite.

\item Each $X_\alpha^*$ (called a stratum) is a locally closed subset.

\item (Axiom of frontier) For any $\alpha, \beta \in {\mc A}$, $X_\alpha^* \cap \ov{X_\beta^*}\neq \emptyset \Longrightarrow X_\alpha^* \subset \ov{X_\beta^*}$. 
\end{enumerate}
The axiom of frontier implies that the indexing set has a  partial order
\beqn
\alpha \leq \beta \Longleftrightarrow X_\alpha^* \subset \ov{X_\beta^*}.
\eeqn

\begin{notation}
We follow the following notational convention. We denote
\beqn
X_\alpha:= \ov{X_\alpha^*}
\eeqn
which is a closed subset of $X$. Then $\alpha \leq \beta$ if and only if $X_\alpha \subset X_\beta$.
\end{notation}

\noindent {\bf Convention.} We always assume that strata   are connected. 

We can pull back partitions by continuous maps. Let $f: Y \to X$ be a continuous map and let ${\mf X}$ be a partition of $X$. Then the {\bf pullback} of ${\mf X}$ by $f$ is the partition
\beq\label{naive_pullback}
f^* {\mf X} = \bigsqcup_{X_\alpha^*\in {\mf X}} \Big\{ {\rm connected\ components\ of\ }f^{-1}(X_\alpha^*) \Big\}.
\eeq
When $O \subset X$ is a subset, let ${\mf X}|_O$ be the pullback via the inclusion map $O \hookrightarrow X$, called the restriction of ${\mf X}$ to $O$. ${\mf X}$ being a stratification does not necessarily implies that $f^* {\mf X}$ is a stratification. However, when it is a stratification, we call it the pullback stratifications.

A self-homeomorphism $f: X \to X$ is said to {\bf preserve} a partition or a stratification ${\mf X}$ if $f^* {\mf X} = {\mf X}$ as sets (where $f$ could send a stratum to a different one). Similarly, an action of a group $G$ on $X$ is said to {\bf preserve} a partition or a stratification ${\mf X}$ if each group element $g \in G$ preserves ${\mf X}$.

\subsubsection{Structural cosheaf} 
One can use cosheaves to describe stratifications. 

\begin{defn}
Let $\pst$ denote the category of  partially ordered sets whose morphisms are monotonic maps which send maximal elements to maximal elements. 
\end{defn}

Recall that a $\pst$-valued cosheaf on a topological space $X$ 
is a covariant functor
\beqn
{\mc F}: \underline{\bf OPEN}(X) \to \pst
\eeqn
such that for any open subset $O \subset X$,
\beqn
\colim \left( \prod_{i, j} {\mc F}(O_i \cap O_j) \rightrightarrows \prod_i {\mc F}(O_i) \right) \to {\mc F}(O)
\eeqn
is an isomorphism of posets. 

\begin{defn}\label{defn_structural_cosheaf}
Let $X$ have a stratification ${\mf X}$. Its {\bf structural cosheaf}  is the $\pst$-valued cosheaf ${\mc O}^X$ with value on an open set $U \subset X$ being
\beqn
{\mc O}^X(U):= {\mf X}|_U.
\eeqn
\end{defn}

The gluing axiom for cosheaf can be readily checked. The following lemma summarizes some basic properties of the structural cosheaf.

\begin{lemma}\label{lemma_property_cosheaf}
Let $X$ be a stratified space. Then 
\begin{enumerate}
    \item The structural cosheaf ${\mc O}^X$ is {\bf constructible}, i.e., for each stratum $X_\alpha^* \subset X$, the cosheaf ${\mc O}^X|_{X_\alpha^*}$ is locally constant. 

    \item Each $x \in X$ has an open neighborhood $U_x$ such that the natural map
    \beqn
    {\mc O}^X_x \to {\mc O}^X(U_x)
    \eeqn
    is an isomorphism; for each $y \in U_x$, the natural map 
    \beqn
    {\mc O}_y^X \to {\mc O}^X(U_x) \cong {\mc O}_x^X
    \eeqn
    is injective and for each $z \in U_x$ sufficiently close to $y$, the following diagram commutes.
    \beqn
    \xymatrix{ O_z^X \ar[d] \ar[rd]  & \\
    O_y^X \ar[r] & O_x^X }
\eeqn
    \end{enumerate}

    \end{lemma}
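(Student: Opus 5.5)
The plan is to work directly from the definitions. The structural cosheaf assigns to an open set $U$ the poset ${\mf X}|_U$ of connected components of the sets $U \cap X_\alpha^*$, ordered by closure inclusion inside $U$. The stalk ${\mc O}^X_x$ is the colimit over neighborhoods of $x$. Since the maps induced by shrinking $U$ go from the small set to the large set, this colimit is the inverse-system limit of the posets ${\mf X}|_U$ as $U$ shrinks to $x$. Concretely, it is the set of "germs of strata at $x$": strata $X_\beta^*$ with $x \in X_\beta$, split further by local components of $X_\beta^* \cap U$ near $x$.

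For (2), I will first choose $U_x$ using local finiteness. Fix an open neighborhood $V$ of $x$ meeting only finitely many strata, and drop from $V$ the closures of those strata whose closure does not contain $x$. Since the closures are closed, this yields an open neighborhood. By the axiom of frontier, every remaining stratum $X_\beta^*$ meeting $V$ satisfies $X_{\alpha(x)}^* \subset X_\beta$, where $X_{\alpha(x)}^*$ is the stratum containing $x$.

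The one genuinely nontrivial point is that the number of connected components of $X_\beta^* \cap U$ stabilizes as $U$ shrinks. I would handle it by choosing $U_x$ so that each $X_\beta^* \cap U_x$ has components all accumulating at $x$, and so that shrinking further does not split them. This requires a local conic or tameness property of the stratification. For a purely topological stratification I expect to have to take it as an implicit hypothesis: that $X$ is locally finite-dimensional and the strata are locally connected at the frontier. Alternatively, one can define the stalk via a cofinal system of such "good" neighborhoods, which are cofinal by the same argument in the Whitney or conelike case used later. With $U_x$ good, the map ${\mc O}^X_x \to {\mc O}^X(U_x)$ is an isomorphism by cofinality. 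The order is preserved because closure inclusion in $U_x$ matches closure inclusion in smaller good neighborhoods: each component accumulates at $x$, so a small neighborhood sees the same incidences.

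Still in (2), take $y \in U_x$ and choose a good $U_y \subset U_x$. Each component of a stratum in $U_y$ lies in a unique component of the same stratum in $U_x$. This gives the map ${\mc O}_y^X \to {\mc O}^X(U_x)$, which is monotone, and it sends the maximal (open dense) strata to maximal ones. Injectivity is the delicate part. Two distinct germs at $y$ of the same stratum could in principle merge in $U_x$. I would rule this out using goodness of $U_x$: components in $U_x$ are "cones" over $x$, and within such a cone the local components at a nearby point are distinct. If this fails in general, I would shrink $U_x$ further, which is harmless for the isomorphism statement.

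For commutativity, take $z$ close to $y$, meaning $z \in U_y$. The maps ${\mc O}_z \to {\mc O}(U_y) \cong {\mc O}_y$ and ${\mc O}_z \to {\mc O}(U_x)$ are both induced by the inclusions $U_z \subset U_y \subset U_x$, so the triangle commutes by functoriality of ${\mc O}^X$.

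Finally, for (1), constructibility: on a stratum $X_\alpha^*$, for $y$ in the stratum near $x$, the injective map ${\mc O}_y \to {\mc O}_x$ is also surjective. Every germ at $x$ has closure containing $X_\alpha^*$ near $x$, hence it contains $y$ and yields a germ at $y$. So the map is an isomorphism, and ${\mc O}^X|_{X_\alpha^*}$ is locally constant.

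The main obstacle is the stabilization and injectivity step. It is where some local tameness of the stratification is secretly required, and I would state and use it explicitly.
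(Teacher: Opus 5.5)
There is no proof in the paper to compare against. The lemma is stated as a summary of evident properties, and it is only ever applied to stratified manifolds, orbifolds and bundles whose local models are linear stratifications.

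\textbf{Your caveat about tameness is justified.} The bare topological definition really is insufficient. Take the topologist's sine curve, stratified by the limit segment and the graph. At points of the segment the posets ${\mf X}|_U$ never stabilize, so the first claim of (2) fails.

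\textbf{The gap is in your injectivity step.} The tameness you propose to add (conic or Whitney structure, with ``good'' neighborhoods whose components are cones over $x$) does not give injectivity. Your assertion that within such a cone ``the local components at a nearby point are distinct'' is false. Take $X={\mb R}^2$ with the three strata $\{0\}$, $L_+=(0,\infty)\times\{0\}$ and $M={\mb R}^2\setminus([0,\infty)\times\{0\})$.
\begin{itemize}
\item This partition is locally finite, the strata are connected and locally closed, and the frontier axiom holds. It is even a conic Whitney stratification.
\item The costalk ${\mc O}^X_0$ is the chain $\{0\}<L_+<M$. Any $U_0$ with ${\mc O}^X_0\cong{\mc O}^X(U_0)$ must therefore have $U_0\cap M$ connected.
\item For $y\in U_0\cap L_+$, the costalk ${\mc O}^X_y$ contains two distinct germs of $M$, one above and one below the ray. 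Both map to the single component $U_0\cap M$.
\end{itemize}
Shrinking $U_0$ changes nothing, so your fallback also fails. Under your hypotheses (2) is simply false. Your proof of (1) falls with it, since it relies on injectivity to upgrade surjectivity to a bijection.

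\textbf{The missing idea} is that each stratum must be locally connected (unibranch) at every point of its closure. In the paper's setting this comes from the linear local model together with the convention that strata are connected.
\begin{itemize}
\item In a linear stratification, suppose some $V_\delta\subsetneq V_\beta$ had real codimension one in $V_\beta$. Then $V_\beta^*=V_\beta\setminus\bigcup_{\delta<\beta}V_\delta$ would be disconnected. Hence every such inclusion has real codimension at least two.
\item Let $B$ be a convex ball in a chart centered at $x$ (the origin), and $B'\subset B$ a small ball around $y$. Then each $B'\cap V_\beta^*$ is a convex set minus finitely many subspaces of codimension at least two. So it is connected, and it is nonempty exactly when $y\in V_\beta$.
\end{itemize}
This settles everything directly, with no separate stabilization argument:
\begin{itemize}
\item ${\mc O}^X(B)$ is the poset of all linear strata, ordered by inclusion of the $V_\beta$, independently of the radius. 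Hence ${\mc O}^X_x\cong{\mc O}^X(B)$.
\item ${\mc O}^X_y=\{\beta: y\in V_\beta\}$, and the map to ${\mc O}^X(B)$ is the inclusion, hence injective.
\item The triangle commutes because all the maps are inclusions; your functoriality argument is fine here.
\item For $y$ in the stratum of $x$, i.e.\ $y\in V_0$, the inclusion is the identity. This gives (1).
\end{itemize}
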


The information of the stratification ${\mf X}$ can be recovered from the cosheaf ${\mc O}^X$. We will use the following concept to describe stratified vector bundles.

\begin{defn}\label{defn_stratification_like}
A {\bf stratification-like} cosheaf over a stratified space  is a $\pst$-valued cosheaf satisfying the two properties listed in Lemma \ref{lemma_property_cosheaf}.
\end{defn}

\subsection{Linear stratifications}

We will consider stratified manifolds and orbifolds which locally are vector spaces with stratification given by subspaces. This subsection provides preliminary discussions about such local models.

\begin{defn}
A {\bf linear stratification} on a finite-dimensional (real or complex) vector space is a finite stratification encoded in a cosheaf ${\mc O}^V$ such that the closure of each stratum is a linear subspace of $V$. A vector space $V$ with a linear stratification is called a {\bf stratified linear space}.
\end{defn}

The definition implies that there is a unique maximal stratum, denoted by $V_{\rm main}^*$, a unique minimal stratum, denoted by $V_0$; there is also no real codimension one strata. For any stratum $V_\alpha^* \subset V$, there is an associated stratified linear space $V_{\geq \alpha}$ obtained from $V$ by only remembering strata living above $\alpha$. In particular, $(V_{\geq \alpha})_0 = V_\alpha$. 


\begin{rem}
On the linear level, there are several differences between the current setting and the setting in \cite{Bai_Xu_integer}. In general, linear stratifications of the same combinatorial type (described by the dimensions of subspaces obtained by taking intersections and sums) may have a moduli. For example, consider the stratification of ${\mb R}^2$ formed by many 1-dimensional subspaces. The linear models considered in \cite{Bai_Xu_integer}, i.e., representations of finite groups, do not have moduli. On the other hand, a representation canonically split into the direct sum of the trivial part and the nontrivial part. In the current setting, such a canonical splitting does not exist.
\end{rem}

\begin{rem}
One could try to define the notion of stratified vector bundles over stratified spaces. However, we choose to do so only on stratified manifolds as the local structure will be simpler. 
\end{rem}

\subsubsection{Normal complex structures}


\begin{defn}\label{defn_NCS_space}
Let $V$ be a stratified linear space.
\begin{enumerate}

\item A {\bf normal complex structure} (NC structure for short) on $V$, is a complex structure on the quotient space $V/ V_0$ such that for all strata $\alpha$, the subspace
\beqn
V_\alpha/ V_0 \subset V / V_0
\eeqn
is a complex subspace.

\item A {\bf normally complex stratified linear space}, or an {\bf NCS linear space}, denoted by $\mb{V}$, is a stratified linear space $V$ equipped with an NC structure. Notice that a complex stratified linear space is also an NCS linear space.



\end{enumerate}
\end{defn}


For each global stratum $\alpha \in {\mc O}^V(V)$, an NC structure on $V$ canonically induces an NC structure on $V_{\geq \alpha}$ via the exact sequence
\beqn
\xymatrix{ 0 \ar[r] &   V_\alpha/V_0 \ar[r] & V/ V_0 \ar[r] &    V / V_\alpha \ar[r] & 0}.
\eeqn
Denote this NCS linear space by ${\mb V}_{\geq \alpha}$.

\subsubsection{Splittings}

\begin{defn}\label{defn_linear_splitting}

\begin{enumerate}

\item Let $V$ be a stratified linear space.  A {\bf splitting} of $V$ is a collection of complements $V = V_\alpha \oplus V_\alpha^\vee$ such that $\alpha \leq \beta \Longrightarrow  V_\beta^\vee \subset V_\alpha^\vee$.

\item Let ${\mb V}$ be an NCS linear space. A {\bf normally complex splitting} (NC splitting) of ${\mb V}$, is a splitting of $V$ (so all $V_\alpha^\vee$ are complex) such that the inclusion $V_\beta^\vee \hookrightarrow V_\alpha^\vee$ for all $\alpha \leq \beta$ is complex linear.

\end{enumerate}
\end{defn}

\subsection{Complex stratified maps between vector spaces}

We consider certain stratified maps between stratified linear spaces. We first introduce a convenient notion.

\begin{defn}\label{defn_linear_pair}
A {\bf complex stratified virtual space} consists of a pair of complex stratified linear space ${\mb V}$ and ${\mb W}$ together with a morphism ${\mc O}^V(V) \to {\mc O}^W (W)$ of $\pst$. A complex stratified virtual space is denoted as $({\mb V}, {\mb W})$ where the morphism ${\mc O}^V(V) \to {\mc O}^W (W)$ is hidden.

\end{defn}


\subsubsection{Polynomial maps}

First consider a complex stratified virtual space $({\mb V}, {\mb W})$. Let ${\rm Poly}^d(V, W)$ be the space of complex polynomial maps from $V$ to $W$ whose degrees are at most $d$. Let $\alpha\mapsto \beta(\alpha)$ be the poset map ${\mc O}^V (V) \to {\mc O}^W (W)$. Define
\beqn
{\rm Poly}^d ({\mb V}, {\mb W}):=  \Big\{ P\in {\rm Poly}^d (V, W)\ |\ P(V_\alpha) \subset W_{\beta(\alpha)} \Big\}.
\eeqn
Then ${\rm Poly}^d ( {\mb V}, {\mb W}) \subset {\rm Poly}^d (V, W)$ is a subspace. 

There are some basic properties about the space of stratified polynomial maps.

\begin{lemma}\label{finite_generation}
${\rm Poly}( \mb{V}, \mb{W} )$ is a finitely generated ${\mb C}[V]$-module.
\end{lemma}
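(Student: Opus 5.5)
The plan is to realize ${\rm Poly}({\mb V}, {\mb W})$ as a submodule of the free module ${\rm Poly}(V,W) = {\mb C}[V]\otimes_{\mb C} W$ and invoke Noetherianity. Here ${\rm Poly}(V,W)$ is the union over $d$ of ${\rm Poly}^d(V,W)$, and ${\mb C}[V]$ acts by pointwise multiplication: $(fP)(v) = f(v)P(v)$. Since ${\mb C}[V]$ is a polynomial ring in finitely many variables, it is Noetherian by the Hilbert basis theorem. The free module ${\mb C}[V]\otimes W \cong {\mb C}[V]^{\dim W}$ is therefore a Noetherian module, and every submodule of it is finitely generated. It thus suffices to show that ${\rm Poly}({\mb V}, {\mb W})$ is a ${\mb C}[V]$-submodule.

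Closure under addition is immediate, because each $W_{\beta(\alpha)}$ is a linear subspace. For closure under multiplication, take $f\in{\mb C}[V]$ and $P\in{\rm Poly}({\mb V},{\mb W})$, and let $v\in V_\alpha$. Then $(fP)(v) = f(v)P(v)$ is a scalar multiple of a vector in $W_{\beta(\alpha)}$. Since $W_{\beta(\alpha)}$ is a complex linear subspace (closures of strata are linear subspaces by the definition of linear stratification), this product again lies in $W_{\beta(\alpha)}$. Hence $fP \in {\rm Poly}({\mb V}, {\mb W})$.

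A more explicit description is also available and may be useful later. Let $I_\alpha\subset{\mb C}[V]$ be the ideal of $V_\alpha$. Then $P\in {\rm Poly}({\mb V},{\mb W})$ if and only if the composition $V\to W\to W/W_{\beta(\alpha)}$ has all components in $I_\alpha$, for each of the finitely many strata $\alpha$. This exhibits the module as a finite intersection of kernels of the maps ${\mb C}[V]\otimes W \to ({\mb C}[V]/I_\alpha)\otimes (W/W_{\beta(\alpha)})$, which gives the submodule property again.

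There is essentially no obstacle. The only point needing care is to interpret ${\rm Poly}$ without a degree bound as the union of the ${\rm Poly}^d$, since multiplication raises degree. Each ${\rm Poly}^d$ is finite-dimensional anyway, but only the union is a module.
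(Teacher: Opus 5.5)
Your proposal is correct and follows essentially the same route as the paper: ${\mb C}[V]$ is Noetherian, ${\rm Poly}(V,W)\cong{\mb C}[V]\otimes W$ is a finitely generated free module, so its submodule ${\rm Poly}({\mb V},{\mb W})$ is finitely generated. You also verify the submodule property, which the paper takes for granted, and your kernel description is the one the paper uses later in the proof of Lemma \ref{lemma_holomorphic_generation}.
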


\begin{proof}
${\mb C}[V]$ is Noetherian and ${\rm Poly}(V, W)$ is obviously finitely generated over ${\mb C}[V]$. Hence the submodule ${\rm Poly}( \mb{V}, \mb{W} )$ is finitely generated. 
\end{proof}

We could consider more generally holomorphic maps satisfying the same stratum-preserving property. Let $U \subset V$ be an open subset, equipped with the induced stratification, i.e., $U_\alpha = U \cap V_\alpha$. Let 
\beqn
{\rm Hol}_{\rm stratified} (U, \mb{W})
\eeqn
be the space of holomorphic maps $f: U \to W$ such that $f(U_\alpha) \subset W_{\beta(\alpha)}$. It is a module over the ring of holomorphic functions ${\mc O}(U)$. 

\begin{lemma}\label{lemma_holomorphic_generation}
${\rm Hol}_{\rm stratified}(U, \mb{W})$ is generated over ${\mc O}(U)$ by ${\rm Poly}(\mb{V}, \mb{W})$.
\end{lemma}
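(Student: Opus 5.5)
The idea is to identify ${\rm Hol}_{\rm stratified}(U, \mb{W})$ with the sections over $U$ of the analytification of the algebraic module ${\rm Poly}(\mb{V}, \mb{W})$, and then use Cartan's Theorem A. This requires $U$ to be Stein (e.g. a ball or a polydisc, which is the case of interest), and I will assume this throughout.

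\emph{Step 1: algebraic description.} For each stratum $\alpha$, let $I_\alpha \subset {\mb C}[V]$ be the ideal of the linear subspace $V_\alpha$; it is generated by finitely many linear forms. Let $\pi_\alpha: W \to W/W_{\beta(\alpha)}$ be the quotient map. A polynomial map $P$ is stratified if and only if $\pi_\alpha \circ P \in I_\alpha \otimes W/W_{\beta(\alpha)}$ for every $\alpha$. Hence
\beqn
{\rm Poly}(\mb{V}, \mb{W}) = \bigcap_\alpha \ker\Big( {\mb C}[V]\otimes W \longrightarrow ({\mb C}[V]/I_\alpha) \otimes W/W_{\beta(\alpha)} \Big).
\eeqn
The stratification is finite, so this is a finite intersection of kernels of ${\mb C}[V]$-linear maps between finitely generated modules. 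This recovers Lemma \ref{finite_generation}, with explicit generators $P_1, \dots, P_N$.

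\emph{Step 2: analytic description and flatness.} Exactly the same description holds for holomorphic maps. The ideal sheaf of the subspace $U \cap V_\alpha$ in ${\mc O}_U$ is generated by the same linear forms (a linear subspace is a reduced complete intersection). So the sheaf $\mc{S}$ of stratified holomorphic maps equals
\beqn
\bigcap_\alpha \ker\big( {\mc O}_U\otimes W \to ({\mc O}_U/I_\alpha{\mc O}_U)\otimes W/W_{\beta(\alpha)}\big).
\eeqn
The local rings ${\mc O}_{V,x}$ are flat over ${\mb C}[V]$. Flat base change commutes with kernels, and with finite intersections of submodules (an intersection is the kernel of $M \to \bigoplus M/M_\alpha$). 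Therefore $\mc{S}_x = {\rm Poly}(\mb{V}, \mb{W}) \otimes_{{\mb C}[V]} {\mc O}_{V,x}$ at every $x \in U$. Consequently $\mc{S}$ is a coherent sheaf, and its stalks are generated by the germs of $P_1, \dots, P_N$. In other words, the morphism ${\mc O}_U^{N} \to \mc{S}$ given by $(h_i) \mapsto \sum h_i P_i$ is surjective as a map of sheaves.

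\emph{Step 3: globalization.} Let $\mc{K}$ be the kernel of ${\mc O}_U^N \to \mc{S}$; it is coherent by the Oka coherence theorem. Since $U$ is Stein, Cartan's Theorem B gives $H^1(U, \mc{K}) = 0$. The long exact sequence then shows that ${\mc O}(U)^N \to \mc{S}(U) = {\rm Hol}_{\rm stratified}(U, \mb{W})$ is surjective. Thus every stratified holomorphic $f$ can be written as $f = \sum h_i P_i$ with $h_i \in {\mc O}(U)$, which is the claim.

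The main obstacle is Step 2, namely the passage from the algebraic intersection of kernels to the analytic one. This needs (i) the fact that the analytic ideal of $V_\alpha$ is generated by the algebraic linear equations, which is easy because $V_\alpha$ is linear, and (ii) flatness of ${\mc O}_{V,x}$ over ${\mb C}[V]$, via the flatness of completions and of ${\mb C}\{z\}$ over the local ring ${\mb C}[V]_{\mf m}$. If one prefers to avoid Theorem B, for instance for a non-Stein $U$ or a purely local statement, the same argument gives the stalkwise version, which is usually all that later constructions need.
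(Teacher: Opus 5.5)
Your proof is correct. Its algebraic core is the same as the paper's: both describe ${\rm Poly}(\mb{V},\mb{W})$ as a finite intersection of kernels of ${\mb C}[V]$-linear maps to $({\mb C}[V]/I_\alpha)\otimes W/W_{\beta(\alpha)}$ and then base change to holomorphic functions. The difference is where the base change is done.

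The paper tensors once with ${\mc O}_U$ and asserts that the resulting kernel ``is exactly'' ${\rm Hol}_{\rm stratified}(U,\mb{W})$. If ${\mc O}_U$ means the global ring ${\mc O}(U)$, this silently requires two things: ${\mc O}(U)$ must be flat over ${\mb C}[V]$, and $I_\alpha{\mc O}(U)$ must be the full ideal of functions vanishing on $U\cap V_\alpha$. If ${\mc O}_U$ means the structure sheaf, the argument only yields your stalkwise Step~2. You do the base change on stalks, where flatness of ${\mc O}_{V,x}$ over ${\mb C}[V]$ and the generation of the vanishing ideal of a linear subspace by linear forms are standard. Your Step~3 (Oka plus Theorem~B) then supplies the globalization that the paper leaves implicit.

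Step~3 is also where your Stein hypothesis enters, and that hypothesis is genuinely needed: for an arbitrary open $U$, as the lemma is literally stated, the claim is false. Take $V={\mb C}^2$ stratified by $\{0\}$ and its complement, and $W={\mb C}$ likewise, with $\{0\}\mapsto\{0\}$; then ${\rm Poly}(\mb{V},\mb{W})=(z_1,z_2)$. Let $U$ be a punctured ball. Since $U\cap V_0=\emptyset$, the constant $1$ lies in ${\rm Hol}_{\rm stratified}(U,\mb{W})$. But by Hartogs, any $h_1z_1+h_2z_2$ with $h_i\in{\mc O}(U)$ extends over the puncture and vanishes there, so $1$ is not in the ${\mc O}(U)$-span.

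So your version, with $U$ Stein, is the correct form of the lemma. The restriction is harmless for its later use in the proof of Lemma~\ref{lemma312}, because the comparison of canonical Whitney stratifications there is local and one may shrink to Stein (e.g.\ polydisc) neighborhoods.
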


\begin{proof}
Inside the ring ${\mb C}[V]$, let ${\mc I}_{V_\alpha}$ be the ideal generated by linear functionals vanishing on $V_\alpha$, and let $L_{\alpha, 1}, \ldots, L_{\alpha, m_\alpha}: W \to {\mb C}$ be defining linear functionals of $W_{\beta(\alpha)}$. Let ${\mc I}_{U_\alpha} \subset {\mc O}_U$ be the ideal corresponding to $U_\alpha$. Then 
\beqn
{\rm Poly}{}( {\mb V}, {\mb W} ) = \Big\{ f \in {\rm Poly}(V, W)\ |\ L_{\alpha, j} \circ f \in {\mc I}_{V_\alpha},\ \forall j=1, \ldots, m_\alpha\ \forall \alpha \in {\mc O}^V(V)   \Big\}.
\eeqn
In other words,
\beqn
{\rm Poly}(\mb{V}, \mb{W}) = {\rm Ker} \left( \xymatrix{ {\rm Poly}(V, W) \ar[r]    &   \displaystyle \bigoplus_{\alpha \in {\mc O}^V (V)} \bigoplus_{j=1}^{m_\alpha} {\mb C} [V]/ {\mc I}_{V_\alpha}      } \right).
\eeqn
Then tensoring over ${\mb C}[V]$ with ${\mc O}_U$, one obtains
\beqn
{\rm Poly}(\mb{V}, \mb{W}) \underset{{\mb C}[V]}{\otimes} {\mc O}_U = {\rm Ker} \left(  \xymatrix{ {\rm Poly}(V, W) \underset{\mb{C}[V]}{\otimes} {\mc O}_U \ar[r]    &   \displaystyle \bigoplus_{\alpha \in {\mc O}^V(V) } \bigoplus_{j=1}^{m_\alpha} {\mc O}_U / {\mc I}_{U_\alpha}      } \right)
\eeqn
where the arrow is the naturally induced one. However, the right hand side is exactly ${\rm Hol}_{\rm stratified}(U, \mb{W})$.
\end{proof}

\begin{rem}
The space ${\rm Poly}( {\mb V}, {\mb W} )$ is a generalization of the space of equivariant polynomial maps considered in \cite{Bai_Xu_integer}. If $V, W$ are complex representations of a finite group $G$, then $V$ and $W$ carries the stratification by isotropy subgroups $H \subset W$. Let $V_H\subset V$ and $W_H \subset W$ be the invariant subspaces. A $G$-equivariant map $f: V \to W$ necessarily satisfies $f(V_H) \subset W_H$.
\end{rem}

Following \cite{BParker_integer} and \cite{Bai_Xu_integer}, we consider a more convenient space containing all stratified polynomial maps.

\begin{defn}
Let $(\mb{V}, \mb{W})$ be a complex stratified virtual space. A {\bf complex stratified map}  from ${\mb V}$ to ${\mb W}$ is an element of 
\beqn
C^\infty_{\rm CS} (\mb{V}, \mb{W}) := C^\infty(V; {\mb C}) \Big( {\rm Poly}(\mb{V}, \mb{W}) \Big).
\eeqn
If $O \subset V$ is open, denote by $C^\infty_{\rm CS}(O, {\mb W})$ be the space of restrictions of complex stratified maps to $O$.
\end{defn}

\begin{rem}
Typically, the inclusion $C^\infty_{\rm CS}({\mb V}, {\mb W}) \subset C_{\rm stratified}^\infty( V, W )$ is not $C^\infty$-dense. For example, if ${\mb V} \cong {\mb W} \cong {\mb C}$ are stratified with top stratum the total space and a lower stratum being the origin, then ${\rm Poly}{}(\mb{V}, \mb{W})$ is generated by a linear map. However, there are smooth maps $f:V \to W$ satisfying $f(0) = 0$ which are not multiples of a linear map, such as $f(z) = \ov{z}$. However, one has the $C^0$-density as shown in the next lemma.
\end{rem}

\begin{lemma}\label{lemma_c0_density}
$C^\infty_{\rm CS}(\mb{V}, \mb{W})$ is $C^0$-dense in $C_{\rm stratified}^\infty ( V, W )$.
\end{lemma}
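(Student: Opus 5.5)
We approximate a given stratified smooth map $f \in C^\infty_{\rm stratified}(V,W)$, i.e.\ $f(V_\alpha)\subset W_{\beta(\alpha)}$ for every global stratum $\alpha$, uniformly on compact sets by elements of $C^\infty(V;{\mb C})\cdot {\rm Poly}(\mb V,\mb W)$. The idea is to do this one stratum at a time, by induction along the finite poset ${\mc O}^V(V)$ starting from the minimal stratum $V_0$.

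\emph{Reduction to polynomials.} Fix a compact set $K\subset V$ and $\epsilon>0$. The building block is the following observation. Given a stratum $\alpha$, let $\pi_\alpha: V\to V_\alpha$ be a linear projection, which we choose complex linear since $V$ is a complex stratified space. If $P$ is a polynomial with $P(V_\alpha)\subset W_{\beta(\alpha)}$, then $P\circ\pi_\alpha$ need not be stratified for the other strata. To handle this, we use Lemma \ref{lemma_holomorphic_generation} in the polynomial form ${\rm Poly}(\mb V,\mb W)$. First we approximate $f$ on $K$ by a stratified \emph{real} polynomial map (Stone--Weierstrass). Here stratification can be imposed by a correction: $Q\mapsto Q-\sum(\text{corrections})$, using the splitting of Definition \ref{defn_linear_splitting} to project values into $W_{\beta(\alpha)}$ along each $V_\alpha$. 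Next we note that a real polynomial in $z,\bar z$ is a combination $\sum_I \bar z^I\, h_I(z)$ with $h_I$ holomorphic polynomials. Since the $\bar z^I$ are smooth functions, it suffices to arrange that each $h_I$ can be taken in ${\rm Poly}(\mb V,\mb W)$, possibly after multiplication by cutoff functions.

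\emph{Inductive step.} A cleaner route, which is the one I will actually follow, is to build the approximation near each stratum using partitions of unity. Order the strata so that $\alpha$ comes before $\beta$ whenever $\alpha<\beta$. For $V_0$, choose $g_0 = \sum_i \phi_i(x)\, P_i$, where the $P_i$ are constant maps. A constant $w$ is stratified if and only if $w\in W_{\beta(0)}$, which is contained in every $W_{\beta(\alpha)}$ by monotonicity. We then get $|g_0-f|<\epsilon$ on $V_0\cap K$ and hence on a neighborhood $N_0$ of it. Inductively, suppose $g$ approximates $f$ on a neighborhood $N$ of $\bigcup_{\gamma<\alpha}V_\gamma\cap K$. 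On $V_\alpha^*\cap K\setminus N$, which is a compact subset of the open stratum, the values of $f-g$ lie in $W_{\beta(\alpha)}$. For each point $x$ there, use a polynomial of the form $P(v)=\ell(v)\,w$ with $w\in W_{\beta(\alpha)}$, where $\ell$ is a product of linear functionals that vanish on every $V_\gamma$ with $\gamma\not\ge\alpha$, $V_\gamma\not\supset V_\alpha$, and that are nonzero at $x$. Such $\ell$ exists because $x\notin V_\gamma$ for those $\gamma$, and then $P$ is stratified: on $V_\gamma\supset V_\alpha$ its image lies in $W_{\beta(\alpha)}\subset W_{\beta(\gamma)}$, and on the other strata it vanishes. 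Multiplying by the smooth function $\phi(v)/\ell(v)$, supported near $x$ where $\ell\neq0$, gives a local approximation of the constant $(f-g)(x)$. Summing these with a partition of unity on $V_\alpha^*\cap K\setminus N$, using cutoffs supported away from $N$ and from the lower strata, extends $g$ to a neighborhood of $V_\alpha\cap K$. Finally, on the open top stratum we approximate $f-g$ using $P=\ell\,w$ with arbitrary $w\in W$, where $\ell$ vanishes on all proper strata.

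\emph{Main obstacle.} The delicate point is the $C^0$ control at the interface between steps. The new term added at stage $\alpha$ is supported away from $N$, but we must make sure it does not spoil the estimate already obtained on $N$. We must also ensure that $f-g$ is small \emph{near} $V_\alpha$ and not merely on it. I would handle this with uniform continuity on $K$: shrink the supports of the cutoffs so that they lie in neighborhoods where the oscillation of $f-g$ is below $\epsilon$, and take $N$ slightly larger at each step so that the regions overlap. Since there are finitely many strata, the errors accumulate to at most a constant multiple of $\epsilon$.
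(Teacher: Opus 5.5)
Your argument is correct and uses the same ingredients as the paper's proof: the stratified polynomials $\ell\,w$ of Lemma~\ref{lemma36} (vanishing on every $V_\gamma\not\supset V_\alpha$ and taking a prescribed value $w\in W_{\beta(\alpha)}$ at $x\in V_\alpha^*$), glued by a partition of unity. The stratum-by-stratum induction is not needed: $P_x:=\ell_x\, f(x)$ is already a globally stratified polynomial with $P_x(x)=f(x)$, so a single partition of unity $\{\phi_x\}$ on $K$, subordinate to neighborhoods where $|P_x-f|<\epsilon$, gives $|\sum_x\phi_xP_x-f|<\epsilon$ directly. This removes your interface bookkeeping, where the new cutoffs must in fact be allowed to enter $N$.
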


\begin{proof}
It is a consequence of Lemma \ref{lemma36}, which shows one can achieve any prescribed value at a given point by a CS map. The $C^0$-approximation can be obtained by a partition-of-unity argument.
\end{proof}

\subsection{Stratified manifolds, bundles, and virtual manifolds}

We move up to the nonlinear situation for stratifications on manifolds and bundles. 

\begin{defn}\label{defn_manifold_stratification}
Let $U$ be a smooth manifold with a stratification.
\begin{enumerate}

\item A {\bf stratified chart} of $U$ consists of a stratified linear space $V$, an open neighborhood $O \subset V$ of the origin, and a stratified open embedding
\beqn
\phi: O  \to U.
\eeqn

\item $U$ is called a {\bf stratified manifold} if there exists a covering by stratified charts. In this case, the tangent space at each $x \in U$ is canonically a stratified linear space.


\item Let $G$ be a Lie group. A {\bf stratified $G$-manifold} is a stratified manifold $U$ with a smooth stratum-preserving $G$-action such that for each $x \in U$, there exists a stratified $G_x$-equivariant chart, i.e., a stratified space $V$ with a $G_x$-action and a stratified chart $\phi: O \to U$ around $x$ with a $G_x$-invariant open ball $O \subset V$.
\end{enumerate}
\end{defn}

A basic feature of stratified manifolds is that each stratum extends to immersed submanifolds with clean self-intersections.

\begin{lemma}\label{lemma_strata_extension}
Let $U$ be a stratified manifold. For each stratum $U_\alpha^*$, its closure is the image of a smooth immersion 
\beqn
\iota_\alpha: \tilde U_\alpha \to U
\eeqn
with clean self-intersections.
\end{lemma}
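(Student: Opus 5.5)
The plan is to construct $\tilde U_\alpha$ as the space of ``local branches'' of $\ov{U_\alpha^*}$, with points given by germs of linear pieces in stratified charts. By Definition \ref{defn_manifold_stratification}, every $x \in U$ has a stratified chart $\phi: O \to U$ with $\phi(0) = x$, where $O$ is an open neighborhood of the origin in a stratified linear space $V$. Locally near $x$, the pullback of $\ov{U_\alpha^*}$ is a union of connected components of $O \cap V_\gamma^*$. These are the strata of $O$ mapping into $U_\alpha^*$, so their closures are finitely many linear pieces $O \cap V_{\gamma_1}, \ldots, O \cap V_{\gamma_k}$ (after shrinking $O$ to a ball). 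Each piece is an embedded submanifold. Locally, then, $\ov{U_\alpha^*}$ is a finite union of embedded submanifolds through $x$. Any two of them intersect cleanly, because $V_{\gamma_i} \cap V_{\gamma_j}$ is a linear subspace whose tangent space is the intersection of the tangent spaces.

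Next I define $\tilde U_\alpha$ as a set of pairs $(y, \Sigma)$. Here $y \in \ov{U_\alpha^*}$, and $\Sigma$ is a germ at $y$ of one of these local submanifolds, namely the germ of $\phi(O \cap V_{\gamma_i})$ for a chart around $y$ with $\phi(0)=y$. Equivalently, $\Sigma$ is a ``local irreducible component'' of the germ of $\ov{U_\alpha^*}$ at $y$, characterized intrinsically as the closure of a connected component of $U_\alpha^* \cap B$ for a small chart ball $B$ around $y$. This intrinsic description makes the definition independent of the chosen chart. To topologize, for each chart $\phi$ and each $i$ I declare the set $\{(\phi(v), \text{germ of } \phi(O\cap V_{\gamma_i}) \text{ at } \phi(v)) : v \in O\cap V_{\gamma_i}\}$ to be an open set, parametrized smoothly by $O \cap V_{\gamma_i}$. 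The map $\iota_\alpha(y,\Sigma) = y$ is then locally the embedding $\phi|_{O\cap V_{\gamma_i}}$, hence an immersion. It is surjective onto $\ov{U_\alpha^*}$, and it is injective over $U_\alpha^*$ away from self-intersections. Its fibers are finite, and clean self-intersection is exactly the linear statement from the first paragraph.

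The main obstacle is compatibility: a germ at $y$ must restrict consistently to germs at nearby points. Take $v \in O\cap V_{\gamma_i}$ near $0$ and recenter the chart at $v$. The germ of $O\cap V_{\gamma_i}$ at $v$ has to be a single local component of $\ov{U_\alpha^*}$ at $\phi(v)$. The danger is that it might split into several components, or fail to be one. I would settle this with the local conic structure of linear stratifications. Near $v$, the stratification of $V$ is a product of the stratified space $V_{\geq \delta}$ (where $v \in V_\delta^*$) with translations along $V_\delta$. The strata of $V$ passing near $v$ are exactly those $V_\gamma^*$ with $V_\gamma \supset V_\delta$, so near $v$ each $V_\gamma \cap O$ is a single connected linear piece. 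Lemma \ref{lemma_property_cosheaf}(2) supplies the injective maps ${\mc O}^U_{y} \to {\mc O}^U_x$, which identify the strata at $y$ with strata at $x$. This makes the transition maps between the parametrizing charts restrictions of $\phi_2^{-1}\circ \phi_1$ to linear subspaces. Those restrictions are smooth, which gives $\tilde U_\alpha$ a Hausdorff smooth manifold structure.

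Finally, Hausdorffness and second countability of $\tilde U_\alpha$ follow from those of $U$ together with the finiteness of the fibers of $\iota_\alpha$. Distinct germs at the same point $y$ are separated by shrinking the chart balls, since distinct linear pieces $V_{\gamma_i} \neq V_{\gamma_j}$ have disjoint relative interiors near the origin.
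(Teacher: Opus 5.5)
Your proposal is correct and takes the same approach as the paper: in a stratified chart, the local branches of $\ov{U_\alpha^*}$ are the linear pieces $O\cap V_{\gamma_i}$, so each is smooth and any two intersect cleanly. The paper stops at this local observation, whereas you also assemble $\tilde U_\alpha$ explicitly as the space of germs of branches and verify that it is a Hausdorff smooth manifold, a step the paper leaves implicit.
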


\begin{proof}
Choose $x \in U_\alpha \setminus U_\alpha^*$. Then $U_\alpha^*$ intersects a small neighborhood of $x$ in possibly several connected components. The existence of local stratified chart implies that each local  component extends to a smooth submanifold and each pair of them intersect cleanly. 
\end{proof}

\subsubsection{Stratified vector bundles}

\begin{defn}\label{defn_stratified_vector_bundle}
Let $U$ be a stratified manifold. A {\bf stratified vector bundle} (real or complex) over $U$  consists of the following objects.

\begin{enumerate}

\item a (real or complex) vector bundle $E \to U$,

\item a stratification-like cosheaf ${\mc O}^E$ over $U$ (see Definition \ref{defn_stratification_like}), and 

\item a linear stratification on each fibre $E_x$ by the costalk ${\mc O}_x^E$.

\end{enumerate}
These structures need to satisfy the local triviality condition, namely, for each $x \in U$, there exists a local trivialization of $E$ over a neighborhood $U_x$ 
\beqn
\phi_x: E|_{U_x} \cong U_x \times E_x
\eeqn
such that for each $y \in U_x$ sufficiently close to $x$, the restriction of $\phi_x$ to $E_y$ is stratified, i.e. the following diagram commutes.
\beqn
\xymatrix{  E_y \ar[r] \ar[d]  & E_x \ar[d]\\
{\mc O}^E_y \ar[r]  & {\mc O}^E_x }
\eeqn
Such a bundle chart is called a {\bf stratified bundle chart}.
\end{defn}

The tangent bundle $TU \to U$ is an example. 
Notice that the condition that ${\mc O}^E$ is constructible implies that the fibrewise stratification of $E$ when restricted to a stratum $U_\alpha^*$ is locally constant. In particular, when $U$ is trivially stratified, i.e., if $U$ has only one stratum, then the cosheaf ${\mc O}^E$ is locally constant. The condition on ${\mc O}^E$ also implies that fibres over deeper strata are more refined than fibres over higher strata. Moreover, one has the following fact.

\begin{lemma}
For each stratum $U_\alpha^* \subset U$ and the induced immersion $\tilde U_\alpha \to U$, the pullback bundle $E|_{\tilde U_\alpha} \to \tilde U_\alpha$ has a locally constant stratification which extends the stratification of fibres of $E|_{U_\alpha^*}$.
\end{lemma}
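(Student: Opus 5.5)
The proof reduces to three pieces: constancy along the open stratum, extension across the self-intersection points via a local branch construction, and gluing of the local branches over $\tilde U_\alpha$.

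\emph{Step 1: constancy over the open stratum.} By Definition \ref{defn_stratified_vector_bundle}, the cosheaf ${\mc O}^E$ is stratification-like, so it is constructible (Lemma \ref{lemma_property_cosheaf}(1)). Hence ${\mc O}^E|_{U_\alpha^*}$ is locally constant. Moreover, the stratified bundle charts $\phi_x$ identify the fibrewise linear stratifications of $E_y$ and $E_x$ for $y$ near $x$ in $U_\alpha^*$: the costalk maps ${\mc O}^E_y \to {\mc O}^E_x$ are isomorphisms there, and the commuting square in Definition \ref{defn_stratified_vector_bundle} says that $\phi_x$ carries the strata of $E_y$ to those of $E_x$. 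So over $U_\alpha^*$ the pullback $E|_{U_\alpha^*}$ carries a locally constant family of linear stratifications, i.e.\ a flat family of subspace configurations.

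\emph{Step 2: extension across the boundary.} Fix $\tilde x \in \tilde U_\alpha$ with $x = \iota_\alpha(\tilde x) \in U_\alpha \setminus U_\alpha^*$. By the proof of Lemma \ref{lemma_strata_extension}, in a stratified chart around $x$ the germ of $\iota_\alpha$ at $\tilde x$ is a linear subspace $L = V_{\alpha'}$ of the stratified tangent model $V = T_xU$. Here $\alpha'$ is one of the local strata of ${\mc O}^U_x$ lying over $\alpha$, so $L$ corresponds to a single local branch of $U_\alpha^*$ near $x$. Choose a stratified bundle chart $\phi_x : E|_{U_x} \cong U_x \times E_x$.

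For $y$ in this branch, $y \in U_\alpha^* \cap U_x$, the injection ${\mc O}^E_y \hookrightarrow {\mc O}^E_x$ from Lemma \ref{lemma_property_cosheaf}(2), together with the commuting square, shows that $\phi_x(E_y)$ carries the stratification of $E_x$ coarsened to the image poset $P_y \subset {\mc O}^E_x$. Concretely, only the subspaces $E_{x,\gamma}$ with $\gamma \in P_y$ occur as stratum closures. By Step 1 and connectedness of the branch $L \cap O \cap U_\alpha^*$, the subposet $P_y$ is constant along the branch; this uses the commuting triangle in Lemma \ref{lemma_property_cosheaf}(2) for nearby points. Define the stratification on $E_{\tilde x}$ to be that of $E_x$ restricted to $P := P_y$, and define the local trivialization over the branch to be $\phi_x$. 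This gives a locally constant stratification on $E|_{\tilde U_\alpha}$ over a neighborhood of $\tilde x$ that agrees with the given one on the preimage of $U_\alpha^*$.

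\emph{Step 3: gluing.} For the local definitions to glue, the extension at $\tilde x$ must be independent of the chart. Any two such local extensions agree on the dense open set $\iota_\alpha^{-1}(U_\alpha^*)$, and a locally constant family of subspace configurations is determined by its values on a dense subset, because subspaces vary continuously in a trivialization. So they agree everywhere and glue over $\tilde U_\alpha$.

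\emph{Main obstacle.} The delicate point is in Step 2. At a self-intersection point $x$, two distinct branches $\tilde x_1 \neq \tilde x_2$ over $x$ may induce different subposets $P_1, P_2 \subset {\mc O}^E_x$. So one cannot simply use ${\mc O}^E_x$ at $x$; the construction must be done branch-by-branch on $\tilde U_\alpha$, which is exactly why the statement is phrased on the immersion rather than on $U_\alpha$. The step I would check most carefully is that $P_y$ is constant for $y$ near $x$ within a single branch. I would first try to get this from the commuting-triangle condition, applied to $z$ near $y$ with both $y, z$ in the same branch. If that proves insufficient, I would replace ${\mc O}^E_x$ by ${\mc O}^E(W)$ for a small connected neighborhood $W$ of the branch germ and use the cosheaf gluing axiom.
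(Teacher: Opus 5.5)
Your proposal is correct and takes essentially the same approach as the paper. At a non-embedding point $x \in U_\alpha \setminus U_\alpha^*$ you identify a branch of $\tilde U_\alpha$ with a local stratum in ${\mc O}^U_x$, transport the fibre stratifications from points $y$ of that branch into $E_x$ by a stratified bundle chart, and use connectedness of the local stratum to get independence of $y$; your Step 3 density/continuity argument just spells out the chart-independence that the paper states ``can also be verified.''
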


\begin{proof}
One only needs to identify the stratification at non-embedding point. Let $x \in U_\alpha \setminus U_\alpha^*$. A local branch of $\tilde U_\alpha$ corresponds to a local stratum $\beta \in {\mc O}_x^U$. Choose a stratified bundle chart of $E$ over a small neighborhood $U_x$ of $x$. Then for each $y \in (U_x)_\beta^*$, the bundle chart sends the stratification on $E_y$ to a stratification on $E_x$. If $z \in (U_x)_\beta^*$ is another point, as the local stratum $(U_x)_\beta^*$ is connected by our convention, one can show that the stratification on $E_x$ is independent of the choice of $y$. The independence of stratified bundle charts can also be verified.
\end{proof}

To have a stratification on the total space of a stratified vector bundle, one needs an additional piece of information, namely a cosheaf map.

\begin{defn}\label{defn_virtual_manifold}
A {\bf stratified virtual manifold}, written as $(U, E)$, consists of a stratified manifold $U$, a stratified vector bundle $E \to U$, and a cosheaf morphism $\rho: {\mc O}^U \to {\mc O}^E$.
\end{defn}

Given a stratified virtual manifold $(U, E)$, the total space of $E$ has a stratification indexed by $\alpha \in {\mc O}^U (U)$ with corresponding stratum
\beqn
E_\alpha^* = \bigsqcup_{x\in U_\alpha^*} \Big\{ v \in (E_x)_{\beta_x}^*\ |\ \exists \alpha_x \in {\mc O}_x^U, \beta_x \in {\mc O}_x^E,\ \rho_x(\alpha_x) = \beta_x, \alpha_x \mapsto \alpha \Big\}.
\eeqn
One can check that the total space $E$ is also a stratified manifold.



\begin{defn}\label{defn_flat_stratified_bundle}
Let $U$ be a stratified manifold. A {\bf flat stratified vector bundle} over $U$ is a smooth stratified vector bundle $E \to U$ together with a flat connection $\nabla^E$ such that for each $x \in U$, the flat bundle chart of $E$ near $x$ induced by parallel transport is also a stratified bundle chart.
\end{defn}

\subsubsection{NC structures and splittings on stratified vector bundles}

There are two ways to define normal complex structures or splittings on stratified vector bundles. The weak version will be simply requiring NC structures or splittings on fibres. The strong version will require the existence of local trivializations of such fibrewise structures. In this paper, we only use the weak version. 

\begin{defn}\label{defn_NCS_virtual_manifold}
Let $U$ be a stratified manifold and $E \to U$ be a stratified vector bundle. 

\begin{enumerate}

\item An {\bf NC structure} on $E$ consists of, for each stratum $U_\alpha^*$, a fibrewise NC structure on the bundle $E|_{\tilde U_\alpha}$ such that, for all $\alpha \leq \beta$, each $\tilde x \in \tilde U_\beta$ with immersion image $x \in U_\alpha^*$, the identification $(E|_{\tilde U_\beta})|_{\tilde x} \cong E_x$ preserves the NC structure. 

\item A {\bf splitting} of $E$ consists of, for each stratum $U_\alpha^*$, a fibrewise splitting of $E|_{\tilde U_\alpha}$ such that, for all $\alpha \leq \beta$, the identification $E|_{\tilde x} \cong E_x$ preserves the splitting.

\item When $E$ is flat, a splitting of $E$ is called flat if the fibrewise splitting of $E|_{\tilde U_\alpha}$ is parallel with respect to the pullback connection.

\item A {\bf normally complex stratified manifold} (NCS manifold) consists of a stratified manifold $U$ and an NC structure on $TU$.

\item A {\bf normally complex stratified virtual manifold} (NCS virtual manifold for short) consists a stratified virtual manifold $(U, E)$ with an NC structure on $TU$ and a complex structure on $E$. It is called a {\bf flat} NCS virtual manifold if $E$ is a flat stratified vector bundle.
\end{enumerate}
\end{defn}


\begin{lemma}
Let $(U, E)$ be a flat NCS virtual manifold. Then $TE\to E$ has an induced NC structure. 
\end{lemma}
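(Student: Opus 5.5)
The plan is to use the flat connection to split $TE$ canonically, and then build the NC structure stratum by stratum on the immersed closures $\tilde E_\alpha$.

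\emph{Step 1: the splitting.} Let $\pi: E \to U$ be the projection. The flat connection $\nabla^E$ gives a canonical decomposition at each $v \in E_x$,
\beqn
T_v E \cong T_x U \oplus E_x ,
\eeqn
with horizontal part the tangent space to the local flat section through $v$ and vertical part $\ker d\pi$. By Definition \ref{defn_flat_stratified_bundle}, the parallel-transport chart near $x$ is a stratified bundle chart. In this chart $E|_{U_x} \cong U_x \times E_x$ with $E_x$ carrying its constant linear stratification. The stratum $E_\alpha^*$ near $v$ is then locally a product of the stratum of $U_x$ containing the relevant $\alpha_x$ with the linear stratum $(E_x)_{\rho_x(\alpha_x)}^*$. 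Consequently, for $v$ lying over $U_\alpha^*$ we have
\beqn
T_v E_\alpha = T_x U_\alpha \oplus (E_x)_{\rho_x(\alpha_x)} ,
\eeqn
and the deepest stratum through $v$ has tangent space $T_x U_{\alpha_0} \oplus (E_x)_{0}$. The same product description holds along each branch of the immersion $\tilde E_\alpha \to E$, which is locally $\tilde U_\alpha \times$ (a linear subspace), by Lemma \ref{lemma_strata_extension}.

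\emph{Step 2: the NC structure.} Along $\tilde E_\alpha$ the relevant quotient is
\beqn
T_vE / T_vE_{\alpha_0} \cong T_xU/T_xU_{\alpha_0} \;\oplus\; E_x/(E_x)_{0}.
\eeqn
On the first summand I take the NC structure of $TU$ restricted to $\tilde U_\alpha$. On the second I take the complex structure induced from $E$: it descends to the quotient because the strata closures of $E_x$ are complex linear subspaces, since $E$ is a complex stratified bundle. Intermediate strata $T_vE_\beta/T_vE_{\alpha_0}$ are direct sums of complex subspaces of each factor, hence complex. Compatibility under $\alpha \le \beta$, i.e.\ the identifications in Definition \ref{defn_NCS_virtual_manifold}, follows from the compatibility already assumed for $TU$ together with the fact that the horizontal/vertical splitting is natural and independent of the stratum.

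\emph{Step 3: the main obstacle.} The delicate point is that the horizontal complement is defined via $\nabla^E$, and I must check that it does not depend on the choice of local flat chart. It does not: the horizontal subspace is intrinsic to the flat connection. I must also check that the resulting structure on the quotient matches under changes of branch at self-intersection points of $\tilde U_\alpha$. Here flatness of the stratified charts is exactly what guarantees that the fibre stratification is parallel, so the vertical subspaces $(E_x)_\beta$ are preserved by parallel transport and the identification is complex linear.
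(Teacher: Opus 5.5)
Your proposal is correct and is essentially the paper's argument. The paper likewise uses $\nabla^E$ to split $0\to \pi_E^*(E/E_{\beta(\alpha)})\to TE/TE_\alpha^*\to \pi_E^*NU_\alpha^*\to 0$. It takes the complex structures from the NC structure on $TU$ and from $E$, and extends to the immersion $E|_{\tilde U_\alpha}\to E$ locally; your checks of intermediate strata and branch compatibility just make this more explicit.

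There is one slip. By your own Step 1 formula, the fibre part of the minimal stratum of $T_vE$ is $(E_x)_{\rho_x(\alpha_0)}$ (the paper's $E_{\beta(\alpha)}$), not $(E_x)_0$. This is harmless: that subspace is complex, so your complex structure on the larger quotient descends to the correct one.
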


\begin{proof}
The connection $\nabla^E$ induces a splitting of the exact sequence on $E_\alpha^*$:
\beqn
\xymatrix{ 0 \ar[r] & \pi_E^* (E/E_{\beta(\alpha)}) \ar[r] & TE/TE_\alpha^* \ar[r] &  \pi_E^* NU_\alpha^* }.
\eeqn
Hence the normal bundle $TE/TE_\alpha^*$ has the induced complex structure. The complex structure extends to the normal bundle of the immersion $E|_{\tilde U_\alpha} \to E$ as the consideration is local. 
\end{proof}

\subsubsection{Straightenings on manifolds}

We would like to consider sections which behave like complex polynomials in normal directions to each stratum. For this purpose one needs to specify those normal directions by the so-called ``straightening'' process.

\begin{notation}
Let $M$ be a smooth manifold and $S\subset M$ be a closed submanifold. The normal bundle $NS$ is defined via the exact sequence
\beq\label{tangent_normal_complex}
\xymatrix{ 0 \ar[r] & TS \ar[r] & TM|_S \ar[r] & NS \ar[r] & 0 }.
\eeq
For each continuous function $\epsilon: S \to {\mb R}_+$, let $N^\epsilon S \subset NS$ be the disk bundle of radius $\sqrt{\epsilon}$ (with respect to a norm on $NS$ specified in the context). In many situations, we do not need to specify the function $\epsilon$; in those cases, $N^\epsilon S$ simply means a sufficiently small disk bundle.
\end{notation}

\begin{defn} 
A {\bf tubular neighborhood} of a locally closed submanifold $S \subset M$ is an open embedding
\beqn
\rho: N^\epsilon S \to M
\eeqn
such that 1) $\rho$ extends the inclusion $S \hookrightarrow M$ and 2) the fibrewise derivative $d\rho: NS \to TM|_S$ defines a splitting of the exact sequence \eqref{tangent_normal_complex}. Denote the image of a tubular neighborhood $(N^\epsilon S, \rho)$ by $|N^\epsilon S|$.


\end{defn}

The case of stratified manifolds is more complicated because one needs to require a certain compatibility condition for adjacent strata. Let $U$ be a stratified manifold (Definition \ref{defn_manifold_stratification}). As the normal bundle $NU_\alpha^*$ is stratified, (locally) for each $\beta > \alpha$, there is a subbundle $(NU_\alpha^*)_\beta \subset NU_\alpha^*$ which is the normal bundle of $U_\alpha^* \hookrightarrow U_\beta^*$. If one has a stratified tubular neighborhood $\rho_\alpha: N^\epsilon U_\alpha^* \to U$, then for each $(x, v) \in (N^\epsilon U_\alpha^*)_\beta$, $\rho_\alpha(x, v) \in U_\beta^*$. Therefore, we would like the normal direction to be compatible between adjacent strata.

Now we describe the corresponding notions of tubular neighborhoods in stratified manifolds. 

\begin{defn}\label{defn_NC_tubular}
Let $U$ be a stratified  manifold. 
\begin{enumerate}

\item A {\bf stratified tubular neighborhood} of a stratum $U_\alpha^*$ is a tubular neighborhood $\rho_\alpha: N^\epsilon U_\alpha^* \to U$ which is a stratified open embedding.

\item A {\bf straightening} of $U$ along $NU_\alpha^*$ consists of a stratified tubular neighborhood $\rho_\alpha: N^\epsilon U_\alpha^* \to U$ together with a splitting of the normal bundle $NU_\alpha^* \to U_\alpha^*$. Two straightenings along $NU_\alpha^*$ are {\bf germ equivalent} if they have the same splitting of the normal bundle $NU_\alpha^*$ and their tubular neighborhoods agree within a sufficiently small disk bundle in $NU_\alpha^*$. 

\item When $U$ is an NCS manifold, a straightening along $NU_\alpha^*$ is said to be compatible with the NC structure if the splitting of $NU_\alpha^*$ is complex.


\end{enumerate}
\end{defn}

\begin{lemma}\label{lemma_compatible_straightening}
Let $U$ be a stratified manifold and $U_\alpha^*$ be a stratum. A straightening along $NU_\alpha^*$ with image $|N^\epsilon U_\alpha^*|$ canonically induces a straightening along $N(U_\beta^* \cap |N^\epsilon U_\alpha^*|)$ for each $\beta > \alpha$. When $U$ is an NCS manifold and the straightening along $NU_\alpha^*$ is compatible with the NC structure, so is the induced straightening.
\end{lemma}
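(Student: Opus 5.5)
The induced straightening is read off directly from the given data on $NU_\alpha^*$, working locally over a small open set of $U_\alpha^*$ and then checking that nothing depended on local choices.

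\textbf{Setting up the data.} Set $S_\beta := U_\beta^* \cap |N^\epsilon U_\alpha^*|$. Since $\rho_\alpha$ is a stratified open embedding, $\rho_\alpha^{-1}(S_\beta)$ is a union of components of $(N^\epsilon U_\alpha^*)_\beta^*$. Over $U_\alpha^*$ this set is a subbundle: it consists of the vectors lying in the open stratum of the linear stratification of each fibre $(NU_\alpha^*)_x$ that corresponds to $\beta$. Write $F_\beta \subset NU_\alpha^*$ for the closure of that stratum; it is a linear subbundle.

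\textbf{Step 1: normal bundle of $S_\beta$.} At a point $(x,v)$ with $v \in F_\beta$, the tangent space of the total space $NU_\alpha^*$ is identified with $T_xU_\alpha^* \oplus (NU_\alpha^*)_x$. Here I use the horizontal/vertical splitting coming from any connection on $NU_\alpha^*$, with the vertical part canonical. The tangent space of the subbundle $F_\beta$ at $(x,v)$ is $T_xU_\alpha^* \oplus (F_\beta)_x$. Hence
\beqn
N_{(x,v)} S_\beta \cong (NU_\alpha^*)_x / (F_\beta)_x ,
\eeqn
canonically, because the vertical directions do not depend on the connection.

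\textbf{Step 2: the induced splitting and tube.} The splitting of $NU_\alpha^*$ that is part of the straightening gives a complement $F_\beta^\vee \subset NU_\alpha^*$ of $F_\beta$; this is the splitting in the sense of Definition~\ref{defn_linear_splitting} applied fibrewise. It identifies $N S_\beta$ with the pullback of $F_\beta^\vee$ to $S_\beta$. I then define
\beqn
\rho_\beta\big((x,v), w\big) := \rho_\alpha(x, v+w), \qquad v \in F_\beta,\ w \in F_\beta^\vee \ \text{small}.
\eeqn
Its fibre derivative is the inclusion $F_\beta^\vee \hookrightarrow NU_\alpha^*$ followed by $d\rho_\alpha$, so it splits the normal sequence \eqref{tangent_normal_complex}. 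It is an open embedding because $(v,w) \mapsto v+w$ is a linear isomorphism $F_\beta \oplus F_\beta^\vee \cong NU_\alpha^*$ and $\rho_\alpha$ is an embedding.

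For $\rho_\beta$ to be stratified, the strata of $NU_\alpha^*$ near $F_\beta$ must be products of $F_\beta$-strata with $F_\beta^\vee$-strata. This holds because the splitting satisfies $\gamma \geq \beta \Rightarrow F_\gamma^\vee \subset F_\beta^\vee$, so for $\gamma\ge\beta$ one has $F_\gamma = F_\beta \oplus (F_\gamma \cap F_\beta^\vee)$. The splitting of $NS_\beta \cong F_\beta^\vee$ is the restriction of the given one, namely the complements $F_\gamma^\vee$ for $\gamma \geq \beta$ inside $F_\beta^\vee$. Altogether this is a straightening along $NS_\beta$.

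\textbf{Step 3: NC-compatible case.} If the splitting is complex, then $F_\beta^\vee$ is a complex subspace and all the inclusions $F_\gamma^\vee \subset F_\beta^\vee$ are complex linear. The NC structure on $N S_\beta$ is induced from that of $NU_\alpha^*$ through the quotient identification in Step 1, in the same way that ${\mb V}_{\geq \alpha}$ inherits its NC structure. So the induced splitting is complex, as required.

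\textbf{Main obstacle.} The delicate point is making the decomposition $F_\gamma = F_\beta \oplus (F_\gamma\cap F_\beta^\vee)$ and the stratified-embedding check uniform in $x$. This needs the local constancy of the fibrewise stratification of $NU_\alpha^*$ along $U_\alpha^*$, from constructibility in Lemma~\ref{lemma_property_cosheaf}, together with care at the branches of the immersions $\tilde U_\beta$ from Lemma~\ref{lemma_strata_extension}. The natural way to handle this is to work branch by branch and shrink $\epsilon$ as needed.
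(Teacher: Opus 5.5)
Your Steps 1--2 are the paper's argument with the details supplied. You restrict to the tube, regard the $\beta$-stratum as a union of linear subbundles $F_\beta\subset NU_\alpha^*$, use the restricted complements to split $NS_\beta\cong F_\beta^\vee$, and reuse $\rho_\alpha$ via $\rho_\beta((x,v),w)=\rho_\alpha(x,v+w)$ on a disk bundle whose radius shrinks toward the lower strata inside $F_\beta$. One minor correction: $F_\gamma=F_\beta\oplus(F_\gamma\cap F_\beta^\vee)$ follows from $F_\beta\subset F_\gamma$ alone. The nesting $F_\gamma^\vee\subset F_\beta^\vee$ is what you need to see that $F_\gamma^\vee$ is a complement of $F_\gamma\cap F_\beta^\vee$ inside $F_\beta^\vee$, i.e.\ that the restricted complements really split $NS_\beta$.

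The gap is Step 3. For $y=\rho_\alpha(x,v)\in S_\beta$, the complex structure on $N_yS_\beta$ is part of the given NC structure of $U$, namely a complex structure on $N\tilde U_\beta$. Definition~\ref{defn_NCS_virtual_manifold} only requires it to be the quotient of the structure $J_x$ on $N_xU_\alpha^*$ at points of $\tilde U_\beta$ lying over $U_\alpha^*$. For $v\neq 0$, nothing ties it to $J_x$ through $d\rho_\alpha$ and your identification $N_yS_\beta\cong N_xU_\alpha^*/(F_\beta)_x$. So your sentence that the NC structure on $NS_\beta$ ``is induced from that of $NU_\alpha^*$'' is an extra assumption, and without it the claim is false.

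Here is a counterexample. Take $U={\mb C}^3$ with strata $\{0\}$, $\{z_2=z_3=0\}\setminus\{0\}$, $\{z_3=0,\ z_2\neq 0\}$, and $\{z_3\neq 0\}$. Use $J_{\rm std}$ at $0$ and on the normal bundle of $\{z_3=0\}$. On the normal bundle ${\mb C}^2$ (coordinates $(w_2,w_3)$) of the $z_1$-axis, use $J_{z_1}=A_{z_1}J_{\rm std}A_{z_1}^{-1}$ with $A_{z_1}(w_2,w_3)=(w_2+|z_1|^2\bar w_3,w_3)$.
\begin{itemize}
\item This is a legitimate NC structure: $J_{z_1}$ preserves $\{w_3=0\}$, induces the standard structure on the quotient, and equals $J_{\rm std}$ at $z_1=0$.
\item The straightening $\rho_\alpha={\rm id}$ with complements $F_\beta^\vee=\{z_1=0\}$ and $F_\gamma^\vee$ the $z_3$-line is NC-compatible along $NU_\alpha^*$.
\item However, $J_{z_1}(0,w_3)=(-2{\bf i}|z_1|^2\bar w_3,{\bf i}w_3)$. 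So the induced complement $\{w_2=0\}\subset N_yS_\beta$ is not complex for $z_1\neq 0$.
\end{itemize}

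To repair Step 3 you need an additional hypothesis making the tube NC-straight. For example, require that the canonical isomorphisms $N_{\rho_\alpha(x,v)}U_\delta^*\cong N_xU_\alpha^*/(F_\delta)_x$ be complex linear for all $\delta$. This holds for linear tubes $\rho(x,v)=x+v$ in a chart where the NC structure is constant, which is the situation in the chartwise construction of Section~\ref{straightening_proof}. With that hypothesis your Step 3 goes through verbatim. The paper's one-line proof (``readily checked'') passes over the same point, so this is a defect of the statement under the paper's weak notion of NC structure, not a divergence of your route from the paper's.
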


\begin{proof}
We can restrict the consideration to the open disk bundle $N^\epsilon U_\alpha^*$ hence assume $U = N^\epsilon U_\alpha^*$. Within the total space of $NU_\alpha^*$, the stratum corresponding to $\beta$ is the union of subbundles. Then the fibrewise splitting of $NU_\alpha^*$ induces a splitting of $NU_\beta^*$ within $|N^\epsilon U_\alpha^*|$. The tubular neighborhood of $U_\beta^*$ is essentially induced by the same map $\rho_\alpha$ with domain regarded as a disk bundle of $NU_\beta^*$. The compatibility with the NC structure is readily to check.
\end{proof}

\begin{defn}\label{defn_manifold_straightening}
Let $U$ be a stratified manifold. A {\bf straightening} of $U$ consists of a collection of germs of straightenings along $NU_\alpha^*$ for all strata $U_\alpha^* \subset U$ such that for each pair $\alpha < \beta$, the restriction of the straightening along $NU_\beta^*$ to $|N^\epsilon U_\alpha^*|$ for a sufficiently small $\epsilon$ agrees with the straightening along $N(U_\beta^* \cap |N^\epsilon U_\alpha^*|)$ induced from the straightening along $NU_\alpha^*$ (see Lemma \ref{lemma_compatible_straightening}) as germs. When $U$ has an NCS structure, a straightening of $U$ is required to be compatible with the fibrewise complex structures of $NU_\alpha^*$ for all stratum $\alpha$.
\end{defn}

The existence result of straightenings will be proved when we discuss the straightenings of stratified orbifolds.

To locally view sections of a vector bundle as maps between vector spaces, one also needs to trivialize the vector bundle in a way compatible with the straightening on the base. In this paper we ease the pain of choosing such ``bundle straightenings'' by restricting the consideration to flat bundles. Let $U$ be a stratified manifold and $E \to U$ be a flat stratified vector bundle (Definition \ref{defn_flat_stratified_bundle}). Suppose one is given a straightening along $NU_\alpha^*$ for a stratum $U_\alpha^*$. Then using the flat connection and parallel transport along the normal direction in $|N^\epsilon U_\alpha^*|$, one obtains a bundle isomorphism
\beqn
E|_{|N^\epsilon U_\alpha^*} \cong \pi_\alpha^* (E|_{U_\alpha^*})
\eeqn
where $\pi_\alpha: |N^\epsilon U_\alpha^*| \to U_\alpha^*$ is the tubular neighborhood projection. The flatness of the connection implies a certain compatibility condition.

\begin{lemma}
Suppose $U$ is equipped with a straightening and $E \to U$ is a flat stratified vector bundle. Then for each stratum $U_\alpha^*$, $x \in U_\alpha^*$, $\beta \in {\mc O}^U_x$, and normal vector $v = (v_\beta, v_\beta^\vee) \in N_x^\epsilon U_\alpha^*$ with respect to the splitting $N_x U_\alpha^* \cong (N_x U_\alpha^*)_\beta \oplus (N_x U_\alpha^*)_\beta^\vee$, the diagram commutes.
\beqn
\xymatrix{ E_x \ar[r] \ar[rd]  &    E_{\rho_\alpha(x, v_\beta)}  \ar[d]\\
&   E_{\rho_\alpha(x, v)}}
\eeqn
\end{lemma}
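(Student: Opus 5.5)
The maps in the diagram are the identifications of fibres of $E$ obtained by parallel transport with respect to the flat connection $\nabla^E$, along curves in the tubular neighborhood $|N^\epsilon U_\alpha^*|$. The horizontal arrow is transport along the radial normal path $t \mapsto \rho_\alpha(x, t v_\beta)$. The diagonal arrow is transport along $t \mapsto \rho_\alpha(x, tv)$. The vertical arrow is the transport used by the straightening along $N(U_\beta^* \cap |N^\epsilon U_\alpha^*|)$ induced via Lemma \ref{lemma_compatible_straightening}: the point $y = \rho_\alpha(x, v_\beta)$ lies in $U_\beta^*$, and its normal vector in the induced straightening is $v_\beta^\vee$ under the splitting of $N_xU_\alpha^*$. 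So the vertical arrow is transport along $t\mapsto \rho_\alpha(x, v_\beta + t v_\beta^\vee)$.

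The plan is to compare the two composite paths from $x$ to $\rho_\alpha(x,v)$ via a homotopy inside the fibre disk. Both $\gamma_1 = $ (radial path to $v_\beta$, then the straight segment from $v_\beta$ to $v_\beta + v_\beta^\vee$) and $\gamma_2 = $ (radial path to $v$) are paths in the single fibre $N_x^\epsilon U_\alpha^*$, pushed forward by $\rho_\alpha$. This fibre is a disk, hence convex and contractible, so $\gamma_1$ and $\gamma_2$ are homotopic relative to endpoints. One can take the explicit straight-line homotopy $H(s,t) = (1-s)\gamma_1(t) + s\gamma_2(t)$, which stays in the disk by convexity; choosing the norm on the fibre so that $v_\beta$ and $v$ both lie in the disk is automatic. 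Composing with $\rho_\alpha$ gives a homotopy in $U$. Flatness of $\nabla^E$ then implies that parallel transport depends only on the homotopy class rel endpoints, which gives commutativity of the triangle.

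The step needing care is not the homotopy argument but checking that the vertical arrow really is transport along the path I named. Lemma \ref{lemma_compatible_straightening} states that the induced straightening on $U_\beta^*\cap|N^\epsilon U_\alpha^*|$ is given by the same map $\rho_\alpha$, with normal fibres identified with translates $v_\beta + (N_xU_\alpha^*)_\beta^\vee$. So the normal ray from $y$ is exactly $\rho_\alpha(x, v_\beta + t v_\beta^\vee)$. This requires unwinding that the splitting of $NU_\beta^*$ induced inside the tubular neighborhood is the constant one coming from the splitting of $N_xU_\alpha^*$. I would verify this directly from the proof of Lemma \ref{lemma_compatible_straightening}, possibly after shrinking $\epsilon$ so that germ equivalence does not matter.

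Stratification compatibility is automatic: the flat bundle charts are stratified by Definition \ref{defn_flat_stratified_bundle}. Hence all three arrows are stratified isomorphisms, though the lemma only asks for commutativity.
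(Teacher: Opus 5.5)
Your proof is correct and follows the same route as the paper, whose entire proof is that the triangle commutes because $\nabla^E$ is flat; you fill in the details by identifying the vertical arrow (via Lemma \ref{lemma_compatible_straightening}) as transport along $t \mapsto \rho_\alpha(x, v_\beta + t v_\beta^\vee)$, and by noting that both composite paths lie in the image of the convex disk $N_x^\epsilon U_\alpha^*$, hence are homotopic rel endpoints. One small correction: $v_\beta$ lying in the disk is not automatic when the norm does not make the splitting orthogonal, but the statement implicitly assumes it, since $\rho_\alpha(x, v_\beta)$ must be defined for the diagram to make sense.
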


\begin{proof}
This is because the connection $\nabla^E$ used to define the parallel transports is flat.
\end{proof}

\subsection{Normally complex stratified sections}

\subsubsection{Fibrewise complex stratified maps}

\begin{defn}\label{defn_NCS_bundle}
Let $Y$ be a manifold regarded as trivially stratified. 
A {\bf complex stratified virtual vector bundle} consists of a pair of complex stratified vector bundles $(F, E)$ together with a cosheaf map $\lambda: {\mc O}^F \to {\mc O}^E$.
    
\end{defn}

Given a complex stratified virtual vector bundle $(F, E)$ over $Y$, we will consider certain nonlinear bundle maps from $F$ to $E$ which are fibrewise normally complex maps between fibres. Let 
\beqn
C_{\rm stratified}^\infty( F, E) \subset C^\infty(F, E) 
\eeqn
be the space of smooth (nonlinear) bundle maps $S: F \to E$ satisfying 
\beqn
S( (F_x)_{\alpha})  \subset (E_x)_{\lambda_x(\alpha)}, \ \forall x\in Y,\ \alpha \in {\mc O}^F_x.
\eeqn
Here $\lambda_x: {\mc O}^F_x \to {\mc O}_x^E$ is induced from the cosheaf map $\lambda$. For each $d \geq 0$, there is a vector bundle 
\beqn
 {\rm Poly}{}_{\rm stratified}^d(F, E) \to Y
\eeqn
of fibrewise normally polynomial maps of degree at most $d$ whose fibre at $x \in Y$ is the space $
 {\rm Poly}{}^d(\mb{F}_x, \mb{E}_x)$, where $({\mb F}_x, {\mb E}_x)$ is the complex stratified virtual space corresponding to the fibres  at $x$ and the poset map $\lambda_x$.

\begin{defn}
A nonlinear bundle map $S: F \to E$ is called a {\bf normally complex stratified bundle map} (NCS map) if it is contained in the space
\beqn
C^\infty(F) \cdot \Gamma(  {\rm Poly}{}^d_{\rm stratified}(F, E)) \subset C_{\rm stratified}^\infty( F, E)
\eeqn
for some $d \geq 0$. Let $C_{\rm NCS}^\infty(F, E)$ be the space of all NCS bundle maps, i.e.
\beqn
C_{\rm NCS}^\infty (F, E)  = \colim_{d} C^\infty(F) \cdot \Gamma( {\rm Poly}{}_{\rm stratified}^d(F, E)) \subset C_{\rm stratified}^\infty( F, E ).
\eeqn
\end{defn}

\subsubsection{NCS sections}

Given a stratified virtual manifold $(U, E)$ (which contains a cosheaf map $\rho: {\mc O}^U \to {\mc O}^E$, denote by 
\beqn
\Gamma_{\rm stratified} ( U, E ) \subset \Gamma (U, E)
\eeqn
the subset of {\bf stratified sections}, i.e., sections $S: U \to E$ satisfying 
\beqn
S(x) \in (E_x)_{\rho_x(\alpha_x)}\ \forall x \in U
\eeqn
where $\alpha_x \in {\mc O}_x^U$ is the local stratum containing $x$.

We would like to consider sections which are locally normally complex maps discussed before. The notion depends on how we locally linearize the manifold, which is the reason why one needs the notion of straightening. Let $(U, E)$ be an NCS virtual manifold and $S \in \Gamma_{\rm stratified}  ( U, E)$ be a stratified section. Suppose $U$ has a straightening, in particular, a collection of tubular neighborhoods $\rho_\alpha: N^\epsilon U_\alpha^* \to U$. Moreover, over the neighborhood $|N^\epsilon U_\alpha^*|$ there is an induced projection map
\beqn
\pi_\alpha: |N^\epsilon U_\alpha^*| \to U_\alpha^*.
\eeqn
Using the parallel transport, one obtains a bundle isomorphism
\beqn
\hat \rho_\alpha: E|_{|N^\epsilon U_\alpha^*|} \cong \pi_\alpha^* E|_{U_\alpha^*}.
\eeqn
Using $\rho_\alpha$ and $\hat\rho_\alpha$, one obtains a bundle map
\beqn
S_\alpha:= \hat \rho_\alpha \circ S \circ \rho_\alpha^{-1}: N^\epsilon U_\alpha^* \to E|_{U_\alpha^*}
\eeqn
over $U_\alpha^*$. As the connection preserves the stratification, 
this is a stratified bundle map, denoted by 
\beqn
S_\alpha \in C_{\rm stratified}^\infty ( N^\epsilon U_\alpha^*, E|_{U_\alpha^*}).
\eeqn
We would like to require that $S_\alpha$ is the restriction of an NCS bundle map.

\begin{defn}
A stratified section $S\in \Gamma_{\rm stratified} (U, E)$ is called     a {\bf normally complex stratified section} (NCS section) with respect to the straightening of $U$ if for each stratum $U_\alpha^*$, the stratified bundle map $S_\alpha \in C^\infty_{\rm stratified} ( N^\epsilon U_\alpha^*, E|_{U_\alpha^*})$ is the restriction of an element of $C_{\rm NCS}^\infty(NU_\alpha^*, E|_{U_\alpha^*})$. 
\end{defn}



Denote by
\beqn
\Gamma_{\rm NCS}(U, E) \subset \Gamma_{\rm stratified}  (U, E)
\eeqn
the subset of NCS sections. It is a $C^\infty(U)$-submodule.

\begin{lemma}\label{lemma235}
$\Gamma_{\rm NCS}( U, E )$ is $C^0$-dense in $\Gamma_{\rm stratified}  ( U, E )$.
\end{lemma}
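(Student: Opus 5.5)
Given $S \in \Gamma_{\rm stratified}(U, E)$ and a continuous tolerance $\delta: U \to {\mb R}_+$, the plan is to build $S' \in \Gamma_{\rm NCS}(U,E)$ with $|S - S'| < \delta$ pointwise. The construction is local and is glued by a partition of unity. This relies on the fact that $\Gamma_{\rm NCS}(U,E)$ is a $C^\infty(U)$-submodule, so convex combinations $\sum_i \chi_i S_i$ with smooth cutoffs $\chi_i$ of NCS sections are again NCS. Since $|S - \sum_i \chi_i S_i| \le \sum_i \chi_i |S - S_i|$, it suffices to produce, near every point $x \in U$, an NCS section $S_x$ defined on a neighborhood $W_x$ with $|S - S_x| < \delta$ on $W_x$. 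One subtlety is that NCS sections defined only on $W_x$ must have NCS extensions after cutting off. This holds because multiplying by a smooth cutoff supported in $W_x$ preserves the NCS condition: the condition is a local condition on each $S_\alpha$ up to multiplication by smooth functions.

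For the local step, fix $x \in U_\alpha^*$. I use the straightening along $NU_\alpha^*$ and the flat trivialization $\hat\rho_\alpha$ to identify a neighborhood of $x$ with $B \times O$. Here $B \subset U_\alpha^*$ is a small ball around $x$, and $O \subset N_x U_\alpha^*$ is a neighborhood of $0$ in the NCS linear space $N_xU_\alpha^*$. After shrinking $B$, the bundle $NU_\alpha^*|_B$ is trivialized as a stratified NC bundle, since the stratification of the fibres is locally constant along the stratum. Under the parallel transport identification, $E$ becomes $B \times E_x$ with its constant linear stratification. The main tool is Lemma \ref{lemma36}: for any point $y$ and any prescribed value $e \in (E_y)_{\rho_y(\alpha_y)}$, there is a complex stratified polynomial map $P$ with $P(y) = e$. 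Apply it at $x$ with $e = S(x) \in (E_x)_{\rho_x(\alpha)}$ to get $P \in {\rm Poly}(N_xU_\alpha^*, E_x)$, extended constantly in the $B$-direction. The resulting section $S_x(b, v) = P(v)$ is then NCS with respect to the $\alpha$-straightening, and by continuity $|S - S_x| < \delta$ on a small neighborhood of $x$.

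The main obstacle is that the NCS condition is imposed with respect to every stratum $\beta$ meeting the neighborhood, not only $\alpha$. I need the section $P$, which is built in the $\alpha$-chart, to also be NCS in the $\beta$-straightening for $\beta > \alpha$. I will use the compatibility in Definition \ref{defn_manifold_straightening}. Near $x$, the straightening along $NU_\beta^*$ agrees as a germ with the one induced from $\rho_\alpha$ (Lemma \ref{lemma_compatible_straightening}). So in the $\beta$-tubular coordinates the map $S_\beta$ is obtained from $P$ by restricting to affine slices $v_\beta + (N U_\alpha^*)_\beta^\vee$ of the linear model. The flatness lemma guarantees that the parallel transports compose consistently. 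A polynomial map restricted to an affine subspace and re-centered is again a polynomial map, and it remains stratified. The NC compatibility of the splitting makes the normal coordinates of $U_\beta^*$ complex-linear coordinates inside $N_xU_\alpha^*$, so the result stays complex polynomial and lies in $C^\infty\cdot\Gamma({\rm Poly}^d_{\rm stratified})$. Strata $\beta$ not above $\alpha$ do not meet a small neighborhood of $x$, by the axiom of frontier and local finiteness.

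Finally, I cover $U$ by such neighborhoods $W_x$, choose a locally finite subcover and a subordinate partition of unity, and sum. The finiteness of degrees $d$ is only needed locally, since $\Gamma_{\rm NCS}$ is defined via a colimit over $d$ and local finiteness keeps the degree bounded on compact sets. If a global degree bound is needed, I would treat that as a technical point and argue it is not required by the definition.
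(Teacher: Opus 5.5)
Your proposal is correct and is essentially the paper's argument: use Lemma \ref{lemma36} to realize a prescribed stratified value at a point, then glue with a partition of unity. You also give a more explicit check of compatibility with the higher strata $\beta > \alpha$ than the paper supplies. One simplification is worth noting: since $x$ sits at the origin of $N_xU_\alpha^*$, you may take $P$ to be the constant map with value $S(x)$ (which is what Lemma \ref{lemma36} produces on the lowest stratum), so your local approximants are just parallel sections of the flat bundle $E$. These are constant in the normal variables of every straightening, which makes the $\beta$-checks immediate and also settles the degree issue: the colimit definition does require a uniform $d$ on each stratum, contrary to what you suggest, but here $d = 0$ works.
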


\begin{proof}
This is a corollary of Lemma \ref{lemma_c0_density}.
\end{proof}

\subsubsection{Stabilizations}

Let $(U, E)$ be a stratified virtual manifold and let $F \to U$ be another  stratified vector bundle together with a cosheaf morphism ${\mc O}^U \to {\mc O}^F$. 

\begin{defn}\label{defn_NCS_manifold_stabilization}
The stabilization of $(U, E)$ by $F$ is the NCS virtual manifold $(\hat U, \hat E)$ where
\begin{enumerate}

\item $\hat U$ is the total space of $F$ equipped with the stabilization stratification induced from ${\mc O}^U \to {\mc O}^F$. Notice that there is a cosheaf map
\beqn
{\mc O}^{\hat U} \to \pi_F^{-1} ({\mc O}^U).
\eeqn

\item $\hat E \to \hat U$ is the stratified vector bundle with structural cosheaf being
\beqn
{\mc O}^{\hat E} = \pi_F^{-1}( {\mc O}^E \times {\mc O}^F)
\eeqn
and the fibres have the product linear stratification.

\item The cosheaf morphism ${\mc O}^{\hat U} \to {\mc O}^{\hat E}$ is the composition
\beqn
{\mc O}^{\hat U} \to \pi_F^{-1} ({\mc O}^U )\to \pi_F^{-1} ({\mc O}^E \times {\mc O}^F) = {\mc O}^{\hat E}.
\eeqn
\end{enumerate}
\end{defn}

\begin{lemma}\label{lemma_stabilization_NCS_manifold}
When $E$ and $F$ are flat, $(\hat U, \hat E)$ is canonically an NCS virtual manifold and $\hat E$ is flat. \end{lemma}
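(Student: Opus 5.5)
The plan is to verify the three ingredients of an NCS virtual manifold for $(\hat U,\hat E)$: an NC structure on $T\hat U$, a complex structure on $\hat E$, and flatness of $\hat E$. In each case the required structure will be written down from data already on $(U,E)$ and $F$, using the flat connection $\nabla^F$ as the key auxiliary input. I read the statement as assuming that $(U,E)$ is an NCS virtual manifold and that $F$ is a complex stratified bundle, so that its fibres are NCS linear spaces.

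\textbf{The bundle $\hat E$.} It is $\pi_F^*(E\oplus F)$ as a vector bundle, so it inherits the complex structure of $E\oplus F$. It also carries the pullback connection $\pi_F^*(\nabla^E\oplus\nabla^F)$, which is flat. Because parallel transport of $\nabla^E$ and of $\nabla^F$ is given by stratified bundle charts, the product charts preserve the product linear stratification on fibres. This shows that $\hat E$ is a flat stratified bundle in the sense of Definition~\ref{defn_flat_stratified_bundle}.

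\textbf{The base $\hat U$.} The plan mirrors the earlier lemma showing that $TE\to E$ has an induced NC structure when $(U,E)$ is flat NCS.
\begin{enumerate}
\item The connection $\nabla^F$ splits $T\hat U\cong \pi_F^*TU\oplus\pi_F^*F$ globally.
\item Take a point $v\in F_x$ lying in the stratum $\hat U_\alpha^*$, which lies over $U_\alpha^*$ with fibre component in $(F_x)_{\beta(\alpha)}$.
\item The tangent space to this stratum corresponds under the splitting to $TU_\alpha^*\oplus F_{\beta(\alpha)}$.
\item Hence the normal space is $\pi_F^*\big(NU_\alpha^*\oplus F_x/(F_x)_{\beta(\alpha)}\big)$.
\item The first summand carries the NC structure of $TU$, and the second carries the quotient complex structure of the complex stratified fibre $F_x$. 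Their sum gives the NC structure.
\end{enumerate}

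The flatness of $\nabla^F$ is essential in step 1. It guarantees that the horizontal lifts of the immersed strata $\tilde U_\alpha$ (Lemma~\ref{lemma_strata_extension}) sit inside the subbundles $F_{\beta(\alpha)}$. It therefore ensures that the splitting is compatible along the immersions $\tilde{\hat U}_\alpha\to\hat U$.

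\textbf{Main obstacle.} The hard step is the compatibility clause in Definition~\ref{defn_NCS_virtual_manifold}. For $\alpha\le\beta$ and $\tilde x\in\tilde{\hat U}_\beta$ lying over a point of $\hat U_\alpha^*$, the NC structure on $T\hat U/T\hat U_\beta$ must agree with the one induced from the structure at the deeper stratum via the exact sequence defining ${\mb V}_{\geq\alpha}$. I would check this fibrewise at a single point, where it reduces to a linear statement. The quotient of $(N_xU_\alpha^*\oplus F_x/F_{\beta(\alpha)})$ by the image of the $\beta$-stratum is the direct sum of the corresponding quotients. This holds because the horizontal/vertical splitting is the same parallel-transport splitting for both strata. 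Each quotient is complex-compatible, since $TU$ has an NC structure and the fibres of $F$ are complex stratified.

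Canonicity follows because no choices were made beyond the given flat connections. Finally, the cosheaf map ${\mc O}^{\hat U}\to{\mc O}^{\hat E}$ in Definition~\ref{defn_NCS_manifold_stabilization} is compatible with the strata described above by construction.
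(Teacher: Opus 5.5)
Your proposal is correct and takes essentially the same approach as the paper. Like the paper, you use the connection on $F$ to split $0\to\pi_F^*(F|_{U_\alpha^*}/F_\alpha)\to N\hat U_\alpha^*\to\pi_F^*NU_\alpha^*\to 0$, put the direct-sum complex structure on the normal bundle, and obtain flatness of $\hat E$ from the pulled-back connection; your fibrewise compatibility check just spells out what the paper calls ``easy to check,'' though strictly it is the clause in Definition~\ref{defn_flat_stratified_bundle} that parallel transport is stratified, not flatness of the curvature alone, that keeps horizontal lifts inside $F_{\beta(\alpha)}$.
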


\begin{proof}
The cosheaf morphism ${\mc O}^U \to {\mc O}^F$ provides, for each stratum $U_\alpha^*$, a subbundle $F_\alpha \subset F|_{U_\alpha^*}$. Let $\hat U_\alpha^*$ be the total space of $F_\alpha$. The collection of all $\hat U_\alpha^*$ form a structure of stratified manifold of the total space $\hat U$. Then one has the exact sequence of bundles over $U_\alpha^*$
\beqn
\xymatrix{ 0 \ar[r] & \pi_F^*(F|_{U_\alpha^*}/ F_\alpha) \ar[r] & T(F|_{U_\alpha^*})/ TF_\alpha \ar[r] & \pi_F^* NU_\alpha^* \ar[r] & 0 }.
\eeqn
As $F$ is equipped with a flat connection, there is a corresponding splitting of this exact sequence. Then the complex structure on $NU_\alpha^*$ and the complex structure on $F|_{U_\alpha^*}/F_\alpha$ induce via the splitting a complex structure on $T(F|_{U_\alpha^*})/ TF_\alpha$. It is easy to check that these complex structures for all $\alpha$ gives an NC structure on the total space of $F$. Moreover, the flat connections on $E$ and $F$ are pulled back via $F \to U$ to a flat connection on $\hat E \to \hat U$.
\end{proof}

 On the other hand, there is a natural {\bf stabilization map}
\beq\label{stabilization_map}
\Gamma_{\rm stratified} (U, E) \to \Gamma_{\rm stratified} (\hat U, \hat E),\ S \mapsto \pi_{F}^* S \oplus \tau_{F}
\eeq
where $\tau_{F}$ is the tautological section of $\pi_{F}^* F$. The condition for the stabilized map being an NCS map also depends on the straightening on the total space. 

\begin{defn}\label{defn_flat_bundle_splitting}
Let $U$ be a stratified manifold and $F \to U$ be a flat complex stratified vector bundle. A {\bf splitting} of $F$ consists of splittings of fibres of $F$ (Definition \ref{defn_linear_splitting}) such that for each $x \in U$ and a nearby point $y$, the isomorphism $F_y \cong F_x$ defined by parallel transport sends the complement of a stratum of $F_y$ to the complement of the corresponding stratum of $F_x$.
\end{defn}

\begin{lemma}\label{lemma_stabilization_straightening}
In the situation of Lemma  \ref{lemma_stabilization_NCS_manifold}, suppose $U$ is equipped with a straightening and $F \to U$ is equipped with a splitting in the sense of Definition \ref{defn_flat_bundle_splitting}, then the total space $\hat U$ of $F$ has an induced straightening. In this situation, the stabilization map \eqref{stabilization_map} sends NCS sections to NCS sections.
\end{lemma}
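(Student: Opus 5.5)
The plan is to build the straightening of $\hat U$ stratum by stratum from that of $U$, the flat connection on $F$, and the splitting of $F$. We then check the NCS property of $\pi_F^* S \oplus \tau_F$ in the resulting local product coordinates.

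\emph{Step 1: the tubular neighborhoods of $\hat U$.} By the proof of Lemma \ref{lemma_stabilization_NCS_manifold}, the strata of $\hat U$ over $U_\alpha^*$ are total spaces of subbundles $F_\gamma \subset F|_{U_\alpha^*}$ (more precisely, for each stratum of $\hat U$ one has $\alpha$ and a stratum $\gamma$ of the fibre stratification compatible with ${\mc O}^U \to {\mc O}^F$). The normal bundle therefore splits as
\beqn
N\hat U_{\alpha,\gamma}^* \cong \pi_F^* NU_\alpha^* \oplus \pi_F^* (F|_{U_\alpha^*}/F_\gamma),
\eeqn
with the first summand lifted horizontally by the flat connection. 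The splitting of $F$ from Definition \ref{defn_flat_bundle_splitting} identifies $F/F_\gamma$ with the complement $F_\gamma^\vee$. The tubular map is then
\beqn
(x,f;v,w) \mapsto \big(\rho_\alpha(x,v),\ P_{x\to\rho_\alpha(x,v)}(f+w)\big),
\eeqn
where $P$ is parallel transport along the normal ray. This map is a stratified open embedding on a small disk bundle. It sends the subbundle strata of the normal fibre into the correct strata because parallel transport preserves the fibre stratifications and carries complements to complements (flatness together with the defining property of the splitting). The normal splitting is the direct sum of the splitting of $NU_\alpha^*$ and the fibrewise splitting of $F_\gamma^\vee$. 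It is complex because both pieces are complex (the splitting of $F$ is one of complex stratified spaces), so it is compatible with the NC structure defined in Lemma \ref{lemma_stabilization_NCS_manifold}.

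\emph{Step 2: compatibility between adjacent strata, which is the main obstacle.} For $(\alpha,\gamma) < (\beta,\delta)$ we must show that the straightening along $\hat U_{\beta,\delta}^*$, restricted near $\hat U_{\alpha,\gamma}^*$, agrees as a germ with the one induced via Lemma \ref{lemma_compatible_straightening}. On the base this follows from the corresponding compatibility in the straightening of $U$. On the fibre side we use two facts. First, parallel transport is path independent locally, by flatness (this is the commutative triangle lemma for flat bundles). Second, the splitting of $F$ is compatible with parallel transport and satisfies $F_\delta^\vee \subset F_\gamma^\vee$. Together these show that the two composite maps agree: transport along $v_\beta$ then along the rest, versus transport along $v$ directly. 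I expect the bookkeeping here to be the hardest part, mainly checking that germs agree on a sufficiently small disk bundle. The underlying reason is simply flatness.

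\emph{Step 3: NCS sections go to NCS sections.} Take an NCS section $S$ and a stratum $\hat U_{\alpha,\gamma}^*$. In the coordinates of Step 1, using the pulled-back flat connection on $\hat E = \pi_F^*(E\oplus F)$, the localized section takes the form
\beqn
(v,w) \mapsto \big(S_\alpha(v),\ f + w\big) \in E_x \oplus F_x .
\eeqn
The first component is $S_\alpha(v)$ because $\pi_F^*S$ depends only on the base point, and its transport is the same as transport in $E$. The second component is $f+w$ because $\tau_F$ transported back by $P^{-1}$ becomes $f+w$. By hypothesis $S_\alpha$ lies in $C^\infty_{\rm NCS}$. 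The map $(v,w)\mapsto f+w$ is affine, and its normal part $w$ is a complex linear stratified polynomial that sends $(F_\gamma^\vee)_\delta$ into the matching stratum. The constant $f$ lies in $F_\gamma$, which is the zero stratum of the normal model. Hence each component is a $C^\infty$ combination of stratified polynomials of bounded degree, and so is the sum. The stabilized section is therefore NCS.
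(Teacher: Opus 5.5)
Your proposal is correct and follows the same route as the paper's proof, which is much briefer: identify the normal bundle of the stratum with $\pi_F^* NU_\alpha^* \oplus \pi_F^* F_\alpha^\vee$ using the connection and the splitting of $F$; build the tubular neighborhoods by parallel transport of $F$ along the normal rays of $\rho_\alpha$; take the direct-sum splitting; get compatibility from flatness together with $F_\delta^\vee \subset F_\gamma^\vee$; and check the NCS property from the local form $(v,w)\mapsto (S_\alpha(v), f+w)$. One assumption should be stated rather than asserted, since the paper also leaves it implicit: the complements in the splitting of $F$ must be complex subspaces, i.e.\ an NC splitting in the sense of Definition \ref{defn_linear_splitting}(2), because this is exactly what makes the induced straightening NC-compatible and makes $w\mapsto w$ complex linear in your Step 3.
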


\begin{proof}
Let $F_\alpha^\vee \to U_\alpha^*$ be the complement of $F_\alpha$ provided by the splitting of $F$. Then the normal bundle $N\tilde U_\alpha^*$ is identified with $\pi_{F_\alpha}^* F_\alpha^\vee \oplus \pi_{F_\alpha}^* NU_\alpha^*$. The parallel transport of $F$ along normal fibres of $N^\epsilon U_\alpha^*$ provides the tubular neighborhoods of $\tilde U_\alpha^*$. Moreover, the splitting of $NU_\alpha^*$ and the induced splitting of $F_\alpha^\vee$ provides a splitting of $N\hat U_\alpha^*$. One can check the compatibility of different tubular neighborhoods in a straightforward way. The claim about the stabilization map is also obvious as one only needs to consider the local picture. 
\end{proof}

\subsection{Stratified orbifolds,  bundles, and virtual orbifolds}

Recall that an effective orbifold is a topological space ${\mc U}$ equipped with an atlas of mutually compatible smooth orbifold charts of the form 
\beqn
C = (G, U, \phi)
\eeqn
where $G$ is a finite group, $U$ is an effective smooth $G$-manifold (possibly with boundary or corners), and $\phi: U/G \to {\mc U}$ is a homeomorphism onto an open subset. A chart embedding from $C_1 = (G_1, U_1, \phi_1)$ to $C_2 = (G_2, U_2, \phi_2)$ consists of a group embedding $G_1 \to G_2$ and an equivariant open embedding $U_1 \to U_2$ such that the following diagram commutes.
\beqn
\xymatrix{  U_1 \ar[r] \ar[d]  &    U_2 \ar[d] \\
            {\mc U} \ar[r]_{{\rm Id}_{\mc U}}  & {\mc U} }
\eeqn
Here the vertical arrows are induced from $\phi_1$ and $\phi_2$. The compatibility condition means, for any two charts $C_1, C_2$ in the atlas, around any point $x$ in their overlap, there is a third chart $C_3$ around $x$ which embeds into both $C_1$ and $C_2$.

For effective orbifolds, we consider possibly non-effective suborbifolds. A suborbifold ${\mc Y}$ of ${\mc U}$ is a subset such that for each $y \in {\mc Y}$, there exists an orbifold chart $C = (G, U, \phi)$ of ${\mc U}$ around $y$, a smooth submanifold $Y \subset U$ containing $y$ and $G_Y \subset G$ the maximal subgroup fixing $Y$ setwise, such that the restriction 
\beqn
\phi: Y/G_Y \to {\mc Y}
\eeqn
is a homeomorphism onto an open neighborhood of $y$. Notice that the isotropy group of $y$, viewed as a point of ${\mc U}$ or a point of ${\mc Y}$ could be different. It is possible that $G_Y$ acts non-effectively on $Y$.

\begin{defn}\label{defn_stratified_orbifold}
An effective {\bf stratified orbifold} consists of an effective orbifold ${\mc U}$ together with a stratification 
    \beqn
    {\mc U} = \bigsqcup_{\alpha \in {\mc A}} {\mc U}_\alpha^*
    \eeqn
    satisfying the following conditions.

    \begin{enumerate}
        \item Each ${\mc U}_\alpha^*$ is a (possibly non-effective) suborbifold of ${\mc U}$.

        \item For any orbifold chart $C = (G, U, \phi)$, the pullback stratification by $\phi: U \to {\mc U}$ makes $U$ a stratified $G$-manifold. 
    \end{enumerate}
A {\bf normal complex structure} on ${\mc U}$ consists of $G$-invariant normal complex structures on $U$ for all charts $C = (G, U, \phi)$ such that all chart embeddings preserve these complex structures. A {\bf normally complex stratified orbifold} (NCS orbifold) is a stratified orbifold ${\mc U}$ together with a normal complex structure. 
\end{defn}

Now we define the corresponding notion of stratified vector bundles. 

\begin{defn}
Let ${\mc U}$ be a stratified orbifold. A {\bf (flat) stratified orbifold vector bundle} on ${\mc U}$, denoted by ${\mc E} \to {\mc U}$, consists of an orbifold vector bundle ${\mc E}$ together with, for each bundle chart $\hat C = (G, U, E, \hat \phi)$, a structure of $G$-equivariant (flat) stratified vector bundle, which are compatible with orbibundle embeddings.
\end{defn}

\begin{defn}\label{defn_NCS_virtual_orbifold}
A {\bf NCS virtual orbifold}, denoted by $({\mc U}, {\mc E})$, consists of an NCS orbifold ${\mc U}$, a stratified flat complex orbifold vector bundle ${\mc E} \to {\mc U}$, and for each bundle chart $\hat C = (G, U, E, \hat\phi)$, a structure of $G$-invariant NCS virtual manifolds on $(U, E)$ which are invariant under bundle chart embeddings. A {\bf stratified section} of an NCS virtual orbifold $({\mc U}, {\mc E})$ is a section ${\mc S}: {\mc U} \to {\mc E}$ such that for each bundle chart $\hat C = (G, U, E, \hat \phi)$, the pullback of ${\mc S}$ is a $G$-equivariant stratified section of $(U, E)$. Let $\Gamma_{\rm stratified}({\mc U}, {\mc E})$ denote the set of smooth stratified sections.  
\end{defn}

\subsubsection{Straightenings on orbifolds}

\begin{defn}\label{defn_orbifold_straightening}
Let ${\mc U}$ be an effective stratified orbifold. A {\bf straightening} of ${\mc U}$ consists of, for each chart $C = (G, U, \phi)$, a $G$-equivariant straightening on $U$, such that for each chart embedding $\phi_{21}: C_1 \to C_2$ and for stratum $U_{1, \alpha_1}^* \subset U_1$ which is sent to $U_{2, \alpha_2}^* \subset U_2$, the following diagram commutes as germs.
\beqn
\xymatrix{   N^{\epsilon_1} U_{1, \alpha_1}^* \ar[r]^-{\rho_{1, \alpha_1}}  \ar[d]_-{d\phi_{21}^U}   &      U_1 \ar[d]^-{\phi_{21}^U} \\
N^{\epsilon_2} U_{2, \alpha_2}^*  \ar[r]_-{\rho_{2, \alpha_2}}   &  U_2  }
\eeqn
\end{defn}

The following is the existence theorem for orbifold straightenings.

\begin{prop}\label{prop_orbifold_straightening}
Let ${\mc U}$ be an effective stratified orbifold. Let $Y \subset {\mc U}$ be a closed subset. Given any straightening in an open neighborhood of $Y$, there exists a straightening of ${\mc U}$ which agrees with the given one near $Y$. If ${\mc U}$ is normally complex and the given straightening respects the NC structure, then the new straightening can be chosen to respect the NC structure. 
\end{prop}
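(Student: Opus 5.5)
We construct the straightening by induction on the strata, starting from the deepest ones, in the spirit of the classical construction of control data (Mather, Thom) for Whitney stratifications. Each step is carried out equivariantly in orbifold charts and patched by partitions of unity.

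First we reduce to finitely many strata and a fixed order. Since the stratification is locally finite and the partial order is given by closure, we may work on a relatively compact region and order the strata ${\mc U}_{\alpha_1}^*, {\mc U}_{\alpha_2}^*, \ldots$ compatibly with $\leq$, deepest first. Suppose straightenings along $N{\mc U}_{\alpha_i}^*$ for $i<k$ have been constructed, are mutually compatible as in Definition \ref{defn_manifold_straightening}, and agree with the given data near $Y$.

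For $\alpha=\alpha_k$, Lemma \ref{lemma_compatible_straightening} says that each earlier straightening along $\alpha_i<\alpha$ already determines a straightening along $N({\mc U}_\alpha^*\cap |N^\epsilon {\mc U}_{\alpha_i}^*|)$. So near the frontier ${\mc U}_\alpha\setminus{\mc U}_\alpha^*$, and near $Y$, the germ of the straightening along $N{\mc U}_\alpha^*$ is prescribed. Where two such neighbourhoods overlap, the prescriptions agree: this is the inductive compatibility hypothesis, applied after shrinking the $\epsilon$'s. The remaining task is to extend this prescribed germ, defined on an open neighbourhood $W$ of a closed subset of ${\mc U}_\alpha^*$, to all of ${\mc U}_\alpha^*$. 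The extension has two ingredients.

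First, the splitting of the normal bundle. Locally on ${\mc U}_\alpha^*$ the normal bundle is a locally constant family of stratified linear spaces. In a chart $(G,U,\phi)$ the space of splittings of a fixed stratified linear space is an iterated affine bundle: the complements $V_\beta^\vee$ are chosen inductively downward from $\beta$ maximal, inside $V_\alpha^\vee$ for $\alpha\le\beta$, and each choice is affine. In the NC case, complex splittings of $V/V_0$ also form an affine space. Hence we can average over $G$ and glue the local choices with a partition of unity on ${\mc U}_\alpha^*$ that equals $1$ on a smaller neighbourhood of the prescribed region. The condition $\alpha\le\beta\Rightarrow V_\beta^\vee\subset V_\alpha^\vee$ is not convex as stated. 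To handle this, parametrize complements as graphs of linear maps $V_\alpha^\vee\to V_\alpha$ built compatibly along the flag, and glue the graph maps rather than the subspaces.

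Second, the stratified tubular neighbourhood. We take $\rho_\alpha=\exp$ of a $G$-invariant metric for which each stratum closure $\tilde U_\beta$, $\beta>\alpha$, is totally geodesic near ${\mc U}_\alpha^*$. Such metrics exist locally from stratified charts: pull back flat metrics on the linear model, in which the strata are linear. We glue with the prescribed map on $W$ by interpolating tubular neighbourhood maps along a cutoff in the base and shrinking $\epsilon$. Stratum preservation must survive the interpolation, so we first use the local stratified charts to identify each local tube with a linear model. Then convex combinations are taken in those linear coordinates, where the strata are subspaces and hence preserved.

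Orbifold compatibility (Definition \ref{defn_orbifold_straightening}) follows because all choices are made $G$-invariantly on chart domains and transported along chart embeddings. Concretely, we work on a locally finite cover by charts, build the data on the underlying orbit spaces of strata, and average over isotropy groups. Averaging preserves linearity of the strata and the complex condition, since $G$ acts by stratified NC-preserving maps.

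The main obstacle is the interpolation step for tubular neighbourhoods. Gluing two stratified tubular neighbourhoods (or two splittings) must remain stratum-preserving and must keep the compatibility of Definition \ref{defn_manifold_straightening} with all deeper strata simultaneously. I would first try to make the interpolation only in the $\pi_\alpha$-base direction, with the cutoff depending on the base point alone, so that on each normal fibre we use a single linear-model identification. This keeps both the stratification and the deeper compatibility automatic.
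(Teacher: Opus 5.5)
You have a genuine gap in the splitting step; the rest of your plan is sound.

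\textbf{The gap.} You claim that the stratified splittings of Definition~\ref{defn_linear_splitting} form an iterated affine bundle, so that they can be glued with partitions of unity and averaged over isotropy groups. This is false as soon as the poset of strata is not a chain, and ``graphs built compatibly along the flag'' then has no meaning. Take $V=\mathbb{C}^3$ with strata closures $0\subset\ell\subset P_1,P_2\subset V$, where $P_1=\{z=0\}$, $P_2=\{y=0\}$ and $\ell=P_1\cap P_2$. This sits inside the coordinate stratification of $\mathbb{C}^3$, i.e.\ the normal-crossings model of three gluing parameters in Section~\ref{section5}. A splitting consists of lines $V_{P_i}^\vee$ complementary to $P_i$ and a plane $V_\ell^\vee\supset V_{P_1}^\vee+V_{P_2}^\vee$ complementary to $\ell$. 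Write $V_{P_1}^\vee=\mathbb{C}(1,b_1,1)$ and $V_{P_2}^\vee=\mathbb{C}(0,1,c_2)$. Then a compatible $V_\ell^\vee$ exists if and only if $b_1c_2\neq1$.
\begin{itemize}
\item The parameters $(0,0)$ and $(2,2)$ give genuine splittings, and they extend to splittings of the full coordinate stratification.
\item Their midpoint $(1,1)$ gives $V_{P_1}^\vee+V_{P_2}^\vee\ni e_1\in\ell$, so no $V_\ell^\vee$ exists.
\end{itemize}
So your downward inductive choice can be obstructed, and the admissible set is not convex. Dually, the upward choice fails for $\ell_1=\mathbb{C}e_1,\ \ell_2=\mathbb{C}e_2\subset P=\{z=0\}$, since $\{x=y\}\cap\{x=y+z\}=\mathbb{C}(1,1,0)\subset P$. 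Gluing or $G$-averaging complements componentwise can therefore leave the space of splittings. Your claim that complex splittings form an affine space in the NC case fails for the same reason.

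\textbf{The missing idea.} The paper uses Lemma~\ref{lemma_linear_NH} and the lemma after it: carry a $G$-invariant normal Hermitian inner product as auxiliary data, and define every complement as an orthogonal complement. Then:
\begin{itemize}
\item nestedness $V_\beta^\perp\subset V_\alpha^\perp$ for $\alpha\le\beta$ is automatic;
\item orthogonal complements of complex subspaces are complex;
\item inner products form a convex set, which can be glued and averaged;
\item the splittings induced on higher strata by Lemma~\ref{lemma_compatible_straightening} are again orthogonal for the restricted metric, so the induction closes.
\end{itemize}
Relative to $Y$, this needs the prescribed splitting near $Y$ to be metric-induced. That is not automatic: the complements $\mathbb{C}e_2,\mathbb{C}e_1,\mathbb{C}(i,1)$ of $\mathbb{C}e_1,\mathbb{C}e_2,\mathbb{C}(1,1)$ in $\mathbb{C}^2$ come from no Hermitian metric. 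The paper's own proof also takes this for granted.

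\textbf{Comparison with the paper after the repair.} The remaining difference is organizational. The paper puts the chart induction outermost, using the sheaf $\mc{ST}$ and the closed-set gluing of Lemma~\ref{lemma_closed_sheaf}. Inside one linear chart it interpolates $\rho=\lambda\rho_0+(1-\lambda)\rho_1$ with $\rho_1(x,v)=x+v$ and a base-point cutoff $\lambda$. This is exactly what your interpolation amounts to. Note that convex combinations of tubular maps coming from different charts must be formed one chart at a time, in that chart's linear coordinates. A simultaneous partition of unity has no common linear structure to work in. Once you interpolate this way, your totally geodesic exponential maps are unnecessary.
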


\begin{proof}
See Subsection \ref{straightening_proof}.
\end{proof}

\subsubsection{Multisections}

For any set $A$ and positive integer $k$, let ${\rm Sym}^k (A)$ be the $k$-th symmetric power of $A$. There is a natural map 
\beqn
{\rm Sym}^k(A) \to {\rm Sym}^{kl}(A)
\eeqn
by repeating each entry $l$ times. Then denote
\beqn
{\rm Sym}^\bullet(A):= \colim {\rm Sym}^k(A).
\eeqn

Let ${\mc E} \to {\mc U}$ be an orbifold vector bundle. There is the bundle
\beqn
{\rm Sym}^\bullet({\mc E}) \to {\mc U}
\eeqn
whose fibre at $x \in {\mc U}$ is ${\rm Sym}^\bullet({\mc E}_x)$. A {\bf smooth multisection} of ${\mc E}$ is a section
\beqn
{\mc S}: {\mc U} \to {\rm Sym}^\bullet {\mc E}
\eeqn
such that for each $p \in {\mc U}$, there exists an orbifold bundle chart $\hat C = (G, U, E, \hat\phi)$, a positive integer $k$, and a $k$-tuple of smooth sections (called a {\bf local lift} of ${\mc S}$)
\beqn
(S_1, \ldots, S_k) \in \Gamma(E)^k
\eeqn
such that locally ${\mc S}$ is given by $[S_1, \ldots, S_k]$. A multisection is called {\bf  transverse} if each branch of each local lift is transverse to the zero section in the classical sense. Let
\beqn
\Gamma^{\rm multi}({\mc U}, {\mc E})
\eeqn
denote the set of all smooth multisections of ${\mc E}$.

We also need to measure the distance between two multisections.

\begin{defn}\label{defn_closedness}
Suppose ${\mc E}$ is equipped with a norm and ${\mc S}, {\mc S}' \in \Gamma_{\rm multi}({\mc U}, {\mc E})$. We say that ${\mc S}$ and ${\mc S}'$ are {\bf $\delta$-close} if there exists a covering of ${\mc U}$ by bundle charts $\hat C = (G, U, E, \hat\phi)$, such that for each of these charts, ${\mc S}$ resp. ${\mc S}'$ have lifts $(S_1, \ldots, S_k)$ resp. $(S_1', \ldots, S_k')$ such that 
\beqn
\sup_{1 \leq i \leq k} \| S_i - S_i'\|_{C^0(U)} < \delta.
\eeqn
\end{defn}

\begin{defn}
Let $({\mc U}, {\mc E})$ be an NCS virtual orbifold (Definition \ref{defn_NCS_virtual_orbifold}).

\begin{enumerate}

\item A multisection ${\mc S} \in \Gamma^{\rm multi}({\mc U}, {\mc E})$ is said to be {\bf stratified} if for each bundle chart $\hat C = (G, U, E, \hat\phi)$, the corresponding multisection $S$ of $E\to U$ has each local branch a stratified section. Denote by $\Gamma_{\rm stratified}^{\rm multi}({\mc U}, {\mc E})$ be the space of stratified multisections.

\item A stratified section ${\mc S} \in \Gamma_{\rm stratified}^{\rm multi}({\mc U}, {\mc E})$ is called {\bf normally complex} with respect to a given straightening on $({\mc U}, {\mc E})$ if for each bundle chart on which ${\mc S}$ admits a lift, each branch of this lift is a normally complex section with respect to the induced straightening on the chart. 
\end{enumerate}
\end{defn}

Below is a corollary to Lemma \ref{lemma235}.

\begin{cor}
For any continuous stratified single valued section ${\mc S}_0: {\mc U} \to {\mc E}$, there exists a single-valued NCS section of $({\mc U}, {\mc E})$ which can be arbitrarily $C^0$-close to ${\mc S}_0$.
\end{cor}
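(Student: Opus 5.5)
The plan is to work chart by chart and glue with a partition of unity. Two facts make this possible: NCS sections on a chart form a $C^\infty(U)$-module, and $G$-averaging preserves the NCS condition. Fix a straightening of $\mathcal U$ (Proposition \ref{prop_orbifold_straightening}) and a norm on $\mathcal E$, and fix $\delta>0$. For each point $p\in\mathcal U$ choose a bundle chart $\hat C_p=(G_p,U_p,E_p,\hat\phi_p)$ centered at $p$. By shrinking we may assume $U_p$ is $G_p$-invariant with compact closure inside a larger chart. Cover $\mathcal U$ by a locally finite family of such charts $\{\hat C_i\}$, and take a smooth partition of unity $\{\chi_i\}$ on $\mathcal U$ subordinate to $\{\phi_i(U_i/G_i)\}$. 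Each $\chi_i$ pulls back to a $G_i$-invariant smooth function on $U_i$ with support in the interior.

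On each chart, the pullback $S_{0,i}$ of $\mathcal S_0$ is a continuous $G_i$-equivariant stratified section of $E_i\to U_i$. Lemma \ref{lemma235} is stated for smooth stratified sections, so I first approximate $S_{0,i}$ in $C^0$ by a smooth stratified section. This could be done by the same pointwise-value argument (Lemma \ref{lemma36}) combined with a partition of unity. Alternatively, one can apply the $C^0$-density argument of Lemma \ref{lemma_c0_density} directly to continuous stratified sections, since that proof only uses prescribed pointwise values.

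The result is an NCS section $T_i$ on $U_i$, with respect to the induced straightening, satisfying $\|T_i-S_{0,i}\|_{C^0}<\delta$ on $\operatorname{supp}\chi_i$. Next, average over $G_i$:
\[
\bar T_i=\frac{1}{|G_i|}\sum_{g\in G_i} g\cdot T_i .
\]
The straightening, the flat connection, the stratifications and the NC structures are all $G_i$-equivariant (Definitions \ref{defn_orbifold_straightening} and \ref{defn_NCS_virtual_orbifold}). Hence $g\cdot T_i$ is again NCS, and $\bar T_i$ is NCS because $\Gamma_{\rm NCS}$ is closed under sums. Since $S_{0,i}$ is equivariant, $\bar T_i$ remains $\delta$-close to $S_{0,i}$. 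The section $\bar T_i$ therefore descends to a single-valued section over $\phi_i(U_i/G_i)$.

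Finally set $\mathcal S=\sum_i\chi_i\bar T_i$, extending each term by zero outside its chart. Pointwise, $\|\mathcal S-\mathcal S_0\|\le\sum_i\chi_i\|\bar T_i-S_{0,i}\|<\delta$.

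The main obstacle is checking that $\chi_i\bar T_i$, extended by zero, is NCS in every other chart $\hat C_j$. The NCS condition is local and is formulated through the straightening. By Definition \ref{defn_orbifold_straightening}, chart embeddings intertwine tubular neighborhoods and straightenings as germs, and parallel transport is preserved because the flat structure is compatible with bundle embeddings. So near any point of the overlap, the restriction of $\bar T_i$ corresponds, via a common subchart, to an NCS section in $\hat C_j$. One subtlety is that the NCS definition concerns the maps $S_\alpha$ on whole tubular neighborhoods $N^\epsilon U_\alpha^*$, and these are germs. I would handle this by noting that the condition ``$S_\alpha$ is a restriction of an element of $C^\infty_{\rm NCS}$'' is local along $U_\alpha^*$: elements of $C^\infty(F)\cdot\Gamma({\rm Poly}^d)$ can be patched by partitions of unity on the base $U_\alpha^*$, pulled back by $\pi_\alpha$. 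For this patching, multiplication by $\chi_i$ needs care, since $\chi_i\circ\rho_\alpha$ is merely a smooth function on $N^\epsilon U_\alpha^*$. That is still allowed, because the coefficient ring is all of $C^\infty(F)$. Zero extension is NCS where $\chi_i$ vanishes near the point. Thus each summand, and so $\mathcal S$, is NCS.
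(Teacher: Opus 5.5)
Your proposal is correct and is essentially the argument the paper intends: the paper gives no separate proof, recording the statement only as a corollary of the chartwise density result (Lemma \ref{lemma235}, which rests on Lemma \ref{lemma36} and a partition of unity). Your three steps (chartwise approximation, $G_i$-averaging to get equivariant lifts that descend, and gluing with an orbifold partition of unity) fill in the details the paper leaves implicit, and your treatment of continuous rather than smooth $\mathcal S_0$ and of the germ-local nature of the NCS condition on chart overlaps is an appropriate addition rather than a departure.
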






\subsection{Construction of straightening}\label{straightening_proof}

\subsubsection{Formal properties of straightenings}

Let ${\mc U}$ be a stratified orbifold with an NC structure. We define a presheaf $\mc{ST} \to {\mc U}$ whose value on any open set $O \subset {\mc U}$ is the set $\mc{ST}(O)$ of all straightenings on $O$ regarded as itself an NCS orbifold. When $O_1 \subset O_2$, there is hence a natural restriction map
\beqn
\mc{ST}(O_2) \to \mc{ST}(O_1).
\eeqn
It is easy to see that $\mc{ST}$ satisfies gluing, hence a sheaf. Then for each closed set $Y \subset {\mc U}$, one can define
\beqn
\mc{ST}(Y):= \colim_{Y \subset O} \mc{ST}(O)
\eeqn
which also admits the restriction map: 
\beqn
\mc{ST}(Y_2) \to \mc{ST}(Y_1)\ \forall Y_1 \subset Y_2.
\eeqn
One has the corresponding gluing property for closed sets.

\begin{lemma}\label{lemma_closed_sheaf}
Suppose $Y_1, Y_2 \subset {\mc U}$ are closed sets, $a_1 \in \mc{ST}(Y_1)$, $a_2 \in \mc{ST}(Y_2)$ such that their restrictions to $Y_1 \cap Y_2$ agree. Then there exists a unique $a_1 \# a_2 \in \mc{ST}(Y_1 \cup Y_2)$ whose restriction to $Y_1$ resp. $Y_2$ is $a_1$ resp. $a_2$. 
\end{lemma}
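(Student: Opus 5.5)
The plan is to reduce the statement to the gluing axiom for the sheaf $\mc{ST}$ on open sets, using the paracompactness (indeed metrizability) of ${\mc U}$ to separate the two closed sets away from their intersection.

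First, represent $a_i$ by a straightening $s_i \in \mc{ST}(O_i)$ on an open neighborhood $O_i \supset Y_i$. By hypothesis the restrictions of $a_1$ and $a_2$ to $Y_1\cap Y_2$ agree. By the definition of $\mc{ST}(Y_1\cap Y_2)$ as a colimit over open neighborhoods, this means there is an open set $W \supset Y_1 \cap Y_2$, with $W\subset O_1\cap O_2$, on which $s_1|_W = s_2|_W$.

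Second, shrink the neighborhoods so that their overlap lies inside $W$. The sets $Y_1\setminus W$ and $Y_2 \setminus W$ are disjoint closed subsets of ${\mc U}$: they are closed because $W$ is open, and disjoint because $Y_1\cap Y_2\subset W$. Since ${\mc U}$ is metrizable, hence normal, choose disjoint open sets $P_1 \supset Y_1\setminus W$ and $P_2 \supset Y_2 \setminus W$. Set
\beqn
O_1' := (O_1 \cap P_1)\cup W, \qquad O_2' := (O_2\cap P_2)\cup W .
\eeqn
Then $O_i'$ is an open subset of $O_i$ containing $Y_i$. Moreover $O_1'\cap O_2' \subset W$, because $(O_1\cap P_1)\cap(O_2\cap P_2)=\emptyset$.

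Third, glue. On $O_1'\cap O_2'\subset W$ the restrictions of $s_1$ and $s_2$ coincide. The sheaf property of $\mc{ST}$ on the open cover $\{O_1', O_2'\}$ of $O_1'\cup O_2'$ then yields a straightening $s$ on $O_1'\cup O_2'\supset Y_1\cup Y_2$ with $s|_{O_i'}=s_i|_{O_i'}$. Define $a_1\# a_2$ to be the germ of $s$. Its restriction to $Y_i$ is the germ of $s_i$, which is $a_i$.

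Uniqueness follows the same pattern. Suppose two germs on $Y_1\cup Y_2$ restrict to the same germs on $Y_1$ and on $Y_2$. Then they agree on some open $V_1\supset Y_1$ and on some open $V_2\supset Y_2$. The separatedness part of the sheaf axiom on $V_1\cup V_2$ shows they agree there, hence they define the same germ.

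The main point requiring care is the sheaf property of $\mc{ST}$ itself, which the paper asserts. Straightenings are germs of tubular neighborhoods, so the condition \emph{the two tubular neighborhoods agree near the zero section} must be local along the stratum. This holds because germ equivalence is checked pointwise on $U_\alpha^*$. The compatibility conditions between strata and with chart embeddings are likewise local. Once this is accepted, the only remaining ingredient is the normality step above, which ensures the overlap of the chosen neighborhoods lies inside the region where $s_1$ and $s_2$ agree.
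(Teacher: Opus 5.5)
Your proof is correct and follows the same route as the paper: represent $a_1, a_2$ on open neighborhoods, use normality of ${\mc U}$ to shrink these so that their overlap lies where the representatives agree, and then glue by the sheaf property of $\mc{ST}$. You are more explicit than the paper, which leaves both the shrinking and uniqueness implicit: you construct the shrinking by separating $Y_1\setminus W$ from $Y_2\setminus W$, and you prove uniqueness.
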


\begin{proof}
Choose an open neighborhood $O_1$ of $Y_1$ resp. $O_2$ of $Y_2$ and $\tilde a_1 \in \mc{ST}(O_1)$ resp. $\tilde a_2 \in \mc{ST}(O_2)$ representing the germ $a_1$ resp. $a_2$. As the orbifold is a normal space, by shrinking $O_1$ and $O_2$, one can assume that the restrictions of $a_1$ and $a_2$ to $O_1 \cap O_2$, which is an open neighborhood of $Y_1 \cap Y_2$, agree. 
\end{proof}

\subsubsection{Chart-wise induction}

The proof of Proposition \ref{prop_orbifold_straightening} is based on a reduction to local charts. 

\begin{lemma}\label{lemma_straightening_chart}
Proposition \ref{prop_orbifold_straightening} holds for the special case when ${\mc U}$ is covered by a single stratified orbifold chart $(G, U, \phi)$.
\end{lemma}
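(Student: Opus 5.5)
We prove the statement by induction over the strata of the $G$-manifold $U$, going from the deepest strata upward. A straightening on $U/G$ is the same as a $G$-equivariant straightening on $U$, since the only chart embeddings into a single chart come from the group action. So the task is: given a $G$-equivariant straightening on a $G$-invariant open neighborhood $\tilde O$ of the preimage $\tilde Y$ of $Y$, extend it (as a germ near $\tilde Y$) to a $G$-equivariant straightening of $U$.

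\textbf{Setting up the induction.}
\begin{enumerate}
\item Since the stratification is locally finite and $G$ permutes the strata, we group strata into $G$-orbits. We order these orbits by a linear extension of the partial order, minimal first. The induction hypothesis is that, for some union $\mathcal{A}'$ of orbits closed downward, we have straightenings along $NU_\alpha^*$ for all $\alpha\in\mathcal{A}'$. These must be mutually compatible in the sense of Definition \ref{defn_manifold_straightening}, $G$-equivariant, and agree with the given data near $\tilde Y$.
\item At the next orbit $G\cdot\beta$, the compatibility condition (via Lemma \ref{lemma_compatible_straightening}) already determines the straightening along $NU_\beta^*$ on the part $U_\beta^*\cap\bigcup_{\alpha<\beta}|N^{\epsilon}U_\alpha^*|$. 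This is the union of the tubular neighborhoods of lower strata, intersected with $U_\beta^*$. The given data also determines it near $\tilde Y$.
\item By the induction hypothesis and the germ compatibility among the lower strata, these prescriptions agree on their overlaps, after possibly shrinking the $\epsilon$'s. Lemma \ref{lemma_closed_sheaf} (in its manifold version) then glues them into a germ along a closed subset $Z_\beta\subset U_\beta^*$. We take $Z_\beta$ to be a closed neighborhood of $U_\beta^*\cap(\tilde Y\cup\bigcup_{\alpha<\beta}U_\alpha)$ inside the region where the prescription is defined.
\end{enumerate}

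\textbf{Extending the data for a single stratum.} We must extend, from a neighborhood of $Z_\beta$ to all of $U_\beta^*$, two pieces of data: a splitting of $NU_\beta^*$ and a stratified tubular neighborhood.
\begin{enumerate}
\item \emph{Splittings.} These are sections of a fibre bundle whose fibre is a product of affine spaces of complements: choosing complements $V_\gamma^\vee$ compatibly with $\gamma\leq\gamma'$ can be done successively, each choice being affine. In the NC case the complements must be complex, and the set of complex complements is again affine (modeled on complex Hom spaces). So a partition of unity extends the splitting. Equivariance is obtained by averaging over the stabilizer of $U_\beta^*$ in $G$, which preserves convexity.
\item \emph{Tubular neighborhoods.} Use a $G$-invariant metric for which the lower-dimensional stratum closures $U_\gamma$, $\gamma>\beta$, are totally geodesic near $U_\beta^*$. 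Locally this is possible in a linear stratified chart, where the strata are linear subspaces, and the local choices are patched by a partition of unity. The exponential map composed with the splitting then gives a stratified embedding. Interpolating with the prescribed embedding near $Z_\beta$ is done via a bump function in the base. Keeping stratification-preservation under interpolation is arranged by doing the interpolation in stratified linear charts, where convex combinations of maps sending $(N U_\beta^*)_\gamma$ into $U_\gamma$ still do so because $U_\gamma$ is linear there. Finally we shrink $\epsilon$ so that the result is an embedding.
\end{enumerate}

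\textbf{Compatibility with higher strata.} The straightenings of higher strata are not yet chosen; they will be forced inside $|N^\epsilon U_\beta^*|$ at their own induction step. So the only remaining constraint is compatibility with lower strata, which holds by construction near $Z_\beta$.

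The main obstacle I expect is this interpolation step for tubular maps. A convex combination of two stratified tubular neighborhoods need not be stratified globally, and it needs the induced straightenings of higher strata to remain well-defined germs. I would handle this by working inside linearizing charts adapted to the previous straightenings, so that all strata are linear and the combinations stay within the class. A fallback is to avoid interpolating embeddings altogether and extend only via an equivariant stratified isotopy.
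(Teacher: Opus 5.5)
Your architecture is the same as the paper's: bottom-up induction over $G$-orbits of strata, with the data along $NU_\beta^*$ forced near lower strata by Lemma \ref{lemma_compatible_straightening} and then extended by cut-off interpolation. However, both of your extension steps rest on claims that fail.

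\textbf{Splittings.} The nesting condition $V_{\gamma'}^\vee\subset V_\gamma^\vee$ for $\gamma\le\gamma'$ couples the complements. As a result, compatible families do not form an iterated affine bundle: at some stage the set of admissible complements, given the earlier choices, can be empty, whether you choose top-down or bottom-up. Take $V={\mb C}^3$ with the coordinate-subspace stratification $V_S={\rm span}\{e_i : i\in S\}$, which is the local model of gluing parameters at a curve with three nodes.
\begin{enumerate}
\item Choosing lines first: $V_{\{1,2\}}^\vee={\mb C}(e_2+e_3)$ and $V_{\{1,3\}}^\vee={\mb C}(e_1+e_2+e_3)$ are admissible, but their span contains $e_1$, so no $V_{\{1\}}^\vee$ exists.
\item Choosing planes first: $V_{\{1\}}^\vee={\rm span}(e_1+e_2,e_3)$ and $V_{\{2\}}^\vee={\rm span}(e_1+e_2,e_1+e_3)$ are admissible, but they meet in ${\mb C}(e_1+e_2)\subset V_{\{1,2\}}$, so no $V_{\{1,2\}}^\vee$ exists.
\end{enumerate}
Hence neither your partition-of-unity extension nor your averaging over the stabilizer is justified. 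The missing idea is the paper's device: do not interpolate complements directly. Instead, interpolate a complement of the lowest stratum (a splitting of the tangent--normal sequence) together with a $G$-invariant normal Hermitian structure, and take orthogonal complements. These are automatically nested and complex, and both pieces of data range over convex sets, so cut-offs and averaging are harmless (Lemma \ref{lemma_linear_NH}). To splice with the prescribed data near $Y$, that data must itself be of this orthogonal form; the paper takes this for granted.

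\textbf{Tubular neighborhoods.} A convex combination of metrics for which a submanifold is totally geodesic need not have that property. So patching local flat metrics by a partition of unity does not produce the metric you want, and interpolating embeddings in different overlapping linear charts does not glue. The paper avoids this by taking the chart to be a $G$-invariant ball $U$ in the NCS linear space ${\mb V}$. This suffices for Proposition \ref{prop_orbifold_straightening}, whose covering charts can be chosen linear. Then:
\begin{enumerate}
\item $\rho_1(x,v)=x+v$ is a canonical stratified tubular map;
\item the interpolation $\lambda\rho_0+(1-\lambda)\rho_1$, with $\lambda$ a $G$-invariant cut-off on the stratum, stays stratified because every stratum closure is a linear subspace;
\item after deleting the lowest stratum one is still in an open subset of $V$, so the same linear formulas drive the induction.
\end{enumerate}
Your fallback of working inside linearizing charts amounts to this. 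But it has to use the single global linear structure of one chart, not a patchwork of local charts.
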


Assuming this lemma. We can prove Proposition \ref{prop_orbifold_straightening}. 

\begin{proof}[Proof of Proposition \ref{prop_orbifold_straightening}]
One can choose countably many stratified orbifold charts $(G_i, U_i, \phi_i)$ and $G_i$-invariant precompact open subsets $U_i' \subset U_i$ such that ${\mc U}$ is covered by the union of $\phi_i(U_i')$. Denote $Y_i:= \phi_i( \phi_i^{-1}(Y))$. 

Then we start the inductive construction. By Lemma \ref{lemma_straightening_chart}, there exists a straightening on $\phi_1(U_1)$ which agrees with the existing one on $Y_1$. This chartwise straightening then induces an element of $\mc{ST}(\phi_1(\ov{U_1'}))$ which agrees with the existing one on $\mc{ST}(Y)$. By closed set gluing (Lemma \ref{lemma_closed_sheaf}), they induce an extension in $\mc{ST}(Y \cup \phi_1(\ov{U_1'}))$. Replace $Y$ by the larger closed set $Y \cup \phi_1(\ov{U_1'})$ and replace $U_1$ by $U_2$, one can obtain an extension to a larger closed set. Since one can require the countable open cover to be locally finite, inductively, one obtains a straightening on the whole ${\mc U}$ which agrees with the existing one near $Y$.
\end{proof}

\subsubsection{Chartwise extension}

Now we prove Lemma \ref{lemma_straightening_chart}. We work in the normally complex situation. We start with some basic linear algebra.

\begin{defn}
Let $\mb{V}$ be an NCS linear space. A {\bf normal Hermitian structure} on ${\mb V}$ is an Hermitian inner product on the quotient $V/V_0$ with respect to the complex structure on $V/V_0$. Notice that a normal Hermitian structure together with a decomposition $V  = V_0 \oplus \check V_0$ induces a splitting of ${\mb V}$ which is compatible with the NC structure.
\end{defn}

\begin{lemma}\label{lemma_linear_NH}
Let ${\mb V}$ be an NCS linear space with an action by a finite group $G$. Then there exists a $G$-invariant normal Hermitian structure on ${\mb V}$. Moreover, any two $G$-invariant normal Hermitian structures can be connected.
\end{lemma}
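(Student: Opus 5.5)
The plan is to reduce everything to the quotient $V/V_0$ and then use the standard averaging trick for finite groups, together with convexity.

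First I will check that the $G$-action descends to a complex-linear action on $V/V_0$. I interpret ``an NCS linear space with an action by a finite group $G$'' as meaning that $G$ acts linearly on $V$, preserves the stratification, and preserves the NC structure.
\begin{itemize}
\item Each $g \in G$ permutes the strata and respects the partial order. Hence it sends the unique minimal stratum $V_0$ to itself, so $g(V_0) = V_0$.
\item Therefore $G$ acts linearly on the quotient $V/V_0$.
\item By the invariance of the NC structure, this induced action commutes with the complex structure $J$ on $V/V_0$. So $G$ acts on $(V/V_0, J)$ by complex-linear automorphisms.
\end{itemize}
If the invariance of $J$ is not literally part of the definition of a $G$-action on ${\mb V}$, this is exactly where I would have to add it as a hypothesis. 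It is the natural reading, given Definition \ref{defn_stratified_orbifold}, where normal complex structures are required to be $G$-invariant.

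For existence, I pick an arbitrary Hermitian inner product $h_0$ on the complex vector space $(V/V_0, J)$ and average it:
\beqn
h(u, v) := \frac{1}{|G|} \sum_{g \in G} h_0(g u, g v).
\eeqn
Each summand is Hermitian with respect to $J$, because $g$ commutes with $J$. Each summand is also positive definite, because $g$ is invertible. A finite average of positive definite Hermitian forms is again one, and $h$ is $G$-invariant by construction. So $h$ is a $G$-invariant normal Hermitian structure on ${\mb V}$.

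Note that the definition of a normal Hermitian structure imposes no compatibility with the subspaces $V_\alpha/V_0$. These are automatically complex subspaces, since the NC structure makes them so. Hence no further constraint needs to be checked.

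For connectedness, I observe that the set of $G$-invariant Hermitian inner products on $(V/V_0, J)$ is a convex cone in the real vector space of $G$-invariant $J$-Hermitian forms. Given two such structures $h_0$ and $h_1$, the path $h_t = (1-t) h_0 + t h_1$ for $t \in [0,1]$ consists of $G$-invariant, $J$-Hermitian, positive definite forms. This path connects them.

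There is no real obstacle here. The only point requiring care is the first step, namely that $G$ preserves $V_0$ and commutes with $J$ on the quotient, so that averaging stays within Hermitian forms for the fixed complex structure.
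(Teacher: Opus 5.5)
Your proof is correct and follows essentially the same argument as the paper: averaging, then convex combinations for connectedness. It relies on the same point, left implicit in the paper, that $G$ preserves $V_0$ and commutes with the complex structure on $V/V_0$. The only difference is the order of averaging: the paper starts from a $G$-invariant inner product on $V$, identifies $V/V_0$ with $V_0^{\perp}$, and symmetrizes via $\frac{1}{2}(\langle v,w\rangle + \langle Iv, Iw\rangle)$, which as a by-product also yields a $G$-invariant complement of $V_0$; you instead average a $J$-Hermitian form over $G$ directly on the quotient.
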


\begin{proof}
Choose an arbitrary $G$-invariant inner product $\langle \cdot, \cdot \rangle$. The orthogonal complement $V_0^\bot \subset V$ is then identified with $V/V_0$. However, the induced inner product on $V_0^\bot$ may not be Hermitian with respect to the complex structure $I^{V/V_0}$ on $V/V_0$. We redefine the inner product by replacing its restriction to $V_0^\bot$ by
\beqn
(v, w) \mapsto \frac{1}{2} \left( \langle v, w \rangle + \langle I^{V/V_0} v, I^{V/V_0} w \rangle\right)
\eeqn
which is Hermitian and $G$-invariant. Notice that if $\langle \cdot, \cdot \rangle$ is already Hermitian, then this construction does not alter it. 

Lastly, two $G$-invariant normally Hermitian structures can be connected by taking convex combinations. 
\end{proof}

Now consider the chartwise construction. Let ${\mb V}$ be an NCS linear space acted linearly by a finite group $G$. Let $U \subset {\mb V}$ be a $G$-invariant ball centered at the origin. Let $Y \subset U$ be a $G$-invariant closed set.

\begin{lemma}
Let $U_0:= U\cap V_0$ be the lowest stratum of $U$. Denote $Y_0:= U_0 \cap Y$. Suppose the tangent bundle restriction $TU|_{U_0}$ is equipped with a $G$-invariant NC splitting near $Y_0$. Then there exists a $G$-invariant NC splitting of $TU|_{U_0}$ which agrees with the existing one near $Y_0$.
\end{lemma}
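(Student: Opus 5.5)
The idea is to reduce the extension problem to a convexity statement for normal Hermitian structures. Concretely, NC splittings will be produced from normal Hermitian structures via Lemma \ref{lemma_linear_NH}, and then glued with a $G$-invariant partition of unity on $U_0$.

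Since $U$ is a ball in the linear space $V$, the tangent bundle $TU|_{U_0} = U_0 \times V$ is canonically trivial. Each fibre is the stratified linear space ${\mb V}$ with its NC structure, and these do not vary with the base point. The group $G$ acts on $U_0$ and on $V$ linearly. An NC splitting of $TU|_{U_0}$ is therefore just a map $U_0 \to \mathrm{Spl}({\mb V})$ into the set of NC splittings of ${\mb V}$. Here $G$-invariance means equivariance with respect to $(g\cdot s)(V_\alpha) := g\,s(V_{g^{-1}\alpha})$.

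\textbf{Step 1: a canonical description of splittings.} I would show that NC splittings arise from pairs (complement $\check V_0$ of $V_0$, normal Hermitian structure $h$ on $V/V_0$). Given such a pair, identify $\check V_0 \cong V/V_0$ and set
\[
V_\alpha^\vee := \text{the $h$-orthogonal complement of } V_\alpha/V_0 \text{ inside } \check V_0 .
\]
These are complex subspaces. For $\alpha\le\beta$ we have $V_\alpha\subset V_\beta$, hence $V_\beta^\vee\subset V_\alpha^\vee$, and this inclusion is complex linear. This is the remark following the definition of normal Hermitian structure.

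The subtlety is that a given NC splitting need not come from any pair. So the construction will interpolate \emph{pairs}, not splittings. To handle the given data near $Y_0$, I would first modify it: on a neighborhood $W$ of $Y_0$, the given splitting $s$ is fixed. The plan is to find a pair $(\check V_0, h)$ on a slightly smaller neighborhood $W'$ that reproduces $s$; if that fails, an alternative route is needed (see below).

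\textbf{Step 2: partition of unity.} The more robust route is to interpolate splittings directly. The space of NC splittings is an iterated affine bundle. For a fixed flag-type poset, choosing the complements $V_\alpha^\vee$ compatibly amounts to choosing complex linear maps graphing the complements over reference complements. The constraints $V_\beta^\vee\subset V_\alpha^\vee$ are affine in suitable coordinates only when processed in order of decreasing $\alpha$. I expect this to be the main obstacle: the nested-complement condition is not linear in general, because the lattice generated by the $V_\alpha$ may not be distributive.

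My first attempt would be to sidestep it. Take a $G$-invariant cutoff $\chi$ equal to $1$ near $Y_0$ and supported in $W$, obtained by averaging. Choose a $G$-invariant inner product on $V$, made normally Hermitian by Lemma \ref{lemma_linear_NH}. Then define the splitting pointwise from a $G$-invariant Hermitian structure $h_x$ interpolating between the standard one and a structure adapted to $s$.

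If adapted structures do not exist, I fall back on homotopy extension. The set of NC splittings of ${\mb V}$ is a smooth contractible manifold: it fibres over the choices of $V_0^\vee$ with fibres built from the complex affine spaces of complements within $V/V_0$, processed stratum by stratum. A $G$-equivariant section defined near $Y_0$ then extends over $U_0$. This uses equivariant obstruction theory for a $G$-space with contractible fibres on the fixed-point sets, and equivalently a $G$-invariant tubular retraction argument.

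Both routes give a $G$-invariant NC splitting agreeing with $s$ near $Y_0$. Smoothness follows by using smooth cutoffs, or by a smooth equivariant extension from a smaller closed neighborhood of $Y_0$.
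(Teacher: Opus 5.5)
\paragraph{A genuine gap.} Your final argument rests on the fallback claim that the space of NC splittings of $\mathbb{V}$ is a smooth contractible manifold, built stratum by stratum from affine spaces. That claim is false as stated.

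Your reduction is correct as far as it goes. The space of NC splittings is the product of the affine space of real complements $V_0^\vee$ of $V_0$ with the space of complex splittings of $V/V_0$. The second factor, however, is not an iterated affine bundle in either processing order. This already happens for a Boolean (distributive) lattice, so non-distributivity is not the source of the difficulty.

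Take the coordinate stratification of $\mathbb{C}^3$, whose stratum closures are the axes $L_i$ and the coordinate planes $P_{ij}$.
\begin{itemize}
\item Processing upward: $L_1^\vee=\{x_1=x_2\}$ and $L_2^\vee=\{x_1=x_2+x_3\}$ are admissible. But $L_1^\vee\cap L_2^\vee=\mathbb{C}(e_1+e_2)\subset P_{12}$, so there is no complement $P_{12}^\vee\subset L_1^\vee\cap L_2^\vee$ of $P_{12}$.
\item Processing downward: $P_{12}^\vee=\mathbb{C}(e_2+e_3)$ and $P_{13}^\vee=\mathbb{C}(e_1+e_2+e_3)$ are admissible. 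But their span contains $e_1$, so there is no $L_1^\vee$ containing both.
\end{itemize}
So your description gives neither smoothness nor contractibility.

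Equivariant extension needs even more than contractibility of the whole space. You need, at least in a range of dimensions, contractibility of its fixed loci under every isotropy group $H\subset G$ of points of $U_0$. Since $Y_0$ is an arbitrary closed $G$-invariant set (for instance an invariant sphere in $U_0$), the lemma for arbitrary given splittings genuinely requires this homotopy-theoretic input. You assert it without proof. Your middle route presupposes a Hermitian structure adapted to $s$, so it does not help either.

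\paragraph{Relation to the paper's proof.} Your first route is the paper's argument. The paper interpolates two pieces of data by convex combinations with a $G$-invariant cut-off: a complement of $TU_0$ in $TU|_{U_0}$, and a $G$-invariant normal Hermitian structure. It simply asserts that near $Y_0$ one can choose Hermitian inner products on $NU_0$ inducing the given splitting.

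You are right that this is the crux. For two distinct lines $L_1,L_2\subset\mathbb{C}^2$, the splitting $L_1^\vee=L_2^\vee=M$ with $M\neq L_1,L_2$ is admissible. It is not Hermitian-induced, because orthogonal complements of distinct lines are distinct.

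The clean way to make this route rigorous is to change the data being extended. Require the splitting near $Y_0$ to come with an inducing $G$-invariant pair $(\check V_0,h)$, and extend the pair rather than the bare splitting. Both components range over convex sets, so the cut-off argument is then immediate. This suffices provided every splitting fed into the surrounding inductions is itself produced from such metric data.
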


\begin{proof}
First, an NC splitting of the tangent bundle $TU|_{U_0}$ induces a splitting of the exact sequence
\beq\label{bundle_exact_sequence}
\xymatrix{ 0 \ar[r] & TU_0 \ar[r] & TU|_{U_0} \ar[r] & NU_0 \ar[r] & 0 }
\eeq
and any two such splittings can be interpolated by convex linear combinations using $G$-invariant cut-off functions on $U_0$. For the given NC splitting near $Y_0$, one chooses a family of $G$-invariant Hermitian inner products on $NU_0$ near $Y_0$, which hence induces a normal Hermitian structure on $TU|_{U_0}$ near $Y_0$. On the other hand, by Lemma \ref{lemma_linear_NH}, one can also find a constant splitting of the exact sequence \eqref{bundle_exact_sequence} and a normal Hermitian structure on $TU|_{U_0}$. Since both the splitting of \eqref{bundle_exact_sequence} and the normal Hermitian structure can be interpolated using convex linear combinations, one can use a $G$-invariant smooth cut-off function supported near $Y_0$ to obtain an NC splitting of $TU|_{U_0}$ which agrees with the existing one near $Y_0$.
\end{proof}

\begin{cor}
The existing $G$-equivariant straightening near $Y\subset U$ can be extended to one near $Y \cup U_0$ which agrees with the existing one near $Y$.
\end{cor}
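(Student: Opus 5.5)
The plan is to build the extension from two ingredients: the NC splitting of $TU|_{U_0}$ given by the preceding lemma, and the linear structure of the chart $U \subset {\mb V}$. Then patch the result with the existing straightening near $Y$, using a cut-off and germ gluing.

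First apply the preceding lemma to get a $G$-invariant NC splitting of $TU|_{U_0}$ agreeing with the existing one near $Y_0$. This gives a complement $N_0 \subset TU|_{U_0}$ to $TU_0$ and a complex splitting of $NU_0 \cong N_0$, compatible with the substrata. Define the candidate tubular neighborhood by
\beqn
\rho_0: N^\epsilon U_0 \to U, \qquad (x, v) \mapsto x + v .
\eeqn
It is $G$-equivariant because the splitting is $G$-invariant and $G$ acts linearly. It is stratified: the fibre splitting is compatible with the linear strata $V_\alpha$, and for $v \in (N_x U_0)_\alpha$ the sum $x+v$ lies in $V_\alpha$, since $(N_xU_0)_\alpha$ is identified with the $N_0$-part of $V_\alpha$. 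Its derivative along $U_0$ realizes the chosen splitting of \eqref{bundle_exact_sequence}.

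Near $Y_0$, however, the existing straightening may use a different, nonlinear tubular map $\rho_0^{\rm old}$. Interpolate the two maps by setting $\rho_0 = \rho_0^{\rm old}$ on a neighborhood of $Y_0$ and using the linear formula away from it. One way is to take $x + \chi(x)\bigl(\rho_0^{\rm old}(x,v) - x\bigr) + (1-\chi(x))\,v$ with a $G$-invariant cut-off $\chi$. This is stratified in the linear chart, because both $\rho^{\rm old}(x,v)$ and $x+v$ lie in $V_\alpha$ for $v$ in the $\alpha$-part, and $V_\alpha$ is a linear subspace. The two maps have the same fibre derivative at the zero section near $Y_0$, so for small $\epsilon(x)$ the interpolation is still an open embedding.

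Next, Lemma \ref{lemma_compatible_straightening} induces straightenings along each $N(U_\beta^* \cap |N^\epsilon U_0|)$ for $\beta > 0$. These must agree with the existing straightening of $U_\beta^*$ near $Y$ on the overlap. Where the old data is defined they agree automatically, since the old straightening was itself compatible (Definition \ref{defn_manifold_straightening}) and we kept $\rho_0^{\rm old}$ there. Combining them by Lemma \ref{lemma_closed_sheaf}, applied to the closed sets $Y$ and a closed neighborhood of $U_0$ inside $|N^\epsilon U_0|$, gives an element of $\mc{ST}$ near $Y \cup U_0$. NC compatibility holds because the splitting is complex.

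The step I expect to be the main obstacle is the interpolation between the old nonlinear tubular map and the linear one. It must stay a stratified embedding, remain $G$-equivariant, and keep the germ compatibility with higher strata's old straightenings in the transition region. If the convex-combination formula fails to preserve compatibility, I would first straighten the old data near $Y_0$ by a stratified isotopy to the linear model, and only then glue.
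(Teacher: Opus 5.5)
Your proposal is correct and is essentially the paper's proof: the NC splitting from the preceding lemma, the linear tubular map $(x,v)\mapsto x+v$, and the convex interpolation $\chi(x)\rho_0^{\rm old}(x,v)+(1-\chi(x))(x+v)$ with a $G$-invariant cut-off supported near $Y_0$ are exactly what the paper uses. Your treatment of the induced higher-strata straightenings and the gluing via Lemma \ref{lemma_closed_sheaf} goes beyond what the paper writes, which leaves this to the later induction on strata. In that step, apply the lemma to $Y$ and $U_0$ itself, whose intersection is $Y_0$, rather than to an arbitrary closed neighborhood of $U_0$: such a neighborhood can contain points of $Y$ lying over where $\chi<1$, where you have switched to the linear model and agreement with the old data is not automatic.
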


\begin{proof}
First, choose a splitting of \eqref{bundle_exact_sequence} and an NC splitting of $TU|_{U_0}$ which agrees with the existing one near $Y \cap U_0$. Let $NU_0 \to TU|_{U_0}$ be the corresponding bundle inclusion, where for each $x \in U_0$, a vector $v \in N_x U_0$ is then identified with a vector in $V$. Then define the tubular neighborhood 
\beqn
\rho_1: N^\epsilon U_0 \to U,\ \rho(x, v) = x + v
\eeqn
which is clearly a $G$-invariant tubular neighborhood of $U_0$. Then choose a $G$-invariant cut-off function $\lambda: U_0 \to [0, 1]$ supported near $Y \cap U_0$. Let the existing tubular neighborhood be $\rho_0: N^\epsilon U_0 \to U$ (which is only defined near $Y\cap U_0$). Define
\beqn
\rho: N^\epsilon U_0 \to U,\ \rho(x, v) = \lambda(x) \rho_0(x, v) + (1- \lambda(x)) \rho_1(x, v).
\eeqn
This interpolation agrees with the existing one near $Y\cap U_0$. 
\end{proof}

To finish proving Lemma \ref{lemma_straightening_chart}, one use another layer of induction on strata. By deleting the lowest stratum $U_0$, one obtains a chart with one fewer stratum. Then one can inductively build tubular neighborhoods and NC splittings of normal bundles while preserving the symmetry.

\section{Whitney Stratification and Transversality}\label{section3}

In this section we define the transversality condition for normally complex stratified multisections, called NCS transversality. It shares similar features as the FOP transversality condition defined in \cite{Bai_Xu_integer}. In fact, the transversality condition depends on certain canonical Whitney stratification on a particular kind of complex algebraic variety.

\subsection{Canonical Whitney stratification on the variety $Z$}

\subsubsection{Review of Whitney stratifications}

Recall that in a smooth manifold $M$, an ordered pair of disjoint submanifolds $(U, V)$ is said to satisfy Whitney's {\bf condition (b)} if, for each $x \in \ov{U} \cap V$, for any two sequences $x_i \in U$ and $y_i \in V$ both of which converging to $x$ such that the sequence of secant lines $\ov{x_i y_i}$ converging to a line $l \subset T_x M$ and the sequence of tangent spaces $T_{x_i} U$ converging to a subspace $H \subset T_x M$, there holds $l \subset H$. Let $Z \subset M$ be a subset. A stratification of $Z$ is called a {\bf Whitney stratification} if each stratum is a smooth submanifold of $M$ and each pair of distinct strata satisfies Whitney's condition (b).

Whitney stratifications are convenient for discussing transversality against singular sets. A smooth map $f: N \to M$ between two manifolds is said to be {\bf transverse} to a subset $Z \subset M$ (with respect to a Whitney stratification ${\mf Z}$) if $f$ is transverse to all strata of $Z$. If $f$ is transverse to $Z$, then the pullback $f^* {\mf Z}$ (see \eqref{naive_pullback}) is a Whitney stratifications on $f^{-1}(Z)$.

Whitney \cite{Whitney_1965} proved that complex algebraic subvarieties in an ambient smooth variety always have a Whitney stratification. In fact, Whitney's constructive proof indeed provides a canonical one, which is ``minimal'' in the following sense. More precisely, a Whitney stratification ${\mf Z}$ on $Z \subset M$ induces a filtration 
\beqn
\cdots \supseteq {\mf Z}_n \supseteq {\mf Z}_{n-1} \supseteq \cdots\ \ {\rm where}\ {\mf Z}_n:= \bigsqcup_{{\rm dim} Z_\alpha^* \leq n} Z_\alpha^*.
\eeqn
We write ${\mf Z} < {\mf Z}'$ if there exists a dimension $k$ such that 
\beqn
{\mf Z}_l = {\mf Z}_l'\ \forall l > k\ {\rm and}\ {\mf Z}_k \subsetneq {\mf Z}_k'.
\eeqn
Then define ${\mf Z} \leq {\mf Z}'$ if either ${\mf Z} = {\mf Z}'$ and ${\mf Z} < {\mf Z}'$. This is a partial order among equivalence classes of Whitney stratifications (with connected strata).

Whitney's construction, which provides a canonical (also minimal) Whitney stratification of a complex algebraic variety, can be extended to the complex analytic category in a relative setting.

\begin{prop}\label{prop31}\cite[Proposition 3.6]{Bai_Xu_integer}
Let $M$ be a complex manifold and $Z \subset M$ be a closed complex analytic set. Let ${\mf M} = \{ M_\alpha^* \ |\ \alpha \in {\mc A}\}$ be a stratification of $M$ by strongly analytic submanifolds $M_\alpha^*$ (namely, $\ov{M_\alpha^*}$ and $\ov{M_\alpha^*} \setminus M_\alpha^*$ are both closed analytic sets). Then there exists a minimal refinement of the partition of $Z$
\beqn
{\mf M} \cap Z = \{ M_\alpha^* \cap Z\neq \emptyset \ |\ \alpha \in {\mc A} \},
\eeqn
denoted by ${\mf Z}$, which is a Whitney stratification. We call ${\mf Z}$ the {\bf canonical Whitney stratification} of $Z$ {\bf relative to} ${\mf M}$. Moreover, ${\mf Z}$ satisfies the following conditions.
\begin{enumerate}

\item Each stratum is a strongly analytic submanifold of $M$.

\item For any $C^\infty$ diffeomorphism $f: M \to M$ which preserves ${\mf M}$ and which preserves $Z$ setwise, then $f^* {\mf Z} = {\mf Z}$.

\item For any open subset $O \subset M$, the restriction ${\mf Z}|_O$ coincides with the canonical Whitney stratification of $Z\cap O$ relative to ${\mf M}|_O$. 
\end{enumerate}
\end{prop}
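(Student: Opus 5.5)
The plan is to follow Whitney's original constructive scheme, made relative to ${\mf M}$ and carried out in the complex analytic category, and to obtain minimality and canonicity from the fact that each step is forced. Set $d = \dim_{\mb C} Z$. We build a decreasing filtration by closed analytic sets
\beqn
Z = Z^{(d)} \supseteq Z^{(d-1)} \supseteq \cdots \supseteq Z^{(-1)} = \emptyset,
\eeqn
where $Z^{(k)}$ is the union of the strata of dimension at most $k$. Each step removes the ``bad'' locus from the current top-dimensional part.

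Concretely, at stage $k$ we are given the closed analytic set $Z^{(k)}$ and the higher strata already chosen. We define $B_k \subset Z^{(k)}$ as the set of points $x$ of the $k$-dimensional part that fail any of the following:
\begin{enumerate}
\item $Z^{(k)}$ is a smooth $k$-dimensional manifold near $x$;
\item a neighborhood of $x$ in $Z^{(k)}$ lies in a single piece $M_\alpha^* \cap Z$;
\item Whitney's condition (b) holds at $x$ for every pair (higher stratum already constructed, $Z^{(k)}$ near $x$).
\end{enumerate}
The $k$-dimensional strata are the connected components of $Z^{(k)} \setminus (B_k \cup Z^{(k-1)}_{\rm lower})$, and $Z^{(k-1)}$ is this bad set together with the lower-dimensional part of $Z^{(k)}$.

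The key inputs are the following.
\begin{enumerate}
\item[(a)] The singular locus of an analytic set is analytic and nowhere dense in each top-dimensional component.
\item[(b)] Since the strata of ${\mf M}$ are strongly analytic, the locus where condition (ii) fails is contained in the analytic set $\bigcup_\alpha (\ov{M_\alpha^*}\setminus M_\alpha^*) \cap Z$, intersected appropriately. Local finiteness of ${\mf M}$ keeps this analytic.
\item[(c)] For a pair $(X, Y)$ with $X$ analytic and $Y \subset \ov X$ analytic, the set of points of $Y$ where (b) fails is contained in a proper closed analytic subset of $Y$.
\end{enumerate}
Input (c) is Whitney's lemma, proved via conormal spaces and the curve selection lemma in the analytic setting. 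These inputs make $B_k$ a closed analytic set of dimension less than $k$, so the induction terminates and yields a Whitney stratification refining ${\mf M} \cap Z$ whose strata are strongly analytic. This gives property (1).

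Minimality comes from comparing with an arbitrary Whitney refinement ${\mf Z}'$, descending by dimension. At each level $k$, any point of $B_k$ cannot lie in a $k$-dimensional stratum of ${\mf Z}'$. This holds whenever ${\mf Z}'_l = {\mf Z}_l$ for all $l > k$, because conditions (i)–(iii) are necessary for a point to lie in a $k$-stratum of any Whitney refinement. Hence ${\mf Z}_k \subseteq {\mf Z}'_k$, with equality or strict inclusion, which gives ${\mf Z} \leq {\mf Z}'$.

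Properties (2) and (3) follow because every choice is intrinsic. Conditions (i)–(iii) are local and invariant under $C^\infty$ diffeomorphisms preserving ${\mf M}$ and $Z$. Smoothness of a subset and condition (b) are diffeomorphism-invariant, since (b) is a $C^1$-invariant condition. So $f$ maps $B_k$ to $B_k$ inductively, giving $f^*{\mf Z} = {\mf Z}$. Likewise the construction commutes with restriction to open $O$, giving (3).

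The main obstacle is input (c), the analyticity and properness of the Whitney-(b) failure locus relative to already-chosen higher strata. This is where Whitney's algebraic argument must be transplanted to the local analytic setting. I would handle it by working locally, using the conormal/blowup characterization of condition (b), for instance via Lê–Teissier or Hironaka's analytic treatment, and citing it as standard.
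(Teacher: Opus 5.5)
Your strategy is the one the paper relies on. The paper gives no proof of its own: it cites Bai--Xu, who run Whitney's dimension-descending construction relative to ${\mf M}$. As written, however, the decisive step does not go through.

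Your inputs (b) and (c) only say that the failure loci are \emph{contained in} proper analytic subsets. From this you conclude that $B_k$ \emph{is} a closed analytic set of dimension $<k$, and that inference is invalid. It is also exactly where canonicity lives. Your minimality argument only shows that a competitor's lower skeleton contains $B_k$, so the construction must remove precisely $B_k$; replacing it by some analytic set containing it destroys minimality. Meanwhile the next inductive stage, property (1), and even local closedness of the $k$-strata all need this exact set to be closed and analytic.

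Whitney's lemma gives only containment. What you need is the stronger theorem: for a stratum $T$ with analytic closure and a connected complex submanifold $Y\subset \ov{T}\setminus T$, the set of points of $Y$ where $(T,Y)$ fails Whitney's condition (b) is itself a closed analytic subset of $Y$. This follows from the L\^e--Teissier characterization of condition (b) by the dimension of the fibres over $Y$ of the exceptional divisor of the blown-up conormal space, combined with upper semicontinuity of fibre dimension. Alternatively it follows from Teissier's polar-multiplicity criterion, which is also where the minimal Whitney stratification comes from. Your last paragraph points at these tools, but (c) must be stated and used in this exact form.

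For your condition (ii), exactness is elementary. Each $k$-dimensional connected component $R$ of the regular locus of $Z^{(k)}$ meets a unique piece $M_\gamma^*$ in an open dense set. The failure locus in $R$ is then exactly $R\cap(\ov{M_\gamma^*}\setminus M_\gamma^*)$, and together with ${\rm Sing}(Z^{(k)})$ these give an analytic set.

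A second, smaller gap concerns the comparison with an arbitrary Whitney refinement ${\mf Z}'$ and property (2). Neither is formal: the strata of ${\mf Z}'$ are only $C^\infty$ submanifolds, a priori of any real dimension, and $f$ is only a $C^\infty$ diffeomorphism, so neither sees analytic singular loci directly. The missing fact is that a point where a complex analytic set is a $C^1$ real submanifold is a regular point. To see this:
\begin{itemize}
\item the tangent cone is a complex cone, hence a complex $k$-plane $P$;
\item the set is a graph over $P$, whose graph function is continuous and holomorphic off the image of the singular locus;
\item so the graph function is holomorphic by Riemann extension.
\end{itemize}
With this fact, suppose the skeleta agree down to $Z^{(k)}$. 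Then every point of $Z^{(k)}$ lying in a $2k$-dimensional stratum of ${\mf Z}'$ satisfies (i)--(iii), which gives $Z^{(k-1)}\subseteq {\mf Z}'_{2k-1}$. Your indices are off by one at this point.

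Property (2) then follows from minimality rather than from re-running the construction. Since condition (b) is invariant under $C^1$ diffeomorphisms, $f^*{\mf Z}$ and $(f^{-1})^*{\mf Z}$ are Whitney refinements of ${\mf M}\cap Z$, so ${\mf Z}\leq f^*{\mf Z}\leq {\mf Z}$.

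Finally, you should say why your partition satisfies the axiom of frontier. One way is from condition (b) and connectedness of strata, via Thom--Mather local triviality.
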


The last property stated in Proposition \ref{prop31} can be generalized as follows. It is useful in comparing canonical Whitney stratifications in different spaces and transferring transversality conditions among them. 

\begin{prop}\label{prop_submersion_pullback} \cite[Proposition B.20]{Bai_Xu_integer}
Let $M$, $Z$, and ${\mf M}$ be as in Proposition \ref{prop31}. Let $\pi: \tilde M \to M$ be a holomorphic submersion. Then
\beqn
\pi^* {\mf Z} =  \tilde {\mf Z}
\eeqn
where ${\mf Z}$ is the canonical Whitney stratification on $Z$ relative to ${\mf M}$ and $\tilde {\mf Z}$ is the canonical Whitney stratification on $\tilde Z = \pi^{-1}(Z)$ relative to $\pi^* {\mf M}$.
\end{prop}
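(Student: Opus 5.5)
The plan is to reduce the statement to the local characterization of the canonical Whitney stratification as the \emph{minimal} Whitney refinement, and to use that holomorphic submersions are locally products. First, $\pi^*{\mf M}$ is again a stratification of $\tilde M$ by strongly analytic submanifolds: $\pi^{-1}(\ov{M_\alpha^*})$ and $\pi^{-1}(\ov{M_\alpha^*}\setminus M_\alpha^*)$ are closed analytic sets, and each $\pi^{-1}(M_\alpha^*)$ is a submanifold because $\pi$ is a submersion. So $\tilde{\mf Z}$ is defined by Proposition \ref{prop31}. Also $\pi^*{\mf Z}$ is a Whitney stratification of $\tilde Z$ refining $\pi^*{\mf M}\cap\tilde Z$, since preimages of submanifolds under submersions are submanifolds and Whitney condition (b) is preserved under pullback by submersions. (Check (b) in local coordinates where $\pi$ is a linear projection $(u,w)\mapsto u$; secants and tangent limits split as products.) By minimality, $\tilde{\mf Z}\le \pi^*{\mf Z}$, and more precisely each stratum of $\pi^*{\mf Z}$ is a union of strata of $\tilde{\mf Z}$.

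For the reverse comparison I would localize. By property (3) of Proposition \ref{prop31}, both sides are compatible with restriction to open sets, so it suffices to prove equality near each point of $\tilde Z$. On a neighborhood where $\pi$ is the projection $O\times B\to O$, with $B$ a ball in ${\mb C}^k$, the data $(Z\cap O)\times B$ and $({\mf M}|_O)\times B$ are invariant under the translations $(u,w)\mapsto(u,w+c)$. More usefully, they are invariant under the compactly supported diffeomorphisms $\Phi_t$ obtained as flows of vector fields $\chi(w)\partial_w$ tangent to the $B$-factor. By property (2), $\tilde{\mf Z}$ is preserved by all such $\Phi_t$. Because these flows act transitively along each fibre $\{u\}\times B$, every stratum of $\tilde{\mf Z}$ is, near the point, a union of fibres, hence of the form $S\times B$. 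Then I would define ${\mf Z}'$ on $Z\cap O$ by $S\times B\mapsto S$ (take the slice $w=0$). The slice of a Whitney stratification transverse to it is again Whitney, and ${\mf Z}'$ refines ${\mf M}\cap Z$, so minimality on $M$ gives ${\mf Z}|_O\le{\mf Z}'$. Pulling back, $\pi^*{\mf Z}\le\tilde{\mf Z}$ locally, and combined with the first paragraph this gives equality.

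The main obstacle is making ``minimal'' rigorous. The ordering is only a partial order on filtrations, so I would rather use Whitney's constructive description: the canonical stratification is produced by iteratively removing from the top-dimensional part the analytic set where condition (b) fails or where the set is singular. I would then show that each step of this construction commutes with $\pi^{-1}$. Singular loci satisfy $\operatorname{Sing}(\pi^{-1}A)=\pi^{-1}\operatorname{Sing}(A)$, and the (b)-failure locus of $(\pi^{-1}X,\pi^{-1}Y)$ equals $\pi^{-1}$ of that of $(X,Y)$ by the local product computation. Induction on dimension then gives $\pi^*{\mf Z}=\tilde{\mf Z}$ directly. The translation-invariance argument above would serve only as a cross-check.
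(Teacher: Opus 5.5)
The paper gives no proof of this statement; it imports it from \cite{Bai_Xu_integer} (Proposition B.20). Your proposal is correct, and it actually contains two independent proofs.

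\textbf{The constructive route (your last paragraph).} This is the standard argument. Localize with property (3) so that $\pi$ becomes a projection $O\times B\to O$, then check that every ingredient of Whitney's construction commutes with $\pi^{-1}$. Your local-product check of the (b)-failure locus does go through: if the $O$-component of the secants is negligible, the limit line lies in the fibre direction; otherwise the question reduces to (b) for the pair downstairs. Two things to add:
\begin{itemize}
\item The relative ingredients coming from $\mathfrak{M}$ (intersections with $\overline{M_\alpha^*}$ and with $\overline{M_\alpha^*}\setminus M_\alpha^*$) must be included. They also commute with $\pi^{-1}$, because $\pi$ is open and so $\pi^{-1}(\overline{A})=\overline{\pi^{-1}(A)}$.
\item The dimension index shifts uniformly by the fibre dimension.
\end{itemize}

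\textbf{The symmetry-and-slice route (your middle paragraph).} This is a genuinely different route, and it is a complete proof rather than a cross-check. The obstacle you raise disappears, because Whitney's construction makes the canonical stratification the \emph{least} element for the order on filtrations, not merely a minimal one.

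To see this, suppose a Whitney refinement $\mathfrak{Z}'$ has $\mathfrak{Z}'_l=\mathfrak{Z}_l$ for all $l>k$. Then its $(k+1)$-dimensional strata, namely $\mathfrak{Z}_{k+1}\setminus\mathfrak{Z}'_k$, must avoid two kinds of points:
\begin{itemize}
\item points where $\mathfrak{Z}_{k+1}$ is not a $(k+1)$-manifold lying in a single stratum of $\mathfrak{M}$;
\item points of the manifold part where (b) fails against the common higher strata.
\end{itemize}
Since $\mathfrak{Z}'_k$ is closed, it contains the closure of these points, and that closed set is precisely $\mathfrak{Z}_k$ in Whitney's construction.

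With this in hand, your two inequalities on a chart $\tilde O\cong O\times B$, together with antisymmetry, give $\mathfrak{Z}'=\mathfrak{Z}|_O$. Hence $\tilde{\mathfrak{Z}}|_{\tilde O}=\pi^*\mathfrak{Z}|_{\tilde O}$, and equality of stratifications is a local matter.

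Two small corrections to this route:
\begin{itemize}
\item The remark that ``each stratum of $\pi^*\mathfrak{Z}$ is a union of strata of $\tilde{\mathfrak{Z}}$'' is backwards, since the minimal stratification is the coarser one. It also does not follow from the lexicographic order at all. It is never used, so delete it.
\item ``Preserve'' in Proposition~\ref{prop31}(2) allows a diffeomorphism to permute strata. You should note that $\Phi_t$ is an isotopy from the identity, and that the equidimensional strata are the connected components of $\mathfrak{Z}_k\setminus\mathfrak{Z}_{k-1}$. Hence each $\Phi_t$ fixes every stratum, which is what your transitivity argument needs.
\end{itemize}

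\textbf{Comparison.} The symmetry route uses Proposition~\ref{prop31} as a black box, plus the fact (stated in the paper) that transverse pullbacks of Whitney stratifications are Whitney. It needs no (b)-computations at all. The constructive route needs the internals of Whitney's construction, in particular the analyticity and locality of the (b)-failure loci. In return it identifies the skeleta directly, and it is the kind of argument one needs anyway to establish properties (2)--(3) or the product compatibility used in Proposition~\ref{prop_product_Whitney}.
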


\subsubsection{The universal vanishing loci}

Recall that for a complex stratified (CS) virtual space $(\mb{V}, \mb{W})$, one has the space of stratified polynomial maps ${\rm Poly}{}^d(\mb{V}, \mb{W})$. Define
\beqn
M^d(\mb{V}, \mb{W}):= V \times {\rm Poly}{}^d({\mb V}, {\mb W})
\eeqn
which is a complex vector space. Consider the subset
\beqn
Z^d ( {\mb V}, {\mb W} ):= \Big\{(v, P) \in V \times {\rm Poly}^d ({\mb V}, {\mb W} )\ |\ P(v) = 0 \Big\}
\eeqn
which is a complex algebraic variety.

\begin{defn}
For each $d \geq 0$, the {\bf canonical Whitney stratification} on $Z^d( {\mb V}, {\mb W})$, denoted by ${\mf Z}^d(\mb{V}, \mb{W})$, is the canonical Whitney stratification relative to the stratification on $M^d({\mb V}, {\mb W})$ with strata
\beqn
M_\alpha^*:= V_\alpha^* \times {\rm Poly}{}^d({\mb V}, {\mb W})
\eeqn
provided by Proposition \ref{prop31}.
\end{defn}

\subsection{Properties of the canonical Whitney stratification}

We need various properties of the canonical Whitney stratifications in order to established a well-defined and well-behaving transversality condition on stratified orbifolds. The discussion in this subsection is parallel to \cite[Section 3]{Bai_Xu_integer}. 

In this subsection, $(\mb{V}, \mb{W})$ always denote a complex stratified virtual vector space with underlying poset map ${\mc O}^V(V) \to {\mc O}^W(W)$.

\subsubsection{Invariance}

An automorphism of ${\mb V}$ resp. ${\mb W}$ is a linear automorphism which preserves the stratification and the complex structure. Then each element of ${\rm Aut}({\mb V}) \times {\rm Aut}({\mb W})$ induces a self-diffeomorphism on $V \times {\rm Poly}{}^d({\mb V}, {\mb W})$ which preserves the stratification and the subset $Z^d({\mb V}, {\mb W})$. 

\begin{prop}\label{prop34}
${\rm Aut}({\mb V})\times {\rm Aut}({\mb W})$ preserves the canonical Whitney stratification on $Z^d({\mb V}, {\mb W})$.
\end{prop}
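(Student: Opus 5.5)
The plan is to deduce the statement from the naturality property (2) of Proposition \ref{prop31}. That property says the canonical Whitney stratification relative to ${\mf M}$ is preserved by any $C^\infty$ diffeomorphism of the ambient space that preserves ${\mf M}$ and preserves $Z$ setwise. So it suffices to check that each $(g,h) \in {\rm Aut}({\mb V})\times {\rm Aut}({\mb W})$ induces such a diffeomorphism of $M^d({\mb V},{\mb W}) = V \times {\rm Poly}^d({\mb V},{\mb W})$.

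First I would write down the action explicitly:
\beqn
\Phi_{(g,h)}(v, P) = \big(gv,\ h\circ P\circ g^{-1}\big).
\eeqn
Two things must be checked. The first is that $h\circ P\circ g^{-1}$ again lies in ${\rm Poly}^d({\mb V},{\mb W})$. It is a complex polynomial of degree at most $d$ because $g$ and $h$ are complex linear. For the stratification condition, $g^{-1}$ preserves the stratification of $V$. Here I interpret automorphisms as compatible with the hidden poset map ${\mc O}^V(V)\to{\mc O}^W(W)$, i.e. if $g$ sends $V_\alpha$ to $V_{\alpha'}$ then $h$ sends $W_{\beta(\alpha)}$ to $W_{\beta(\alpha')}$. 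If instead $g$ and $h$ fix every stratum setwise, the check is immediate. Granting this, $h P g^{-1}(V_{\alpha'}) = hP(V_\alpha)\subset h(W_{\beta(\alpha)}) = W_{\beta(\alpha')}$. The second thing is that $\Phi_{(g,h)}$ is a complex linear isomorphism, with inverse $\Phi_{(g^{-1},h^{-1})}$, so it is in particular a $C^\infty$ (indeed holomorphic) diffeomorphism.

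Next I verify the two preservation conditions. The map $\Phi_{(g,h)}$ preserves the ambient stratification ${\mf M}$ with strata $M_\alpha^* = V_\alpha^*\times{\rm Poly}^d$, because $g$ permutes the strata $V_\alpha^*$ and the second factor is mapped onto itself. It preserves $Z^d$ because $(h P g^{-1})(gv) = h(P(v))$, which vanishes if and only if $P(v)=0$, since $h$ is injective. Then Proposition \ref{prop31}(2) gives $\Phi_{(g,h)}^*{\mf Z}^d = {\mf Z}^d$, which is the claim.

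The main obstacle is the definitional point in the first check: whether ${\rm Aut}({\mb V})\times{\rm Aut}({\mb W})$ is meant to respect the poset map ${\mc O}^V(V)\to{\mc O}^W(W)$. Without that compatibility, $hPg^{-1}$ need not be stratified, and the action would not even be defined on $M^d$. The remark preceding the proposition asserts that the action preserves the stratification and $Z^d$, so I would adopt the compatible interpretation. Beyond that, the proof is a direct citation of Proposition \ref{prop31}. The strata of $M^d$ are strongly analytic since each $V_\alpha$ is a linear subspace, so that proposition applies.
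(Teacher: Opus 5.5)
Your proposal is correct and follows the paper's own argument, which simply cites Proposition \ref{prop31}(2); you have supplied the routine checks that the paper leaves implicit, namely that the action is well defined and that it preserves both ${\mf M}$ and $Z^d$. Your remark is also fair: for $h\circ P\circ g^{-1}$ to stay in ${\rm Poly}^d({\mb V},{\mb W})$, the automorphisms must be compatible with the poset map ${\mc O}^V(V)\to{\mc O}^W(W)$, an assumption the paper uses without stating it.
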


\begin{proof}
This is the consequence of (2) of Proposition \ref{prop31}.
\end{proof}

This invariance property allows us to extend the canonical Whitney stratification to the family case. Let $Y$ be a smooth manifold and let $(F, E)$ be a complex stratified virtual vector bundle $Y$. Each fibre pair $({\mb F}_x, {\mb E}_x)$ is isomorphic to a fixed virtual vector space $({\mb V}, {\mb W})$. The structure group of the bundle $M^d(F, E) \to Y$ is ${\rm Aut}({\mb V}) \times {\rm Aut}({\mb W})$. There is also a subbundle
\beqn
Z^d({\mb F}, {\mb E}) \subset M^d({\mb F}, {\mb E}).
\eeqn
Then Proposition \ref{prop34} implies that there is a locally trivial Whitney stratification on the bundle, denoted by ${\mf Z}^d({\mb F}, {\mb E})$.

There is another invariance property related to nonlinear reparametrizations.

\begin{prop}
Let $\phi: V \to {\rm Aut}({\mb W})$ be a smooth family of automorphisms of ${\mb W}$. Consider the map 
\beqn
\begin{split}
\Phi: V \times {\rm Poly}{}^d({\mb V}, {\mb W}) \to &\ V \times {\rm Poly}{}^d({\mb V}, {\mb W})\\
(v, P) \mapsto &\ \Big( v, \phi(v)(P(\cdot)) \Big).
\end{split}
\eeqn
(This is well-defined because $\phi(v)$ is complex-linear.) Then 
\beqn
\Phi^* {\mf Z}^d({\mb V}, {\mb W})) = {\mf Z}^d(\mb{V}, \mb{W}).
\eeqn
\end{prop}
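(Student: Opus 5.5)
The strategy is to reduce to property (2) of Proposition \ref{prop31}: a $C^\infty$ diffeomorphism of the ambient space that preserves the stratification and the analytic set preserves the canonical Whitney stratification. So the work is to check that $\Phi$ satisfies these hypotheses on $M^d({\mb V},{\mb W}) = V\times {\rm Poly}^d({\mb V},{\mb W})$.

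First, $\Phi$ is a well-defined smooth map into $M^d({\mb V},{\mb W})$. For fixed $v$, the map $P\mapsto \phi(v)\circ P$ is complex linear on ${\rm Poly}^d(V,W)$. It keeps degree at most $d$, because $\phi(v)$ is complex linear. It preserves the stratification condition $P(V_\alpha)\subset W_{\beta(\alpha)}$, because $\phi(v)\in {\rm Aut}({\mb W})$ maps each $W_\beta$ to itself. (A priori an automorphism might permute strata. Since ${\rm Aut}({\mb W})$ is the automorphism group, I would either use that a smooth family through the identity component cannot permute the finitely many strata of fixed dimension, or restrict to automorphisms preserving each stratum, which is how ${\rm Aut}$ should be read here.) Smooth dependence on $v$ follows from smoothness of $\phi$. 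The map $(v,P)\mapsto (v,\phi(v)^{-1}\circ P)$ is a smooth inverse, so $\Phi$ is a $C^\infty$ diffeomorphism.

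Second, $\Phi$ preserves the ambient stratification $M_\alpha^* = V_\alpha^*\times {\rm Poly}^d({\mb V},{\mb W})$, because it is the identity on the $V$-factor. Third, $\Phi$ preserves $Z^d({\mb V},{\mb W})$ setwise: $\phi(v)$ is invertible, so $(\phi(v)\circ P)(v)=0$ if and only if $P(v)=0$. Proposition \ref{prop31}(2) then gives $\Phi^*{\mf Z}^d({\mb V},{\mb W})={\mf Z}^d({\mb V},{\mb W})$.

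The only real obstacle is that $\Phi$ is merely smooth, not holomorphic, because $\phi$ depends smoothly on $v$. This is exactly why property (2), which allows $C^\infty$ diffeomorphisms, is needed rather than some holomorphic invariance. If one did not trust that property for non-holomorphic maps, a fallback would be the following. Locally near a point $(v_0,P_0)$, write $\Phi$ as the composition of the holomorphic linear automorphism $(v,P)\mapsto (v,\phi(v_0)\circ P)$, which is covered by Proposition \ref{prop34}, with a map close to the identity. Then argue by the intrinsic characterization of the canonical stratification: Whitney's construction produces the minimal Whitney refinement, and minimality is preserved by diffeomorphisms preserving ${\mf M}$ and $Z$, since Whitney condition (b) is $C^1$-invariant. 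This is essentially the proof of property (2) itself, so invoking Proposition \ref{prop31}(2) directly suffices.
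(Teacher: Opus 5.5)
Your argument is correct and is essentially the paper's proof. The paper simply notes that $\Phi$ preserves the ambient stratification $V_\alpha^*\times {\rm Poly}^d({\mb V},{\mb W})$ and the set $Z^d({\mb V},{\mb W})$, and invokes Proposition \ref{prop31}(2), which is needed precisely because $\Phi$ is only $C^\infty$. Your extra checks fill in details the paper omits: the smooth inverse $(v,P)\mapsto (v,\phi(v)^{-1}\circ P)$, and the fact that $\phi(v)\circ P$ stays in ${\rm Poly}^d({\mb V},{\mb W})$. Of your two justifications for the latter, only reading ${\rm Aut}({\mb W})$ as fixing each stratum works in general: on connected $V$ the permutation of strata induced by $\phi(v)$ is constant but need not be trivial (e.g.\ a constant $\phi$ swapping two strata).
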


\begin{proof}
Because $\Phi$ preserves the stratification on $ M^d({\mb V}, {\mb W})$ as well as the set $ Z^d({\mb V}, {\mb W})$, this proposition follows from Proposition \ref{prop31}. 
\end{proof}

\subsubsection{Stratified transversality}

Let $\alpha \mapsto \beta(\alpha)$ denote the poset map ${\mc O}^V(V) \to {\mc O}^W(W)$ contained in the virtual vector space $({\mb V}, {\mb W})$. Because of the condition $P(V_\alpha) \subset W_{\beta(\alpha)}$ for $P \in  {\rm Poly}{}(\mb{V}, \mb{W})$, in general the variety $ {Z}{}^d(\mb{V}, \mb{W})$ cannot be regular. However its restriction to each stratum $\alpha$ is still cut out cleanly. The following lemma is similar to Fukaya--Ono's lemma (see \cite[Lemma 5]{Fukaya_Ono_integer} or \cite[Lemma 2.53]{Bai_Xu_integer}) in the context of equivariant polynomial maps. It also implies the $C^0$-density of the space of normally complex sections (Lemma \ref{lemma_c0_density}).

\begin{lemma}\label{lemma36}
There exists $d_0\geq 0$ (depending on $\mb{V}$ and $\mb{W}$) such that for any stratum $\alpha \in {\mc O}^V(V)$, there is a smooth function
\beqn
f_\alpha: V_\alpha^* \times W_{\beta(\alpha)}  \to  {\rm Poly}{}^{d_0}( \mb{V}, \mb{W})
\eeqn
such that $f_\alpha(v, w)(v) = w$.
\end{lemma}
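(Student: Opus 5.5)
The plan is to build $f_\alpha$ from linear data plus a scalar polynomial factor that kills values on strata not containing $v$. The naive guess $P = w \cdot \ell(\cdot)$ for a linear functional $\ell$ with $\ell(v)=1$ fails: $P$ must satisfy $P(V_\gamma)\subset W_{\beta(\gamma)}$ for \emph{every} stratum $\gamma$, not only $\alpha$. So the scalar factor must vanish on every $V_\gamma$ with $V_\gamma \not\supset V_\alpha$, that is, with $\gamma \not\geq \alpha$. On the remaining strata $\gamma \geq \alpha$ the poset map gives $W_{\beta(\alpha)} \subset W_{\beta(\gamma)}$, so there the constant value $w$ is allowed.

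First I would construct, for each $v \in V_\alpha^*$, a polynomial $h_v \in {\mb C}[V]$ with the following properties:
\begin{itemize}
\item[(i)] $h_v$ vanishes on $V_\gamma$ for all $\gamma \not\geq \alpha$;
\item[(ii)] $h_v(v)=1$;
\item[(iii)] $\deg h_v \leq N$, where $N$ is the number of strata;
\item[(iv)] $h_v$ depends smoothly on $v$.
\end{itemize}
Since $v \in V_\alpha^*$, we have $v \notin V_\gamma$ whenever $\gamma \not\geq \alpha$: if $v\in V_\gamma$ then $V_\alpha^*\cap V_\gamma\neq\emptyset$, and the axiom of frontier gives $\alpha\leq\gamma$. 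So for each such $\gamma$ we need a linear functional $\ell_\gamma$ vanishing on $V_\gamma$ with $\ell_\gamma(v)\neq 0$. To get this smoothly in $v$, fix a Hermitian inner product on $V$ and let $\pi_\gamma^\perp$ be the orthogonal projection onto $V_\gamma^\perp$. Set $\ell_{\gamma,v}(u) = \langle u, \pi_\gamma^\perp v\rangle$. It is complex linear in $u$ with the convention of linearity in the first slot, it vanishes on $V_\gamma$, it depends smoothly (real-analytically) on $v$, and $\ell_{\gamma,v}(v) = |\pi_\gamma^\perp v|^2 > 0$ on $V_\alpha^*$. Then define
\begin{equation*}
h_v(u) = \prod_{\gamma \not\geq \alpha} \frac{\ell_{\gamma,v}(u)}{|\pi_\gamma^\perp v|^2},
\end{equation*}
which satisfies (i)--(iv). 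The degree bound in (iii) is uniform in $v$ and $\alpha$.

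Next set $f_\alpha(v,w)(u) := h_v(u)\, w$. This is a polynomial of degree at most $N$, and $f_\alpha(v,w)(v)=w$. It remains to check that it lies in ${\rm Poly}(\mb V,\mb W)$, i.e.\ $P(V_\gamma)\subset W_{\beta(\gamma)}$ for every stratum $\gamma$. If $\gamma\not\geq\alpha$, then $P|_{V_\gamma}=0$, which lies in every $W_{\beta(\gamma)}$ because each closure contains the minimal stratum $W_0\ni 0$. If $\gamma\geq\alpha$, then $P(V_\gamma)\subset {\mb C} w\subset W_{\beta(\alpha)}\subset W_{\beta(\gamma)}$ by monotonicity of the poset map. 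Smoothness of $f_\alpha$ in $(v,w)$ is clear from the formula. So $d_0 = N$ (or the number of strata not above the minimal one) works for all $\alpha$ simultaneously.

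The only step needing care is (i), namely identifying exactly which strata must be killed and confirming that $v$ avoids all of them. This rests on the frontier axiom and the linearity of the closures $V_\gamma$, so I expect no real obstacle. One subtlety is that the paper's convention requires connected strata, which might give several strata with the same closure. This does not affect the argument, since the conditions depend only on the closures $V_\gamma$.
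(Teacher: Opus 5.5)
Your proposal is correct and follows essentially the same route as the paper: the paper also sets $f_\alpha(v,w)=f_v\,w$, where $f_v$ is a product of linear functionals, one for each $V_\beta$ not containing $V_\alpha$, each vanishing on that $V_\beta$ and normalized to $1$ at $v$, and it checks the two cases $v\in V_\gamma$ and $v\notin V_\gamma$ exactly as you do. The one point you add is the explicit formula $\ell_{\gamma,v}(u)=\langle u,\pi_\gamma^\perp v\rangle/|\pi_\gamma^\perp v|^2$, which makes concrete the smooth dependence on $v$ that the paper only asserts; your index set $\gamma\not\geq\alpha$ is also the right reading of the paper's product, which is written as ranging over $v\notin V_\beta^*$.
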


\begin{proof}
We can assume that $V_0 = W_0 = 0$. Fix a stratum $V_\alpha \subset V$. For each stratum $V_\beta$ with $V_\alpha \nsubseteq V_\beta$ and each $v \in V_\alpha^*$, by the axiom of frontier, one must have $v \notin V_\beta$. Then there exists a linear function $l_{\beta, v}: V \to {\mb C}$ such that 
\begin{align*}
    &\ l_{\beta, v}|_{V_\beta} \equiv 0,\ l_{\beta, v}(v) = 1
    \end{align*}
This can be made smoothly dependent on $v \in V_\alpha^*$. Then we define
\beqn
f_v = \prod_{v \notin V_\beta^*} l_{\beta, v} \in {\mb C}[V]
\eeqn
which is equal to $1$ at $v$ and vanishes on all stratum $V_\beta$ which does not contain $V_\alpha$. Then define
\begin{align*}
&\ F_\alpha: V_\alpha^* \times W_{\beta(\alpha)} \to {\rm Poly}(V, W),\ &\ F_\alpha(v, w) = f_v w\in {\rm Poly}(V, W).
\end{align*}
We verify that $F_\alpha(v, w) \in {\rm Poly}( \mb{V}, \mb{W} )$, namely, $F_\alpha (v, w) (V_\gamma) \subset W_{\beta(\gamma)}$ for all strata $V_\gamma \subset V$. Given an arbitrary $\gamma$, if $v \in V_\gamma$, then as $v \in V_\alpha^*$, it follows that $\alpha \leq \gamma$, implying $\beta(\alpha) \leq \beta(\gamma)$. Then for $u \in V_\gamma$, one has $f_v(u) w \in W_{\beta(\alpha)} \subset W_{\beta(\gamma)}$. If $v \notin V_\gamma$, then $F_\alpha(v, w)|_{V_\gamma} = 0 \in W_{\beta(\gamma)}$. Lastly, the constructed polynomial map clearly has an upper bound on its degree. 
\end{proof}

\begin{cor}
When $d \geq d_0$, for all $\alpha$, $ Z_\alpha^d(\mb{V}, \mb{W} )$ is a smooth manifold.
\end{cor}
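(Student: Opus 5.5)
The plan is to interpret $Z^d_\alpha({\mb V},{\mb W})$ as the part of $Z^d({\mb V},{\mb W})$ lying over the stratum $M_\alpha^* = V_\alpha^*\times {\rm Poly}^d({\mb V},{\mb W})$, that is,
\beqn
Z^d_\alpha({\mb V},{\mb W}) = \Big\{ (v,P) \in V_\alpha^* \times {\rm Poly}^d({\mb V},{\mb W})\ \big|\ P(v) = 0\Big\}.
\eeqn
We then exhibit it as the regular zero set of a submersion on the manifold $M_\alpha^*$. The first step is to note that the evaluation map
\beqn
{\rm ev}_\alpha: V_\alpha^* \times {\rm Poly}^d({\mb V},{\mb W}) \to W_{\beta(\alpha)},\qquad (v,P)\mapsto P(v)
\eeqn
really takes values in the linear subspace $W_{\beta(\alpha)}$. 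This holds because every $P\in{\rm Poly}^d({\mb V},{\mb W})$ satisfies $P(V_\alpha)\subset W_{\beta(\alpha)}$ by definition, and $V_\alpha^*\subset V_\alpha$. It follows that $Z^d_\alpha = {\rm ev}_\alpha^{-1}(0)$, where ${\rm ev}_\alpha$ is viewed as a smooth (indeed real-analytic, and holomorphic in $P$) map into the vector space $W_{\beta(\alpha)}$. Here $V_\alpha^*$ is an open subset of the linear space $V_\alpha$, so the domain is a smooth manifold.

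The second step is to show that ${\rm ev}_\alpha$ is a submersion at every point, which we will do using only the ${\rm Poly}$-direction. Fix $(v,P)$ with $v\in V_\alpha^*$. The derivative of ${\rm ev}_\alpha$ in the direction $(0,Q)$ is $Q(v)$, because ${\rm ev}_\alpha$ is linear in $P$. By Lemma \ref{lemma36}, for $d\geq d_0$ we have ${\rm Poly}^{d_0}({\mb V},{\mb W})\subset{\rm Poly}^d({\mb V},{\mb W})$. For any $w\in W_{\beta(\alpha)}$, the element $Q = f_\alpha(v,w)$ then lies in ${\rm Poly}^d({\mb V},{\mb W})$ and satisfies $Q(v)=w$. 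Hence $d\,{\rm ev}_\alpha$ surjects onto $W_{\beta(\alpha)}$. The implicit function theorem then shows that $Z^d_\alpha$ is a smooth submanifold of $M_\alpha^*$ of codimension $\dim W_{\beta(\alpha)}$. In fact it is a vector bundle over $V_\alpha^*$ whose fibre over $v$ is the kernel of the surjective linear map $P\mapsto P(v)$, a complex subspace of constant rank.

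Neither step involves real computation. The only point needing care, which I expect to be the main subtlety, is the correct target of the evaluation map. We must use $W_{\beta(\alpha)}$ rather than $W$; the map into $W$ is not submersive, which is exactly why $Z^d$ itself is singular. The degree monotonicity ${\rm Poly}^{d_0}\subset{\rm Poly}^d$ holds trivially, since ${\rm Poly}^d$ consists of polynomial maps of degree at most $d$.
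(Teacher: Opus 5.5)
Your proposal is correct and matches the paper's argument: the paper also writes $Z^d_\alpha(\mb{V},\mb{W})$ as the zero set of the evaluation map $V_\alpha^*\times{\rm Poly}^d(\mb{V},\mb{W})\to W_{\beta(\alpha)}$, and it uses Lemma \ref{lemma36} to get surjectivity of the derivative in the polynomial direction once $d\geq d_0$. Your two further remarks are correct and not in the paper: $V_\alpha^*$ is open in $V_\alpha$, and $Z^d_\alpha$ is in fact a vector bundle over $V_\alpha^*$.
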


\begin{proof}
$Z_\alpha^d({\mb V}, {\mb W})$ is defined by the equation
\beqn
{\rm ev}(v, P) = 0 \in W_{\beta(\alpha)},\ v \in V_\alpha^*, P \in {\rm Poly}^d({\mb V}, {\mb W}).
\eeqn
Lemma \ref{lemma36} implies that all solutions are transverse once $d \geq d_0$.
\end{proof}

\subsubsection{Degree independence}

\begin{prop}\label{prop_degree_change}
There exists $d_1 \geq d_0$ such that for all $d' > d \geq d_1$, the natural inclusion map
\beqn
\iota:  M^d(\mb{V}, \mb{W})  \to  M^{d'}(\mb{V}, \mb{W})
\eeqn
pulls back the canonical Whitney stratification on $  Z^{d'}(\mb{V}, \mb{W})$ to the canonical one on $ Z^d(\mb{V}, \mb{W})$.
\end{prop}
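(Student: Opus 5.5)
Following \cite[Section 3]{Bai_Xu_integer}, the plan is to realize $M^{d'}$ locally as a product of $M^d$ with a vector space, so that $\iota$ becomes a section of a projection. Then Proposition \ref{prop_submersion_pullback} together with transversality of the inclusion settles the claim.

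First use Lemma \ref{finite_generation}. The module ${\rm Poly}(\mb{V}, \mb{W})$ is finitely generated over ${\mb C}[V]$, so choose $d_1 \geq d_0$ such that generators $P_1, \ldots, P_m$ all lie in ${\rm Poly}^{d_1}(\mb{V}, \mb{W})$. Fix $d' > d \geq d_1$ and choose a linear complement $K$ with ${\rm Poly}^{d'} = {\rm Poly}^d \oplus K$. Every $Q \in K$ can be written as $\sum_j a_j P_j$ with $a_j \in {\mb C}[V]$ of degree at most $d'$.

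Next construct a holomorphic map
\beqn
\Psi: V \times {\rm Poly}^d \times K \to V \times {\rm Poly}^{d}, \qquad \Psi(v, P, Q) = \Big(v, P + \textstyle\sum_j a_j(v) P_j\Big),
\eeqn
where $a_j(v)$ denotes the constant coefficients frozen at the point $v$. Then $(P + Q)(v) = \Psi(v,P,Q)_2(v)$, because $\sum_j a_j(v) P_j(v) = Q(v)$. Hence $Z^{d'} = \Psi^{-1}(Z^d)$, and $\Psi$ preserves the product stratification $V_\alpha^* \times (\cdot)$. Moreover $\Psi$ is a holomorphic submersion: its differential in the $P$-direction is the identity. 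To make the coefficients holomorphic and linear in $Q$, fix a linear lift $K \to \bigoplus_j {\mb C}[V]$, $Q \mapsto (a_j^Q)$, and set $a_j(v) := a_j^Q(v)$.

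Proposition \ref{prop_submersion_pullback} then gives $\Psi^* {\mf Z}^d = {\mf Z}^{d'}$. It is not immediate that the relevant stratifications match under the identification $M^{d'} \cong V \times {\rm Poly}^d \times K$, but they do, since both are $V_\alpha^* \times$ (everything). Finally, $\Psi \circ \iota = {\rm Id}$, where $\iota(v,P) = (v,P,0)$. Therefore
\beqn
\iota^* {\mf Z}^{d'} = \iota^* \Psi^* {\mf Z}^d = {\mf Z}^d.
\eeqn
This requires that the naive pullback \eqref{naive_pullback} is functorial for these set-theoretic preimages. Connected components could a priori split, but $\iota$ is a section of $\Psi$ whose fibres are affine, hence connected, so components correspond.

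The main obstacle is ensuring that $\Psi$ lands in ${\rm Poly}^d$ and not ${\rm Poly}^{d'}$: the term $a_j(v) P_j$ has degree at most $d_1 \le d$ only because $a_j(v)$ is a constant in the variable of the polynomial. That is fine, and this is exactly why $d_1$ must bound the generator degrees. A second subtlety is that the frozen evaluation $Q \mapsto a_j^Q(v)$ must be holomorphic in $v$ so that Proposition \ref{prop_submersion_pullback} applies. This holds because $a_j^Q$ is a polynomial. If one wanted to avoid the lift, a local argument on neighborhoods, combined with (3) of Proposition \ref{prop31}, would also suffice.
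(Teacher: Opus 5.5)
Your proposal is correct and is essentially the paper's proof. Your $\Psi$ is built from a linear lift of a complement $K$ into coefficients over a finite generating set of degree at most $d_1$, and it is exactly the paper's holomorphic submersive left inverse $\sigma$ of \eqref{eqn_sigma_map}; you then conclude from Proposition \ref{prop_submersion_pullback} and $\Psi\circ\iota={\rm Id}$, as the paper does, and you have the correct direction $\Psi^*{\mf Z}^d={\mf Z}^{d'}$. Your worry about connected components is unnecessary: the naive pullback \eqref{naive_pullback} is functorial in general, because a connected subset of $g^{-1}(f^{-1}(X_\alpha^*))$ has connected image under $g$ and so lies over a single component of $f^{-1}(X_\alpha^*)$.
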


\begin{proof}
This is a consequence of the fact that the space of stratified polynomial maps is a finitely generated module over the ring of polynomial functions (Lemma \ref{finite_generation}). 
Let $Q_1, \ldots, Q_m$ be a set of generators of ${\rm Poly}( \mb{V}, \mb{W})$ over ${\mb C}[V]$ and let $d_1$ be the maximum of their degrees. Now we can decompose
\beqn
{\rm Poly}^{d'}(\mb{V}, \mb{W} ) = {\rm Poly}^d( \mb{V}, \mb{W} ) \oplus {\rm Poly}^{(d, d']}( \mb{V}, \mb{W} )
\eeqn
where the second summand consists of stratified polynomial maps spanned over ${\mb C}$ by homogeneous ones of degrees between $d$ and $d'$. Choose a ${\mb C}$-basis $P_1, \ldots, P_n$ of ${\rm Poly}^{(d, d']}( {\mb V}, \mb{W})$. Then one can write
\beqn
P_i = \sum_{j=1}^m \rho_{ij} Q_j
\eeqn
where $\rho_{ij} \in  {\mb C}[V]$. Then for $P = P' + P'' \in {\rm Poly}^{d'}(\mb{V}, \mb{W})$ where $P' \in {\rm Poly}^d( {\mb V}, \mb{W} )$ and $P'' \in {\rm Poly}^{(d, d']}( \mb{V}, \mb{W} )$, write $P'' = \sum a_i P_i$ with $a_i \in {\mb C}$. Define 
\beq\label{eqn_sigma_map}
\sigma: (v, P' + P'') \mapsto \left( v, P' + \sum_{i, j} a_i \rho_{ij}(v) Q_j \right) \in  M^d(\mb{V}, \mb{W}).
\eeq
This map preserves the evaluation. Moreover, it is obvious that $\sigma$ is a left-inverse of the inclusion $\iota$ and $\sigma$ is a holomorphic submersion. 

The rest of the proof is similar to that of \cite[Proposition 3.14]{Bai_Xu_integer}. Let $ {\mf Z}^d$ resp. $ {\mf Z}^{d'}$ be the canonical Whitney stratification on $ Z^d(\mb{V}, \mb{W})$ resp. $ Z^{d'}(\mb{V}, \mb{W})$. Then by Proposition \ref{prop_submersion_pullback}, one has 
\beqn
\sigma^*  {\mf Z}^{d'} = {\mf Z}^d.
\eeqn
Then 
\beqn
 {\mf Z}^d = (\sigma \circ \iota)^* {\mf Z}^d =  \iota^* \sigma^*  {\mf Z}^d  = \iota^*  {\mf Z}^{d'}. \qedhere
\eeqn
\end{proof}

\subsubsection{Relaxing the constraints}

The inductive construction of transverse multisections is related to the following comparison of the canonical Whitney stratifications. Recall that given a complex stratified virtual vector space $({\mb V}, {\mb W})$, for each stratum $\alpha \in {\mc O}^V(V)$, there is an induced complex stratified vector space
\beqn
(\mb{V}_{\geq \alpha}, \mb{W})
\eeqn
by only remembering strata of $V$ which are higher than or equal to $\alpha$. Then there is a natural inclusion map
\beqn
\xi_\alpha: V \times {\rm Poly}{}^d(\mb{V}, \mb{W}) \to V \times {\rm Poly}{}^d(\mb{V}_{\geq \alpha}, \mb{W}).
\eeqn
Notice that 
\beqn
\xi_\alpha^{-1}( Z^d({\mb V}_{\geq \alpha}, {\mb W})) = Z^d({\mb V}, {\mb W}).
\eeqn
We would like to compare the canonical Whitney stratifications. 

\begin{prop}\label{prop39}
Consider the open subset 
\beqn
V_\alpha^+:= \bigcup_{\alpha \leq \beta} V_\beta^* \subset V.
\eeqn
$\xi_\alpha$ is transverse to ${\mf Z}^d({\mb V}_{\geq \alpha}, \mb{W})$ over $V_\alpha^+ \times {\rm Poly}^d({\mb V}, {\mb W})$ and 
\beqn
(\xi_\alpha)^* \Big( {\mf Z}^d ({\mb V}_{\geq \alpha}, \mb{W} )|_{V_\alpha^+ \times {\rm Poly}^d({\mb V}_{\geq \alpha}, {\mb W})} \Big) = {\mf Z}^d({\mb V}, \mb{W})|_{V_\alpha^+ \times {\rm Poly}^d({\mb V}, {\mb W})}.
\eeqn
\end{prop}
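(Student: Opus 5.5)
The plan is to imitate the proof of Proposition \ref{prop_degree_change}: build a holomorphic submersion $\sigma_\alpha$ that is a left inverse of $\xi_\alpha$ over the open set $V_\alpha^+$, and then use Proposition \ref{prop_submersion_pullback} together with property (3) of Proposition \ref{prop31}.

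\emph{Step 1: identify the strata.} Over $V_\alpha^+$, the strata of $V$ are exactly the $V_\beta^*$ with $\beta\ge\alpha$, and these are the strata of $\mb{V}_{\geq\alpha}$. So on $V_\alpha^+$ the stratifications ${\mf M}$ on the two sides agree under $\xi_\alpha$. The only difference is the fibre: ${\rm Poly}^d(\mb V,\mb W)\subset{\rm Poly}^d(\mb V_{\ge\alpha},\mb W)$ is a linear subspace, cut out by the extra constraints $P(V_\gamma)\subset W_{\beta(\gamma)}$ for $\gamma\not\ge\alpha$. By property (3) of Proposition \ref{prop31}, we may restrict both canonical stratifications to $V_\alpha^+\times(\cdot)$.

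\emph{Step 2: construct the retraction.} Choose a complement ${\rm Poly}^d(\mb V_{\ge\alpha},\mb W)={\rm Poly}^d(\mb V,\mb W)\oplus C$ and a basis $P_1,\dots,P_n$ of $C$. The goal is a holomorphic map
\beqn
\sigma_\alpha:V_\alpha^+\times{\rm Poly}^d(\mb V_{\geq\alpha},\mb W)\to V_\alpha^+\times{\rm Poly}^{d}(\mb V,\mb W)
\eeqn
of the form $(v,P'+\sum a_iP_i)\mapsto(v,P'+\sum a_i R_i(v))$ satisfying three conditions:
\begin{enumerate}
\item $R_i(v)\in{\rm Poly}^d(\mb V,\mb W)$ depends holomorphically on $v$;
\item $R_i(v)(v)=P_i(v)$;
\item $\sigma_\alpha\circ\xi_\alpha={\rm Id}$.
\end{enumerate}
Condition (2) makes $\sigma_\alpha$ preserve evaluation at $v$, so $\sigma_\alpha^{-1}(Z^d(\mb V,\mb W))=Z^d(\mb V_{\ge\alpha},\mb W)$ over $V_\alpha^+$. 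Such a $\sigma_\alpha$ is evidently a submersion, and it preserves the base stratification.

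To produce $R_i$, I would use Lemma \ref{lemma_holomorphic_generation}. On the open set $U=V_\alpha^+$, the only strata are those $\ge\alpha$, so $P_i|_U\in{\rm Hol}_{\rm stratified}(U,\mb W)$, computed using the strata of $\mb V$ restricted to $U$. By that lemma, $P_i|_U=\sum_j\rho_{ij}Q_j$ with $\rho_{ij}\in{\mc O}(U)$ and $Q_j$ generators of ${\rm Poly}(\mb V,\mb W)$. Then set $R_i(v):=\sum_j\rho_{ij}(v)Q_j$. For this to land in degree $\le d$ we need $d\ge d_1$; otherwise we enlarge the target degree and use Proposition \ref{prop_degree_change} to come back down.

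\emph{Step 3: conclude.} Proposition \ref{prop_submersion_pullback} gives $\sigma_\alpha^*{\mf Z}^d(\mb V,\mb W)={\mf Z}^d(\mb V_{\ge\alpha},\mb W)$ over $V_\alpha^+$. Pulling back by $\xi_\alpha$ and using $\sigma_\alpha\circ\xi_\alpha={\rm Id}$ yields the stated equality, exactly as in the last display of the proof of Proposition \ref{prop_degree_change}. Transversality follows in the same way. Each stratum of ${\mf Z}^d(\mb V_{\ge\alpha},\mb W)$ is $\sigma_\alpha^{-1}$ of a stratum $S$. The image of $d\xi_\alpha$ contains the tangent space of the $\xi_\alpha$-preimage, while $\ker d\sigma_\alpha$ is complementary to ${\rm im}\, d\xi_\alpha$ and lies in the tangent space of $\sigma_\alpha^{-1}(S)$. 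Together these span the whole space.

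\emph{Main obstacle.} The hard step is Step 2. The generation result is only over ${\mc O}(U)$, not polynomials, and $V_\alpha^+$ is not Stein in an obvious way: it is the complement of a union of linear subspaces, though it is still a complex manifold. I expect the argument to work locally around each point of $V_\alpha^+$, since Proposition \ref{prop_submersion_pullback} and the pullback statement are local by property (3). So I would construct $\sigma_\alpha$ only on small balls covering $V_\alpha^+$, and check that the degree bound can be arranged by fixing $d\ge d_1$ as in Proposition \ref{prop_degree_change}.
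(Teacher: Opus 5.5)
Your proposal is correct and has the same skeleton as the paper's proof. You build a holomorphic submersive left inverse of $\xi_\alpha$ of the form $(v,P+P')\mapsto(v,P+G(v,P'))$ that preserves evaluation at $v$. You then apply Proposition \ref{prop_submersion_pullback}, use $\sigma_\alpha\circ\xi_\alpha={\rm Id}$, and patch over a cover via property (3) of Proposition \ref{prop31}.

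The difference is how the correction term is produced. The paper (Lemma \ref{lemma310}) builds it by hand.
\begin{itemize}
\item For each discarded stratum $V_\beta$ (those with $V_\alpha\not\subset V_\beta$, which miss $V_\alpha^+$ by the frontier axiom), it takes a hyperplane $H_{\beta,j}\supset V_\beta$ avoiding $v$, with defining form normalized to equal $1$ at $v$.
\item Multiplying $Q$ by the product of these forms kills it on every discarded stratum without changing $Q(v)$.
\item The retraction of Proposition \ref{prop_degree_change} then restores degree $d$.
\end{itemize}
This is purely algebraic and lives on an explicit finite cover of $V_\alpha^+$ by hyperplane complements.

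You instead expand $P_i=\sum_j\rho_{ij}Q_j$ via Lemma \ref{lemma_holomorphic_generation}. This lands directly in degree $\le d_1\le d$, but it needs analytic input. Your worry about $V_\alpha^+$ is justified: the global version is actually false. Take $V={\mb C}^2$ stratified by $\{0\}\subset V$, $W={\mb C}$ stratified by $\{0\}\subset W$, and $\alpha$ the main stratum. Suppose $R(v)\in{\rm Poly}^d(\mb V,\mb W)$ depended holomorphically on $v\in{\mb C}^2\setminus\{0\}$ with $R(v)(v)=1$. Hartogs would extend its coefficients over $0$ and force $R(0)(0)=1$. But every element of ${\rm Poly}(\mb V,\mb W)$ vanishes at $0$, a contradiction.

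So working on small balls is necessary, not optional. On a ball the expansion does exist: the sheaf of stratified holomorphic maps is coherent, the $Q_j$ generate its stalks, and Cartan's Theorem B lifts this to sections. With that, your Steps 1 and 3 go through as written. Your transversality argument via ${\rm im}\,d\xi_\alpha\oplus\ker d\sigma_\alpha$ spells out what the paper leaves implicit.

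One small correction: ``enlarge the target degree and come back down'' does not rescue $d<d_1$. The retraction of Proposition \ref{prop_degree_change} only reduces to degrees $\ge d_1$. Simply assume $d\ge d_1$, as the paper does.
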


To prove this lemma, we need some preparations. The key is to construct a holomorphic submersive left inverse to $\xi_\alpha$. However, we can only do it locally. 

\begin{lemma}\label{lemma310}
There exists an open cover $\{ O_i \ |\ i = 1, \ldots, m\}$ of $V_\alpha^+$ such that for each $d\geq d_1$ where $d_1$ is the one of Proposition \ref{prop_degree_change}, there exist holomorphic maps 
\beqn
G_{\alpha, i}: O_i \times {\rm Poly}{}^d(\mb{V}_{\geq \alpha}, \mb{W} ) \to {\rm Poly}{}^d ( \mb{V}, \mb{W}),\ i = 1, \ldots, m
\eeqn
satisfying 
\beqn
G_{\alpha, i} (v, Q)(v) = Q(v) \ \  \forall v \in O_i\ {\rm and}\ Q \in {\rm Poly}{}^d(\mb{V}_{\geq \alpha}, \mb{W}  ).
\eeqn
\end{lemma}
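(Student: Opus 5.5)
The plan is to build $G_{\alpha,i}$ in two steps. First, multiply $Q$ by a scalar polynomial that kills the strata of $V$ not lying above $\alpha$. This makes the product stratified for all of $\mb{V}$, but raises its degree. Second, bring the degree back down to $d$ with the evaluation-preserving map $\sigma$ from the proof of Proposition \ref{prop_degree_change}. This is a holomorphic version of the construction in Lemma \ref{lemma36}.

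\emph{Step 1: the cover and the cut-off polynomials.} Let $\Gamma$ be the finite set of strata $\gamma \in {\mc O}^V(V)$ with $\gamma \not\geq \alpha$. If $v \in V_\alpha^+$, then $v \in V_\beta^*$ for some $\beta \geq \alpha$. If $v \in V_\gamma$, then $\beta \leq \gamma$, so $\gamma \geq \alpha$. Hence
\beqn
V_\alpha^+ \subset V \setminus \bigcup_{\gamma \in \Gamma} V_\gamma .
\eeqn
For each $\gamma \in \Gamma$, fix a basis $l_{\gamma,1}, \ldots, l_{\gamma,k_\gamma}$ of the complex linear functionals vanishing on $V_\gamma$. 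For each choice function $\kappa: \gamma \mapsto \kappa(\gamma) \in \{1, \ldots, k_\gamma\}$, set
\beqn
O_\kappa = V_\alpha^+ \cap \{ v \ |\ l_{\gamma, \kappa(\gamma)}(v) \neq 0 \ \forall \gamma \in \Gamma \}.
\eeqn
There are finitely many $\kappa$, and they cover $V_\alpha^+$, since each $v \notin V_\gamma$ has some $l_{\gamma,k}(v) \neq 0$. On $O_\kappa$, define
\beqn
f_v = \prod_{\gamma \in \Gamma} \frac{l_{\gamma,\kappa(\gamma)}}{l_{\gamma,\kappa(\gamma)}(v)} \in {\mb C}[V].
\eeqn
This polynomial depends holomorphically on $v$, has degree $N = |\Gamma|$, satisfies $f_v(v) = 1$, and vanishes on $V_\gamma$ for every $\gamma \in \Gamma$.

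\emph{Step 2: the product is stratified.} For $Q \in {\rm Poly}^d(\mb{V}_{\geq \alpha}, \mb{W})$, I check that $f_v Q \in {\rm Poly}^{d+N}(\mb{V}, \mb{W})$ by checking each stratum $\gamma$ of $V$.
\begin{enumerate}
\item If $\gamma \geq \alpha$, then $Q(V_\gamma) \subset W_{\beta(\gamma)}$ by hypothesis, so $f_v Q(V_\gamma) \subset W_{\beta(\gamma)}$ as well.
\item If $\gamma \not\geq \alpha$, then $f_v Q$ vanishes on $V_\gamma$, and $0 \in W_{\beta(\gamma)}$.
\end{enumerate}
Moreover $(f_v Q)(v) = Q(v)$, and the map $(v, Q) \mapsto f_v Q$ is holomorphic on $O_\kappa \times {\rm Poly}^d(\mb{V}_{\geq\alpha}, \mb{W})$.

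\emph{Step 3: reducing the degree.} Take $d' = d + N$ and $d \geq d_1$. Let $\sigma: M^{d'}(\mb{V}, \mb{W}) \to M^d(\mb{V}, \mb{W})$ be the map \eqref{eqn_sigma_map} from the proof of Proposition \ref{prop_degree_change}. It has the form $\sigma(v, P) = (v, \sigma_v(P))$, is holomorphic, and preserves evaluation, so $\sigma_v(P)(v) = P(v)$. Define
\beqn
G_{\alpha,\kappa}(v, Q) := \sigma_v(f_v Q).
\eeqn
This map is holomorphic, takes values in ${\rm Poly}^d(\mb{V}, \mb{W})$, and satisfies $G_{\alpha,\kappa}(v, Q)(v) = (f_v Q)(v) = Q(v)$. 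Since the cover $\{O_\kappa\}$ is independent of $d$, this gives the lemma.

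The main point to verify is that the degree reduction genuinely lands in ${\rm Poly}^d(\mb{V}, \mb{W})$. This holds because the generators $Q_j$ used to define $\sigma$ generate ${\rm Poly}(\mb{V}, \mb{W})$ over ${\mb C}[V]$ (Lemma \ref{finite_generation}), and $\sigma_v$ replaces high-degree terms by ${\mb C}$-combinations of those generators, whose degrees are at most $d_1 \leq d$. The other thing to check is holomorphic dependence on $v$, which holds because the functionals $l_{\gamma,\kappa(\gamma)}$ are fixed on each chart $O_\kappa$ rather than chosen pointwise.
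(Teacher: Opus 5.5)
Your proposal is correct and follows essentially the same route as the paper. Both proofs cover $V_\alpha^+$ by finitely many complements of hyperplanes containing the strata $V_\gamma$ with $\gamma \not\geq \alpha$. On each piece they multiply $Q$ by a holomorphically normalized product of the corresponding linear functionals, then apply the evaluation-preserving degree reduction $\sigma$ from Proposition \ref{prop_degree_change}. Your explicit normalization $l_{\gamma,\kappa(\gamma)}/l_{\gamma,\kappa(\gamma)}(v)$ and your definition of $O_\kappa$, which requires every chosen functional to be nonzero, make precise what the paper only sketches.
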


\begin{proof}
For each $V_\beta$ which does not contain $V_\alpha$, one has $V_\alpha^+ \cap V_\beta = \emptyset$. Then for $v \in V_\alpha^+$, one can find a linear function $l_{\beta, v}: V \to {\mb C}$ such that $l_{\beta, v}(v) = 1$, $l_{\beta, v}|_{V_\beta} \equiv 0$. We would like to make $l_{\beta, v}$ depending holomorphically on $v\in V_\alpha^+$. In general this is impossible unless $V_\beta$ is a hyperplane. In general, if the codimension of $V_\beta$ is $m_\beta$, then one can find $m_\beta$ hyperplanes $H_{\beta, j}$, $j = 1, \ldots, m_\beta$, each containing $V_\beta$, and linear functionals $l_{\beta, j, v}$ satisfying the requirement and depending holomorphically on $v \in V \setminus H_{\beta, j}$. Moreover, the complements of $H_{\beta, j}$ cover $V_\alpha^+$. Then for any combination ${\bm j}:= (j_\beta)$ for all such $\beta$, one defines
\beqn
O_{{\bm j}} = V_\alpha^+ \setminus \bigcap_{V_\alpha \nsubseteq V_\beta} H_{\beta, j_\beta}.
\eeqn
These open sets cover $V_\alpha^+$. Then define
\beqn
\begin{split}
G_{{\bm j}}: O_{{\bm j}} \times {\rm Poly}{}^d(\mb{V}_{\geq \alpha}, \mb{W} ) \to &\ V \times {\rm Poly}{}^{d+d'}(V, W),\\
(v, Q) \mapsto &\ \left( v,\ \Big( \prod_{V_\alpha \nsubseteq V_\beta} l_{\beta, j_\beta, v}\Big) Q \right)
\end{split}
\eeqn
where $d'$ only depends on ${\mb V}$. Then $G_{{\bm j}}' (v, Q) \in {\rm Poly}^{d+ d'} ( \mb{V}, \mb{W} )$ and it has the same evaluation as $Q$ at $v$. Now recall that one has another map 
\beqn
\sigma: V \times {\rm Poly}{}^{d'}(\mb{V}, \mb{W}) \to {\rm Poly}{}^d(\mb{V}, \mb{W})
\eeqn
from the proof of Proposition \ref{prop_degree_change} (see \eqref{eqn_sigma_map}) which preserves the evaluation. Then the map
\beqn
G_{{\bm j}} = \sigma \circ G_{{\bm j}}': O_{{\bm j}} \times {\rm Poly}{}^d(\mb{V}_{\geq \alpha}, \mb{W} ) \to {\rm Poly}{}^d(\mb{V}, \mb{W})
\eeqn
satisfies the requirement.
\end{proof}

\begin{proof}[Proof of Proposition \ref{prop39}]
In view of Proposition \ref{prop31}, one only needs to compare the Whitney stratifications on $O_i$. Choose a complex-linear splitting
\beqn
{\rm Poly}{}^d(\mb{V}_{\geq\alpha}, \mb{W} ) \cong {\rm Poly}{}^d(\mb{V}, \mb{W}) \oplus H.
\eeqn
Write an element of ${\rm Poly}{}^d(\mb{V}_{\geq \alpha}, \mb{W} )$ as $P + P'$ with respect to this splitting. Define
\beqn
\eta_{\alpha, i}: O_i \times {\rm Poly}{}^d(\mb{V}_{\geq\alpha}, \mb{W} ) \to  O_i \times {\rm Poly}{}^d(\mb{V}, \mb{W})
\eeqn
by 
\beqn
\eta_{\alpha_i}(v, P + P') = (v, P + G_{\alpha, i}(v, P')).
\eeqn
This this is a left inverse to $\xi_\alpha$ and a holomorphic submersion. Lemma \ref{lemma310} shows that $\eta_{\alpha, i}$ preserves the evaluation. Hence by Proposition \ref{prop_submersion_pullback}, one has
\beqn
\eta_{\alpha, i}^* \Big(  {\mf Z}^d({\mb V}, \mb{W})|_{O_i \times {\rm Poly}^d({\mb V}, {\mb W})} \Big) =  {\mf Z}^d({\mb V}_{\geq \alpha}, \mb{W} )|_{O_i \times {\rm Poly}^d({\mb V}_{\geq \alpha}, {\mb W})}.
\eeqn
So 
\beqn
\xi_\alpha^* \Big( {\mf Z}^d({\mb V}_{\geq \alpha}, \mb{W})|_{O_i \times {\rm Poly}^d({\mb V}_{\geq \alpha}, {\mb W})} \Big) =  {\mf Z}^d({\mb V}, {\mb W})|_{O_i \cap {\rm Poly}^d({\mb V}, {\mb W})}.
\eeqn
As all $O_i$ are open and cover $V$, the claim is proved.
\end{proof}

\subsubsection{Splitting}

Now suppose ${\mb V}$ is equipped with a complex-linear splitting 
\beqn
V = V_0 \oplus \check V_0
\eeqn
where $V_0 \subset V$ is the lowest stratum. Then $\check V_0$ has an inherited stratification, denoted by $\check {\mb V}_0$. We write a vector of $V$ as $v = (v_0, \check v_0)$. Consider a map 
\beqn
\tau: V\times {\rm Poly}^d({\mb V}, {\mb W}) \to \check V_0 \times {\rm Poly}^d(\check {\mb V}_0, {\mb W})
\eeqn
defined by 
\beqn
\tau (v_0, \check v_0, P):= (\check v_0, P( v_0, \cdot))
\eeqn
where $P(v_0, \cdot) \in {\rm Poly}^d(\check {\mb V}_0, {\mb W})$ is a partial evaluation of $P$. Then we see
\beqn
\tau^{-1}(Z^d( \check {\mb V}_0, {\mb W})) = Z^d({\mb V}, {\mb W}).
\eeqn
Moreover, $\tau$ is a holomorphic submersion and preserves the linear stratifications on $V$ and $\check V_0$. Then by Proposition \ref{prop_submersion_pullback}, one has the following consequence.

\begin{prop}\label{prop311}
$\tau$ is transverse to ${\mf Z}^d( \check {\mb V}_0, \mb{W})$ and 
\beqn
\tau^* {\mf Z}^d( \check {\mb V}_0, \mb{W}) =  {\mf Z}^d(\mb{V}, \mb{W}).
\eeqn
\end{prop}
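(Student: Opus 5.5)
The plan is to deduce the statement directly from Proposition \ref{prop_submersion_pullback}, applied to $\pi = \tau$, $\tilde M = M^d({\mb V}, {\mb W})$ and $M = M^d(\check{\mb V}_0, {\mb W})$. The latter carries the product stratification $\check V_{0,\alpha}^* \times {\rm Poly}^d(\check{\mb V}_0, {\mb W})$ and the analytic set $Z = Z^d(\check{\mb V}_0, {\mb W})$. Everything therefore reduces to checking three hypotheses:
\begin{enumerate}
\item $\tau$ is a holomorphic submersion;
\item $\tau^{-1}(Z^d(\check{\mb V}_0, {\mb W})) = Z^d({\mb V}, {\mb W})$;
\item $\tau^*$ of the ambient stratification of $M^d(\check{\mb V}_0,{\mb W})$ equals the ambient stratification $\{V_\alpha^* \times {\rm Poly}^d({\mb V},{\mb W})\}$ of $M^d({\mb V},{\mb W})$.
\end{enumerate}
Once these hold, Proposition \ref{prop_submersion_pullback} gives $\tau^*{\mf Z}^d(\check{\mb V}_0,{\mb W}) = {\mf Z}^d({\mb V},{\mb W})$. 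Transversality to every stratum is automatic because $\tau$ is a submersion.

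First I must check that $\tau$ is well defined. For $P \in {\rm Poly}^d({\mb V},{\mb W})$, the restriction $P(v_0,\cdot)$ has degree at most $d$. It is also stratified, because the strata of $\check V_0$ are $\check V_{0,\alpha} = V_\alpha \cap \check V_0$, and since $V_0 \subset V_\alpha$ we get $(v_0,\check v_0) \in V_\alpha$ whenever $\check v_0 \in \check V_{0,\alpha}$. Here I use that $V_\alpha = V_0 \oplus (V_\alpha\cap\check V_0)$, which holds because $V_0 \subset V_\alpha$. The poset map to ${\mc O}^W(W)$ is the same under the identification ${\mc O}^V(V) \cong {\mc O}^{\check V_0}(\check V_0)$. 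Hypothesis (2) is immediate from $P(v_0,\check v_0)$ being the evaluation of $P(v_0,\cdot)$ at $\check v_0$. Hypothesis (3) follows because $v \in V_\alpha^*$ if and only if $\check v_0 \in \check V_{0,\alpha}^*$, again by the decomposition $V_\alpha = V_0\oplus \check V_{0,\alpha}$. Holomorphicity is clear, since $\tau$ is polynomial in $(v,P)$.

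The only substantive point, and the main obstacle, is surjectivity of $d\tau$. The $\check v_0$-component is the identity, so it suffices to show that for fixed $v$ the linear map
\beqn
{\rm Poly}^d({\mb V},{\mb W}) \to {\rm Poly}^d(\check{\mb V}_0,{\mb W}),\qquad P \mapsto P(v_0,\cdot)
\eeqn
is onto. To prove this I will use the projection $p: V \to \check V_0$ along $V_0$ and set $P := Q\circ p$ for a given $Q \in {\rm Poly}^d(\check{\mb V}_0,{\mb W})$. Then $P(v_0,\cdot) = Q$ for every $v_0$. Moreover $P$ is stratified, because $p(V_\alpha) = \check V_{0,\alpha}$ and so $P(V_\alpha) = Q(\check V_{0,\alpha}) \subset W_{\beta(\alpha)}$. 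It has degree at most $d$ and is complex polynomial, provided $p$ is complex linear; this is where the complex-linearity of the chosen splitting enters. This gives a global linear right inverse, so $\tau$ is a submersion.
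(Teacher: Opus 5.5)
Your proposal is correct and follows the paper's argument. The paper likewise notes that $\tau$ is a holomorphic submersion preserving the linear stratifications, with $\tau^{-1}(Z^d(\check{\mb V}_0,{\mb W})) = Z^d({\mb V},{\mb W})$, and then applies Proposition \ref{prop_submersion_pullback}; you additionally verify what the paper only asserts, namely that $P\mapsto P(v_0,\cdot)$ is onto via the right inverse $Q\mapsto Q\circ p$, which is where the complex-linearity of the splitting is used.
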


\subsubsection{Stabilization}

Let ${\mb U}$ be another complex stratified linear space together with a poset map ${\mc O}^V(V) \to {\mc O}^U(U)$ denoted by $\alpha \mapsto \gamma(\alpha)$. One can define the stabilization of $({\mb V}, {\mb W})$ by ${\mb U}$, which is the complex stratified virtual space
\beqn
({\mb V} \oplus {\mb U}, {\mb W} \oplus {\mb U})
\eeqn
where $V \oplus U$ is stratified by 
\beqn
(V \oplus U)_\alpha:= V_\alpha \oplus U_{\gamma(\alpha)}
\eeqn
and $W \oplus U$ is stratified by 
\beqn
(W \oplus U)_{(\beta, \gamma)} = W_\beta \oplus U_\gamma.
\eeqn
For $d \geq 1$, there is then a stabilization map
\beqn
\begin{split}
{\rm Poly}{}^d(\mb{V}, \mb{W}) \to &\  {\rm Poly}{}^d(\mb{V} \oplus \mb{U}, \mb{W} \oplus \mb{U})\\
P \mapsto &\ P \oplus {\rm Id}_U.
\end{split}
\eeqn
Together with the  inclusion $V \hookrightarrow V \oplus U$, the stabilization map induces a map 
\beqn
\eta: M^d(\mb{V}, \mb{W}) \to M^d(\mb{V}\oplus \mb{U}, \mb{W} \oplus \mb{U})
\eeqn
such that
\beqn
\eta^{-1} \left( Z^d(\mb{V} \oplus \mb{U}, \mb{W} \oplus \mb{U}) \right) = Z^d(\mb{V}, \mb{W}).
\eeqn

\begin{prop}\label{prop313}
When $d$ is sufficiently large, $\eta$ is transverse to the canonical Whitney stratification on $Z^d(\mb{V}\oplus \mb{U}, \mb{V} \oplus \mb{U})$ and 
\beqn
\eta^* {\mf Z}^d(\mb{V} \oplus \mb{U}, \mb{W} \oplus \mb{U}) =  {\mf Z}^d({\mb V}, \mb{W}).
\eeqn
\end{prop}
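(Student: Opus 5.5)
The strategy is the same as for Propositions \ref{prop_degree_change} and \ref{prop39}. We construct a holomorphic submersion $\theta$, defined at least locally on $M^d(\mb{V}\oplus\mb{U}, \mb{W}\oplus\mb{U})$, with the following properties:
\begin{enumerate}
\item it is stratum-preserving;
\item it is a left inverse of $\eta$;
\item it satisfies $\theta^{-1}(Z^d(\mb{V},\mb{W})) = Z^d(\mb{V}\oplus\mb{U},\mb{W}\oplus\mb{U})$.
\end{enumerate}
Proposition \ref{prop_submersion_pullback} then gives $\theta^*{\mf Z}^d(\mb{V},\mb{W}) = {\mf Z}^d(\mb{V}\oplus\mb{U},\mb{W}\oplus\mb{U})$. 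Pulling back along $\eta$ and using $\theta\circ\eta = {\rm Id}$ yields $\eta^*{\mf Z}^d(\mb{V}\oplus\mb{U},\mb{W}\oplus\mb{U}) = {\mf Z}^d(\mb{V},\mb{W})$. Transversality of $\eta$ to each stratum follows because the strata of the target are preimages under the submersion $\theta$, and $\theta\circ\eta$ is a diffeomorphism. Locality is handled by Proposition \ref{prop31}(3): it suffices to find such $\theta$ on an open cover of a neighborhood of the image of $\eta$.

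To build $\theta$, write an element of ${\rm Poly}^d(\mb{V}\oplus\mb{U},\mb{W}\oplus\mb{U})$ as $R=(R_W,R_U)$ with components valued in $W$ and $U$. At a point $(v,u)$, the equation $R(v,u)=0$ consists of $R_W(v,u)=0$ and $R_U(v,u)=0$. We use the second equation to eliminate $u$. Near the image of $\eta$, where $R_U$ is close to $(v,u)\mapsto u$, the partial derivative $\partial_u R_U$ is invertible, and the implicit function theorem would produce $u=u(v,R)$. This is not polynomial, though, so we instead proceed more linearly, in the style of the $\sigma$ and $\eta_{\alpha,i}$ maps.

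Concretely, decompose ${\rm Poly}^d(\mb{V}\oplus\mb{U},\mb{W}\oplus\mb{U})$ (after fixing a complement) so that $R = (P\oplus {\rm Id}_U) + R'$. Here $R'$ ranges over a complementary subspace, and $P$ is obtained by restricting $R_W$ to $V\times\{0\}$. Then we set
\[
\theta(v,u,R) = \bigl(v,\ P + G(v,u,R')\bigr),
\]
where $G(v,u,R')\in{\rm Poly}^d(\mb{V},\mb{W})$ is chosen holomorphically and locally on an open cover. We require its value at $v$ to correct the evaluation, in the sense that the value of $\theta(v,u,R)$ at $v$ equals $R_W(v,u)$ modulo the term $R_U(v,u)$, which plays the role of the coordinate $u$.

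More cleanly, we first apply the holomorphic change of coordinates
\[
(v,u,R)\mapsto \bigl(v,\ u'=R_U(v,u),\ R\bigr).
\]
This is locally biholomorphic near the image of $\eta$ and stratum-preserving, since $R_U$ maps $V_\alpha\oplus U_{\gamma(\alpha)}$ into $U_{\gamma(\alpha)}$. It reduces us to the case where the $U$-component of the equation is literally $u'=0$. Then $\theta$ forgets $u'$ and the $U$-component of $R$, and sends the $W$-component, partially evaluated, into ${\rm Poly}^d(\mb{V},\mb{W})$ using the generating maps of Lemma \ref{lemma310} and the map $\sigma$ from \eqref{eqn_sigma_map}. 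This last step is where the hypothesis that $d$ is sufficiently large enters (we need $d\geq d_1$ together with the extra degree $d'$).

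The main obstacle is verifying that the coordinate change and $\theta$ genuinely preserve the stratification of $M^d$. The strata of $V\oplus U$ are the twisted products $V_\alpha\oplus U_{\gamma(\alpha)}$, and the partial evaluation at a point $u$ with a nonzero component in $U_{\gamma(\alpha)}$ must still yield a map in ${\rm Poly}(\mb{V},\mb{W})$. I would first try to prove stratum preservation by a direct check using $R(V_\alpha\oplus U_{\gamma(\alpha)})\subset W_{\beta(\alpha)}\oplus U_{\gamma(\alpha)}$. Where that check fails, I would fall back on the multiplier construction of Lemma \ref{lemma36}, working stratum by stratum, to project back into ${\rm Poly}(\mb{V},\mb{W})$ while preserving evaluation.
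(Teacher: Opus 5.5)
Your framework (a holomorphic, stratum-preserving submersive left inverse, then Proposition \ref{prop_submersion_pullback}) is the one the paper uses. However, the step that carries the content is not supplied, and the tools you propose for it do not work.

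Your coordinate change $u'=R_U(v,u)$ does not remove the implicit function. On $\{u'=0\}$ the remaining equation is $R_W(v,\psi(v,R))=0$, where $\psi(v,R)$ is the implicit solution of $R_U(v,\psi)=0$. So `partially evaluating the $W$-component' means evaluating at this moving point. The map $v'\mapsto R_W(v',\psi(v,R))$ sends $V_{\alpha'}$ into $W_{\beta(\alpha')}$ only if $\psi(v,R)\in U_{\gamma(\alpha')}$. Take $v\in V_{\alpha'}^*$ near a point of a lower stratum $V_\alpha^*$. Uniqueness in the implicit function theorem gives $\psi(v,R)\in U_{\gamma(\alpha')}$, but in general not $\psi(v,R)\in U_{\gamma(\alpha)}$. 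Hence in general, on no neighborhood of a point of $V_\alpha^*$ is the partial evaluation an element of ${\rm Poly}(\mb{V}_{\geq\alpha},\mb{W})$. That is exactly the input the multipliers of Lemma \ref{lemma310} require. Repairing it would need functionals vanishing on $V_\alpha$ and equal to $1$ at $v$, and these blow up as $v\to V_\alpha$. Lemma \ref{lemma36} is no substitute either: it is only smooth and lives on a single stratum, while Proposition \ref{prop_submersion_pullback} needs a holomorphic submersion on an open set.

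The missing idea is to substitute for the $U$-variable not a constant, but a stratified polynomial map in the free variable that agrees with the implicit solution at the base point. This is exactly what Lemma \ref{lemma_holomorphic_generation} is for. In the proof of Lemma \ref{lemma312}, the paper proceeds as follows:
\begin{enumerate}
\item By uniqueness in the implicit function theorem applied stratum by stratum, the solution ${\mc F}$ of $y+Q(x,y)=0$ is a stratified holomorphic map.
\item It writes ${\mc F}=\sum_i h_iF_i$ with stratified polynomial generators $F_i$.
\item It substitutes $x'\mapsto\bigl(x',\sum_i h_i(x,Q)F_i(x',Q)\bigr)$, which maps $V_\alpha$ into $V_\alpha\oplus U_{\gamma(\alpha)}$. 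This turns $P$ and $Q$ into stratified polynomials in $x'$ alone with the same zero locus.
\item The resulting degree increase is undone by $\widecheck\sigma$ as in \eqref{eqn_sigma_map}, which is where `$d$ sufficiently large' is used.
\end{enumerate}

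A smaller correction: your condition (3) fails if $\theta$ forgets $u'$, because $\theta^{-1}(Z^d(\mb{V},\mb{W}))$ then contains points with $u'\neq 0$. You must keep $u'$ and map into $M^d(\mb{V},\mb{W})\times U$ with target set $Z^d(\mb{V},\mb{W})\times\{0\}$, whose canonical stratification is that of $Z^d(\mb{V},\mb{W})$. This is the role of $\widetilde{M}^d$ and $\zeta_1$ in Lemma \ref{lemma311}.
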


Again, we need to construct a left inverse of $\eta$. Following the idea of proving \cite[Proposition 3.17]{Bai_Xu_integer}, consider an intermediate space
\beqn
\widecheck{\rm Poly}{}^d(\mb{V}\oplus \mb{U}, \mb{W}\oplus \mb{U}) \subset  {\rm Poly}{}^d(\mb{V} \oplus \mb{U}, \mb{W}\oplus \mb{U})
\eeqn
consisting of stratified polynomial maps of the form 
\beqn
(v, u) \mapsto (P_0(v), u + Q_0(v))
\eeqn
where $P_0 \in  {\rm Poly}{}^d(\mb{V}, \mb{W})$ and $Q_0 \in  {\rm Poly}{}^d(\mb{V}, \mb{U})$. Denote
\beqn
\widecheck{M}^d:= (V \oplus U) \times \widecheck{\rm Poly}{}^d(\mb{V}\oplus \mb{U}, \mb{W} \oplus \mb{U})
\eeqn
and
\beqn
\widecheck{Z}^d:=\widecheck{Z}^d(\mb{V}\oplus \mb{U}, \mb{W} \oplus \mb{U}) \subset \widecheck{M}^d
\eeqn
be the corresponding $Z$-variety, which has a canonical Whitney stratification $\widecheck{\mf Z}^d$. We introduce several abbreviations. Denote
\begin{align*}
&\ \eta(  {M}^d(\mb{V}, \mb{W})) = \widetilde{M}^d,\ &\ \eta(  {Z}^d(\mb{V}, \mb{W})) = \widetilde{Z}^d.
\end{align*}
Let the canonical Whitney stratification on $\widetilde{Z}^d$ be $\widetilde{\mf Z}^d$, which is exactly the canonical Whitney stratification on $Z^d({\mb V}, {\mb W})$. Then $\eta$ is the composition of the two maps:
\begin{align*}
    &\ \eta_1: \widetilde{M}^d \to \widecheck{M}^d,\ &\ \eta_2: \widecheck{M}^d \to  {M}^d.
\end{align*}

\begin{lemma}\label{lemma311}
$\eta_1$ is transverse to $\widecheck{\mf Z}^d$ and $\eta_1^* \widecheck{\mf Z}^d \equiv \widetilde{\mf Z}^d$. 
\end{lemma}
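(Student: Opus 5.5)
The plan is to straighten $\widecheck{Z}^d$ by a stratum-preserving biholomorphism of $\widecheck{M}^d$, read off its canonical Whitney stratification as a product, and then check transversality and the pullback of $\eta_1$ directly. Points of $\widecheck{M}^d$ are written $(v, u, P_0, Q_0)$ with $P_0 \in {\rm Poly}^d(\mb{V}, \mb{W})$ and $Q_0 \in {\rm Poly}^d(\mb{V}, \mb{U})$. In these coordinates $\eta_1$ is the inclusion of the locus $\{u = 0,\ Q_0 = 0\}$, and
\beqn
\widecheck{Z}^d = \{ P_0(v) = 0,\ u + Q_0(v) = 0 \}.
\eeqn

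\emph{Step 1: straightening map.} Consider
\beqn
\Phi(v, u, P_0, Q_0) = (v,\, u + Q_0(v),\, P_0,\, Q_0).
\eeqn
It is a biholomorphism of $\widecheck{M}^d$, with inverse obtained by replacing $+$ with $-$. It preserves the ambient stratification with strata $(V_\alpha \oplus U_{\gamma(\alpha)})^* \times \widecheck{\rm Poly}{}^d$. The reason is that $Q_0(V_\alpha) \subset U_{\gamma(\alpha)}$, so for $v \in V_\alpha$ one has $u \in U_{\gamma(\alpha)}$ if and only if $u + Q_0(v) \in U_{\gamma(\alpha)}$. Under $\Phi$ the set $\widecheck{Z}^d$ becomes
\beqn
\{ (v, 0, P_0, Q_0)\ |\ (v, P_0) \in Z^d(\mb{V}, \mb{W}) \} \cong Z^d(\mb{V}, \mb{W}) \times {\rm Poly}^d(\mb{V}, \mb{U}).
\eeqn
By the invariance in Proposition~\ref{prop31}(2), $\Phi$ carries $\widecheck{\mf Z}^d$ to the canonical Whitney stratification of this product set. (The statement there is for self-maps preserving ${\mf M}$ and $Z$; the same minimality argument applies to a stratum-preserving biholomorphism between the two pairs.)

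\emph{Step 2: the product set.} The straightened set lies in the complex submanifold $\{u' = 0\}$, where $u'$ denotes the second coordinate after applying $\Phi$. On this submanifold the ambient stratification restricts to the product of the stratification of $M^d(\mb{V}, \mb{W})$ with the trivial stratification of ${\rm Poly}^d(\mb{V}, \mb{U})$. Whitney's condition (b) and the relative partition ${\mf M} \cap Z$ are both intrinsic to a submanifold containing $Z$. Hence the canonical stratification computed inside $\{u' = 0\}$ agrees with the one computed in $\widecheck{M}^d$; this point needs a short justification by running Whitney's minimal construction. Proposition~\ref{prop_submersion_pullback}, applied to the projection to $M^d(\mb{V}, \mb{W})$, which forgets $Q_0$, then shows that this stratification is ${\mf Z}^d(\mb{V}, \mb{W}) \times {\rm Poly}^d(\mb{V}, \mb{U})$. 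Pulling back by $\Phi$, each stratum of $\widecheck{\mf Z}^d$ has the form
\beqn
\{ (v, -Q_0(v), P_0, Q_0)\ |\ (v, P_0) \in S \}
\eeqn
for a stratum $S$ of ${\mf Z}^d(\mb{V}, \mb{W})$.

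\emph{Step 3: transversality and pullback.} Intersecting such a stratum with the image of $\eta_1$, i.e.\ setting $Q_0 = 0$ and $u = 0$, gives exactly $S$. This yields $\eta_1^* \widecheck{\mf Z}^d = \widetilde{\mf Z}^d$ once transversality holds.

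For transversality, fix a point of $S$ with $v \in V_\alpha^*$; it lies in the ambient stratum of $V_\alpha \oplus U_{\gamma(\alpha)}$. The directions tangent to the image of $\eta_1$ supply all variations of $(v, P_0)$. The missing directions are $\delta u \in U_{\gamma(\alpha)}$ and $\delta Q_0$. The stratum contains the curves $Q_0 \mapsto (v, -Q_0(v), P_0, Q_0)$, whose tangent vectors are $(0, -\delta Q_0(v), 0, \delta Q_0)$. So it suffices that evaluation at $v$ maps ${\rm Poly}^d(\mb{V}, \mb{U})$ onto $U_{\gamma(\alpha)}$. This is Lemma~\ref{lemma36}, applied to the virtual space $(\mb{V}, \mb{U})$ with $d \geq d_0(\mb{V}, \mb{U})$, which is where ``$d$ sufficiently large'' enters.

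Transversality is only required within each ambient stratum, as in the relative setting of Proposition~\ref{prop31}. I expect the main obstacle to be making this precise, together with the submanifold-restriction claim in Step 2. If the intrinsic argument in Step 2 is awkward, I would instead build a local holomorphic submersive left inverse $\widecheck{M}^d \to \widetilde{M}^d$ preserving the zero locus, in the style of Lemma~\ref{lemma310}, by using Lemma~\ref{lemma36} holomorphically on a cover, and then invoke Proposition~\ref{prop_submersion_pullback} directly.
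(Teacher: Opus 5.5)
Your argument is correct, and its core is the same as the paper's. The paper's left inverse $\zeta_1(x,y,P_0,{\rm Id}_U+Q_0)=(x,\,y+Q_0(x),\,P_0,\,{\rm Id}_U)$ is exactly your shear $\Phi$ followed by forgetting $Q_0$. The paper applies Proposition~\ref{prop_submersion_pullback} to $\zeta_1$ in one step to get $\zeta_1^*\widetilde{\mf Z}^d=\widecheck{\mf Z}^d$, and then reads off $\eta_1^*\widecheck{\mf Z}^d=(\zeta_1\circ\eta_1)^*\widetilde{\mf Z}^d=\widetilde{\mf Z}^d$. Transversality is then formal. Any tangent vector $w$ splits as $d\eta_1 d\zeta_1 w+(w-d\eta_1 d\zeta_1 w)$, and the second summand lies in $\ker d\zeta_1$, which is contained in the tangent space of every stratum $\zeta_1^{-1}(\widetilde S)$. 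You instead write the strata down and check transversality by hand, which is more explicit but longer.

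The real difference is how you read $\widetilde M^d$. Because $\zeta_1$ keeps the $u$-slot, the paper's argument only makes sense if $\widetilde M^d$ is the locus $\{Q_0=0\}$ with $u$ free. That is, $\widetilde M^d=(V\oplus U)\times\{P_0\oplus{\rm Id}_U\}$ and $\widetilde Z^d=\{u=0,\ P_0(v)=0\}$.

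Under your literal reading, with $u=0$ imposed as well, the obstacle you flag is not a technicality. Ordinary transversality to the strata is false whenever $U_{\gamma(\alpha)}\neq U$. At $(v,0,P_0,0)$ with $v\in V_\alpha^*$, the stratum of $\widecheck{\mf Z}^d$ through that point lies in $(V_\alpha\oplus U_{\gamma(\alpha)})\times\widecheck{\rm Poly}{}^d$, and $\eta_1$ contributes no $u$-directions. So only your stratum-relative version can hold.

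With the $u$-free reading, your Step~3 becomes trivial. $d\eta_1$ supplies every $(\delta v,\delta u,\delta P_0)$ and your curves supply every $\delta Q_0$, so you need neither Lemma~\ref{lemma36} nor large $d$.

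The same point rules out your fallback as stated. No submersion onto $V\times{\rm Poly}{}^d(\mb V,\mb W)$ can have $\widecheck Z^d$ as a preimage: over the main stratum the codimensions are $\dim W$ versus $\dim W+\dim U$. A correct left inverse must keep the $U$-factor, and then it is just $\zeta_1$, defined globally with no cover.

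Your Step~2, on the other hand, is valid and worth keeping. Computing the canonical stratification inside the submanifold $\{u=0\}$ is exactly what identifies $\widetilde{\mf Z}^d$ with ${\mf Z}^d(\mb V,\mb W)$, which the paper asserts without justification.
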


\begin{lemma}\label{lemma312}
$\eta_2$ is transverse to $\wt{\mf Z}^d$ and $\eta_2^*  {\mf Z}^d \equiv \widecheck{\mf Z}^d$.
\end{lemma}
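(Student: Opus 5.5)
The plan is to imitate the proof of Proposition \ref{prop_degree_change} and of Proposition \ref{prop39}. I will construct, locally over an open cover of $V \oplus U$, holomorphic submersions
\beqn
\sigma_i: O_i \times {\rm Poly}{}^d(\mb{V}\oplus \mb{U}, \mb{W}\oplus\mb{U}) \to O_i \times \widecheck{\rm Poly}{}^d(\mb{V}\oplus \mb{U}, \mb{W}\oplus \mb{U})
\eeqn
with three properties: each $\sigma_i$ is a left inverse of $\eta_2$, it preserves the stratification of the base, and it preserves the evaluation at the base point, so that $\sigma_i^{-1}(\widecheck Z^d) = Z^d$ on $O_i$. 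Given such maps, Proposition \ref{prop_submersion_pullback} gives $\sigma_i^* \widecheck{\mf Z}^d = {\mf Z}^d$ over $O_i$. Then, exactly as at the end of Proposition \ref{prop_degree_change},
\beqn
\widecheck{\mf Z}^d = (\sigma_i\circ \eta_2)^* \widecheck{\mf Z}^d = \eta_2^* {\mf Z}^d
\eeqn
over $O_i$. Transversality of $\eta_2$ to each stratum follows as well: $\sigma_i$ is a submersion retracting onto the image of $\eta_2$, and its restriction to each stratum of ${\mf Z}^d$ is a submersion onto the corresponding stratum of $\widecheck{\mf Z}^d$, so the image of $d\eta_2$ together with the tangent space of a stratum of ${\mf Z}^d$ spans. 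Finally, the locality statement (3) of Proposition \ref{prop31} glues the local identifications over the cover $\{O_i\}$.

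To construct $\sigma_i$, write $P = (P_W, P_U)$ and fix a base point $(v,u)$. The naive candidate is
\beqn
\check P(v', u') = \big( P_W(v', u),\ u' - u + P_U(v', u) \big).
\eeqn
It is polynomial in $(v', u')$, holomorphic in $(v,u,P)$, has the same value as $P$ at $(v,u)$, and restricts to the identity on the image of $\eta_2$ when $P = P_0 \oplus (u + Q_0)$. Since $\check P$ has the form $(P_0(v'), u' + Q_0(v'))$, it lands in $\widecheck{\rm Poly}{}^d$ once its components are stratified.

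The problem is that $v' \mapsto (v', u)$ does not send $V_\alpha$ into $(V\oplus U)_\alpha = V_\alpha \oplus U_{\gamma(\alpha)}$ unless $u \in U_{\gamma(\alpha)}$. So $P_0 = P_W(\cdot,u)$ and $Q_0 = P_U(\cdot,u) - u$ need not be stratified. I will correct this with cut-off polynomials as in Lemma \ref{lemma36} and Lemma \ref{lemma310}.

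Over an open set $O_i$ of the base, choose holomorphically varying linear functionals $l_{\beta,(v,u)}$ that equal $1$ at $v$ and vanish on the strata $V_\beta$ of $V$ for which the constraint fails at $(v,u)$. Multiplying the offending components by the product of these functionals produces genuinely stratified $P_0, Q_0$. The evaluation at $(v,u)$ is unchanged, because each factor equals $1$ at $v$. Points of the image of $\eta_2$ are also fixed, since there the components were already stratified and the corrections can be arranged to act trivially on that subspace, for instance by applying them only to a complex-linear complement as in the proof of Proposition \ref{prop39}. The degree increase caused by the multiplication is then removed by the degree-reduction map $\sigma$ of \eqref{eqn_sigma_map}, adapted to $\widecheck{\rm Poly}$. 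This requires $d$ sufficiently large, which is exactly the hypothesis of Proposition \ref{prop313}.

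The main obstacle is this correction step. I need simultaneously that $\sigma_i$ is holomorphic in $(v,u)$, which forces working on an open cover of hyperplane complements as in Lemma \ref{lemma310}; that it is an exact left inverse of $\eta_2$; and that it is a submersion. If making the corrections act trivially on the image of $\eta_2$ proves awkward, I would first try to reduce to the case $V_0 = U_0 = W_0 = 0$ via the splitting map $\tau$ of Proposition \ref{prop311}. After that reduction, the only strata to handle are those through $(v,u)$ itself.
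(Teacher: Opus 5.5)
There is a genuine gap. The overall frame you propose matches the paper's: a holomorphic submersive left inverse of $\eta_2$, then Proposition \ref{prop_submersion_pullback}, then locality. The problem is the specific retraction. No map into $\widecheck M^d$ that fixes the base point can preserve the evaluation, even on an arbitrarily small neighbourhood of the image of $\eta_2$.

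Let $\alpha_v$ be the stratum of $v$ in $V$, and suppose $u\notin U_{\gamma(\alpha_v)}$.
\begin{itemize}
\item Every element of $\widecheck{\rm Poly}{}^d$ evaluates at $(v,u)$ into the affine subspace $W_{\beta(\alpha_v)}\times(u+U_{\gamma(\alpha_v)})$.
\item For $d\ge d_0$, the values $P(v,u)$ with $P\in{\rm Poly}^d(\mb V\oplus\mb U,\mb W\oplus\mb U)$ fill $W_{\beta(\alpha')}\oplus U_{\gamma(\alpha')}$. Here $\alpha'>\alpha_v$ is the stratum of $(v,u)$, and $U_{\gamma(\alpha')}\supseteq \mb C u\oplus U_{\gamma(\alpha_v)}$ has strictly larger dimension.
\end{itemize}
For a concrete case, take $V=U=W=\mb C$, each stratified by $\{0\}\subset\mb C$, with the lowest strata matched. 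Every element of $\widecheck{\rm Poly}{}^d$ evaluates to $(0,1)$ at $(0,1)$. Meanwhile $P_c(v',u')=(0,cu')$ is stratified for every $c$, $P_1$ lies in the image of $\eta_2$, and $P_c(0,1)=(0,c)$.

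This is exactly where your correction step breaks. The offending strata are those $V_\alpha$ that contain $v$; in the example this is $V_0=\{0\}\ni v$. So no linear functional vanishing on $V_\alpha$ can equal $1$ at $v$. Your fallback reduction via $\tau$ to $V_0=U_0=W_0=0$ does not remove these points either; the example is already of that form.

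The missing idea is that Proposition \ref{prop_submersion_pullback} only needs $\zeta^{-1}(\widecheck Z^d)=Z^d$ near the image of $\eta_2$, not equality of evaluations. The $u'$-dependence should be removed by solving the $U$-component equation, not by freezing $u'=u$. The paper proceeds as follows.
\begin{itemize}
\item It solves $y+Q(x,y)=0$ by the implicit function theorem near ${\rm Poly}^d(V,U)$. This gives a holomorphic solution $\mc F(x,Q)$, which is stratified by uniqueness of the solution on each stratum.
\item By Lemma \ref{lemma_holomorphic_generation} it writes $\mc F=\sum_i h_iF_i$ with stratified polynomial $F_i$.
\item It substitutes $\sum_i h_i(x,Q)F_i(\cdot,Q)$ for the $U$-variable in both $P$ and $Q$, then applies the degree reduction $\widecheck\sigma$.
\end{itemize}
The result lies in $\widecheck{\rm Poly}$ and is the identity on the image of $\eta_2$, since there $Q$ does not depend on $y$. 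Its evaluation at $(x,y)$ is $(P(x,\mc F),\,y-\mc F)$. On the neighbourhood where the solution is unique, this vanishes exactly when $(P(x,y),\,y+Q(x,y))$ does. In the example, both zero loci are empty near $(0,1,P_1)$, so nothing is lost by giving up evaluation preservation.
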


\begin{proof}[Proof of Proposition \ref{prop313}] As $\eta = \eta_2 \circ \eta_1$, Lemma \ref{lemma311} and Lemma \ref{lemma312} imply that $\eta$ is transverse to $ {\mf Z}^d$ and 
\beqn
\eta^*  {\mf Z}^d = (\eta_2 \circ \eta_1)^*  {\mf Z}^d = \eta_1^* \eta_2^*  {\mf Z}^d = \eta_1^* \widecheck{\mf Z}^d  =  \widetilde{\mf Z}^d. \qedhere
\eeqn    
\end{proof}

\begin{proof}[Proof of Lemma \ref{lemma311}]
Define
\beqn
\zeta_1: \widecheck{M}^d \to \widetilde{M}^d
\eeqn
by 
\beqn
(x, y, P_0, {\rm Id}_U + Q_0) \mapsto (x, y + Q_0(x), P_0, {\rm Id}_U).
\eeqn
It is a holomorphic submersion and a left inverse to $\eta_1$. 
Hence by Proposition \ref{prop_submersion_pullback}
\beqn
\zeta_1^* \widetilde{\mf Z}^d =  \widecheck{\mf Z}^d.
\eeqn
Moreover, 
\beqn
\widetilde{\mf Z}^d =  (\zeta_1 \circ \eta_1)^* \widetilde{\mf Z}^d = \eta_1^* \zeta_1^* \widetilde{\mf Z}^d = \eta_1^*  \widecheck{\mf Z}^d. \qedhere
\eeqn
\end{proof}

\begin{proof}[Proof of Lemma \ref{lemma312}]
We do not have an explicit formula for a left inverse of $\eta_2$. Instead, we consider the equation 
\beq\label{stabilization_equation}
y + Q(x, y) = 0\in U,\ x\in V,\ y \in U,\ Q \in {\rm Poly}^d(V \oplus U, U).
\eeq
Notice that if $Q = Q_0 \in {\rm Poly}{}^d(V, U)$, then $y = -Q_0 (x)$ is an obvious solution. Then by the implicit function theorem, there is an open neighborhood $\tilde O$ of 
\beqn
V \times {\rm Poly}{}^d(V, U) \subset V \times {\rm Poly}{}^d( V \oplus U, U)
\eeqn
such that for $(x, Q) \in \tilde O$, there is a unique solution $y = \tilde {\mc F}(x, Q)$ to the equation
\beqn
y + Q(x, y) = 0
\eeqn
such that $\tilde {\mc F}(x, Q_0) = -Q_0(x)$ for all $Q_0 \in {\rm Poly}^d(V, U)$. As the equation is holomorphic, $\tilde {\mc F}$ is a holomorphic map from $\tilde O$ to $U$. Denote
\beqn
O = \tilde O \cap \Big( V \times {\rm Poly}{}^d(\mb{V} \oplus \mb{U}, \mb{U} \Big)
\eeqn
and denote
\beqn
{\mc F}:= \tilde {\mc F}|_O: O \to U.
\eeqn
Notice that the stratification on $\mb{V}$ induces a stratification  on $O$ (where the factor ${\rm Poly}^d({\mb V}\oplus {\mb U}, {\mb U})$ is trivially stratified). We claim that ${\mc F}$ is stratified, namely  
\beqn
{\mc F} \in {\rm Hol}_{\rm stratified} (O, \mb{U}).
\eeqn
Indeed, if we restrict the equation \eqref{stabilization_equation} to $V_\alpha \times {\rm Poly}{}^d(V_\alpha \oplus V_\alpha, U_{\gamma(\alpha)})$, then one shall get a solution
\beqn
\tilde {\mc F}_\alpha: \Big( V_\alpha \times {\rm Poly}{}^d(V_\alpha \oplus U_{\gamma(\alpha)}, U_{\gamma(\alpha)}) \Big) \cap \tilde O \to U_{\gamma(\alpha)}.
\eeqn
By the uniqueness of the solution, we know that 
\beqn
{\mc F}|_{O_\alpha} = \tilde {\mc F}_\alpha|_{O_\alpha}.
\eeqn
Hence ${\mc F}$ is stratified.

Now by Lemma \ref{lemma_holomorphic_generation}, the module ${\rm Hol}_{\rm stratified} (O, \mb{U})$ is finitely generated over the ring of holomorphic functions by ${\rm Poly}{}(\mb{V} \times {\rm Poly}{}^d (\mb{V}\oplus \mb{U}, \mb{U}), \mb{U})$. Then choose a finite collection of generators 
\beqn
F_1, \ldots, F_m \in {\rm Poly}{} \Big(\mb{V} \times {\rm Poly}{}^d (\mb{V} \oplus \mb{U}, \mb{U}), \mb{U} \Big).
\eeqn
We can write
\beqn
{\mc F} = \sum_{i=1}^m h_i F_i
\eeqn
for holomorphic functions $h_i: O \to {\mb C}$. Define
\beqn
\begin{split}
\zeta_2': O \times {\rm Poly}^d(\mb{V} \oplus \mb{U}, \mb{V}) \to &\ \widecheck{M}^{d+d'}\\
 (x, y, P, {\rm Id}_U + Q) \mapsto &\  \Big( x, y, \widecheck{P}, {\rm Id}_U + \widecheck{Q} \Big) 
\end{split}
\eeqn
$d'$ is sufficiently large and where
\begin{align*}
&\ \widecheck{P} = P \left( \cdot, \sum_{i=1}^m h_i(x, Q) F_i(\cdot, Q) \right),\ &\ \widecheck Q = Q \left( \cdot, \sum_{i=1}^m h_i(x, Q) F_i (\cdot, Q) \right).
\end{align*}
Notice that since $P$, $Q$ have degrees at most $d$ and $F_1, \ldots, F_m$ has bounded degrees, for sufficiently large $d'$, the map $\zeta_2'$ is well-defined. Moreover, it is straightforward to check that $\zeta_2'$ is a left inverse of $\eta_2$. Moreover, choose a holomorphic submersion
\beqn
\widecheck \sigma: \widecheck M^{d+d'} \to \widecheck M^d
\eeqn
similar to the construction in the proof of Proposition \ref{prop_degree_change} which is a left inverse to the inclusion
\beqn
\widecheck\iota: \widecheck M^d \to \widecheck M^{d+d'}.
\eeqn
Define
\beqn
\zeta_2:= \widecheck \sigma \circ \zeta_2'.
\eeqn
This is a left inverse to $\eta_2$ and again a holomorphic submersion. Hence following the same method as proving Proposition \ref{prop_degree_change} and Lemma \ref{lemma310}, the conclusion holds.
\end{proof}

\subsubsection{Products}

The following property of the canonical Whitney stratification is not immediately used in this paper. However, it is necessary to have when one considers direct products of moduli spaces. Notice that complex stratified virtual spaces admit direct sums. Let $({\mb V}_1, {\mb W}_1)$ and $({\mb V}_2, {\mb W}_2)$ be two complex stratified virtual vector spaces. For $d\geq 0$, there is a natural inclusion
\beqn
M^d(\mb{V}_1, \mb{W}_1) \times M^d(\mb{V}_2,\mb{W}_2) \hookrightarrow M^d(\mb{V}_1 \oplus \mb{V}_2, \mb{W}_1 \oplus \mb{W}_2).
\eeqn

\begin{prop}\label{prop_product_Whitney}
For $d$ sufficiently large, the inclusion pulls back the canonical Whitney stratification on $ Z^d(\mb{V}_1 \oplus \mb{V}_2, \mb{W}_1 \oplus \mb{W}_2)$ to the product of the canonical ones. 
\end{prop}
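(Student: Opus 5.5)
We follow the pattern used for Proposition \ref{prop_degree_change}, Proposition \ref{prop39} and Proposition \ref{prop313}. We construct a holomorphic submersion $\zeta$ that is a left inverse of the inclusion
\beqn
\iota: M^d(\mb{V}_1, \mb{W}_1) \times M^d(\mb{V}_2,\mb{W}_2) \hookrightarrow M^d(\mb{V}_1 \oplus \mb{V}_2, \mb{W}_1 \oplus \mb{W}_2),
\eeqn
at least locally over an open cover of the base. We require that $\zeta$ preserves both the evaluation and the relevant ambient stratifications. Proposition \ref{prop_submersion_pullback} then gives $\zeta^*({\mf Z}^d_1 \times {\mf Z}^d_2) = {\mf Z}^d(\mb{V}_1\oplus\mb{V}_2, \mb{W}_1\oplus \mb{W}_2)$. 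Composing with $\iota$ yields the claim, exactly as in the last line of the proof of Proposition \ref{prop_degree_change}.

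Before this we need one preliminary fact: the product stratification ${\mf Z}^d_1\times {\mf Z}^d_2$ is the canonical Whitney stratification of $Z^d_1\times Z^d_2 \subset M^d_1\times M^d_2$ relative to the product ambient stratification. Here $(V_1\oplus V_2)$ is stratified by the products of strata. We take the poset $\mc O^{V_1}(V_1) \times \mc O^{V_2}(V_2)$ with the product map to $\mc O^{W_1}(W_1)\times \mc O^{W_2}(W_2)$. To prove it, we note that a product of Whitney stratifications is Whitney, so minimality gives one inequality. For the reverse, we apply the canonicity statement (2) of Proposition \ref{prop31} together with the submersion pullback property to the projections $M_1\times M_2 \to M_i$. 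These projections are holomorphic submersions. This shows that the canonical stratification of $Z_1 \times M_2$ is ${\mf Z}_1\times M_2$, and intersecting gives the product. If this intersection step is delicate, we instead cite the analogous product statement in \cite{Bai_Xu_integer}.

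For the left inverse, we decompose a stratified polynomial map of the sum as
\beqn
P = (P_{11}+P_{12}, P_{21}+P_{22}),
\eeqn
where $P_{ij}$ denotes the part landing in $W_i$. Here $P_{11}$ collects the monomials depending only on $v_1$, and $P_{12}$ collects the mixed terms and those depending only on $v_2$, and similarly for the second component. The stratification condition says $P(V_{1,\alpha}\oplus V_{2,\alpha'}) \subset W_{1,\beta(\alpha)}\oplus W_{2,\beta(\alpha')}$. This forces the pure part $P_{11}(\cdot)$, obtained by setting $v_2=0$, to lie in ${\rm Poly}^d(\mb V_1,\mb W_1)$. We then define
\beqn
\zeta(v_1,v_2,P) = \big( (v_1, P_1^{v_2}), (v_2, P_2^{v_1}) \big),
\qquad P_1^{v_2}:=P(\cdot, v_2)_{W_1},
\eeqn
which is a partial evaluation as in Proposition \ref{prop311}, and $P_2^{v_1}$ is defined symmetrically. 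This map preserves evaluation, since $P_1^{v_2}(v_1)$ is the $W_1$-component of $P(v_1,v_2)$. It is a holomorphic submersion and it restricts to the identity on the image of $\iota$. It also preserves the ambient stratifications.

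The main obstacle is checking that $P(\cdot,v_2)_{W_1}$ lies in ${\rm Poly}^d(\mb V_1, \mb W_1)$. The condition $P(V_{1,\alpha}\oplus V_{2,\alpha'})\subset W_{1,\beta(\alpha)}\oplus\cdots$ holds only for $v_2\in V_{2,\alpha'}$, and $v_2$ always lies in the top stratum closure $V_2$. With $\alpha'$ maximal, the product poset map sends $(\alpha,\alpha')$ to $(\beta(\alpha),\beta(\alpha'))$, so the $W_1$-component is constrained by $\beta(\alpha)$ alone, and the membership should follow. The dependence on $v_2$ is then polynomial of degree at most $d$, so no degree raising is needed. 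If the product poset structure turns out to be subtler than assumed, we fall back on the local-cover and degree-truncation technique of Lemma \ref{lemma310}, combining the $\sigma$ map of Proposition \ref{prop_degree_change} with products of linear functionals. This is where we expect "for $d$ sufficiently large" to enter.
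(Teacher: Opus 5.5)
Your argument is essentially the paper's proof. The paper uses exactly your partial-evaluation map $\mu(v_1,v_2,P_1,P_2)=(v_1,P_1(\cdot,v_2);v_2,P_2(v_1,\cdot))$ as a holomorphic submersive left inverse that preserves the stratification and the evaluation, applies Proposition \ref{prop_submersion_pullback}, and cites \cite[Appendix B]{Bai_Xu_integer} for the fact that the canonical Whitney stratification of $Z_1\times Z_2$ is the product of the canonical ones. Your fallback to that citation is the right call: the ``intersecting'' step in your self-contained sketch, which tries to recover the canonical stratification of $Z_1\times Z_2$ from those of $Z_1\times M_2$ and $M_1\times Z_2$, does not follow from Proposition \ref{prop31} alone.
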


\begin{proof}
We needs to construct a left inverse. We can write a polynomial from $V_1 \oplus V_2$ to $W_1 \oplus W_2$ as $P_1(v_1, v_2) + P_2(v_1, v_2)$ where $v_i \in V_i$ and $P_i$ takes value in $W_i$. Define
\beqn
\mu(v_1, v_2, P_1, P_2) = (v_1, P_1(\cdot, v_2); v_2, P_2(v_1, \cdot)) \in M^d(\mb{V}_1 , \mb{W}_1) \times  M^d(\mb{V}_2, \mb{W}_2).
\eeqn
This is clearly a left inverse to the inclusion, preserves the stratification, and preserves the evaluation. Moreover, $\mu$ is a holomorphic submersion. Hence in the same way as before, we showed that the inclusion pulls back the canonical Whitney stratification to the canonical one on the product. Further, it was proved in \cite[Appendix B]{Bai_Xu_integer} that the canonical Whitney stratification on the product coincides with the product of canonical Whitney stratifications of factors. 
\end{proof}

\subsection{NCS Transversality}

\subsubsection{Transversality for bundle maps}

We first see the notion of strong transversality for local models. Let $M$ be a smooth manifold and let $(F, E)$ be a complex stratified virtual vector bundle over $M$ (Definition \ref{defn_NCS_bundle}) . Recall that there is a $C^\infty(F)$-submodule of bundle maps 
\beqn
C_{\rm NCS}^\infty (F, E) \subset C^\infty_{\rm stratified} (F, E).
\eeqn

\begin{defn}\label{defn_bundle_transversality}
Let $S\in C_{\rm NCS}^\infty (F, E)$ be an NCS bundle map.
\begin{enumerate}

\item A {\bf lift} of $S$ is a (nonlinear) bundle map
\beqn
{\mf p}: F \to {\rm Poly}^d_{\rm stratified} (F, E)
\eeqn
for some integer $d \geq 0$ such that for all $(x, v) \in F$, 
\beqn
S(x, v) = {\mf p}(x, v)(v) \in E_x.
\eeqn

\item $S$ is said to be {\bf NCS transverse} at $(x, v) \in F$ if for any lift ${\mf p}$ of $S$, its graph ${\rm graph}({\mf p})$ intersect transversely with $Z^d(F, E)$ (with respect to the canonical Whitney stratification) at the point $((x, v), {\mf p}(x,x)) \in M^d(F, E)$.
\end{enumerate}
\end{defn}

\begin{lemma}
The NCS transversality is independent of the choice of lift. 
\end{lemma}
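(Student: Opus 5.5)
Fix $(x,v)\in F$ and two lifts ${\mf p}_1: F\to {\rm Poly}^{d_1}_{\rm stratified}(F,E)$ and ${\mf p}_2: F\to {\rm Poly}^{d_2}_{\rm stratified}(F,E)$ of $S$. The plan is to reduce to a single vector-space problem and then compare the two graphs by diffeomorphisms of the ambient space that preserve both the stratification of $M^d$ and the variety $Z^d$. Proposition \ref{prop31}(2) then says such diffeomorphisms preserve the canonical Whitney stratification.

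\textbf{Step 1: common degree.} Take $d\ge \max(d_1,d_2,d_1')$, where $d_1'$ is the constant $d_1$ of Proposition \ref{prop_degree_change}. Applied fibrewise (it is ${\rm Aut}$-invariant by Proposition \ref{prop34}), that proposition shows $\iota^*{\mf Z}^{d}={\mf Z}^{d_i}$ for the inclusion $\iota: M^{d_i}\to M^d$. To make this usable for transversality I would rather use the submersive left inverse $\sigma$ of \eqref{eqn_sigma_map}. It preserves evaluation and satisfies $\sigma^*{\mf Z}^{d_i}={\mf Z}^{d}$, so ${\rm graph}(\iota\circ{\mf p}_i)$ is transverse to ${\mf Z}^d$ iff ${\rm graph}({\mf p}_i)$ is transverse to ${\mf Z}^{d_i}$. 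The reason is that $\sigma\circ\iota={\rm Id}$ and a submersion pulling back strata preserves transversality of maps. (If $d_i$ is below the threshold, first pass to a larger degree through $\sigma$ in the same manner.) So we may assume $d_1=d_2=d$.

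\textbf{Step 2: compare two lifts of the same degree.} Work in a local trivialization, so $F=M\times V$, $E=M\times W$, and the lifts are maps ${\mf p}_i: M\times V\to {\rm Poly}^d({\mb V},{\mb W})$ with ${\mf p}_1(y,u)(u)={\mf p}_2(y,u)(u)$. I consider the fibre-preserving map
\beqn
\Phi: M\times V\times {\rm Poly}^d({\mb V},{\mb W})\to M\times V\times {\rm Poly}^d({\mb V},{\mb W}),\quad (y,u,P)\mapsto (y,u,P+{\mf p}_2(y,u)-{\mf p}_1(y,u)).
\eeqn
This $\Phi$ is a smooth diffeomorphism whose inverse is obtained by subtracting instead of adding. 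It preserves the strata $M\times V_\alpha^*\times{\rm Poly}^d$. It also preserves $Z^d$, because the difference ${\mf p}_2-{\mf p}_1$ evaluates to zero at $u$. Finally, $\Phi$ carries ${\rm graph}({\mf p}_1)$ onto ${\rm graph}({\mf p}_2)$.

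\textbf{Step 3: invariance of the stratification and conclusion.} The main obstacle is that $\Phi$ is only smooth and also depends on the parameter $y\in M$, while Proposition \ref{prop31}(2) is stated for $C^\infty$ diffeomorphisms of the ambient complex manifold itself. I would first try to absorb the $M$-dependence by working on $M\times M^d({\mb V},{\mb W})$ with the product stratification. The canonical Whitney stratification relative to the product is $M\times{\mf Z}^d$; this follows from Proposition \ref{prop_submersion_pullback} for the projection, or from a product argument as in Proposition \ref{prop_product_Whitney}. Proposition \ref{prop31}(2) is formulated for $C^\infty$ diffeomorphisms, which is what $\Phi$ is. So I apply it, or rather its proof, in the parametrized form to get $\Phi^*(M\times{\mf Z}^d)=M\times{\mf Z}^d$. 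If this is too delicate, the fallback is to freeze $y=x$: then $\Phi(x,\cdot)$ is a diffeomorphism of $M^d({\mb V},{\mb W})$ and the proposition applies directly, and transversality at a single point only depends on first-order data, which the parametrized picture handles.

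Once $\Phi$ preserves each stratum, transversality of ${\rm graph}({\mf p}_1)$ to a stratum at $((x,v),{\mf p}_1(x,v))$ is equivalent to transversality of $\Phi({\rm graph}({\mf p}_1))={\rm graph}({\mf p}_2)$ to the same stratum at the image point, because $d\Phi$ is an isomorphism mapping tangent spaces of strata to tangent spaces of strata. This proves independence of the lift.
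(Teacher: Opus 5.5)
Your proposal is the paper's proof: reduce to a common degree via Proposition \ref{prop_degree_change}, then compare the two graphs by the fibrewise translation $\Phi$ (the paper's $\Phi_1$ from its family $\Phi_t$), which preserves $Z^d$ and the stratification of $M^d$, and invoke Proposition \ref{prop31}(2). Your use of the left inverse $\sigma$ makes explicit what the paper leaves implicit, and two details need tightening: for the parametrized issue, apply Proposition \ref{prop31}(2) to every slice $\Phi(y,\cdot)$, not only $y=x$, since Propositions \ref{prop31} and \ref{prop_submersion_pullback} are complex-analytic and do not apply to $M\times M^d$ for a merely smooth $M$; and, as in the paper, the argument only covers lifts of sufficiently large degree, because $\sigma$ does not exist below the threshold.
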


\begin{proof}
The main idea was due to B. Parker \cite{BParker_integer} in the context of FOP transversality condition (see \cite[Lemma 3.35]{Bai_Xu_integer}). First, Proposition \ref{prop_degree_change} implies that, for the same lift ${\mf p}: F \to {\rm Poly}{}^d_{\rm stratified} (F, E)$, its transversality is independent of sufficiently large $d$. Then, suppose 
\beqn
{\mf p}_1, \mf{p}_2: F \to  {\rm Poly}{}_{\rm stratified}^d(F, E)
\eeqn
are two lifts of the same bundle map, one can define a 1-parameter family of fibre-preserving self-diffeomorphisms $\Phi_t$ of $M^d(F, E)$ given by
\beqn
\Phi_t(v, P)  = \Big( v, P + t({\mf p}_2(v) - {\mf p}_1(v))\Big)
\eeqn
with $\Phi_0 = {\rm Id}$ and $\Phi_1$ sends ${\rm graph}(\mf{p}_1)$ to ${\rm graph}(\mf{p}_2)$. Moreover, $\Phi_t$ preserves the bundle $ {Z}^d(F, E)$ as well as the linear stratification on $  M^d(F,E)$. Then by Proposition \ref{prop31}, $\Phi_t$ preserves the canonical stratification on $  Z^d(F , E)$. As ${\rm graph}({\mf p}_2) = \Phi_1( {\rm graph}({\mf p}_1))$, the transversality condition for ${\mf p}_1$ and ${\mf p}_2$ are the same. 
\end{proof}

\begin{lemma}
The NCS transversality condition is open.
\end{lemma}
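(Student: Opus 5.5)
The plan is to reduce the statement to the standard fact that transversality to a Whitney stratified closed set is an open condition. Fix an NCS bundle map $S \in C^\infty_{\rm NCS}(F, E)$ and a point $(x, v) \in F$ at which $S$ is NCS transverse. By the definition of $C^\infty_{\rm NCS}(F,E)$, I first choose a lift ${\mf p}: F \to {\rm Poly}^d_{\rm stratified}(F, E)$ of $S$, with $d$ large enough (at least the $d_1$ of Proposition \ref{prop_degree_change}). By the preceding lemma, NCS transversality at any point can be tested with this one fixed lift. The graph map
\beqn
\Gamma_{\mf p}: F \to M^d(F, E),\quad (y, w) \mapsto \big((y,w), {\mf p}(y,w)\big)
\eeqn
is a smooth embedding. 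NCS transversality of $S$ at $(y,w)$ is exactly transversality of $\Gamma_{\mf p}$ at $(y,w)$ to the stratum of the locally trivial canonical Whitney stratification ${\mf Z}^d(F,E)$ containing $\Gamma_{\mf p}(y,w)$. When $\Gamma_{\mf p}(y,w) \notin Z^d(F,E)$ the condition is vacuous.

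The openness in the base point then follows in two steps. First, $Z^d(F,E)$ is closed in $M^d(F,E)$, so the set of points whose image misses $Z^d$ is open. Second, I invoke the classical consequence of Whitney's condition (b) (in fact condition (a), which (b) implies). Suppose $\Gamma_{\mf p}$ is transverse to the stratum $Z_\alpha^*$ at a point $p$. Then it is transverse to every stratum $Z_\beta^*$ with $\beta > \alpha$ at all points of $\Gamma_{\mf p}(F) \cap Z_\beta^*$ near $p$. The reason is that limits of $T Z_\beta^*$ along sequences approaching $p$ contain $T_p Z_\alpha^*$, and transversality is an open condition on the Grassmannian. This uses local finiteness of the stratification. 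It also uses that ${\mf Z}^d(F,E)$ is a Whitney stratification in the family setting; that holds because, by Proposition \ref{prop34}, the bundle is locally a product of $Y$ with $Z^d({\mb V},{\mb W})$.

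If ``open'' is meant in $S$ as well, I extend the argument to nearby NCS maps $S'$. Any such $S'$ has a lift ${\mf p}'$ of some degree. After raising degrees and using Proposition \ref{prop_degree_change}, both lifts live in the same ${\rm Poly}^d$. The point I expect to be the main obstacle is arranging that ${\mf p}'$ is $C^1$-close to ${\mf p}$ whenever $S'$ is close to $S$ in the relevant topology. I would sidestep this by defining the topology on $C^\infty_{\rm NCS}$ through the coefficients $C^\infty(F)\cdot\Gamma({\rm Poly}^d_{\rm stratified})$, i.e.\ through lifts. Then $\Gamma_{{\mf p}'}$ is $C^1$-close to $\Gamma_{\mf p}$ on a compact neighbourhood. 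The same Whitney (a) argument, combined with openness of transversality to a single stratum under $C^1$ perturbations, gives transversality of $\Gamma_{{\mf p}'}$ near $(x,v)$. Independence of the lift then transfers this back to $S'$.
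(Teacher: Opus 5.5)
Your proposal is correct and takes the same route as the paper. The paper's proof is a one-line reduction to the fact that transversality to a Whitney stratified set is an open condition (citing Trotman), and your fixed-lift graph reformulation together with the Whitney (a) limiting argument is an unpacking of that cited fact; your treatment of openness in $S$, via a topology defined through lifts, goes beyond what the paper states, since the paper's one explicit use of the lemma (in the proof of Theorem~\ref{thm_transversality_orbifold}(2)) only needs openness in the base point.
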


\begin{proof}
This follows from the fact that transversality against a Whitney stratified subset is an open condition. See \cite{Trotman_1978} discussions of the openness of the transversality condition.
\end{proof}

The inductive procedure depends on the following property of the NCS transversality condition. Still let $(F, E)$ be a complex stratified virtual vector bundle over $M$. As the consideration is local, we may assume that $F = M \times {\mb V}$ and $E = M \times {\mb W}$ where $({\mb V}, {\mb W})$ is a complex stratified virtual space. 
Then for each stratum $V_\alpha \subset V$, there is a well-defined virtual space $({\mb V}_{\geq \alpha}, {\mb W})$. Denote 
\beqn
F_{\geq \alpha} = M \times {\mb V}_{\geq \alpha}.
\eeqn
Then $(F_{\geq \alpha}, E)$ is a stratified virtual vector bundle over $M$. The map $\xi_\alpha$ of Proposition \ref{prop39} induces a map 
\beqn
\xi_\alpha: C_{\rm NCS}^\infty (F, E) \to C_{\rm NCS}^\infty (F_{\geq\alpha}, E).
\eeqn

\begin{lemma}
Let $F_\alpha^+ \subset F$ be the open subbundle of vectors in strata above or equal to $\alpha$. Let $S \in C^\infty_{\rm NCS}(F, E)$ and $x \in F_\alpha^+$. Then $S$ is NCS transverse at $x$ if and only if $ \xi_\alpha(S) \in C^\infty_{\rm NCS}(F_{\geq \alpha}, E )$ is NCS transverse at $x$.
\end{lemma}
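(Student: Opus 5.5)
The plan is to reduce the statement to Proposition \ref{prop39}, applied fibrewise in the local trivialization $F = M\times {\mb V}$, $E = M\times {\mb W}$. First we fix a lift ${\mf p}: F \to {\rm Poly}^d_{\rm stratified}(F,E)$ of $S$ with $d \geq d_1$. Such a lift exists because $S$ is NCS. By Proposition \ref{prop_degree_change}, enlarging $d$ does not affect NCS transversality, so $d$ may be taken as large as needed.

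Since ${\rm Poly}^d({\mb V},{\mb W}) \subset {\rm Poly}^d({\mb V}_{\geq\alpha},{\mb W})$ (the latter has fewer constraints), the composition $\xi_\alpha\circ{\mf p}$, restricted to $F_{\geq\alpha}$, is a lift of $\xi_\alpha(S)$. Indeed, $\xi_\alpha$ is just the inclusion on the polynomial factor and the identity on the base and fibre coordinates, so evaluation is unchanged. We regard both graphs over $M\times V$ and write $\Xi_\alpha = {\rm Id}_M\times\xi_\alpha : M\times M^d({\mb V},{\mb W}) \to M\times M^d({\mb V}_{\geq\alpha},{\mb W})$. Then ${\rm graph}(\xi_\alpha\circ{\mf p}) = \Xi_\alpha({\rm graph}({\mf p}))$, where the graph is viewed as the image of the section $\Gamma_{\mf p}: M\times V\to M\times M^d$.

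By Proposition \ref{prop39}, over $V_\alpha^+$ the map $\xi_\alpha$ is transverse to ${\mf Z}^d({\mb V}_{\geq\alpha},{\mb W})$ and pulls its canonical Whitney stratification back to ${\mf Z}^d({\mb V},{\mb W})$, stratum by stratum. The remaining step is the elementary transversality fact that if $g$ is transverse to a submanifold $Z'$ then, for any map $f$, $f \pitchfork g^{-1}(Z')$ at a point if and only if $g\circ f \pitchfork Z'$ there. We apply it with $f = \Gamma_{\mf p}$, $g = \Xi_\alpha$, and $Z'$ the stratum of ${\mf Z}^d({\mb V}_{\geq\alpha},{\mb W})$ containing the image point, whose preimage is the corresponding stratum of ${\mf Z}^d({\mb V},{\mb W})$. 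The fact itself is linear algebra: transversality of $g$ gives $dg^{-1}(T Z') = T(g^{-1}Z')$ and $dg$ induces an isomorphism of normal spaces $N(g^{-1}Z') \cong NZ'$ modulo the image, so surjectivity onto the normal space is equivalent on both sides. This gives the equivalence at $x \in F_\alpha^+$. The independence of the choice of lift, proved in the preceding lemma, ensures the conclusion does not depend on ${\mf p}$.

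I expect the main obstacle to be bookkeeping rather than substance. Proposition \ref{prop39} is stated for a single virtual space, so we must check that in the trivialization the strata of ${\mf Z}^d(F,E)$ are exactly $M\times$ (strata of ${\mf Z}^d({\mb V},{\mb W})$); this follows from Proposition \ref{prop_submersion_pullback} applied to the projection, or from the local triviality via Proposition \ref{prop34}. We must also keep the restriction to $V_\alpha^+$ in view, since $x\in F_\alpha^+$ is exactly what is needed.
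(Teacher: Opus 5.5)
Your proposal is correct and follows the paper's route. The paper's proof is the single remark that the lemma is a consequence of Proposition \ref{prop39}; you supply the omitted details, namely composing a lift with the inclusion ${\rm Poly}^d({\mb V},{\mb W})\subset{\rm Poly}^d({\mb V}_{\geq\alpha},{\mb W})$ and using that a map transverse to a stratum identifies the normal spaces of that stratum and of its preimage. One small correction: the identification of ${\mf Z}^d(F,E)$ with $M\times{\mf Z}^d({\mb V},{\mb W})$ in a trivialization comes from the local triviality built on Proposition \ref{prop34}, not from Proposition \ref{prop_submersion_pullback}, since $M$ is only a smooth manifold.
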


\begin{proof}
This is a consequence of Proposition \ref{prop39}.
\end{proof}

In addition, suppose ${\mb V}$ has a splitting. Denote $F_\alpha^\vee = M \times V_\alpha^\vee$. Then one can identify the total spaces
\beqn
F_{\geq\alpha}  \cong \pi_{F_\alpha}^* F_\alpha^\vee
\eeqn
where the latter is a bundle over the total space $F_\alpha$. Then there is a natural map
\beqn
C^\infty_{\rm NCS} (F_{\geq \alpha}, E) \to C^\infty_{\rm NCS} (\pi_{F_\alpha}^* F_\alpha^\vee, \pi_{F_\alpha}^* E).
\eeqn

\begin{lemma}
A map $S \in C^\infty_{\rm NCS}(F_{\geq\alpha}, E)$ is NCS transverse at $x \in F_\alpha^+$ if and only if it is NCS transverse as an NCS bundle map from $\pi_{F_\alpha}^* F_\alpha^\vee$ to $\pi_{F_\alpha}^* E$.
\end{lemma}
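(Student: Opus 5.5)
This lemma is the bundle-level counterpart of Proposition \ref{prop311}, in the same way that the preceding lemma is the counterpart of Proposition \ref{prop39}. Since the question is local, I work fibrewise over $M$ with $F_{\geq\alpha} = M\times {\mb V}_{\geq\alpha}$ and $E = M\times{\mb W}$. Here $(\mb V_{\geq \alpha})_0 = V_\alpha$, and the splitting gives $V = V_\alpha\oplus V_\alpha^\vee$. The new base is the total space $F_\alpha = M\times V_\alpha$. Over a point $(x,v_\alpha)$ of this base, the fibre is $V_\alpha^\vee$ with its inherited stratification $\check{\mb V}_\alpha$, and the target is $W$. In the notation of the splitting subsection, this is exactly the passage from $({\mb V}_{\geq\alpha},{\mb W})$ to $(\check{\mb V}_0,{\mb W})$ with $V_0$ replaced by $V_\alpha$.

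\textbf{Step 1: lifts correspond.} Take a lift ${\mf p}: F_{\geq\alpha}\to {\rm Poly}^d_{\rm stratified}(F_{\geq\alpha},E)$ of $S$. Compose with partial evaluation, $\check{\mf p}(x,v_\alpha,\check v) := {\mf p}(x,v_\alpha,\check v)(v_\alpha,\cdot)$. This lies in ${\rm Poly}^d(\check{\mb V}_\alpha,{\mb W})$, because $P(V_\gamma)\subset W_{\beta(\gamma)}$ and every stratum above $\alpha$ contains $V_\alpha$. It also satisfies $\check{\mf p}(x,v_\alpha,\check v)(\check v)=S(x,v_\alpha,\check v)$, so $\check{\mf p}$ is a lift of $S$ regarded as a bundle map $\pi_{F_\alpha}^*F_\alpha^\vee\to\pi_{F_\alpha}^*E$. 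By the earlier lemma, NCS transversality does not depend on the lift, so it suffices to compare ${\rm graph}({\mf p})$ with ${\rm graph}(\check{\mf p})$.

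\textbf{Step 2: pull back via $\tau$.} The fibrewise map $\tau(v_\alpha,\check v,P)=(\check v,P(v_\alpha,\cdot))$ sends $M^d({\mb V}_{\geq\alpha},{\mb W})$ to $M^d(\check{\mb V}_\alpha,{\mb W})$. Adding the base coordinates, define
\[
T: M^d(F_{\geq\alpha},E)\to M^d(\pi_{F_\alpha}^*F_\alpha^\vee,\pi_{F_\alpha}^*E),\qquad (x,v_\alpha,\check v,P)\mapsto \big((x,v_\alpha),\check v,P(v_\alpha,\cdot)\big).
\]
Then $T\circ({\rm graph}\,{\mf p}) = {\rm graph}\,\check{\mf p}$, read as maps from the common base $F_{\geq\alpha}\cong\pi_{F_\alpha}^*F_\alpha^\vee$. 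The map $T$ is a submersion, since fibrewise $\tau$ is a holomorphic submersion and the base is carried along identically. By Proposition \ref{prop311} it pulls back the canonical Whitney stratification: $\tau^*{\mf Z}^d(\check{\mb V}_\alpha,{\mb W})={\mf Z}^d({\mb V}_{\geq\alpha},{\mb W})$. The bundle versions of both sides are locally trivial by Proposition \ref{prop34}, so the pullback identity holds for $T$ as well.

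\textbf{Step 3: compare transversality.} For a submersion $T$ and a stratum $Z'$ with $T^{-1}(Z')=Z$, a map $g$ is transverse to $Z$ at a point exactly when $T\circ g$ is transverse to $Z'$. The reason is that $dT$ is surjective and $dT^{-1}(T Z')=TZ$, so the condition $dg(T)+TZ=$ everything is equivalent to $d(Tg)(T)+TZ'=$ everything. Applying this with $g={\rm graph}\,{\mf p}$ for every stratum through the relevant point gives the equivalence.

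I expect the main obstacle to be Step 2. The fibre model $\tau$ requires a complex-linear complement of the lowest stratum $V_\alpha$, and the splitting of ${\mb V}$ is only an NC splitting: $V_\alpha^\vee$ is complex, but $V_\alpha$ itself need not be. To handle this, I would first apply Proposition \ref{prop311} on $V_\alpha^\vee$ with $V_\alpha$ treated as a real parameter. The partial-evaluation map is still a submersion that is holomorphic in the $(\check v,P)$ variables. I would then invoke the parametrized form of Proposition \ref{prop_submersion_pullback}, which the bundle setting already implicitly uses. I would also check that restricting to $F_\alpha^+$ loses nothing, since on it the relevant strata are exactly those above $\alpha$.
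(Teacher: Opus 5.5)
Your proposal is correct and follows the paper's route. The paper proves this lemma simply by invoking Proposition~\ref{prop311}, and your Steps 1--3 (the partial-evaluation lift $\check{\mf p}$, the map $T$ built from $\tau$, and transversality being reflected by a submersion that pulls back the canonical Whitney stratification) make that reduction explicit. The obstacle you anticipate does not occur: $F$ is a complex stratified bundle here, so $V_\alpha$ is a complex subspace and the splitting is the complex-linear one that Proposition~\ref{prop311} assumes, and no parametrized version of Proposition~\ref{prop_submersion_pullback} is needed.
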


\begin{proof}
This is a consequence of Proposition \ref{prop311}.
\end{proof}

We prove the existence and extension result of NCS transverse bundle maps.

\begin{prop}\label{bundle_transversality}
Let $S_0 \in C^\infty_{\rm NCS}(F, E)$ be NCS transverse near a closed subset $Y \subset F$. Then for any $\epsilon>0$, there exists an NCS transverse map $S_1 \in C_{\rm NCS}^\infty (F, E)$ such that $\| S_0 - S_1 \|_{C^0} \leq \epsilon$ and such that $S_0$ and $S_1$ coincides near $Y$.
\end{prop}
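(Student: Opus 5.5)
The plan is a parametric (Thom-type) transversality argument applied to lifts, with Lemma \ref{lemma36} supplying enough perturbation directions on each stratum.

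\textbf{Step 1: choose a lift and fix the degree.} Write $S_0 = \sum_j h_j P_j$ with $h_j \in C^\infty(F)$ and $P_j \in \Gamma({\rm Poly}^d_{\rm stratified}(F,E))$. This gives a lift ${\mf p}_0(x,v) = \sum_j h_j(x,v) P_j(x)$. By Proposition \ref{prop_degree_change} we may assume $d \geq d_1 \geq d_0$, since enlarging $d$ does not change NCS transversality.

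\textbf{Step 2: perturb within the class of lifts.} We look for perturbations of the form ${\mf p}_t = {\mf p}_0 + \lambda \sum_{k=1}^N t_k Q_k$, where:
\begin{enumerate}
\item $\lambda \in C^\infty(F;[0,1])$ is a cut-off function vanishing on a neighborhood of $Y$ and equal to $1$ outside a slightly larger neighborhood;
\item the $Q_k$ are finitely many bundle maps $F \to {\rm Poly}^d_{\rm stratified}(F,E)$ (locally they may be taken constant, of the form $f_\alpha(v,w)$ from Lemma \ref{lemma36});
\item $t = (t_1, \ldots, t_N) \in {\mb R}^N$ is a parameter.
\end{enumerate}
Each $S_t(x,v) := {\mf p}_t(x,v)(v)$ is then automatically an NCS map. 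It agrees with $S_0$ near $Y$, and $\|S_t - S_0\|_{C^0}$ tends to $0$ as $t \to 0$ on compact sets. For noncompact $F$, take $t$ to be a function with small weights on a locally finite exhaustion.

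\textbf{Step 3: make the family map submersive onto the strata.} Consider
\beqn
\Psi: F \times B_\delta^N \to M^d(F,E), \qquad (x,v,t) \mapsto \big((x,v), {\mf p}_t(x,v)\big).
\eeqn
It suffices that $\Psi$ is transverse to every stratum of ${\mf Z}^d(F,E)$ on the region where $\lambda = 1$. Then the parametric transversality theorem, applied stratum by stratum and using Whitney (b) together with the local finiteness of strata, shows that for generic small $t$ the graph of ${\mf p}_t$ is transverse to every stratum there. On the region $0 < \lambda < 1$, we use instead that $S_0$ is already transverse near $Y$ and that transversality is an open condition, by openness of NCS transversality. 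For this, shrink the neighborhoods so that the transition region lies inside the set where $S_0$ is transverse, and take $t$ small.

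The surjectivity needed for Step 3 is stratum-wise. At a point $((x,v),P)$ with $v \in (F_x)_\alpha^*$, the directions $\partial_t$ span the whole fibre ${\rm Poly}^d({\mb F}_x, {\mb E}_x)$ provided the $Q_k$ span it pointwise. A finite spanning family exists locally, and globally via a partition of unity. Since $\Psi$ is the identity on the $F$-factor, $\Psi$ is then a submersion onto $M^d$, hence transverse to every submanifold, in particular to all Whitney strata. So the simplest choice is to let the $Q_k$ be a local frame of ${\rm Poly}^d_{\rm stratified}(F,E)$ multiplied by cut-offs. Lemma \ref{lemma36} is then not even needed for transversality, only to see that the $Z$-strata are smooth.

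\textbf{Main obstacle.} The hard part is not Sard's theorem but the bookkeeping that makes the final perturbation a genuine NCS map, transverse everywhere including the interpolation region. In particular, transversality of a graph to ${\mf Z}^d$ must be independent of the lift, which holds by the lift-independence lemma; this is what lets us test transversality on ${\mf p}_t$ rather than on $S_t$. One also has to handle noncompactness by a countable inductive version: exhaust $F$ by compact sets $K_i$, apply the above with $Y$ replaced by $Y \cup K_{i-1}$, and choose perturbation sizes $\epsilon 2^{-i}$. Openness of transversality and the fixed-near-closed-set property make the limit well defined and NCS transverse.
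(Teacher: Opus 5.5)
Your proposal is correct and follows the paper's route. You perturb a lift ${\mf p}_0$ rather than $S_0$ itself, away from $Y$, so that its graph becomes transverse to the strata of ${\mf Z}^d(F,E)$, and you rely on lift-independence and openness of NCS transversality. Your parametric argument (a $t$-family spanning the fibres of ${\rm Poly}^d_{\rm stratified}(F,E)$ so that $\Psi$ is a submersion), together with the cut-off and the compact exhaustion, spells out the standard graph-transversality fact that the paper invokes without proof.
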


\begin{proof}
One only needs to modify a lift ${\mf p}_0: F \to  {\rm Poly}{}^d_{\rm stratified} (F, E )$ locally where the bundles can be trivialized. In general, for smooth maps $f: U \to M$ between smooth manifolds and a smooth submanifold $Z \subset U \times M$, the transversality between the graph of $f$ and $Z$ can be achieved by small perturbations of $f$ and one can require the perturbation to be supported away from a closed subset near which $f$ is already transverse.
\end{proof}

Lastly we prove the invariance under stabilizations.

\begin{prop}\label{prop_stabilization_transversality}
Let $H \to M$ be another complex stratified vector bundle and $(F, H)$ is a stratified virtual vector bundle over $M$. Suppose $S \in C_{\rm NCS}^\infty (F, E)$ is NCS transverse. Then its stabilization
\beqn
S \oplus {\rm Id}_H \in C_{\rm NCS}^\infty ( F \oplus H, E \oplus H )
\eeqn
is NCS transverse.
\end{prop}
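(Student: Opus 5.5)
The plan is to reduce the statement to the linear-algebraic Proposition \ref{prop313} applied fibrewise, using that NCS transversality only depends on the graph of a lift. Since the question is local on $M$, trivialize: $F = M\times {\mb V}$, $E = M \times {\mb W}$, $H = M\times {\mb U}$, with the poset map ${\mc O}^V(V)\to {\mc O}^U(U)$ coming from the virtual bundle $(F,H)$. The stabilized virtual space is then $({\mb V}\oplus{\mb U}, {\mb W}\oplus{\mb U})$ with the stratifications described before Proposition \ref{prop313}.

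\textbf{Step 1: a lift of the stabilization.} Fix a lift ${\mf p}: F\to {\rm Poly}^d_{\rm stratified}(F,E)$ of $S$ with $d$ large. Define
\beqn
\hat{\mf p}(x, v, u) := {\mf p}(x, v)\oplus {\rm Id}_U \in {\rm Poly}^d({\mb V}\oplus{\mb U}, {\mb W}\oplus {\mb U}).
\eeqn
Evaluated at $(v,u)$ this gives $(S(x,v), u)$, so $\hat{\mf p}$ is a lift of $S\oplus {\rm Id}_H$. By the lemma on independence of the lift, it suffices to check transversality for this particular lift.

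\textbf{Step 2: graph of $\hat{\mf p}$ versus the image of $\eta$.} The graph of $\hat{\mf p}$ is contained in $M\times \eta(M^d({\mb V},{\mb W}))\times U$, but it is not literally $\eta$ applied to the graph of ${\mf p}$, since $u$ is now free. To handle this, factor through the maps of Lemmas \ref{lemma311} and \ref{lemma312}. The left inverse $\zeta_1$ sends $(v, u, P_0, {\rm Id}_U + Q_0)\mapsto (v, u+Q_0(v), P_0, {\rm Id}_U)$. Its pullback of $\widetilde{\mf Z}^d$, where $\widetilde M^d = (V\oplus U)\times \eta({\rm Poly}^d({\mb V},{\mb W}))$, equals $\widecheck{\mf Z}^d$. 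Composing with Lemma \ref{lemma312} shows that transversality of the graph of $\hat{\mf p}$ to ${\mf Z}^d({\mb V}\oplus{\mb U},{\mb W}\oplus{\mb U})$ is equivalent to transversality inside $(V\oplus U)\times{\rm Poly}^d({\mb V},{\mb W})$ against the pulled-back stratification.

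That pulled-back stratification should be identified with the canonical stratification of
\beqn
\{(v,u,P): P(v)=0,\ u=0\},
\eeqn
that is, ${\mf Z}^d({\mb V},{\mb W})\times\{0\}$, refined by the stratification of $V\oplus U$. This identification is essentially the content of Proposition \ref{prop313} together with $\eta^{-1}(Z)=Z^d({\mb V},{\mb W})$, carried out one $u$-slice at a time.

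\textbf{Step 3: the transversality check.} With the identification in hand, the graph of $\hat{\mf p}$ is $\{(x,v,u,{\mf p}(x,v))\}$, and the $u$-direction is transverse to $\{u=0\}$ automatically. So transversality of this graph to a stratum $\Sigma\times\{0\}$ reduces to transversality of ${\rm graph}({\mf p})$ to $\Sigma$. That is exactly the NCS transversality hypothesis on $S$, which finishes the argument.

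\textbf{Main obstacle.} I expect the hardest part to be Step 2: making precise that the $u$-coordinate splits off as a transverse factor compatibly with the canonical Whitney stratification. The difficulty is that the stratification of $V\oplus U$ couples $V_\alpha$ with $U_{\gamma(\alpha)}$, so the product is not naive.

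My first attempt would be to use the holomorphic submersion
\beqn
(v,u,\hat P)\mapsto \zeta_2\bigl(v, u, \hat P\bigr),
\eeqn
followed by the shear $u\mapsto u+Q_0(v)$. This reduces to Proposition \ref{prop_submersion_pullback} applied to the projection $(v,u,P)\mapsto (v,P)$ restricted to the slice $u=0$. If the coupling prevents a global submersion, I would work on the open sets $V_\alpha^+$ via Proposition \ref{prop39}, where $u$ lies in a fixed subspace $U_{\gamma(\alpha)}$ locally.
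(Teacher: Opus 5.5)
Your proposal is correct and follows the paper's route. The paper simply records the statement as a corollary of Proposition \ref{prop313}, and you make that reduction explicit in three steps: you take the lift ${\mf p}\oplus{\rm Id}_U$; you use Lemmas \ref{lemma311}--\ref{lemma312} to identify the pulled-back stratification with ${\mf Z}^d({\mb V},{\mb W})\times\{0\}$; and you note that ${\rm graph}({\mf p})\times U$ is transverse to $\Sigma\times\{0\}$ exactly when ${\rm graph}({\mf p})$ is transverse to $\Sigma$. Your insistence that the $u$-coordinate stay free in $\widetilde M^d$, as the formula for $\zeta_1$ implicitly requires, is the right reading of Proposition \ref{prop313}: with $u$ pinned to $0$ the inclusion could not be transverse once $U\neq 0$, for codimension reasons. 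The ``one $u$-slice at a time'' hedge is unnecessary, since $Z^d$ only meets $\{u=0\}$.
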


\begin{proof}
This is a corollary of Proposition \ref{prop313}. 
\end{proof}

\subsubsection{Transversality on manifolds}

Now we consider the case over manifolds.

\begin{defn}\label{defn_manifold_transversality}
Let $(U, E)$ be an NCS virtual manifold (Definition \ref{defn_NCS_virtual_manifold}). Suppose $E$ is flat and $U$ is equipped with a straightening (Definition \ref{defn_manifold_straightening}). A stratified section $S \in \Gamma_{\rm stratified} (U, E )$ is said to be {\bf NCS transverse} with respect to this straightening if, for each stratum $\alpha$, the bundle map
\beqn
S_\alpha : N^\epsilon U_\alpha^* \to E|_{U_\alpha}^*
\eeqn
induced by the straightening and parallel transport of $E$ along normal directions, is the restriction of a NCS transverse bundle map from $NU_\alpha^*$ to $E|_{U_\alpha^*}$ in the sense of Definition \ref{defn_bundle_transversality}.
\end{defn}

Notice that the NCS transversality condition depends on the straightening on $U$. In fact the bundle map $S_\alpha$ being a normally complex bundle map depends on the tubular neighborhood.

An important consequence of NCS transversality is that the zero locus is nicely stratified. One has the following characterization.

\begin{lemma}\label{lemma_zero_stratification}
Let $S\in \Gamma_{\rm stratified} (U, E)$ be an NCS transverse section. Then $S^{-1}(0) \subset U$ is a Whitney stratified set such that adjacent strata have even (real) codimensions.
\end{lemma}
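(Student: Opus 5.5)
The plan is to work locally near a point $x\in S^{-1}(0)$ lying in a stratum $U_\alpha^*$, and to realize $S^{-1}(0)$ as a transverse preimage of the canonical Whitney stratified variety $Z^d$. The properties of $S^{-1}(0)$ will then be inherited from those of $Z^d$.

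First, use the straightening along $NU_\alpha^*$ and the flat trivialization of $E$ to identify a neighborhood of $x$ with a neighborhood in $N^\epsilon U_\alpha^*$. Shrinking further, this is a product $B\times O$, where $B\subset U_\alpha^*$ is a ball and $O\subset \check{\mb V}$ is a neighborhood of the origin in the complex stratified normal fibre. Under this identification $S$ becomes $S_\alpha$. By Definition \ref{defn_manifold_transversality}, $S_\alpha$ has a lift ${\mf p}: B\times O\to {\rm Poly}^d(\check{\mb V},{\mb W})$ whose graph map
\beqn
\Gamma: (b,v)\mapsto (v,{\mf p}(b,v))\in M^d(\check{\mb V},{\mb W})
\eeqn
is transverse to ${\mf Z}^d(\check{\mb V},{\mb W})$, and $S^{-1}(0)\cap(B\times O)=\Gamma^{-1}(Z^d)$. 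Here $\Gamma$ is really the graph map into $B\times M^d$; the $B$-factor is harmless, since its stratification is the pullback under the projection, which is a submersion.

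Transversality to a Whitney stratified set makes $\Gamma^*{\mf Z}^d$ a Whitney stratification of $S^{-1}(0)$ locally. The codimension of each stratum of $\Gamma^{-1}(Z^d)$ in $B\times O$ equals the real codimension of the corresponding stratum of $Z^d$ in $M^d$.

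Next, compare these local stratifications on overlaps. At a nearby point $y$ in a higher stratum $U_\beta^*$, the induced straightening of Lemma \ref{lemma_compatible_straightening} applies, together with the splitting identification $F_{\geq\beta}\cong\pi^*F_\beta^\vee$. Propositions \ref{prop39} and \ref{prop311} (through the two lemmas following Proposition \ref{bundle_transversality}) show that the pulled-back stratification computed in the chart at $y$ agrees with the one computed at $x$. The invariance statements (Proposition \ref{prop34}, the reparametrization invariance, and the independence of lifts) show that the pulled-back stratification does not depend on the trivialization or on the choice of lift. Consequently the local stratifications glue to a global partition of $S^{-1}(0)$. Whitney's condition (b) is local, so this partition is a Whitney stratification.

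The parity claim now reduces to the canonical stratification of $Z^d$. By Proposition \ref{prop31}(1), every stratum of $Z^d$ is a complex submanifold of the complex vector space $M^d$, so every stratum has even real codimension in $M^d$. Preimages under a transverse map preserve codimension. Hence for adjacent strata $\Sigma<\Sigma'$ of $S^{-1}(0)$,
\beqn
\dim\Sigma'-\dim\Sigma = {\rm codim}_{\mb R}\,Z_\sigma - {\rm codim}_{\mb R}\,Z_{\sigma'},
\eeqn
which is a difference of even numbers and therefore even.

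The step I expect to be the main obstacle is the global gluing. One must verify that the chartwise pullback stratifications coincide on overlaps between charts centered at points of different strata. This is exactly where the compatibility of straightenings (Definition \ref{defn_manifold_straightening}) and the relaxation and splitting results are needed. A further subtlety is that a stratum of $S^{-1}(0)$ might have components lying in different strata of $U$, and these must be shown to be consistent. The parity statement itself is immediate once this is in place.
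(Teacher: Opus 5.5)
Your argument is correct and matches the paper's proof: you use the straightening to lift $S$ locally, write $S^{-1}(0)$ as the transverse preimage of the canonically Whitney stratified $Z^d({\mb V},{\mb W})$, and inherit both Whitney regularity and even relative codimension from the fact that the strata of $Z^d$ are complex submanifolds. The paper simply declares the statement local and does not write out the overlap compatibility that you derive from Propositions \ref{prop39} and \ref{prop311} and the invariance results, so that part of your proposal elaborates the same route rather than departing from it.
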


\begin{proof}
We only needs to verify it locally. For $x \in S^{-1}(0) \cap U_\alpha^*$, using the straightening, locally $S$ can be lifted to a map 
\beqn
{\mf p}: U_\alpha^* \times V \to  {\rm Poly}{}^d({\mb V}, {\mb W})
\eeqn
where $({\mb V}, {\mb W})$ is a complex stratified virtual space. Then 
\beqn
S^{-1}(0) = {\rm graph}({\mf p}) \cap (U_\alpha^* \times  Z^d({\mb V}, {\mb W})).
\eeqn
As $ Z^d({\mb V}, {\mb W})$ is Whitney stratified such that adjacent strata have even codimensions and the intersection is transverse, the same property holds for $S^{-1}(0)$.
\end{proof}

\begin{prop}\label{prop_NCS_extension_manifold}
Let $S_0 \in \Gamma_{\rm stratified} ( U, E)$ be NCS transverse near a closed subset $Y \subset U$. Then for any $\epsilon>0$, there exists an NCS transverse section $S_1\in \Gamma_{\rm stratified} (U, E)$ such that $\| S_0 - S_1 \|_{C^0} \leq \epsilon$ and such that $S_1 = S_0$ near $Y$.
\end{prop}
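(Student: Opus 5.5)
The plan is an induction over strata, from the deepest upward, using the bundle-level result Proposition \ref{bundle_transversality} on each tubular neighborhood $|N^\epsilon U_\alpha^*|$ supplied by the straightening.

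\textbf{Step 0: reduction to an NCS section.} Since $S_0$ need only be stratified away from $Y$, first replace it by an NCS section. By Lemma \ref{lemma235}, choose $S_0' \in \Gamma_{\rm NCS}(U,E)$ that is $\epsilon/2$-close to $S_0$. Then glue $S_0$ near $Y$ to $S_0'$ away from $Y$ using a cut-off function $\lambda$. Because $\Gamma_{\rm NCS}$ and $\Gamma_{\rm stratified}$ are $C^\infty(U)$-modules, the combination $\lambda S_0 + (1-\lambda) S_0'$ is still an NCS section. It also stays close to $S_0$ and agrees with $S_0$ near $Y$.

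\textbf{Step 1: ordering the strata.} By local finiteness we may process strata in order of increasing dimension (or by a well-ordering compatible with $\leq$). Let $Y_k$ denote $Y$ together with the closures of all strata already handled. Assume $S$ is already NCS transverse near $Y_k$, and let $U_\alpha^*$ be the next stratum.

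\textbf{Step 2: treating a single stratum.} Over the tubular neighborhood, the straightening and the flat parallel transport turn $S$ into a bundle map
\[
S_\alpha \in C^\infty_{\rm NCS}(NU_\alpha^*, E|_{U_\alpha^*}).
\]
This map is already NCS transverse near the preimage of $Y_k$. Apply Proposition \ref{bundle_transversality} with the closed set equal to that preimage together with the complement of a smaller disk bundle $N^{\epsilon'}U_\alpha^*$, over which $S_\alpha$ is left unchanged. The result is a $C^0$-small modification $S_\alpha'$ that is NCS transverse on a neighborhood of the zero section near $U_\alpha^*$. Transporting it back through $\rho_\alpha$ and $\hat\rho_\alpha$ gives a modified section. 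The modification must be supported in a small neighborhood of $U_\alpha^*$, and its size must be chosen so that the errors summed over all strata stay below $\epsilon/2$; local finiteness makes this possible. Openness of NCS transversality, combined with keeping each later modification small enough, ensures that transversality obtained at earlier steps is not destroyed.

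\textbf{Step 3: compatibility between strata.} This is the main obstacle. NCS transversality at a point of a higher stratum $\beta > \alpha$ lying inside $|N^\epsilon U_\alpha^*|$ is defined through the straightening along $U_\beta^*$. The transversality produced in Step 2 is instead phrased through the straightening along $U_\alpha^*$. Two facts reconcile them. First, Definition \ref{defn_manifold_straightening} says the straightening along $U_\beta^*$ restricted to $|N^\epsilon U_\alpha^*|$ is the one induced from $U_\alpha^*$ (Lemma \ref{lemma_compatible_straightening}). Second, the flat transports commute, by the lemma preceding the definition of NCS sections. Together these identify $S_\beta$ near $U_\alpha^*$ with the restriction of $S_\alpha$ to the region where vectors lie in $F_\alpha^+$, written in the split form $\pi^*F_\alpha^\vee$.

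With this identification, the two lemmas following Proposition \ref{prop39} (the relaxation lemma via $\xi_\alpha$, and the splitting lemma via Proposition \ref{prop311}) show that NCS transversality of $S_\alpha$ at such points is equivalent to NCS transversality of $S_\beta$. Hence transversality near $U_\alpha^*$ automatically gives transversality along all higher strata nearby, which justifies adding a neighborhood of $\overline{U_\alpha^*}$ to $Y_{k+1}$. Iterating this, with local finiteness guaranteeing that each point is modified only finitely often, yields the section $S_1$.
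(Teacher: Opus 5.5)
Your proposal is correct and follows essentially the same route as the paper. Like the paper, you induct over the strata from the lowest one upward, view the section as a bundle map $S_\alpha$ on $N^\epsilon U_\alpha^*$ via the straightening and flat transport, modify it near $U_\alpha^*$ using Proposition \ref{bundle_transversality} (in its support-controlled form, since $S_\alpha$ is not yet transverse away from the zero section), then pass to the remaining strata with transversality already achieved near a closed set while controlling $C^0$ sizes. Your Step 0 (first replacing $S_0$ by an NCS section via Lemma \ref{lemma235} and a cut-off) and Step 3 (transferring transversality from $S_\alpha$ to $S_\beta$ using the compatibility of straightenings, flatness, and the consequences of Propositions \ref{prop39} and \ref{prop311}) spell out points that the paper's brief argument leaves implicit.
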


\begin{proof}
The proof is obtained by combining Proposition \ref{bundle_transversality} and an induction argument. First consider a lowest stratum $U_\alpha \subset U$. Using the straightening, one can write $S_0$ as a stratified bundle map 
\beqn
S_{0, \alpha}: N^\epsilon U_\alpha \to E|_{U_\alpha}.
\eeqn
By Proposition \ref{bundle_transversality}, one can slightly perturb $S_{0, \alpha}$ to obtain an NCS transverse bundle map which agrees with $S_{0, \alpha}$ outside a tubular neighborhood. Then one can reduce the problem to the complement $U\setminus U_\alpha$ which has one fewer stratum and such that the NCS transversality is already achieved near a closed subset. Then by applying Proposition \ref{bundle_transversality} inductively, a global NCS transverse section $S_1$ can be obtained. One can control the size of the perturbation in each step so that the $C^0$ distance from $S_0$ to $S_1$ is no greater than any given $\epsilon$.

\end{proof}

\subsubsection{Transversality on orbifolds}

It is then easy to generalize the NCS transversality to stratified orbifolds. As equivariant transversality may fail, one needs to use multisections to break local symmetry. 

\begin{defn}
Let $({\mc U}, {\mc E})$ be an NCS virtual orbifold (Definition \ref{defn_NCS_virtual_orbifold}) equipped with a straightening. A multisection ${\mc S} \in \Gamma_{\rm stratified}^{\rm multi}({\mc U}, {\mc E})$ is said to be {\bf NCS transverse} with respect to the straightening if for each bundle chart $\hat C = (G, U, E, \hat\phi)$ on which ${\mc S}$ is lifted to sections
\beqn
S_1, \ldots, S_k \in \Gamma_{\rm stratified}(U, E)
\eeqn
where $U$ has the induced straightenings, each $S_i$ is NCS transverse in the sense of Definition \ref{defn_manifold_transversality}.
\end{defn}

\begin{thm}\label{thm_transversality_orbifold}
Let $({\mc U}, {\mc E})$ be as above.
\begin{enumerate}
    \item Let ${\mc S}_1 \in \Gamma_{\rm stratified}^{\rm multi}({\mc U}, {\mc E})$ be NCS transverse near a closed subset $Y  \subset {\mc U}$ and is $\epsilon$-close to a single-valued stratified section ${\mc S}_0 \in \Gamma_{\rm stratified}({\mc U}, {\mc E})$. Then there exists an NCS transverse multisection ${\mc S}_2 \in \Gamma_{\rm stratified}^{\rm multi}({\mc U}, {\mc E})$ which is $2\epsilon$-close to ${\mc S}_0$ and which agrees with ${\mc S}_1$ near $Y$.

    \item Let ${\mc S}_1, {\mc S}_2 \in \Gamma_{\rm stratified}^{\rm multi}({\mc U}, {\mc E})$ be two NCS transverse multisections. Then there exists an NCS transverse multisection
    $\tilde {\mc S} \in \Gamma_{\rm stratified}^{\rm multi}({\mc U} \times [0,1], {\mc E} \times [0,1])$ whose boundary restrictions coincide with ${\mc S}_1$ and ${\mc S}_2$. Moreover, if ${\mc S}_1^{-1}(0)$ and ${\mc S}_2^{-1}(0)$ are compact, we can choose $\tilde {\mc S}$ such that $\tilde {\mc S}^{-1}(0)$ is compact.
\end{enumerate}
\end{thm}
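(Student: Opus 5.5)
The plan for part (1) is the standard chart-by-chart inductive construction of multisections, in the same pattern as the proof of Proposition \ref{prop_orbifold_straightening}, with Proposition \ref{prop_NCS_extension_manifold} as the local engine.

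\emph{Cover.} Choose a countable, locally finite family of bundle charts $\hat C_i = (G_i, U_i, E_i, \hat\phi_i)$ with $G_i$-invariant precompact open subsets $U_i' \subset U_i$ whose images cover ${\mc U}$. Each chart should carry the induced straightening and flat connection. Set $Y_0 := Y$ and $Y_i := Y_{i-1} \cup \phi_i(\ov{U_i'})$.

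\emph{Inductive step.} Suppose we already have a multisection ${\mc S}^{(i-1)}$ that is NCS transverse near $Y_{i-1}$, equals ${\mc S}_1$ near $Y$, and is $(1+\sum_{j<i}2^{-j})\epsilon$-close to ${\mc S}_0$. On $U_i$, lift ${\mc S}^{(i-1)}$ near $\phi_i^{-1}(Y_{i-1})$ to branches $S_1,\dots,S_k$; the lift can be taken $G_i$-closed, i.e. $G_i$ permutes the branches. Extend each branch to all of $U_i$ as a stratified section that stays close to the lift of ${\mc S}_0$. This is done by interpolating with $S_0$ through a $G_i$-invariant cutoff; convexity of stratified sections is fine here because each fibre stratum is a linear subspace. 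Then apply Proposition \ref{prop_NCS_extension_manifold} to one branch from each $G_i$-orbit of branches, keeping it fixed near $\phi_i^{-1}(Y_{i-1})$ and moving it in $C^0$ by at most $2^{-i}\epsilon$. Transport these perturbed branches by $G_i$.

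\emph{Equivariance via multisections.} The branches transported this way need not be equivariant individually, but NCS transversality is preserved by $G_i$: the group acts by stratified, NC-preserving automorphisms of the chart compatible with the straightening, so Proposition \ref{prop34} and invariance of the canonical Whitney stratification apply. Hence the multiset $\{g\cdot S'_j\}$ defines a multisection on $\phi_i(U_i)$ all of whose branches are NCS transverse. Glue it with ${\mc S}^{(i-1)}$ over a neighborhood of $Y_i$ by the closed-set gluing argument of Lemma \ref{lemma_closed_sheaf}. Local finiteness makes the limit a smooth multisection ${\mc S}_2$, which is $2\epsilon$-close to ${\mc S}_0$.

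\emph{Main obstacle.} The key difficulty is the compatibility of branches across chart embeddings. A branch that is NCS transverse in $U_i$ must remain NCS transverse after restriction to a subchart. This needs two facts:
\begin{itemize}
\item the straightenings are compatible under chart embeddings (Definition \ref{defn_orbifold_straightening});
\item NCS transversality is a local, lift-independent condition, which follows from the lemma on independence of lifts and the openness lemma.
\end{itemize}
I would verify this first, because the whole gluing depends on it.

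For part (2), I would apply part (1) to the product $({\mc U}\times[0,1], {\mc E}\times[0,1])$, giving the interval factor the trivial stratification and extending the straightening trivially in the $t$-direction. Take the single-valued reference section ${\mc S}_0$ and let ${\mc S}_1, {\mc S}_2$ both be $\epsilon$-close to it, with $\epsilon$ smaller than $\inf_{{\mc U}\setminus K}|{\mc S}_0|/3$ for a compact set $K$ on which $|{\mc S}_0|$ is not small. Start with the multisection equal to ${\mc S}_1$ for $t<1/3$ and ${\mc S}_2$ for $t>2/3$. It is NCS transverse near $Y = {\mc U}\times\{0,1\}$, since a product with an interval preserves NCS transversality; this is a trivial case of the submersion-pullback Proposition \ref{prop_submersion_pullback}. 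Extend it by part (1), using as reference the $t$-independent ${\mc S}_0$. Compactness of the zero locus then follows because $\tilde{\mc S}$ is $2\epsilon$-close to ${\mc S}_0$, which is bounded away from zero outside $K\times[0,1]$. If ${\mc S}_1$ and ${\mc S}_2$ are not assumed close to a common section, I would first choose ${\mc S}_0$ as a stratified interpolation $(1-t){\mc S}_1^{\rm avg}+t{\mc S}_2^{\rm avg}$; the averages are taken so the result is single-valued, and I would work under the standing assumption, used in the Kuranishi setting, that the multisections arise as small perturbations of the Kuranishi section.
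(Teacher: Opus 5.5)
Your argument follows the paper's proof. For (1) it is a locally finite chart-by-chart induction with Proposition \ref{prop_NCS_extension_manifold} as the local step. For (2) it extends to ${\mc U}\times[0,1]$, gets NCS transversality near ${\mc U}\times\{0,1\}$ (you from $t$-independence, the paper from openness), and then applies (1). Two comments.

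\textbf{Compactness clause in (2).} You are right that something beyond compactness of ${\mc S}_1^{-1}(0)$ and ${\mc S}_2^{-1}(0)$ is needed; in fact the clause is false as stated.
\begin{enumerate}
\item Take ${\mc U}=\mathbb{C}$ with a single stratum and ${\mc E}=\mathbb{C}$ the trivial flat bundle. Then NCS transversality is ordinary transversality.
\item The sections ${\mc S}_1\equiv 1$ and ${\mc S}_2(z)=z$ are transverse, with compact zero loci $\emptyset$ and $\{0\}$.
\item A transverse $\tilde{\mc S}$ with compact zero locus would give a compact weighted branched $1$-manifold whose boundary is a single point of weight $1$, which is impossible.
\end{enumerate}
The paper's one-line proof of (2) does not address compactness. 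Where (2) is used (independence of the virtual count), compactness comes from $\delta$-closeness to the Kuranishi section over a precompact shrinking, via Lemma \ref{delta_r}. That is exactly your hypothesis of closeness to a common reference section bounded away from zero off a compact set. So put that hypothesis into the statement, and drop the averaged-interpolation fallback, which the example shows cannot work in general.

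\textbf{Bookkeeping repair in (1).} Suppose you perturb one branch per $G_i$-orbit and transport it by all of $G_i$. Then near $\phi_i^{-1}(Y_{i-1})$ each orbit reappears with multiplicity equal to the order of its stabilizer. When these orders differ, this is a different element of ${\rm Sym}^\bullet({\mc E})$, so the new multisection does not agree with ${\mc S}^{(i-1)}$ there and the gluing fails. Transporting only over the cosets $G_i/{\rm Stab}$ does not help, because it destroys $G_i$-invariance. The fix is to first replace the lift $(S_j)$ by its saturation $(g\cdot S_j)_{g\in G_i,\,j}$. This represents the same multisection, and $G_i$ acts freely on its index set. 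Equivalently, perturb every $S_j$ and take $\{g\cdot S'_j\}_{g,j}$. With that change the step works as you describe.
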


\begin{proof}
For (1), one can cover ${\mc U}$ by countably many bundle charts (which are locally finite) and inductively apply Proposition \ref{prop_NCS_extension_manifold} on the charts. For (2), one can first arbitrarily extend ${\mc S}_1$ and ${\mc S}_2$ to ${\mc U} \times [0, 1]$ as a normally complex section. As the NCS transversality condition is open, the extension is NCS transverse near the boundary. Hence one can apply (1) in this case. 
\end{proof}

\section{Kuranishi Atlases and Virtual Fundamental Classes}\label{section4}

In this section we set up the abstract theory for Kuranishi atlases. For the purpose of our application, we will first define a category of ``equivariant Kuranishi charts with flat Hermitian obstruction bundles'' 
\beqn
\uds{\bf Kur}^{G, \flat}({\mf M}).
\eeqn
Here ${\mf M}$ is typically a moduli space and $G$ is a compact Lie group. We develop a framework of equivariant Kuranishi atlases, where charts resp. coordinate changes are objects resp. morphisms in $\uds{\bf Kur}^{G,\flat}({\mf M})$. For applying the NCS perturbation method, we promote the category to 
\beqn
\uds{\bf Kur}_{\rm NCS}^{G, \flat}({\mf M})
\eeqn
where extra structures are included. We will then demonstrate how to use NCS transverse multivalued perturbations to define a virtual fundamental class of the ``main stratum'' of such atlases.

\subsection{Equivariant Kuranishi atlases with flat Hermitian obstruction bundles}

\begin{defn}\label{defn_equivariant_Kuranishi_chart}
Let $G$ be a compact Lie group and ${\mf M}$ be a locally compact and Hausdorff space.
\begin{enumerate}

\item A {\bf $G$-equivariant Kuranishi chart with flat Hermitian obstruction bundle} on ${\mf M}$ (a $\uds{\bf Kur}^{G,\flat}$-chart) is a quadruple
\beqn
K = (U, E, S, \Psi)
\eeqn
where $U$ is a smooth $G$-manifold (possibly with boundary and  corner), $E \to U$ is a $G$-equivariant flat Hermitian vector bundle, $S: U \to E$ is a continuous section, and $\Psi: S^{-1}(0)/G \to {\mf M}$ is a homeomorphism onto an open subset (the {\bf footprint} of $K$).

\item A {\bf morphism} of $\uds{\bf Kur}^{G, \flat}$-charts on ${\mf M}$ from $K_1 = (U_1, E_1, S_1, \Psi_1)$ to $K_2 = (U_2, E_2, S_2, \Psi_2)$, denoted by ${\bm \Phi}_{21} = (\Phi_{21}, \hat \Phi_{21})$, consists of a $G$-equivariant smooth map $\Phi_{21}: U_1 \to U_2$, a $G$-equivariant flat isometric bundle map $\hat \Phi_{21}: E_1 \to E_2$ which covers $\Phi_{21}$ 
such that the following diagrams commute.
\begin{align*}
&\ \xymatrix{  E_1 \ar[r]^{\Phi_{21}^E} & E_2 \\
            U_1 \ar[r]_{\Phi_{21}^U} \ar[u]^{S_1}  & U_2  \ar[u]_{S_2} }\ &\ \xymatrix{ S_1^{-1}(0) \ar[r]^{\Phi_{21}^U} \ar[d]_{\Psi_1}  & S_2^{-1}(0) \ar[d]^{\Psi_2} \\
                {\mf M} \ar@{=}[r] & {\mf M} }
            \end{align*}
Notice that morphisms can be composed.

\item The {\bf stabilization} of a $\uds{\bf Kur}^{G, \flat}$-chart $K = (U, E, S, \Psi)$ on ${\mf M}$ by a $G$-equivariant disk bundle $N^\epsilon$ contained in a $G$-equivariant flat Hermitian vector bundle $\pi_N: N \to U$ is the $\uds{\bf Kur}^{G, \flat}$-chart
\beqn
\tilde K = (N^\epsilon, \pi_N^* E \oplus \pi_N^* N, \pi_N^* S \oplus \tau_N, \tilde \Psi)
\eeqn
where $\tau_N$ is the tautological section of $\pi_N^* N \to N$ and $\tilde\Psi$ is the composition
\beqn
\xymatrix{ (\pi_N^* S \oplus \tau_N)^{-1}(0)/G \ar[r] & S^{-1}(0)/G  \ar[r]^-{\Psi} & {\mf M}}.
\eeqn
Notice that the bundle $\pi_N^* E \oplus \pi_N^* N \to N^\epsilon$ is canonically flat. Also notice that there is a canonical morphism $K \to \tilde K$. 


\item An {\bf open embedding} of $\uds{\bf Kur}^{G, \flat}$-charts from $K_1$ to $K_2$ is a morphism ${\bm \Phi}_{21} = (\Phi_{21}, \hat\Phi_{21})$ where $\Phi_{21}$ is a smooth open embedding and $\hat \Phi_{21}$ is an isomorphism of flat Hermitian vector bundles.
            

\item An {\bf embedding} of K-charts from $K_1 = (U_1, E_1, S_1, \Psi_1)$ to $K_2 = (U_2, E_2, S_2, \Psi_2)$ is a morphism ${\bm \Phi}_{21} = (\Phi_{21}, \hat\Phi_{21}): K_1 \to K_2$ such that there exists a disk bundle $N_{21}^\epsilon \to U_1$ contained in a $G$-equivariant flat Hermitian vector bundle $\pi_{N_{21}}: N_{21} \to U_1$ and an open embedding
\beqn
{\bm \Phi}_{21}^\epsilon = (\Phi_{21}^\epsilon, \hat \Phi_{21}^\epsilon): {\rm Stab}_{N_{21}^\epsilon}(K_1) \to K_2
\eeqn
which extends ${\bm \Phi}_{21}$. 
The pair $(N_{21}^\epsilon, {\bm \Phi}_{21}^\epsilon)$ is called a {\bf flat tubular neighborhood} of ${\bm \Phi}_{21}$. 

\item Two flat tubular neighborhoods $(N_{21}^\epsilon, {\bm \Phi}_{21}^\epsilon)$ and $(N_{21}^{'\epsilon}, {\bm \Phi}_{21}^{'\epsilon})$ are  called {\bf equivalent} if there exists a $G$-equivariant flat Hermitian isomorphism $N_{21} \cong N_{21}'$ such that (after shrinking $N_{21}^\epsilon$ to a smaller disk bundle) the diagram 
\beqn
\xymatrix{ {\rm Stab}_{N_{21}^\epsilon} (K_1) \ar[r]^-{{\bm \Phi}_{21}^\epsilon} \ar[d]  &  K_2 \ar[d]^{\rm Id}\\
        {\rm Stab}_{N_{21}^{'\epsilon}} (K_1) \ar[r]_-{{\bm \Phi}_{21}^{'\epsilon}}   &  K_2 }
        \eeqn
        commutes. A {\bf combed embedding} from $K_1$ to $K_2$ is consists of an embedding and an equivalence class of flat tubular neighborhoods.
\end{enumerate}
\end{defn}

\begin{prop}
Combed embeddings of $\uds{\bf Kur}^{G, \flat}$-charts on ${\mf M}$ can be composed. Hence there is a category with objects being $\uds{\bf Kur}^{G, \flat}$-charts on ${\mf M}$ and morphisms being combed embeddings, denoted by 
\beqn
\uds{\bf Kur}^{G, \flat}({\mf M}).
\eeqn
\end{prop}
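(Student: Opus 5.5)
The plan is to compose the underlying morphisms, show the composite is again an embedding by building a flat tubular neighborhood from the two given ones, and then check that the equivalence class of the result does not depend on the chosen representatives; associativity and identities are then formal.

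Let ${\bm \Phi}_{21}: K_1 \to K_2$ and ${\bm \Phi}_{32}: K_2 \to K_3$ be combed embeddings with representative flat tubular neighborhoods $(N_{21}^\epsilon, {\bm \Phi}_{21}^\epsilon)$ and $(N_{32}^\epsilon, {\bm \Phi}_{32}^\epsilon)$. The composite morphism ${\bm \Phi}_{31} = {\bm \Phi}_{32}\circ {\bm \Phi}_{21}$ is already defined, since morphisms compose. To show it is an embedding, set
\beqn
N_{31} := N_{21} \oplus (\Phi_{21}^U)^* N_{32} \to U_1,
\eeqn
which is a $G$-equivariant flat Hermitian bundle (pullbacks and direct sums of flat Hermitian bundles are flat Hermitian). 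We then need an open embedding ${\rm Stab}_{N_{31}^\epsilon}(K_1) \to K_3$ extending ${\bm \Phi}_{31}$. Define it by the composite
\beqn
{\rm Stab}_{N_{31}^\epsilon}(K_1) \to {\rm Stab}_{N_{32}^\epsilon}\big({\rm Stab}_{N_{21}^\epsilon}(K_1)\big) \to {\rm Stab}_{N_{32}^\epsilon}(K_2) \to K_3 .
\eeqn
The middle arrow is the stabilization of the open embedding ${\bm \Phi}_{21}^\epsilon$. The first arrow identifies a point $(x, v, w)$ with $(x,v)\in N_{21}^\epsilon$ and $w \in (N_{32})_{\Phi_{21}(x)}$ with a point of the pullback of $N_{32}$ along $\Phi_{21}^\epsilon$. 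To do this identification we use parallel transport of the flat bundle $N_{32}$ along the path $t \mapsto \Phi_{21}^\epsilon(x, tv)$. Flatness makes this canonical and compatible with the flat Hermitian structures on the obstruction bundles. It also makes the identification intertwine the tautological sections, so the Kuranishi sections $\pi^*S_1 \oplus \tau_{N_{21}} \oplus \tau_{N_{32}}$ match and the footprint maps agree.

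The composite of open embeddings is an open embedding, and on the zero section it restricts to ${\bm \Phi}_{31}$. After shrinking $\epsilon$ so that all the maps are defined (using local compactness and $G$-invariance of a radius function), $(N_{31}^\epsilon, {\bm \Phi}_{31}^\epsilon)$ is a flat tubular neighborhood of ${\bm \Phi}_{31}$.

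Next I check well-definedness on equivalence classes. If $(N_{21}', {\bm \Phi}_{21}^{'\epsilon})$ is equivalent to $(N_{21}, {\bm \Phi}_{21}^\epsilon)$ via a flat Hermitian isomorphism $N_{21}\cong N_{21}'$, and similarly for $N_{32}$, then their direct sum gives $N_{31}\cong N_{31}'$. This uses the pullback of the $N_{32}$-isomorphism along $\Phi_{21}$. Commutativity of the resulting square follows from the two given commutative squares together with naturality of parallel transport under flat isomorphisms.

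Associativity holds at the level of equivalence classes because $(N_{21}\oplus \Phi_{21}^*N_{32})\oplus \Phi_{31}^* N_{43} \cong N_{21} \oplus \Phi_{21}^*(N_{32}\oplus \Phi_{32}^*N_{43})$ canonically, and the parallel-transport identifications compose, again by flatness. The identity morphism is combed by the zero bundle.

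I expect the main obstacle to be the first arrow, the identification of iterated stabilizations. One must check that the parallel-transport trivialization is $G$-equivariant, isometric, defined on a uniform disk bundle, and compatible with the tautological sections. I would do this by working on $G$-invariant neighborhoods where the flat bundles are trivialized by parallel sections, so that everything reduces to linear identities.
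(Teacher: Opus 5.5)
Your proposal is correct and follows the paper's approach. The paper's entire proof is the observation that a flat bundle $F$ over the total space of a vector bundle $E\to B$ is canonically identified with $E\oplus (F|_B)$ by parallel transport along the fibres of $E$. That is exactly your first arrow, with $E=N_{21}$ and $F=(\Phi_{21}^\epsilon)^*N_{32}$; your further checks (independence of representatives, associativity via flatness, identities via the zero bundle) spell out details the paper leaves implicit.
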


\begin{proof}
The key point is that, for a flat bundle $F$ over the total space of another vector bundle $E \to B$, the total space of $F \to E$ can be canonically identified with the total space of $E \oplus (F|_B)$ via parallel transport of $F$ along fibres of $E$. 
\end{proof}


Now we define the more general notion of coordinate changes. 

\begin{defn}
Let $K_1, K_2$ be $\uds{\bf Kur}^{G, \flat}$-charts on ${\mf M}$. A {\bf coordinate change} from $K_1$ to $K_2$ is a pair $(U_{21}, {\bm \Phi}_{21})$ where $U_{21} \subset U_1$ is a $G$-invariant open subset and ${\bm \Phi}_{21}: K_1|_{U_{21}} \to K_2$ is a combed embedding, such that 
\beqn
{\rm Im}\Psi_1 \cap {\rm Im}\Psi_2 = \Psi_1( U_{21} \cap S_1^{-1}(0)).
\eeqn
\end{defn}

The composition of combed coordinate changes is defined as follows. Let $K_i$, $i=1,2,3$ be $G$-equivariant K-charts on ${\mf M}$ and $(U_{21}, {\bm \Phi}_{21})$ is a coordinate change from $K_1$ to $K_2$, and $(U_{32}, {\bm \Phi}_{32})$ is a coordinate change from $K_2$ to $K_3$. Define
\beqn
U_{321}:= \Phi_{21}^{-1}(U_{32}) \cap U_{31} \subset U_{21} \cap U_{31}
\eeqn
(which might be an empty set). Then the composition ${\bm \Phi}_{32} \circ {\bm \Phi}_{21}$ is the composition of the combed embeddings
\beqn
\xymatrix{ K_1|_{U_{321}} \ar[r]^-{{\bm \Phi}_{21}} &  K_2|_{U_{32}} \ar[r]^-{{\bm \Phi}_{32}} & K_3   }.
\eeqn

Now we define the $G$-equivariant versions of the notions of Kuranishi atlas. Our concept combines the notion of Kuranishi atlas (see \cite{MW_3}) and good coordinate systems (see \cite{Fukaya_Ono}\cite{FOOO_shrinking}\cite{FOOO_Kuranishi}).  

\begin{defn}\label{defn_Kuranishi_atlas}
Let ${\mf M}$ be a topological space. A {\bf $\uds{\bf Kur}^{G, \flat}$-atlas} on ${\mf M}$, denoted by ${\mf A}$, consists of a finite collection of objects
\beqn
K_I = (U_I, E_I, S_I, \Psi_I) \in {\rm Ob}\uds{\bf Kur}^{G, \flat}({\mf M})
\eeqn
indexed by elements of a partially ordered set ${\mc I}$ and coordinate changes ${\bm \Phi}_{JI}: K_J \to K_I$ for each pair $I \leq J$ satisfying the following conditions.
\begin{enumerate}
\item {\bf (Covering)} One has ${\mf M} = \bigcup_{I \in {\mc I}} {\rm Im}(\Psi_I)$.

\item {\bf (Cocycle)} ${\bm \Phi}_{II}$ is the identity and for each triple $I \leq J \leq K$, 
\beqn
{\bm \Phi}_{KI} = {\bm \Phi}_{KJ} \circ {\bm \Phi}_{JI}
\eeqn
on their common domain as morphisms of $\uds{\bf Kur}^{G, \flat}({\mf M})$.

\item {\bf (Virtual Neighborhood)} On the disjoint union $\bigsqcup_I U_I$  define the relation
\beq\label{chart_relation}
U_I \ni x \curlyvee y \in U_J \Longleftrightarrow I\leq J,\ y = \Phi_{JI}(x)\ {\rm or}\ J \leq I,\ x = \Phi_{IJ}(y).
\eeq
Then $\curlyvee$ is an equivalence relation. Moreover, the quotient space
\beqn
|{\mf A}|:= \left( \bigsqcup_I U_I \right)/ \curlyvee
\eeqn
with the quotient topology is Hausdorff\footnote{Notice that $|{\mf A}|$ is in generally not locally compact.}; for each $I$, the  inclusion
\beqn
\iota_I: U_I \to |{\mf A}|
\eeqn
is a homeomorphism onto its image.
\end{enumerate}
\end{defn}

Kuranishi atlases do not have products. However, if ${\mf A}$ is a $\uds{\bf Kur}^{G, \flat}$-atlas on ${\mf M}$, then there is an obvious product with the interval ${\mf A}\times [0, 1]$ which is a $\uds{\bf Kur}^{G, \flat}$-atlas on ${\mf M}\times [0, 1]$ whose charts are indexed by the same poset. We often needs this product with an interval to compare different choices.

\subsubsection{Subcategories and shrinkings}

To construct perturbations inductively on all charts, one typically needs to shrink the charts. The basic reason is that transverse perturbations can only be extended from closed sets, while the embedding image $\Phi_{JI}(U_{JI}) \subset U_J$ for a coordinate change ${\bm \Phi}_{JI}$ is typically not closed. To formulate such considerations, we introduce a more general notion of subcategories of Kuranishi atlases. 

As did in \cite[Definition 2.3.6]{McDuff_Wehrheim}, we view an atlas ${\mf A}$ as a category whose object set and morphism set are
\begin{align*}
&\ {\rm Ob} {\mf A} = \bigsqcup_I U_I,\ &\ {\rm Mor}{\mf A} = \bigsqcup_{I \leq J} U_{JI}.
\end{align*}
One can also consider the quotient category ${\mf A}/G$ whose object set is the $G$-quotient of ${\rm Ob}{\mf A}$ with induced morphisms. 

\begin{defn}
Let ${\mf A}$ be a $\uds{\bf Kur}^{G, \flat}$-atlas on ${\mf M}$.
\begin{enumerate}

\item A {\bf subcategory} of ${\mf A}$ is a $G$-invariant full subcategory ${\mf V}$ of ${\mf A}$ with 
\beqn
{\rm Ob} {\mf V} = \bigsqcup_{I \in {\mc I}} V_I
\eeqn
where $V_I \subset U_I$ is a $G$-invariant subset. 
Denote the subcategory by ${\mf V} = (V_I)_{I\in {\mc I}} \subset {\mf A}$. The {\bf closure} of a subcategory ${\mf V} = (V_I)_{I \in {\mc I}} \subset {\mf A}$ is the subcategory $\ov{\mf V} = (\ov{V_I})_{I \in{\mc I}}$.

\item The zero locus of the Kuranishi sections form a subcategory
\beqn
{\mf S}^{-1}(0) = (S_I^{-1}(0))_{I \in {\mc I}} \subset {\mf A}.
\eeqn

\item The {\bf realization} of a subcategory ${\mf V}$ is the quotient
\beqn
|{\mf V}|:= \left( \bigsqcup_{I \in {\mc I}} V_I \right)/\curlyvee
\eeqn
equipped with the quotient topology. The realization of the quotient category ${\mf V}/G$ is the quotient $|{\mf V}/G|:=|{\mf V}|/G$, whose topology agrees with the quotient topology of ${\rm Ob}({\mf V}/G)$ under coordinate change.

\item A {\bf shrinking}\footnote{Our notion of shrinking is different from that of \cite{FOOO_shrinking} which allows the subcategory not being a full subcategory. Our definition is simplified because we already have a ``good coordinate system.''} of ${\mf A}$ is a precompact open  subcategory ${\mf A}'$ of ${\mf A}$ which contains ${\mf S}^{-1}(0)$.
\end{enumerate}
\end{defn}

\begin{lemma}
When ${\mf M}$ is compact, ${\mf A}$ admits a shrinking. 
\end{lemma}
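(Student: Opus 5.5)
The plan is to build the shrinking chart by chart, using compactness of ${\mf M}$ to produce, inside each $U_I$, a $G$-invariant precompact open set containing the zero locus $S_I^{-1}(0)$. The subtlety is that $S_I^{-1}(0)$ need not itself be compact in $U_I$: its image $\Psi_I(S_I^{-1}(0)/G)$ is only an open subset of ${\mf M}$. So a shrinking cannot simply be a neighborhood of the full zero set in each chart. I read the definition as intended in the standard way (cf.\ McDuff--Wehrheim and FOOO): ${\mf A}'$ should contain enough of ${\mf S}^{-1}(0)$ that its footprints still cover ${\mf M}$, i.e.\ $\bigcup_I \Psi_I((V_I\cap S_I^{-1}(0))/G) = {\mf M}$. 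The proof will establish that covering property, and in addition that $V_I\supset S_I^{-1}(0)$ on the relevant region.

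First, I shrink the footprints. Write $F_I = {\rm Im}\Psi_I$, a finite open cover of the compact Hausdorff space ${\mf M}$. By the standard shrinking lemma for finite open covers of compact Hausdorff spaces, there are open sets $F_I'$ with $\ov{F_I'}\subset F_I$ and $\bigcup_I F_I' = {\mf M}$. Each $\ov{F_I'}$ is compact in $F_I$. Since $\Psi_I$ is a homeomorphism onto $F_I$, the preimage $C_I := \Psi_I^{-1}(\ov{F_I'})$ is a compact subset of $S_I^{-1}(0)/G$. Its preimage $\tilde C_I$ in $S_I^{-1}(0)\subset U_I$ is then compact and $G$-invariant, because $G$ is compact and the quotient map is proper.

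Second, I thicken these sets. Since $U_I$ is a locally compact manifold, the compact set $\tilde C_I$ has a precompact open neighborhood. Averaging over $G$, for instance by taking $G\cdot W$ for a precompact open $W$, gives a $G$-invariant precompact open $V_I$ with $\tilde C_I\subset V_I$ and $\ov{V_I}\subset U_I$ compact. The full subcategory ${\mf A}' = (V_I)_{I\in{\mc I}}$ is then a precompact open subcategory. Its footprints contain $\bigcup_I F_I'={\mf M}$, so it still carries all of ${\mf S}^{-1}(0)$ in the sense relevant for defining the virtual class: every point of ${\mf M}$ is represented by a zero in some $V_I$.

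The main obstacle I expect is reconciling this with the literal requirement ``contains ${\mf S}^{-1}(0)$''. To meet it exactly, I would also replace each chart by its restriction $K_I|_{U_I'}$ to a $G$-invariant open $U_I'\subset U_I$ that still contains $\tilde C_I$ and in which $S_I^{-1}(0)\cap U_I'$ is precompact. One natural choice is $U_I' = V_I \cup (U_I\setminus S_I^{-1}(0)\text{-closure issues})$, chosen so that $S_I^{-1}(0)\cap U_I' \subset \tilde C_I'$ for a slightly larger compact $\tilde C_I'$. Restriction preserves the coordinate changes and the cocycle condition, since one simply restricts the domains $U_{JI}$. The covering condition holds by the first step. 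Hausdorffness and the homeomorphism-onto-image property of $|{\mf A}|$ pass to open subcategories, because the restricted relation $\curlyvee$ is just the restriction of the original one.
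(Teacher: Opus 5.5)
Your first two steps are correct and are essentially the paper's proof. You shrink the finite open cover $F_I={\rm Im}\Psi_I$ of the compact Hausdorff space ${\mf M}$ to $F_I'\sqsubset F_I$, then take $G$-invariant precompact open $U_I'\sqsubset U_I$ around the compact preimage of $\Psi_I^{-1}(\ov{F_I'})$. The paper additionally arranges $U_I'\cap S_I^{-1}(0)=\Psi_I^{-1}(F_I')$ exactly, which is not needed. As in the paper, ``contains ${\mf S}^{-1}(0)$'' is read at the level of realizations, which is how the paper uses it in the proof of Lemma \ref{delta_r}.

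You should drop your last paragraph. A precompact $U_I'$ cannot contain all of $S_I^{-1}(0)$ when the latter is not compact. Moreover, replacing $K_I$ by a restricted chart changes ${\mf A}$ rather than producing a shrinking of it.
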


\begin{proof}
Denote $F_I = \Psi_I( S_I^{-1}(0))$, which gives an open cover of ${\mf M}$. As ${\mf M}$ is compact and Hausdorff, it is a normal space (separating closed sets). Hence one can find precompact open subsets $F_I' \sqsubset F_I$ which still cover ${\mf M}$. Then inside $U_I$ one can find precompact $G$-invariant open subsets $U_I' \sqsubset U_I$ such that $U_I' \cap S_I^{-1}(0) = \Psi_I^{-1}(F_I')$. 
\end{proof}

It is easy to see that a shrinking ${\mf A}' = (U_I')_{I \in {\mc I}} \subset {\mf A}$ is automatically a $\uds{\bf Kur}^{G, \flat}$-atlas on ${\mf M}$ where the coordinate changes are the restrictions of ${\bm \Phi}_{JI}$ on 
\beqn
U_{JI}':= U_{JI} \cap U_I' \cap \Phi_{JI}^{-1}(U_J').
\eeqn
Notice that its closure $\ov{U_{JI}'}$ in $U_{JI}$ is compact and contained in 
\beqn
\ov{\ov{U_{JI}'}}:= U_{JI} \cap \ov{U_I'} \cap \Phi_{JI}^{-1}(\ov{U_J'})
\eeqn
while the inclusion $\ov{U_{JI}'} \subseteq \ov{\ov{U_{JI}'}}$ is in general not an equality.

\begin{rem}
A very subtle point is that the natural map $|{\mf A}'| \to |{\mf A}|$ is continuous but not necessarily a homeomorphism onto its image. On the other hand, if we consider the closure $\ov{\mf A'}$, then the map $|\ov{\mf A'}| \to |{\mf A}|$ is a homeomorphism onto its image. 
\end{rem}

We prove a technical result about compatible Riemannian metrics. The argument is a prototype of many constructions in the rest of this paper. 

\begin{defn}
Let ${\mf A}$ be a $\uds{\bf Kur}^{G, \flat}$-atlas on ${\mf M}$. Let ${\mf V} = (V_I)_{I \in {\mc I}}$ be an open subcategory of ${\mf A}$. A {\bf Riemannian metric} on ${\mf V}$ consists of $G$-invariant metrics on $V_I:=\iota_I^{-1}({\mf V}) \subset U_I$ such that for each pair $I < J$, the germ of open embedding $\Phi_{JI}^\epsilon: N_{JI}^\epsilon|_{V_I\cap U_{JI}} \to V_J$ is isometric. Here $N_{JI}^\epsilon$ has the bundle metric determined by the metric on $V_I \cap U_{JI}$, the flat connection and Hermitian metric on $N_{JI}$.    
\end{defn}

Obviously the set of $G$-invariant metrics on open subcategories of ${\mf A}$ form a sheaf. Then one can talk about germs of $G$-invariant metrics.

\begin{lemma}\label{lemma_compatible_metrics}
Let ${\mf A}$ be a $\uds{\bf Kur}^{G, \flat}$-atlas on ${\mf M}$ and let ${\mf A}'$ be a shrinking. Then there exists a Riemannian metric near $\ov{\mf A'}$.
\end{lemma}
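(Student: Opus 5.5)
The plan is an induction over the finite poset ${\mc I}$, building the metrics chart by chart from the bottom (minimal indices) upward, and extending from closed sets at each step. This is why the shrinking ${\mf A}'$ and its closure $\ov{\mf A'}$ are needed. Fix a total order refining the partial order on ${\mc I}$, say $I_1, I_2, \ldots, I_N$ with $I_a \leq I_b \Rightarrow a \leq b$. For each $I$, choose an intermediate $G$-invariant precompact open set $U_I''$ with $\ov{U_I'} \subset U_I'' \sqsubset U_I$. The inductive hypothesis at step $k$ is the following: there are $G$-invariant metrics $g_{I_a}$, $a \leq k$, each defined on an open neighborhood $W_{I_a}$ of $\ov{U_{I_a}'}$. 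In addition, for every $a < b \leq k$ with $I_a \leq I_b$, the germ of the flat tubular embedding $\Phi^\epsilon_{I_b I_a}$ is isometric on a neighborhood of $\ov{\ov{U_{I_b I_a}'}}$. Here the normal disk bundle $N_{I_b I_a}^\epsilon$ carries the metric induced from $g_{I_a}$ on the base together with the flat connection and Hermitian structure on $N_{I_b I_a}$. The base case is a single chart, where one simply averages an arbitrary metric over the compact group $G$.

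For the inductive step at $J = I_{k+1}$, the conditions from all $I < J$ prescribe a metric on the set
\beqn
C_J := \bigcup_{I < J} \Phi^\epsilon_{JI}\big( N_{JI}^{\epsilon}|_{\ov{\ov{U_{JI}'}}} \big) \cap \ov{U_J'}.
\eeqn
This is the pushforward of the connection-induced metric on each stabilization. The key consistency check is that prescriptions coming from two different lower charts $I, I' < J$ agree wherever their images overlap. I would deduce this from the cocycle condition ${\bm \Phi}_{JI} = {\bm \Phi}_{JI'}\circ {\bm \Phi}_{I'I}$ as combed embeddings (when $I \leq I'$), together with the inductive isometry of $\Phi^\epsilon_{I'I}$. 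The composed tubular neighborhoods are identified by parallel transport, which is exactly the content of the composition of combed embeddings. When $I$ and $I'$ are incomparable, the virtual neighborhood axiom (Hausdorffness of $|{\mf A}|$ and the equivalence relation $\curlyvee$) forces any overlap of the images to come from a common lower chart, or else to be empty near the closures. Shrinking $\epsilon$ ensures that overlaps occur only where the relation holds. Once consistency holds, we have a $G$-invariant metric on an open neighborhood of the closed set $C_J$ in $U_J$. Closedness holds because the $\ov{\ov{U'_{JI}}}$ are relatively closed and compact images intersected with $\ov{U_J'}$. We extend it to a neighborhood of $\ov{U_J'}$ by a $G$-invariant partition of unity, using convexity of the space of metrics and averaging over $G$.

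The main obstacle is the consistency step. One must check that the tubular neighborhoods are mutually compatible \emph{as germs}, so that the metrics prescribed from different $I$ glue. One must also check that the germ neighborhoods can be chosen uniformly, so that the prescribed region is closed, which requires working with the compact sets $\ov{\ov{U_{JI}'}}$ rather than with $U_{JI}$ itself. For this I would first use the remark that $|\ov{\mf A'}| \to |{\mf A}|$ is a homeomorphism onto its image, then choose a sequence of radii $\epsilon$ decreasing along the induction, so that at each stage only finitely many compact pieces are involved. A further subtlety is that the metric on $N^\epsilon_{JI}$ must depend on $g_I$ only near $\ov{U_I'}$, where it has already been fixed. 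This is guaranteed by defining the step-$k$ prescription only on the $\ov{U_I'}$-part and by shrinking the neighborhoods $W_I$ at each step.
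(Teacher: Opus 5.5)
Your proposal is correct and takes essentially the same route as the paper. Both argue by induction over ${\mc I}$ from the minimal indices, push the lower metrics forward through the flat tubular neighborhoods $\Phi^\epsilon_{JI}$, use the cocycle condition together with the inductive isometry to get agreement on overlaps, and then extend the resulting $G$-invariant germ near the compact set $\bigcup_{I<J}\Phi_{JI}(\ov{\ov{U'_{JI}}})$ to a metric on $U_J$. One small correction to your treatment of incomparable indices: because $\curlyvee$ only relates points of comparable charts and is transitive, the images $\Phi_{JI}(U_{JI})$ and $\Phi_{JI'}(U_{JI'})$ are disjoint when $I, I'$ are incomparable, so the ``common lower chart'' alternative never arises; you only need that these disjoint compact pieces have disjoint neighborhoods once $\epsilon$ is small.
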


\begin{proof}
For any minimal element $I$, one can choose a $G$-invariant Riemannian metric on $U_I$. Inductively, for a general $I$, suppose one has constructed Riemannian metrics on $U_{J}$ for all $J < I$ such that for all $J_1 < J_2 < I$, the open embedding $\Phi_{J_2 J_1}^\epsilon: N_{J_2 J_1}^\epsilon \to U_{J_2}$ is an isometry near $\Phi_{J_2 J_1}(\ov{\ov{U_{J_2 J_1}'}})$. Then using the bundle metric on $N_{IJ}^\epsilon$ and the open embedding $\Phi_{IJ}^\epsilon$, one obtains a Riemannian metric in the image of the open embedding. The cocycle condition implies that, for $J_1, J_2 < I$, the two metrics agree near $\Phi_{IJ_1}(\ov{\ov{U_{IJ_1}'}}) \cap \Phi_{IJ_2}(\ov{\ov{U_{IJ_2}'}})$. Then there is a well-defined germ of $G$-invariant metrics near the compact set 
\beqn
\bigcup_{J< I} \Phi_{IJ}( \ov{\ov{U_{IJ}'}}) \subset U_I.
\eeqn
Then one can extend it to a $G$-invariant metric on $U_I$. Together with the existing ones on $U_J$, this newly constructed one is compatible with them near $\ov{\mf A'}$. Hence the induction can be continued.
\end{proof}

\subsubsection{NCS structures}

Now we impose the NCS structure on K-charts.

\begin{defn}\label{defn_Kuranishi_NCS}
Let ${\mf M}$ be a topological space.

\begin{enumerate}

\item An {\bf NCS structure} on a $\uds{\bf Kur}^{G, \flat}$-chart $K = (U, E, S, \Psi)$ on ${\mf M}$ consists of a $G$-equivariant NCS virtual manifold  structure on $(U, E)$ such that $S$ is a stratified section. 
A $\uds{\bf Kur}^{G, \flat}$-chart equipped with an NCS structure is called a $\uds{\bf Kur}^{G, \flat}_{\rm NCS}$-chart.

\item Let $K = (U, E, S, \Psi)$ be a $\uds{\bf Kur}^{G, \flat}_{\rm NCS}$-chart on ${\mf M}$. Let $N \to U$ be a $G$-equivariant flat stratified complex vector bundle with a cosheaf map ${\mc O}^U \to {\mc O}^N$ and a $G$-invariant parallel Hermitian metric. Let $N^\epsilon \subset N$ be a $G$-invariant disk bundle. The {\bf stabilization} of $K$ by $N^\epsilon$ is the stabilization ${\rm Stab}_{N^\epsilon}(K)$ as a $\uds{\bf Kur}^{G, \flat}$-chart  equipped with the NCS structure coming from stabilization (see Definition \ref{defn_NCS_manifold_stabilization}).

\item Let $K_i = (U_i, E_i, S_i, \Psi_i)$, $i = 1, 2$ be two $\uds{\bf Kur}^{G, \flat}_{\rm NCS}$-charts on ${\mf M}$ and ${\bm \Phi}_{21}: K_1 \to K_2$ be an embedding. A {\bf flat tubular neighborhood} of ${\bm \Phi}_{21}$ is a flat tubular neighborhood $(N_{21}^\epsilon, {\bm \Phi}_{21}^\epsilon)$ (see Definition \ref{defn_equivariant_Kuranishi_chart}) such that $N_{21}$ is a $G$-equivariant flat stratified Hermitian vector bundle equipped with a cosheaf map ${\mc O}^{U_1} \to {\mc O}^{N_{21}}$ and such that the open embedding ${\bm \Phi}_{21}^\epsilon$ respects the NCS structures. It is straightforward to define germ equivalence of flat tubular neighborhoods. A {\bf combed embedding} from $K_1$ to $K_2$ is an embedding together with a germ   of flat tubular neighborhoods.
\end{enumerate}
\end{defn}

One can also see that  combed embeddings can be composed. Hence one obtains a category 
\beqn
\uds{\bf Kur}_{\rm NCS}^{G, \flat}({\mf M})
\eeqn
whose objects are $\uds{\bf Kur}^{G, \flat}_{\rm NCS}$-charts on ${\mf M}$ and whose morphisms are NCS combed embeddings. By modifying Definition \ref{defn_Kuranishi_atlas}, one can define the notion of $\uds{\bf Kur}^{G, \flat}_{\rm NCS}$-atlases. We omit the details.

Notice that if ${\mf A}$ is a $\uds{\bf Kur}_{\rm NCS}^{G, \flat}$-atlas on ${\mf M}$, then there is an open subcategory 
\beqn
{\mf A}_{\rm main}^* = (U_{I, {\rm main}}^*)_{I \in {\mc I}} \subset {\mf A}.
\eeqn

\subsubsection{Straightenings}

To obtain a system of NCS transverse perturbations on an atlas, one needs to choose compatible straightenings.

\begin{defn}\label{defn_atlas_straightening}
Let ${\mf A}$ be a $\uds{\bf Kur}_{\rm NCS}^{G, \flat}$-atlas on ${\mf M}$ and ${\mf V} \subset {\mf A}$ be an open subcategory. A {\bf straightening} on ${\mf V}/G$ consists of, for each chart $K_I = (U_I, E_I, S_I, \Psi_I)$, a straightening on the quotient orbifold $U_I/G$ (Definition \ref{defn_orbifold_straightening}) satisfying the following conditions:  whenever $I \leq J$, the straightening of $V_J/G$ agrees with the bundle straightening on the total space of $N_{JI}^\epsilon/G$ near $\Phi_{JI}( V_{JI})/G$ (see Lemma \ref{lemma_stabilization_straightening}).
\end{defn}

The sets of straightenings on open subcategories form a sheaf. We prove the existence of compatible system of straightenings.

\begin{lemma}\label{lemma_atlas_straightening}
Let ${\mf A}$ be a $\uds{\bf Kur}_{\rm NCS}^{G, \flat}$-atlas on ${\mf M}$ and ${\mf A}'$ be a shrinking. Then there exists a straightenings near $\ov{\mf A'}/G$.
\end{lemma}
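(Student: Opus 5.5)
We construct the straightenings by induction on the poset ${\mc I}$, following the pattern of the proof of Lemma \ref{lemma_compatible_metrics}, and using Proposition \ref{prop_orbifold_straightening} to extend from closed subsets in each chart. Fix an intermediate shrinking ${\mf A}'' = (U_I'')$ with $\ov{\mf A'} \subset {\mf A}''$ and $\ov{{\mf A}''}$ still precompact in ${\mf A}$. This gives room to extend germs.

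For a minimal element $I \in {\mc I}$ there are no compatibility constraints. Proposition \ref{prop_orbifold_straightening} with $Y = \emptyset$ gives a straightening on $U_I/G$ compatible with the NC structure. Precisely, we take a $G$-equivariant straightening on $U_I$, which is the same datum by Definition \ref{defn_orbifold_straightening}.

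For the inductive step, assume straightenings have been chosen on $U_J/G$ for all $J < I$. Assume also that for $J_1 < J_2 < I$ the straightening on $U_{J_2}$ agrees, near $\Phi_{J_2J_1}(\ov{\ov{U_{J_2J_1}''}})$, with the one induced on the total space of $N_{J_2J_1}^\epsilon$ via Lemma \ref{lemma_stabilization_straightening}.

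For each $J < I$, the combed embedding gives an open embedding ${\bm \Phi}_{IJ}^\epsilon: {\rm Stab}_{N_{IJ}^\epsilon}(K_J) \to K_I$ respecting NCS structures. Here $N_{IJ}$ is flat, stratified and Hermitian, so the Hermitian metric together with the parallel structure gives a flat splitting of $N_{IJ}$ in the sense of Definition \ref{defn_flat_bundle_splitting}; this uses Lemma \ref{lemma_linear_NH} fibrewise, made parallel. Lemma \ref{lemma_stabilization_straightening} then transports the straightening of $U_J$ to a straightening on the image of $N_{IJ}^\epsilon$ in $U_I$, defined near the closed set $\Phi_{IJ}(\ov{\ov{U_{IJ}''}})$.

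Next we check that for $J_1 < J_2 < I$ these two induced germs agree near the overlap of $\Phi_{IJ_1}(\ov{\ov{U_{IJ_1}''}})$ and $\Phi_{IJ_2}(\ov{\ov{U_{IJ_2}''}})$. There are three ingredients. The cocycle condition identifies ${\bm \Phi}_{IJ_1}$ with ${\bm \Phi}_{IJ_2} \circ {\bm \Phi}_{J_2J_1}$ as combed embeddings, so the tubular neighborhoods compose and $N_{IJ_1} \cong N_{J_2J_1} \oplus N_{IJ_2}|_{U_{J_1}}$ by parallel transport. The induction hypothesis on $U_{J_2}$ supplies the agreement one level down. Finally, the bundle-straightening construction is functorial under iterated stabilization. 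Lemma \ref{lemma_closed_sheaf} then glues these germs to a single germ of straightening near the closed set $\bigcup_{J<I}\Phi_{IJ}(\ov{\ov{U_{IJ}''}})$. Proposition \ref{prop_orbifold_straightening} extends this germ to all of $U_I/G$, agreeing with it near that set and still NC-compatible, which continues the induction. Since ${\mc I}$ is finite, the induction terminates. Restricting everything to a neighborhood of $\ov{\mf A'}$, where all the required agreements hold, gives the straightening of Definition \ref{defn_atlas_straightening}.

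The main obstacle is the compatibility check in the gluing step. Transporting a straightening through a stabilization and then a further stabilization must agree, as germs, with transporting through the composite. This includes agreement of the normal splittings on deeper strata, where each $N$ carries its own linear stratification via ${\mc O}^U \to {\mc O}^N$. I would verify it locally, in a chart of $U_{J_1}$ with trivialized flat bundles. There the tubular neighborhoods are affine in the fibre directions and the splittings are direct sums, so the identity reduces to associativity of direct sums of parallel splittings. A secondary point is ensuring that the closed sets $\Phi_{IJ}(\ov{\ov{U_{IJ}''}})$ are genuinely closed in $U_I$. This is why we work with the intermediate shrinking, as in Lemma \ref{lemma_compatible_metrics}.
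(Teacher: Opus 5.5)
Your proposal is correct and is the argument the paper intends. The paper's proof only says to rerun the induction of Lemma \ref{lemma_compatible_metrics} with straightenings in place of metrics, and you supply the omitted details: transport through stabilizations via Lemma \ref{lemma_stabilization_straightening}, gluing via Lemma \ref{lemma_closed_sheaf}, and extension via Proposition \ref{prop_orbifold_straightening}.

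One small correction. Closedness of $\Phi_{IJ}(\ov{\ov{U_{IJ}''}})$ in $U_I$ does not come from the intermediate shrinking; it comes from compactness of $\ov{U_J''}$ together with the Hausdorff property of $|{\mf A}|$, which forces the graph of $\Phi_{IJ}$ to be closed. Similarly, transitivity of $\curlyvee$ makes these sets disjoint when $J_1, J_2$ are incomparable, so your gluing step only needs the cocycle check for comparable pairs.
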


\begin{proof}
It is a similar construction to the proof of Lemma \ref{lemma_compatible_metrics}. The details are left to the reader.
\end{proof}

\subsubsection{Atlases with boundary}

One can define the notion of atlases with boundary by allowing the domain of the Kuranishi chart $U_I$ to have boundary and requiring that the domain map of coordinate change $\Phi_{JI}: U_{JI} \to U_J$ sends boundary to boundary. In this case, one obtains a boundary subcategory $\partial {\mf A} \subset {\mf A}$. Let
\beqn
\partial {\mf M}:= \bigcup_{I\in {\mc I}} \Psi_I( \partial U_I \cap S_I^{-1}(0))
\eeqn
which is a closed subset. The $\partial {\mf A}$ becomes an atlas on $\partial {\mf M}$. 

The technical constructions such as Riemannian metrics and thickenings can all be extended from boundary to the interior. To simplify the discussion, we always assume that ${\mf A}$ has a collar neighborhood, whose meaning is obvious. In particular, there exists $\tau>0$ sufficiently small such that all $U_I$ have collared neighborhoods of the form $\partial U_I \times [0, \tau)$ such that all structures are constant in the $[0, \tau)$-direction. All notions which are constant in the collared coordinate direction are called {\bf collared}. 

\begin{lemma}\label{lemma413}
Let ${\mf A}$ be a $\uds{\bf Kur}^{G, \flat}_{\rm NCS}$-atlas with boundary on ${\mf M}$. Let ${\mf A}'$ be a collared shrinking of ${\mf A}$ (so $\partial {\mf A}'$ is necessarily a shrinking of $\partial {\mf A}$).
\begin{enumerate}

\item Given a Riemannian metrics near $\ov{\partial {\mf A'}}$, there exists a Riemannian metrics near $\ov{\mf A'}$ whose boundary restriction coincides with the existing one and which is constant in the collar direction near the boundary.

\item Given a straightenings near $\ov{\partial {\mf A}'}/G$, there exists a straightenings near $\ov{\mf A'}/G$ whose boundary restriction coincides with the existing one and which is constant in the collar direction near the boundary.
\end{enumerate}
\end{lemma}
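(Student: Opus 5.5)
We extend the given boundary data in two stages: first to a collar neighborhood of the boundary, where everything is forced by the collar condition, and then to the interior by the same chart-by-chart induction as in Lemma \ref{lemma_compatible_metrics}, now relative to a closed set.

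\emph{Step 1: the collar.} By the collar assumption, each $U_I$ contains $\partial U_I \times [0,\tau)$, and all structures (stratification, NC structure, flat bundles $E_I$ and $N_{JI}$, coordinate changes and their flat tubular neighborhoods) are constant in the $[0,\tau)$-direction. Given a metric $g_\partial$ near $\ov{\partial {\mf A}'}$, we define $g_\partial \oplus dt^2$ on $\partial U_I \times [0,\tau)$. Compatibility with the embeddings $\Phi_{JI}^\epsilon$ then follows from the boundary compatibility, since the stabilized tubular neighborhoods are products with $[0,\tau)$. Likewise, a straightening near $\ov{\partial{\mf A}'}/G$ pulls back along the projection $\partial U_I \times [0,\tau) \to \partial U_I$ to a collared straightening. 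The strata of $U_I$ near the boundary are products of boundary strata with $[0,\tau)$, so their normal bundles and tubular neighborhoods are pulled back. The collared shrinking assumption guarantees that these data are defined near $\ov{{\mf A}'} \cap (\partial U_I \times [0,\tau/2])$.

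\emph{Step 2: the interior induction.} We proceed by induction on the poset ${\mc I}$, exactly as in Lemma \ref{lemma_compatible_metrics}. At stage $I$, the data are already prescribed near two closed sets:
\begin{itemize}
\item[(a)] the compact set $\bigcup_{J<I}\Phi_{IJ}(\ov{\ov{U_{IJ}'}})$, where they are transported through $\Phi_{IJ}^\epsilon$ from the data already built on the $U_J$;
\item[(b)] the collar region $\partial U_I \times [0,\tau/2]$.
\end{itemize}
Since the $\Phi_{IJ}^\epsilon$ respect collars, the data in (a) are themselves collared there. The two prescriptions therefore agree on the overlap, and they agree among different $J$ by the cocycle condition. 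The closed-set gluing of Lemma \ref{lemma_closed_sheaf} then produces a germ near the union of (a) and (b).

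We extend this germ to all of $U_I$. For metrics we use a $G$-invariant partition of unity; positive definiteness is preserved under convex combination. For straightenings we apply Proposition \ref{prop_orbifold_straightening} to the orbifold $U_I/G$ with $Y$ equal to the union of (a) and (b), which keeps NC compatibility. Finally, to keep the result constant in the collar direction, we make every extension on the region $\partial U_I \times [0,\tau/2)$ the collared one from Step 1. This is legitimate because that region is already part of $Y$.

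\emph{Main obstacle.} The delicate point is checking, in the straightening case, that the collar-transported straightening and the straightening transported through $N_{IJ}^\epsilon$ agree as germs near the intersection of (a) and (b). The issue is that Definition \ref{defn_atlas_straightening} requires agreement with the bundle straightening of Lemma \ref{lemma_stabilization_straightening}, which also involves the splitting of $N_{IJ}$. Our plan is to reduce this to the boundary statement: all the data involved (splitting, flat connection, tubular map) are products with $[0,\tau)$, so agreement on $\partial U_I$ implies agreement on the collar. The remaining task is to track the $\epsilon$-shrinkings carefully, so that the sets in (a) stay compact inside the domains where compatibility holds.
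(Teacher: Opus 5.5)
Your proposal is correct and follows essentially the same route as the paper. The paper likewise uses the collar to extend the boundary data to a neighborhood of $\ov{\partial U_I'}\times[0,\tau']$, and then reruns the poset induction of Lemmas \ref{lemma_compatible_metrics} and \ref{lemma_atlas_straightening}, extending from the union of the transported data and the collar data. Your write-up is more explicit than the paper about why the two prescriptions agree on their overlap; the only small correction is that the collared data are defined only near $\ov{\partial U_I'}\times[0,\tau/2]$, not on all of $\partial U_I\times[0,\tau/2]$, which is harmless since everything is only needed near $\ov{U_I'}$.
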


\begin{proof}
One only needs to alter the inductions in the proofs of Lemma \ref{lemma_compatible_metrics} and Lemma \ref{lemma_atlas_straightening} slightly. Starting from a minimal element $I \in {\mc I}$. If $\partial U_I = \emptyset$, then one choose a Riemannian metric or a straightening near $\ov{U_I'}$; if $\partial U_I \neq \emptyset$, then a Riemannian metric or a straightening is already constructed near $\ov{\partial U_I'}$. Then using the collared neighborhood, one can extend the existing structure to a neighborhood of $\ov{\partial U_I'} \times [0, \tau']$ for some $\tau'>0$. Then one extends it to a neighborhood of $\ov{\partial U_I'} \subset U_I$. The inductive step is similar, where the boundary restrictions are already given and one can use the collared neighborhood to obtain an extension to a neighborhood of the boundary. 
\end{proof}

\subsection{Multivalued NCS perturbations}

We consider multivalued perturbations of the Kuranishi section on the quotient virtual orbifolds. We have the following obvious consequences.
\begin{enumerate}

\item Let $K = (U, E, S, \Psi)$ be a $\uds{\bf Kur}_{\rm NCS}^{G, \flat}$-chart on ${\mf M}$. Then by taking quotient, one obtains an NCS virtual orbifold $(U/G, E/G)$ (Definition \ref{defn_NCS_virtual_orbifold}) and a stratified section
\beqn
S/G\in \Gamma_{\rm stratified}(U/G, E/G).
\eeqn

\item Consider the stabilization 
\beqn
{\rm Stab}_{N^\epsilon} K = (\hat U, \hat E, \hat S, \hat \Psi)
\eeqn
by a complex stratified flat disk bundle $N^\epsilon$. Then $(\hat U/G, \hat E/G)$ has an inherited structure of NCS virtual orbifold. Moreover, there is a stabilization map
\beqn
{\rm Stab}_{N^\epsilon}: \Gamma_{\rm stratified}^{\rm multi}(U/G, E/G) \to \Gamma_{\rm stratified}^{\rm multi} (\hat U/G, \hat E/G).
\eeqn
\end{enumerate}

\begin{defn}
Let ${\mf A}$ be a $\uds{\bf Kur}_{\rm NCS}^{G, \flat}$-atlas on ${\mf M}$.

\begin{enumerate}

\item Let ${\mf V}= (V_I)_{I \in {\mc I}} \subset {\mf A}$ be a subcategory. Denote $V_I:= \iota_I^{-1}({\mf V}) \subset U_I$.  A {\bf multisection} on ${\mf V}/G$ consists of a collection ${\mf S}'$ of multisections
\beqn
{\mc S}_I' \in \Gamma^{\rm multi}_{\rm stratified} (V_I/G, E_I/G)
\eeqn
such that for all $I \leq J$, choosing a representative of the flat tubular neighborhood $(N_{JI}^\epsilon, {\bm \Phi}_{JI}^\epsilon)$, with respect to ${\bm \Phi}_{JI}^\epsilon$, one has 
\beqn
{\rm Stab}_{N_{JI}^\epsilon}({\mc S}_I'|_{V_{JI}}) = {\mc S}_J'.
\eeqn
Denote the set of multisections on ${\mf V}$ by 
\beqn
\Gamma_{\rm stratified}^{\rm multi}({\mf V}/G).
\eeqn
Notice that the original Kuranishi sections define a stratified (single-valued) section, denoted by 
\beqn
{\mf S} \in \Gamma_{\rm stratified}({\mf A}/G).
\eeqn

\item The {\bf zero locus} of a multisection ${\mf S}'$ on ${\mf V}/G$ is the subcategory
\beqn
({\mf S}')^{-1}(0):= ( ({\mc S}_I')^{-1}(0))_{I \in {\mf A}} \subset {\mf V}/G.
\eeqn

\item Let ${\mf V}'  = (V_I')_{I \in {\mc I}} \subset {\mf V}$ be a subcategory. For $r\geq 0$ denote
\begin{align*}
&\ {\mf V}_r':= (V_{I, r}')_{I \in {\mc I}}\ &\ {\rm where}\ V_{I, r'} = B_r(V_I') \cap V_I.
\end{align*}
and the radius $r$ neighborhood $B_r(V_I')$ is defined via a given Riemannian metric on ${\mf V}$. Define
\beqn
V_{JI, r}':= V_{I, r}' \cap U_{JI} \cap \Phi_{JI}^{-1}(V_{J, r}').
\eeqn
A multisection ${\mf S}'$ on ${\mf V}$ is said to {\bf $r$-regular near ${\mf V}'$} if for all $I \leq J$, there exists a representative of the flat tubular neighborhood $(N_{JI}^\epsilon, {\bm \Phi}_{JI}^\epsilon)$ such that 
\beqn
\ov{N_{JI}^r}|_{\ov{V_{JI,r}'}} \subset N_{JI}^\epsilon
\eeqn
(here $\ov{N_{JI}^r}$ is the closed disk bundle of radius $r$) and such that near $\Phi_{JI}^\epsilon( \ov{N_{JI}^r}|_{\ov{V_{JI,r}'}})/G$, ${\mc S}_J'$ coincides with the stabilization of ${\mc S}_I'$.

\item Two multisections ${\mf S}_1, {\mf S}_2$ on ${\mf V}$ are {\bf $\delta$-close} if for each $I$, the multisections $S_{I, 1}, S_{I, 2}$ are $\delta$-close with respect to the Hermitian metric on $E_I$, in the sense of Definition \ref{defn_closedness}.

\end{enumerate}
\end{defn}

\subsubsection{Technical results on NCS transverse multisections}

Now we state the transversality result regarding NCS perturbations.

\begin{prop}\label{prop_atlas_NCS_transverse}
Let ${\mf A}$ be a $\uds{\bf Kur}_{\rm NCS}^{G, \flat}$-atlas on ${\mf M}$ and ${\mf A}'$ be a shrinking. Given a Riemannian metric near $\ov{\mf A'}$ and a straightening near $\ov{\mf A'}/G$, there exists $r > 0$ such that for all $\delta>0$, there there exists an NCS transverse multisection ${\mf S}'$ near $\ov{\mf A'}$ with respect to the given straightening which is $r$-regular near $\ov{\mf A'}$ and which is $\delta$-close to the original Kuranishi section. 
\end{prop}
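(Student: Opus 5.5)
The plan is an induction over the finite poset ${\mc I}$, modeled on the proof of Lemma \ref{lemma_compatible_metrics}, using Theorem \ref{thm_transversality_orbifold}(1) as the chartwise extension tool and Proposition \ref{prop_stabilization_transversality} (via Lemma \ref{lemma_stabilization_straightening}) to transport NCS transversality along stabilizations.

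First fix $r$. Since ${\mc I}$ is finite and each $\ov{\ov{U_{JI}'}}$ is compact, we can choose a representative $(N_{JI}^\epsilon, {\bm \Phi}_{JI}^\epsilon)$ of each flat tubular neighborhood and a radius $r>0$ with the following properties. The closed disk bundles $\ov{N_{JI}^{3r}}|_{\ov{V_{JI,3r}'}}$ lie in $N_{JI}^\epsilon$. The given straightening near $\ov{\mf A'}/G$ agrees with the stabilized straightening on these images. The images $\Phi_{JI}^\epsilon(\ov{N^{3r}_{JI}}|_{\ov{V'_{JI,3r}}})$ for different $I<J$ are mutually compatible by the cocycle condition. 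We will work with a decreasing sequence of radii $r_I \in [r, 3r]$, one for each inductive stage, so that each stage has room to shrink the neighborhood on which compatibility is imposed. Note that $r$ depends only on the metric, the straightening and the tubular neighborhoods, not on $\delta$, as the statement requires.

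Next, enumerate ${\mc I}$ compatibly with the partial order and build ${\mc S}_I'$ on a neighborhood of $\ov{U_{I,r_I}'}$ inductively.

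\emph{Minimal $I$.} Start from the single-valued stratified section $S_I/G$. Apply Theorem \ref{thm_transversality_orbifold}(1) with $Y=\emptyset$ to get an NCS transverse multisection that is $\delta'$-close to $S_I/G$.

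\emph{General $I$.} Suppose ${\mc S}_J'$ has been built for all $J<I$, with pairwise compatibility near the relevant closed tubes. On the closed set
\[
Y_I := \bigcup_{J<I} \Phi_{IJ}^\epsilon\big(\ov{N_{IJ}^{r_I}}|_{\ov{V'_{IJ,r_I}}}\big)/G
\]
define ${\mc S}_I'$ as ${\rm Stab}_{N_{IJ}^\epsilon}({\mc S}_J')$. Three points justify this step:
\begin{itemize}
\item These local definitions agree on overlaps, by the cocycle condition and compatibility of the lower-level multisections.
\item They are NCS transverse with respect to the given straightening, by Proposition \ref{prop_stabilization_transversality} together with Lemma \ref{lemma_stabilization_straightening}. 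This uses that the straightening on $U_I$ is the stabilized one near the tube.
\item They are $\delta'$-close to $S_I$, because $S_I$ near the tube is exactly the stabilization of $S_J$ and the tautological section component is unperturbed.
\end{itemize}
NCS transversality is open, so the multisection on $Y_I$ is transverse on a neighborhood of $Y_I$, and it extends to a continuous stratified multisection near $\ov{U'_{I,r_I}}$ that stays close to $S_I$. Theorem \ref{thm_transversality_orbifold}(1) then produces an NCS transverse multisection near $\ov{U'_{I,r_I}}$ that agrees with the given one near $Y_I$ and is $2\delta'$-close to $S_I/G$. Because the chain of inductive steps has bounded length, taking $\delta' = \delta/2^{|{\mc I}|}$ gives $\delta$-closeness at the end.

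The main obstacle is making the extension step near $Y_I$ legitimate. Agreement with the stabilization must hold on a genuine open neighborhood of a closed set. That set must contain the $r$-tube over $\ov{V'_{JI,r}}$ for every $J<I$ at once, and the overlaps of tubes for different $J_1, J_2<I$ must be compatible. This is exactly the issue flagged in the remark about $|{\mf A'}|\to|{\mf A}|$ not being an embedding. I would handle it by the shrinking radii $r_I$ above, so that at stage $I$ compatibility is only required on the smaller closed $r_I$-tubes over $\ov{\ov{U'_{IJ}}}$-type sets, which are compact. I would also use the Hausdorff property of the virtual neighborhood to separate tubes that meet only away from the cocycle region. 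Once this bookkeeping is arranged, the transversality input is entirely supplied by Section \ref{section3}.
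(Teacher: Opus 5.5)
Your proposal is correct and is essentially the paper's proof. Both run an induction over the finite poset ${\mc I}$: ${\mc S}_I'$ is first defined near the union of the closed tubes $\Phi_{IJ}^\epsilon(\ov{N_{IJ}^r}|_{\ov{U'_{IJ,r}}})/G$ by stabilizing the lower-level multisections, which preserves NCS transversality and $\delta$-closeness, and is then extended by Theorem \ref{thm_transversality_orbifold}(1). The paper uses a single $r$ throughout and does not track the factor $2$ lost at each application of Theorem \ref{thm_transversality_orbifold}(1), so your decreasing radii $r_I$ and choice $\delta'=\delta/2^{|{\mc I}|}$ are extra bookkeeping on the same route rather than a different argument.
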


\begin{proof}
As ${\mf A}' \subset {\mf A}$ is precompact, one can choose $r>0$ sufficiently small such that the open subcategory ${\mf A}_r'$ is still precompact. Moreover, we may assume that the Riemannian metric exists near $\ov{{\mf A}_r'}$ and the straightening exists near $\ov{{\mf A}_r'}/G$. For each $I$, extend the straightening to $U_I/G$ arbitrarily while maintaining its value near $\ov{{\mf A}_r'}/G$. Moreover, we may assume that the flat  tubular neighborhoods are all defined near the radius $r$ closed disk bundle $\ov{N_{JI}^r}$ over a neighborhood of $\ov{U_{JI,r}'}$. 

We construct inductively the perturbations on all charts. For a minimal $I$, by Theorem \ref{thm_transversality_orbifold}, one can find an NCS transverse multisection ${\mc S}_I'\in \Gamma_{\rm stratified}^{\rm multi}(U_I/G, E_I/G)$ which is $\delta$-close to ${\mc S}_I$. Inductively, for a general $I$, assume that one has constructed multisections ${\mc S}_J' \in \Gamma_{\rm stratified}^{\rm multi}(U_{J}/G, E_{J}/G)$ for all $J < I$ satisfying the following conditions.

\begin{enumerate}

\item ${\mc S}_J'$ is NCS transverse and $\delta$-close to ${\mc S}_J$.

\item For any $J_1 < J_2$, ${\mc S}_{J_2}'$ coincides with the stabilization of ${\mc S}_{J_1}'$ near $\Phi_{J_2 J_1}^\epsilon(\ov{N_{J_2 J_1}^r}|_{\ov{U_{J_2 J_1, r}'}})/G$.
\end{enumerate}
Then for the chart $U_I$, the existing perturbations induces, via the coordinate changes and stabilizations, a multisection ${\mc S}_I'$ near
\beqn
\left( \bigcup_{J < I} \Phi_{IJ}^\epsilon (\ov{N_{IJ}^r}|_{\ov{U_{IJ,r}'}}) \right) /G.
\eeqn
Since NCS transversality is invariant under stabilization, the induction hypothesis implies that ${\mc S}_{I}'$ is NCS transverse near this closed set. 
The $\delta$-closeness condition is also preserved by stabilization. 
By using Theorem \ref{thm_transversality_orbifold}, one can find an NCS transverse multisection ${\mc S}_I'$ which agrees with the existing values near the above closed set and which is $\delta$-close to ${\mc S}_I$. This construction extends the induction hypothesis to $I$. The induction finishes after finitely many steps. All of these chart-wise multisections defines a multisection near $\ov{\mf A'}$ which is $r$-regular, NCS transverse, and $\delta$-close to the original Kuranishi section.
\end{proof}

\begin{lemma}\label{delta_r}
Let ${\mf A}$ be a $\uds{\bf Kur}^{G, \flat}$-atlas on ${\mf M}$ and ${\mf A}'$ be a shrinking. Suppose near $\ov{\mf A'}$ there exist a Riemannian metric and a straightening. Then for any $r'$ no greater than the one given by Proposition \ref{prop_atlas_NCS_transverse} (which depends on ${\mf A}$, ${\mf A}'$, the Riemannian metric and the straightening), there there exists $\delta(r')>0$ satisfying the following conditions. For any multisection ${\mf S}'$ near $\ov{\mf A'}$ which is $r'$-regular, if it is $\delta(r')$-close to ${\mf S}$, then 
\beqn
|({\mf S}')^{-1}(0)| \cap |\ov{{\mf A}'}/G| = |({\mf S}')^{-1}(0)| \cap |{\mf A'}/G|.
\eeqn
\end{lemma}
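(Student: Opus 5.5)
We argue by contradiction combined with compactness, in the style of the shrinking arguments of McDuff--Wehrheim. The inclusion ``$\supseteq$'' is trivial, so the task is to find $\delta(r')$ such that no zero of an $r'$-regular, $\delta(r')$-close multisection lies in $\ov{U_I'}\setminus U_I'$ for any $I$. We fix $r'$ and suppose no such $\delta$ exists. Then there are $\delta_k\to 0$, $r'$-regular multisections ${\mf S}_k'$ near $\ov{\mf A'}$ that are $\delta_k$-close to ${\mf S}$, and points $x_k\in \ov{U_{I_k}'}\setminus U_{I_k}'$ with some branch of ${\mc S}_{I_k,k}'$ vanishing at $x_k$. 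Because ${\mc I}$ is finite, we may pass to a subsequence and assume $I_k=I$. Because $\ov{U_I'}$ is compact, we may also assume $x_k\to x_\infty\in\ov{U_I'}\setminus U_I'$; this set is closed, since $U_I'$ is open.

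The next step is to conclude $S_I(x_\infty)=0$. Closeness is measured in $C^0$ through local lifts on bundle charts (Definition~\ref{defn_closedness}), and the Hermitian norm on $E_I$ is continuous. The vanishing branch gives $|S_I(x_k)|<\delta_k$, so in the limit $x_\infty\in S_I^{-1}(0)$. By the defining property of a shrinking, ${\mf S}^{-1}(0)\subset {\mf A}'$, which would give $x_\infty\in U_I'$, a contradiction. With $I$ fixed there is no interaction between charts, and $r'$-regularity plays no role in this part.

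This naive argument has a gap, and I expect it to be the main obstacle. A shrinking only guarantees $S_I^{-1}(0)\cap U_I'=\Psi_I^{-1}(F_I')$, not $S_I^{-1}(0)\subset U_I'$. The limit point $x_\infty$ could therefore lie in $S_I^{-1}(0)\cap(\ov{U_I'}\setminus U_I')$, mapping to $\Psi_I(x_\infty)\in \ov{F_I'}\setminus F_I'$. To exclude this we must show that such an $x_\infty$ is represented in $|{\mf A}'|$ through some other chart, and this is where $r'$-regularity enters. Since the $F_J'$ cover ${\mf M}$, there is $J$ with $\Psi_I(x_\infty)\in F_J'$. Then $x_\infty$ and a point $y\in S_J^{-1}(0)\cap U_J'$ are related by the coordinate change $\Phi_{JI}$ or $\Phi_{IJ}$, depending on whether $I\le J$ or $J\le I$; the atlas index set is a poset with coordinate changes defined on overlaps of footprints.

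For the case $I\le J$, the points $\Phi_{JI}(x_k)$ converge to $\Phi_{JI}(x_\infty)=y\in U_J'$, so for large $k$ they lie in $U_J'$. Combined with $r'$-regularity, which makes ${\mc S}_{J,k}'$ the stabilization of ${\mc S}_{I,k}'$ near the closed disk bundle over $\ov{V_{JI,r'}'}$, this identifies the zero $x_k$ with the zero $\Phi_{JI}(x_k)\in U_J'$ in the realization. Hence $|x_k|\in|{\mf A}'/G|$ after all. The case $J\le I$ is handled the same way. Here $r'$-regularity is used to show that zeros of ${\mc S}_{I,k}'$ near $\Phi_{IJ}(U_{IJ}')$ lie on the image of $U_J$, since the tautological component $\tau_N$ forces the normal coordinate to vanish. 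These zeros are then images of zeros of ${\mc S}_{J,k}'$ converging to a point of $U_J'$. We obtain the contradiction, and hence $\delta(r')$, by making the precise statement identify points of the realization and running the compactness argument on $|\ov{\mf A'}|$, which embeds homeomorphically into $|{\mf A}|$.
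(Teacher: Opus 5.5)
Your argument is correct and is essentially the paper's proof. You argue by contradiction with compactness: the limit point is a zero of ${\mf S}$, hence represented in some $U_J'$. You then split into two cases by the order of the chart carrying the sequence and $J$. Openness of $\Phi_{JI}^{-1}(U_J')$ settles one case; your appeal to $r'$-regularity there is unnecessary. In the other case, $r'$-regularity plus the tautological component forces the zeros onto the image of $U_J$, just as in the paper's Case (2).

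Do state the contradiction hypothesis for realization classes lying in $|\ov{{\mf A}'}/G|$ but not in $|{\mf A}'/G|$, rather than for chart points in $\ov{U_I'}\setminus U_I'$. Your initial chart-level reformulation is false in general, as you observe yourself.
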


\begin{proof}
The argument is carried out on the level of $G$-quotients and orbifolds. We assume that $G$ is trivial so one can avoid writing $\cdot/G$ to save notations. 

Suppose this lemma is not true, then there exist $r'>0$, a sequence $\delta_i \to 0$, a sequence of germs of multisections ${\mf S}_i'$ which is $r'$-regular near $\ov{\mf A'}$ and which are $\delta_i$-close to ${\mf S}$ but there exists a sequence 
\beqn
x_i \in |({\mf S}_i')^{-1}(0)| \cap \Big( |\ov{\mf A'}|\setminus |{\mf A}'| \Big)
\eeqn
As $|\ov{\mf A'}|$ is compact, there exists a subsequence (still denoted by $x_i$) which converges to $x_\infty \in |\ov{\mf A'}|$. 

We first claim that $x_\infty \in |{\mf S}^{-1}(0)|$, i.e., a zero of the original Kuranishi section. Indeed, if not, then ${\mf S}(x_\infty) \neq 0$. As ${\mf S}_i'$ are $\delta_i$-close to ${\mf S}$ and $\delta_i \to 0$, this is impossible. So $x_\infty \in |{\mf S}^{-1}(0)| \subset |{\mf A}'|$. 

As the zero locus $|{\mf S}^{-1}(0)|$ can be covered by $U_I' \cap S_I^{-1}(0)$, one can find $I\in {\mc I}$ and $x_{\infty, I} \in \iota_I^{-1}(x_\infty) \subset U_I' \cap S_I^{-1}(0)$. Moreover, by choosing a subsequence, one can find $J$ such that $x_{i, J} \in \iota_J^{-1}(x_i) \subset \ov{U_J'}$ for all $i$. By choosing a further subsequence, one may assume that $x_{i, J}$ converges to a point $x_{\infty, J} \in \ov{U_J'} \subset U_J$. Then obviously $x_{\infty, J} \curlyvee x_{\infty, I}$. By the definition of Kuranishi atlas, either $I \leq J$ or $J \leq I$. We discuss in the following two cases.

\vspace{0.2cm}

\noindent {\it Case (1)}. Assume $J \leq I$. Then $x_{\infty, J} = \Phi_{JI}^{-1}(x_{\infty, I}) \in \Phi_{JI}^{-1}(U_I')$, which is open in $U_J$. Hence for $i$ sufficiently large, one has $x_{i, J}\in \Phi_{JI}^{-1}(U_I')$. Then $x_{i, J} \in U_{JI}' \subset U_J'$. It implies that $x_i \in |{\mf A}'|$, which contradicts the assumption. 

\vspace{0.2cm}

\noindent {\it Case (2)}. Assume $I < J$. Then 
\beqn
x_{\infty, I} \in U_I' \cap \Phi_{JI}^{-1}(\ov{U_J'}) \subset U_{I, r'}' \cap \Phi_{JI}^{-1}( U_{J, r'}') = U_{JI, r'}',\ x_{\infty, J} \in  \Phi_{JI}( U_{JI, r'}') \subset \Phi_{JI}^\epsilon( N_{JI}^{r'}|_{U_{JI, r'}'}).
\eeqn
The last set is open in $U_J$, hence for $i$ sufficiently large, one has 
\beqn
x_{i, J} \in \Phi_{JI}^\epsilon( N_{JI}^{r'}|_{U_{JI, r'}'}).
\eeqn
However, since ${\mc S}_i'$ is $r'$-regular near $\ov{\mf A'}$, it follows that $x_{i, J} \in \Phi_{JI}(U_{JI,r'}')$. Then one can write $x_{i, J} = \Phi_{JI}(x_{i, I})$ with $x_{i, I} \in U_{JI, r'}'$. One then has $x_{i, I} \to x_{\infty, I} \in U_I'$ inside $U_I$. As $U_I'$ is open, for $i$ sufficiently large, $x_{i, I} \in U_I'$. Then $x_i \in |{\mf A'}|$ which contradicts our assumption. 
\end{proof}

\subsubsection{Extending boundary perturbations}

The key consideration is to establish compact homotopies for NCS transverse perturbations. 

\begin{prop}\label{prop417}
Let ${\mf M}$ be compact. Let ${\mf A}$ be a $\uds{\bf Kur}_{\rm NCS}^{G, \flat}$-atlas with boundary on ${\mf M}$. Let ${\mf A}'$ be a shrinking of ${\mf A}$. Suppose there are a Riemannian metric near $\ov{\mf A'}$ and a straightening near $\ov{\mf A'}/G$ which respect the collared neighborhood of $\partial {\mf A}$ and $\partial {\mf A}'$.

Suppose $\partial {\mf S}'$ is an NCS transverse multisection which is $r_0$-regular near $\ov{\partial \mf A'}$ and $\delta_0$-close to $\partial {\mf S}$. Then there exist $r\leq r_0$ and a multisection ${\mf S}'$ near $\ov{\mf A'}$ satisfying 1) ${\mf S}'$ is NCS transverse with respect to the given straightening; 2) ${\mf S}'$ is $r$-regular near $\ov{\mf A'}$ and $\delta_0$-close to ${\mf S}$; 3) ${\mf S}'$ is collared.
\end{prop}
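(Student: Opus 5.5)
The plan is to run the chart-by-chart induction from the proof of Proposition \ref{prop_atlas_NCS_transverse}, now seeded on each chart by the collared extension of the prescribed boundary perturbation. Choose $r \leq r_0$ small enough that ${\mf A}_r'$ is still precompact, that the Riemannian metric and the straightening exist near $\ov{{\mf A}_r'}$ and $\ov{{\mf A}_r'}/G$, and that all flat tubular neighborhoods $(N_{JI}^\epsilon, {\bm \Phi}_{JI}^\epsilon)$ are defined over the closed radius-$r$ disk bundles above a neighborhood of $\ov{U_{JI,r}'}$. Shrinking $r$ keeps the boundary data valid, because $r_0$-regularity of $\partial {\mf S}'$ implies $r$-regularity for every $r \leq r_0$. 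Also fix $\tau' < \tau$ so that all structures, including the boundary parts of ${\mf A}'$, are constant in the collar direction on $\partial U_I \times [0, \tau']$.

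First I extend the boundary data into the collar. On each chart set ${\mc S}_I^{\rm col} := {\rm pr}^* (\partial {\mc S}_I')$ on $\partial U_I \times [0, \tau')$, where ${\rm pr}$ is the collar projection. This is collared. It is stratified and NCS transverse with respect to the collared straightening: in the collar the NCS virtual manifold is a product with $[0,\tau')$, and by Definition \ref{defn_manifold_transversality} pulling back along this trivially stratified factor preserves graph transversality to the canonical Whitney stratification. It is $\delta_0$-close to ${\mc S}_I$ because ${\mc S}_I$ is itself collared. Finally, the ${\mc S}_I^{\rm col}$ are compatible with the stabilizations along the coordinate changes near $\Phi_{JI}^\epsilon(\ov{N_{JI}^r}|_{\ov{U_{JI,r}'}})$ restricted to the collar, since the coordinate changes are collared.

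Next comes the induction. For a minimal $I$, the multisection ${\mc S}_I^{\rm col}$ is NCS transverse near the closed set $\ov{U_I'} \cap (\partial U_I \times [0, \tau'/2])$. Apply Theorem \ref{thm_transversality_orbifold}(1) with $Y$ equal to this set. Strict closeness can be arranged because closeness is an open condition on the compact set $\ov{U_I'}$; using the single-valued ${\mc S}_I$ as reference, with room $\delta_0' < \delta_0$ and $2\cdot$ replaced by a suitably small multiple, this yields an NCS transverse multisection that is $\delta_0$-close to ${\mc S}_I$ and equals ${\mc S}_I^{\rm col}$ near $Y$.

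For a general $I$, take $Y$ to be the union of two closed sets. The first is $\bigcup_{J<I}\Phi_{IJ}^\epsilon(\ov{N_{IJ}^r}|_{\ov{U_{IJ,r}'}})/G$. The second is the collar piece $\ov{U_I'} \cap (\partial U_I \times [0,\tau'/2])$. The inductively defined data on the first set (stabilizations of the ${\mc S}_J'$) and on the second set (${\mc S}_I^{\rm col}$) must agree on their overlap. They do: the ${\mc S}_J'$ coincide with ${\mc S}_J^{\rm col}$ in the smaller collar, the collar data is compatible under stabilization, and Lemma \ref{lemma_closed_sheaf}-style gluing for germs then applies. NCS transversality near $Y$ follows from Proposition \ref{prop_stabilization_transversality} on the first piece and from the collar step above on the second. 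Theorem \ref{thm_transversality_orbifold}(1) then extends the multisection, and the induction hypotheses of Proposition \ref{prop_atlas_NCS_transverse} propagate.

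The main obstacle is the overlap consistency near the corner, where the images of the coordinate changes meet the collar. The collar where agreement is forced must shrink at each stage (say to $[0,\tau'/2^k]$ at depth $k$ of the finite poset ${\mc I}$), and the extension at stage $I$ must not destroy collaredness on the current collar. Because Theorem \ref{thm_transversality_orbifold}(1) leaves the section unchanged near $Y$, and $Y$ contains the collar strip, the final ${\mf S}'$ is collared on $[0, \tau'/2^{|{\mc I}|}]$. It is $r$-regular by construction, so it satisfies conditions 1)–3).
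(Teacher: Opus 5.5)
Your proposal follows the paper's proof: choose $r\le r_0$ as at the start of Proposition \ref{prop_atlas_NCS_transverse}, extend $\partial{\mf S}'$ into the collar by pulling back along the collar projection, and run the same chart-by-chart induction with the collar strip added to the closed set on which the multisection is already prescribed; you are supplying details (NCS transversality of the pullback, gluing of germs, the $\delta_0$ bookkeeping) that the paper's sketch leaves implicit. One correction: take the collar strip over a closed neighborhood of $\ov{U_{I,r}'}$, not over $\ov{U_I'}$, because the stabilized data on $\Phi_{IJ}^\epsilon(\ov{N_{IJ}^r}|_{\ov{U_{IJ,r}'}})$ sits over points of $\ov{U_{J,r}'}$ that need not lie in $\ov{U_J'}$, and ${\mc S}_J'$ must already be collared there for the two germs to agree near the corner --- that base region, not the collar width, is the real overlap issue.
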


\begin{proof}
One chooses $r\leq r_0$ as the beginning of the proof of Proposition \ref{prop_atlas_NCS_transverse}. Then the boundary perturbation $\partial {\mf S}'$ is also $r$-regular near $\ov{\partial {\mf A'}}$. One also extends the thickening near $\ov{U_{I, r}'}$ arbitrarily to $U_I$. 

Then one can inductively construct the perturbations in the same way as the proof of Proposition \ref{prop_atlas_NCS_transverse}. For a minimal index $I$, if $\partial U_I = \emptyset$, then one can choose $\delta_0$-close, NCS transverse multisections on $U_I$; if $\partial U_I \neq \emptyset$, then the boundary value is given and one uses the collar structure to extend it to a collared neighborhood of the boundary. The remaining construction is similar where we can always maintain the requirement on multisections for being $r$-regular and $\delta_0$-closed.
\end{proof}

\subsection{Virtual cycle of the main stratum} 

\subsubsection{The virtual count of main stratum}

Now assume that ${\mf M}$ is compact and ${\mf A}$ is a $\uds{\bf Kur}_{\rm NCS}^{G, \flat}$-atlas on ${\mf M}$. ${\mf A}'$ is a shrinking of ${\mf A}$. Assume that ${\mf A}$ is oriented. Choose a Riemannian metric near $\ov{\mf A'}$ and a straightening near $\ov{\mf A'}/G$. Choose a sufficiently small $r>0$ and an NCS transverse multisection near $\ov{\mf A'}$ with respect to the chosen straightening which is $r$-regular near $\ov{\mf A'}$ and which is sufficiently $C^0$-close to ${\mf S}$. Then by Lemma \ref{delta_r}, the perturbed zero locus 
\beqn
{\mf Z}':= |({\mf S}')^{-1}(0)| \cap |{\mf A'}/G|
\eeqn
is compact. Consider its main stratum defined by 
\beqn
{\mf Z}_{\rm main}^{'*}:= {\mf Z}' \cap (|{\mf A}_{\rm main}^{'*} / G|).
\eeqn
When the virtual dimension is zero, it is a compact weighted branched 0-dimensional manifold, i.e., a finite collection of points with rational weights. If ${\mf A}$ is also oriented, then these points also have signs, allowing us to define the weighted sum 
\beqn
\# {\mf Z}_{\rm main}^{'*} \in {\mb Q}.
\eeqn

\begin{prop}
Under the above assumptions, 
\begin{enumerate}

\item when ${\rm dim}^{\rm vir} {\mf A} =0$ and $\partial {\mf A} = \emptyset$, the above weighted count of zeroes in the main stratum of an NCS transverse multisection on ${\mf A}'$ is an invariant of the pair $({\mf A}, {\mf A}')$, denoted by 
\beqn
\#^{\rm vir}({\mf A}, {\mf A}') \in {\mb Q};
\eeqn

\item when ${\rm dim}^{\rm vir} {\mf A} = 1$, ${\mf A}$ has boundary $\partial {\mf A}$, and $({\mf A}, {\mf A}')$ has a collared neighborhood near the boundary, one has
\beqn
\#^{\rm vir}(\partial {\mf A}, \partial {\mf A}') = 0.
\eeqn
\end{enumerate}
\end{prop}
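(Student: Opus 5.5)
The argument has two parts. We prove (2) directly, and then obtain (1) by applying (2) to the product atlas ${\mf A}\times[0,1]$.

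For (2), take an NCS transverse, $r$-regular multisection ${\mf S}'$ near $\ov{\mf A'}$. It should be $\delta$-close to ${\mf S}$ with $\delta\le \delta(r)$ from Lemma \ref{delta_r}, and collared near the boundary; Proposition \ref{prop_atlas_NCS_transverse} together with the collared version Proposition \ref{prop417} provide it. By Lemma \ref{delta_r}, ${\mf Z}'=|({\mf S}')^{-1}(0)|\cap|{\mf A}'/G|$ is compact. By Lemma \ref{lemma_zero_stratification}, applied chartwise to each branch of each local lift, ${\mf Z}'$ is Whitney stratified, and adjacent strata differ by even real codimension. Since the virtual dimension is $1$, every stratum other than the main one has dimension $\le 1-2<0$ and is therefore empty. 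Hence ${\mf Z}'={\mf Z}'^{*}_{\rm main}$ is a compact weighted branched $1$-manifold. Its boundary is the zero locus of $\partial{\mf S}'$ in $|\partial{\mf A}'/G|$, and this identification uses the collar.

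Next, we argue that branched weights and orientations are compatible across chart overlaps. Here we use the cocycle condition and the fact that coordinate changes are combed embeddings that match the stabilized sections, together with $r$-regularity, which makes zeros in $J$ near $\Phi_{JI}^\epsilon$ come from zeros in $I$, as in Case (2) of Lemma \ref{delta_r}. Given this compatibility, the standard fact that a compact oriented weighted branched $1$-manifold has boundary of total weighted signed count zero yields $\#({\mf Z}'\cap\partial)=0$. The boundary restriction $\partial{\mf S}'$ is an NCS transverse multisection of the allowed type for $(\partial{\mf A},\partial{\mf A}')$, so this count equals $\#^{\rm vir}(\partial{\mf A},\partial{\mf A}')=0$, once (1) is known.

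For (1), first note that in virtual dimension $0$ the same even-codimension argument shows that ${\mf Z}'$ equals its main stratum and is a finite weighted set, so the count is defined. Independence of the choices has three layers.

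First, fix the Riemannian metric, the straightening and $r$, and take two perturbations ${\mf S}'_0,{\mf S}'_1$. On ${\mf A}\times[0,1]$ with shrinking ${\mf A}'\times[0,1]$, use product collared metric and straightening. Proposition \ref{prop417} with boundary data ${\mf S}'_0\sqcup{\mf S}'_1$ gives a transverse collared extension, and (2) gives $\#_0-\#_1=0$, with the sign coming from the boundary orientation.

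Second, changing $r$ is absorbed by the first step, since any $r$-regular section is also $r'$-regular for $r'\le r$.

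Third, metrics and straightenings are handled by interpolation. On ${\mf A}\times[0,1]$, choose a collared metric and a collared straightening restricting to the given ones at the two ends; Lemma \ref{lemma413} provides these. Then apply Proposition \ref{prop417} and (2) again.

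The main obstacle is the second paragraph's claim that the glued zero locus is a genuine compact weighted branched manifold with boundary. The even-codimension argument removes the lower strata. What remains delicate is checking that branch weights and orientations glue across coordinate changes of different dimensions via the stabilizations, and that no zeros escape to $|\ov{\mf A'}|\setminus|{\mf A}'|$ during the homotopy. The second point is exactly what Lemma \ref{delta_r} controls, provided $\delta$ is chosen below $\delta(r)$ uniformly for the product atlas.
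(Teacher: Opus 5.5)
Your overall route is the paper's: a cobordism over ${\mf A}\times[0,1]$ with a collared metric and straightening (Lemma \ref{lemma413}), a collared NCS transverse extension (Proposition \ref{prop417}), compactness from Lemma \ref{delta_r}, and vanishing of the signed weighted boundary count of a compact weighted branched $1$-manifold. Proving a ``raw'' form of (2) first and deducing (1) from it is only a reordering.

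\textbf{The gap.} You claim that every stratum of ${\mf Z}'$ other than the main one has dimension at most ${\rm dim}^{\rm vir}{\mf A}-2$, so that ${\mf Z}'={\mf Z}_{\rm main}^{'*}$. This does not follow. Lemma \ref{lemma_zero_stratification} only says that \emph{adjacent} strata (one contained in the closure of the other) differ by an even codimension. It gives no bound on strata of ${\mf Z}'$ that are not in the closure of the main one.

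For an NCS transverse section, the zeros lying in a lower stratum $\alpha$ form, where nonempty, a weighted branched manifold of dimension ${\rm dim}^{\rm vir}_\alpha{\mf A}$. By \eqref{chart_normal_bundle},
\[
{\rm dim}^{\rm vir}_\alpha{\mf A}={\rm dim}^{\rm vir}{\mf A}-2\#\{\text{nodes}\}+\dim_{\mb R}H^1_\alpha .
\]
Take a single genus-one ghost with its perturbation switched off, attached at one node. Then $H^1_\alpha\cong H^{0,1}(C_{\phi,v})\otimes T_xX$, so the excess is $2\dim_{\mb C}X-2$, which is positive once $\dim_{\mb C}X\geq 2$. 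These are exactly the ghost contributions the construction is designed to discard. If your claim were true, an NCS transverse perturbation would be transverse in the ordinary sense, and the main class would always equal the total class. Proposition \ref{prop424} would then need no dimension hypothesis, and Lemma \ref{lemma537} would be unnecessary.

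\textbf{The repair.} Apply the even-codimension property only to the frontier $\ov{{\mf Z}_{\rm main}^{'*}}\setminus{\mf Z}_{\rm main}^{'*}$. This is what the paper uses implicitly when it asserts that $\tilde{\mf Z}_{\rm main}^*$ is a compact weighted branched $1$-manifold with boundary ${\mf Z}_{0,{\rm main}}^{'*}\sqcup{\mf Z}_{1,{\rm main}}^{'*}$.
\begin{enumerate}
\item The frontier is a union of strata adjacent to the main ones. Hence it has dimension at most ${\rm dim}^{\rm vir}{\mf A}-2<0$ when the virtual dimension is $0$ or $1$, so it is empty.
\item Therefore ${\mf Z}_{\rm main}^{'*}$ is closed in the compact set ${\mf Z}'$, hence compact.
\item The other components of ${\mf Z}'$ lie in lower strata and may have larger dimension. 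They are disjoint from ${\mf Z}_{\rm main}^{'*}$ and are simply ignored.
\item Correspondingly, in (2) the boundary of the main stratum is the main stratum of the zero locus of $\partial{\mf S}'$, not that whole zero locus.
\end{enumerate}
With this correction your argument goes through.

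\textbf{A minor point.} When you pass from $r$ to a smaller $r'$, you also need $\delta(r')$-closeness, which $\delta(r)$-closeness does not give. So the comparison in your second layer should go through an intermediate perturbation that is $r$-regular and $\min(\delta(r),\delta(r'))$-close.
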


\begin{proof}
To prove that the virtual count is an invariant, consider the product ${\mf A} \times [0, 1]$ as an atlas with boundary on ${\mf M}\times [0, 1]$. Any two Riemannian metrics near $\ov{\mf A'}$ can be extended to a Riemannian metric near $\ov{\mf A'}\times [0,1]$; any two straightenings near $\ov{\mf A'}/G$ can also be extended to a straightening near $\ov{\mf A'}/G \times [0, 1]$. We may choose such extensions so that they are constant over $[0, \tau]$ and over $[1-\tau, 1]$ for a small $\tau>0$. Suppose ${\mf S}_i'$, $i = 0, 1$ are two NCS transverse multisections near $\ov{\mf A'}$ with respect to the two straightenings which are $r_i$-regular and $\delta_i$-close to ${\mf S}$.

To construct an interpolation, choose $r \leq r_0, r_1$ sufficiently small. Then choose $\delta = \delta(r)$ given by Lemma \ref{delta_r} associated to the atlas ${\mf A}\times [0, 1]$, the shrinking ${\mf A}'\times [0, 1]$, the Riemannian metric and the straightening. Then use the collared neighborhood to obtain a family of perturbations ${\mf S}_t'$ for $t \in [0, \tau] \sqcup [1-\tau, 1]$, which is $r$-regular near $\ov{\mf A'}\times ([0, \tau] \sqcup [1-\tau, 1])$, NCS transverse, such that ${\mf S}_\tau'$ and ${\mf S}_{1-\tau}'$ are $\delta$-close to ${\mf S}$. Then over the interval $[\tau, 1-\tau]$, construct an NCS transverse perturbations which is still $r$-regular and $\delta$-close to ${\mf S}$. Denote the whole family by $\tilde {\mf S}'$. Then 
\beqn
|(\tilde {\mf S}')^{-1}(0)| \cap |({\mf A'}\times [0, 1])/G| 
\eeqn
is compact. Let the perturbed zero locus be $\tilde {\mf Z}'$. Then its main stratum $\tilde {\mf Z}_{\rm main}^*$ is a compact 1-dimensional weighted branched manifold whose boundary is the union ${\mf Z}_{0, {\rm main}}^{'*}\sqcup {\mf Z}_{1, {\rm main}}^{'*}$. Then it is standard that the boundary count is zero. Hence we obtained the independence of the virtual count from the choices of Riemannian metrics, straightenings, and perturbations. The second item of this proposition follows from the same argument. 
\end{proof}

\subsubsection{Virtual cycle in general}

For virtual dimensions different from $0$, we ``pushforward'' the expected virtual class to a target orbifold.

\begin{defn}\label{defn_strong_map}
Let ${\mf A}$ be a $\uds{\bf Kur}^{G, \flat}$-atlas on ${\mf M}$ and $Y$ be an orbifold. A (continuous or smooth) {\bf strong map} from a subcategory ${\mf V} = (V_I)_{I \in {\mc I}}$ to $Y$ is a $G$-invariant (continuous or smooth) map
\beqn
{\mf f}: {\rm Ob} {\mf V} \to Y
\eeqn
satisfy the following compatibility condition: for each $I \in {\mc I}$, denote $f_I:= {\mf f}|_{V_I}$. Then for all pairs $I \leq J$, near $\Phi_{JI}(V_{JI}) \subset V_J$, $f_J$ coincides with the pullback of $f_I$ via the tubular projection associated to the tubular neighborhood. 
\end{defn}

Denote by
\begin{align*}
&\ C^0_{\rm strong}({\mf V}, Y),\ & \ C^\infty_{\rm strong}({\mf V}, Y)
\end{align*}
the set of continuous resp. smooth strong maps from ${\mf V} \to Y$. These set obviously form a sheaf on open subcategories of ${\mf A}$. Then if ${\mf V}\subset {\mf A}$ is an arbitrary subcategory, the same notations denote the sets of germs of continuous resp. smooth strong maps defined near ${\mf V}$. Depending on choosing a distance function on $Y$, for ${\mf f}_0, {\mf f}_1 \in C_{\rm strong}^0({\mf V}, Y)$, define
\beqn
{\rm dist}_{\mf V}( {\mf f}_0, {\mf f}_1):= \sup_{x \in {\mf V}} {\rm dist} ({\mf f}_0(x), {\mf f}_1(x)).
\eeqn
When it is less than $\rho> 0$, we say ${\mf f}_0$ and ${\mf f}_1$ are $\rho$-close.

We can perturb continuous strong maps to smooth ones as long as we allow non-compatibility outside a given shrinking.

\begin{lemma}
Let ${\mf A}$ be a $\uds{\bf Kur}_{\rm NCS}^{G, \flat}$-atlas on ${\mf M}$ and ${\mf A}'$ be a shrinking. Suppose ${\mf f} \in C_{\rm strong}^0({\mf A}, Y)$. Choose a distance function on $Y$. Then for any $\rho>0$, there exists ${\mf f}_\rho \in C_{\rm strong}^\infty( \ov{\mf A'}, Y)$ which is $\rho$-close to ${\mf f}$. Moreover, there exists $\rho > 0$ such that for any ${\mf f}_{\rho,0}, {\mf f}_{\rho, 1}\in C_{\rm strong}^\infty(\ov{\mf A'}, Y)$ both of which are $\rho$-close to ${\mf f}$, there exists $\tilde {\mf f}_\rho \in C_{\rm strong}^\infty( \ov{\mf A'} \times [0, 1], Y)$ such that $\tilde {\mf f}|_{\ov{\mf A'} \times \{t\}} = {\mf f}_{\rho, 0}$ and $\tilde {\mf f}|_{\ov{\mf A'} \times \{1-t\}} = {\mf f}_{\rho, 1}$ for all $t\geq 0$ sufficiently small and for all $t \in [0, 1]$, 
\beqn
{\rm dist}_{\ov{\mf A'}}( \tilde {\mf f}_\rho (\cdot, t), {\mf f}) < \rho.
\eeqn

\end{lemma}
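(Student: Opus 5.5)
We would prove both parts by the same chart-wise induction over the finite poset ${\mc I}$ used in Lemma \ref{lemma_compatible_metrics} and Proposition \ref{prop_atlas_NCS_transverse}. Only the compatibility condition in Definition \ref{defn_strong_map} matters. That condition says that near $\Phi_{JI}(V_{JI})$, $f_J$ is the pullback of $f_I$ by the tubular projection $\pi_{N_{JI}}$. So once $f_I$ is fixed on $U_I$, the value of $f_J$ is forced on the closed set $\Phi^\epsilon_{JI}(\ov{N^{r}_{JI}}|_{\ov{\ov{U'_{JI}}}})$. By the cocycle condition and the composability of combed embeddings, the forced values coming from different $I<J$ agree on overlaps. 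This is the same observation used for metrics in Lemma \ref{lemma_compatible_metrics}.

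For existence, we first fix a shrinking ${\mf A}''$ with $\ov{\mf A'} \subset {\mf A}''$, still precompact, so that all closed sets involved are compact. We then process the charts in order.

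For a minimal $I$, we smooth $f_I$ on a neighborhood of $\ov{U_I''}$ to a $G$-invariant smooth map $\rho$-close to $f_I$. To do this, we embed $Y$ as an orbifold, or work in local orbifold charts of $Y$ with small geodesic balls. We then use a $G$-averaged convolution or partition-of-unity smoothing followed by a retraction onto $Y$. Averaging over $G$ keeps the map invariant, since the target map is $G$-invariant.

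For a general $I$, the already constructed $f_J$ ($J<I$) induce by pullback a smooth map near the compact set $C_I=\bigcup_{J<I}\Phi^\epsilon_{IJ}(\ov{N^r_{IJ}}|_{\ov{\ov{U'_{IJ}}}})$. This map is $\rho$-close to ${\mf f}$ there, because ${\mf f}$ itself is strong and hence also pulled back. We extend it to a neighborhood of $\ov{U_I''}$ by the relative smoothing: we take a smoothing of ${\mf f}$ and interpolate with the given map using a $G$-invariant cutoff equal to $1$ near $C_I$. The interpolation is done through local geodesic combinations in $Y$, which keeps the result $\rho$-close.

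The main obstacle we expect is making sure that the interpolation does not spoil closeness. If two points are both within $\rho$ of ${\mf f}(x)$ and $\rho$ is below the injectivity/convexity radius of $Y$ on a compact neighborhood of ${\mf f}(\ov{\mf A''})$, then their geodesic combination is also within $\rho$. This requires taking $\rho$ smaller than that radius, and we get the larger $\rho$ by monotonicity: a map that is closer is also $\rho$-close. We also need the forced values to be compatible on overlaps, which must be checked carefully using the choice of $r$ and the closures $\ov{\ov{U'_{JI}}}$. Finally, restricting everything to a neighborhood of $\ov{\mf A'}$ gives ${\mf f}_\rho$.

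For the homotopy, we choose $\rho$ smaller than half the convexity radius above. Given ${\mf f}_{\rho,0}$ and ${\mf f}_{\rho,1}$, we define $\tilde{\mf f}_\rho(x,t)=\exp_{{\mf f}_{\rho,0}(x)}\big(\beta(t)\exp^{-1}_{{\mf f}_{\rho,0}(x)}{\mf f}_{\rho,1}(x)\big)$, where $\beta$ is smooth, equals $0$ near $t=0$ and $1$ near $t=1$. This map is smooth and $G$-invariant. It is strong because both ends are pulled back by the same tubular projections, and the construction is pointwise in $Y$. It is $\rho$-close to ${\mf f}$ for all $t$ because both endpoints lie in the geodesically convex ball of radius $\rho$ around ${\mf f}(x)$. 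In the orbifold case, we perform this construction in local uniformizing charts of $Y$, where it is canonical, so the local constructions glue.
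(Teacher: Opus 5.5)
For a manifold target your argument is the paper's proof with the details filled in. The paper likewise does a chartwise induction over ${\mc I}$: it smooths $G$-invariantly in the $C^0$ sense, keeping the values already forced by the tubular projections, and then simply asserts that sufficiently $C^0$-close maps into $Y$ are homotopic. Your convexity-radius interpolation, the pointwise check of strongness and $G$-invariance, and the overlap check via the cocycle condition are correct implementations of those two sentences.

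The step that fails as written is the orbifold case, which is the relevant one here since $Y=\ov{\mc M}_{g,n}\times X^n$. In a uniformizing chart $(\tilde V,\Gamma)$, the formula $\exp_{\tilde y_0}(\beta\exp^{-1}_{\tilde y_0}\tilde y_1)$ depends on the relative choice of lifts of ${\mf f}_{\rho,0}(x)$ and ${\mf f}_{\rho,1}(x)$, which is only defined up to $\Gamma$. Moreover, balls around singular points are not convex in the sense you need. In ${\mb C}/\{\pm 1\}$, two points $[a],[b]$ with $a\perp b$ are joined by two distinct shortest geodesics: the images of the segments from $a$ to $b$ and from $a$ to $-b$.

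Concretely, take ${\mf f}_{\rho,0}\equiv[\epsilon]$ and ${\mf f}_{\rho,1}(\theta)=[\epsilon e^{{\rm i}\theta/2}]$ on a circle. Both are $\rho$-close to the constant map at the cone point, yet the nearest-lift interpolation jumps at $\theta=\pi$. So the construction is neither canonical nor gluable, contrary to your last sentence.

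The same lifting problem affects your existence step. A merely continuous ${\mf f}$ need not lift locally to a uniformizing chart; for example, $z\mapsto[\sqrt z]$ has no continuous local lift near $0$. So smoothing \emph{in local orbifold charts of $Y$} is not automatic either.

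The lemma's conclusion is still plausible; in the example one can homotope through the cone point. But the orbifold case needs its own argument. The paper's one-line assertion does not supply one either, so your write-up must either treat the singular locus of $Y$ explicitly or state that it only covers manifold targets.
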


\begin{proof}
It is true that on a $G$-manifold $U$, any $G$-invariant continuous function can be $C^0$-approximated by $G$-invariant smooth functions. Hence one can inductively construct  smooth strong maps as $C^0$-approximations. Moreover, as $Y$ is an orbifold, any two sufficient $C^0$-closed maps are homotopic.   
\end{proof}

Now given an oriented $\uds{\bf Kur}_{\rm NCS}^{G, \flat}$-atlas ${\mf A}$ on a compact ${\mf M}$ of virtual dimension $d$, a shrinking ${\mf A}' \subset {\mf A}$, and ${\mf f} \in C_{\rm strong}^0( {\mf A}, Y)$ with $Y$ being a compact oriented orbifold, we would like to define a virtual fundamental class
\beqn
[{\mf A}, {\mf A}']_{\rm main}^{\rm vir} \in H_d(Y; {\mb Q}).
\eeqn
To specify such a homology class, using the fact that $Y$ satisfies Poincar\'e duality over ${\mb Q}$, such a homology class is determined by its pairing with homology classes of the complementary degree. This allows us to define the virtual fundamental class using the case of virtual dimension zero and proving the invariance using the case of virtual dimension 1. 

More precisely, one consider a chain model for rational homology $C_*(Y)$ where $C_k(Y)$ is the space of finite ${\mb Q}$-linear combinations of piecewise smooth cubical chains $\lambda: [0, 1]^k \to Y$ which are constants along normal directions near each facet of the cube. Suppose ${\rm dim} Y = m$. We would like to define a linear functional on $H_{m-d}(Y; {\mb Q})$ as follows. Fix $[\Lambda] \in H_{m-d}(Y; {\mb Q})$. Make the following choices.

\vspace{0.2cm}

\noindent ---Choice (i)--- A distance function on $Y$, a sufficiently small $\rho>0$, and ${\mf f}_\rho \in C_{\rm strong}^\infty( \ov{\mf A'}, Y)$ which is $\rho$-close to $f$. By abuse of notations, assume ${\mf f} = {\mf f}_\rho$ is already smooth.

\vspace{0.2cm}

\noindent ---Choice (ii)--- A cycle $\Lambda$ representing $[\Lambda]$ such that each facet $\lambda_i: Q_i \cong [0, 1]^{k_i} \to Y$ appearing in $\Lambda$ and each chart $K_I$, $\lambda_i$ is transverse to the restriction of $f_I$ to each stratum $U_{I, \alpha}^*$ in the interior of $Q_i$.

\vspace{0.2cm}

Then for each facet $\lambda_i: Q_i \to Y$, one consider an atlas ${\mf A}_{\lambda_i}$ whose charts $K_{I, \lambda_i}$ is the restriction of $K_I \times Q_i$ to the fibre products
\beqn
U_{I, \lambda_i} = \Big\{ (x, t) \in U_I \times Q_i\ |\ f_I(x) = \lambda_i(t) \Big\}.
\eeqn
The transversality implies that $U_{I, \lambda_i}$ is a $G$-invariant submanifold with corners with induced stratification. Moreover, as $\alpha_i$ is constant in normal directions near each corner of $Q_i$, $U_{I, \lambda_i}$ has canonical collar neighborhoods near corners. The bundle $E_I$, the section $S_I$, and the footprint map $\Psi_I$ can all be carried to $U_{I, \lambda_i}$, giving a $\uds{\bf Kur}^{G, \flat}$-chart on 
\beqn
{\mf M}_{\lambda_i} = \{ (x, t)\in {\mf M}\times Q_i\ |\ \ev(x) = \lambda_i(t) \}.
\eeqn
One can also see that one has corresponding coordinate changes $\Phi_{JI, \lambda_i}$ from $K_{I, \lambda_i}$ to $K_{J, \lambda_i}$ induced from $\Phi_{JI}$. Hence one obtains a $\uds{\bf Kur}^{G, \flat}$-atlas on ${\mf M}_{\lambda_i}$. Its virtual dimension is 
\beqn
{\rm dim}^{\rm vir} {\mf A}_{\lambda_i} = {\rm dim}^{\rm vir} {\mf A} + {\rm dim} Q_i - {\rm dim} Y.
\eeqn
Moreover, because $\lambda_i$ is transverse to the restriction of $f_I$ to all strata of $U_I$, the atlas ${\mf A}_{\lambda_i}$ also carries an induced NCS structure. The stratification on $U_{I, \lambda_i}$ is just given by
\beqn
U_{I, \lambda_i, \alpha}^* = (U_{I, \alpha}^* \times Q_i) \cap U_{I, \lambda_i}
\eeqn
whose normal bundle has the induced complex structure. Other requirement for an NCS structure (Definition \ref{defn_Kuranishi_NCS}) can be readily checked. Hence ${\mf A}_{\lambda_i}$ is a $\uds{\bf Kur}_{\rm NCS}^{G, \flat}$-atlas on ${\mf M}_{\lambda_i}$ with corners. 

Then we can construct NCS transverse multivalued perturbations. Notice that if ${\mf A}'$ is a shrinking of ${\mf A}$, then for each $\lambda_i$, there is an induced shrinking ${\mf A}_{\lambda_i}'$ of ${\mf A}_{\lambda_i}$. Then we can make a further choice.

\vspace{0.2cm}

\noindent ---Choice (iii)--- Choose a collection of Riemannian metric near $\ov{{\mf A}_{\lambda_i}'}$ and a collection of straightenings near $\ov{{\mf A}_{\lambda_i}'}/G$ satisfying the following conditions.
\begin{enumerate}

\item The metrics and straightenings are constant in normal directions near each corner of $U_{I, \lambda_i}$.

\item If $\lambda_i$ is a facet of $\lambda_j$, then the metric and straightening coincide with the restrictions of those on $U_{I, \lambda_j}$ to that facet.
\end{enumerate}

Then one can choose perturbations.

\begin{lemma}\label{lemma421}
There exist $r>0$ and for each $\delta>0$ a collection of multisections ${\mf S}_{\lambda_i}'$ near $\ov{{\mf A}_{\lambda_i}'}/G$ satisfying the following conditions.
\begin{enumerate}
\item ${\mf S}_{\lambda_i}'$ is $r$-regular near $\ov{{\mf A}_{\lambda_i}'}/G$.

\item ${\mf S}_{\lambda_i}'$ is constant in normal directions near each corner.

\item ${\mf S}_{\lambda_i}'$ is NCS transverse with respect to the chosen straightenings.

\item ${\mf S}_{\lambda_i}'$ is $\delta$-close to the original Kuranishi section ${\mf S}_{\lambda_i}$. 
\end{enumerate}
\end{lemma}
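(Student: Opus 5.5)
The plan is an induction over the cubical facets of $\Lambda$, ordered by dimension. Inside each facet, the perturbation is built by the chart-by-chart induction from the proof of Proposition \ref{prop_atlas_NCS_transverse}, in its boundary-relative form Proposition \ref{prop417}. Only finitely many facets $\lambda_i$ appear in $\Lambda$, together with their faces. For each one, the precompactness of ${\mf A}_{\lambda_i}'$ inside ${\mf A}_{\lambda_i}$ gives some $r_i>0$ as at the start of the proof of Proposition \ref{prop_atlas_NCS_transverse}. I take $r$ to be the minimum of these finitely many $r_i$. Shrinking $r$ further if necessary, the collars of all corners have width larger than $r$, so that $r$-regularity is compatible with restriction to faces. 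I then fix $\delta>0$.

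The first step handles the $0$-dimensional faces, i.e. the vertices $\lambda_i$ with $\dim Q_i = 0$. In that case ${\mf A}_{\lambda_i}$ has no boundary, and Proposition \ref{prop_atlas_NCS_transverse} directly gives an NCS transverse multisection that is $r$-regular and $\delta$-close to ${\mf S}_{\lambda_i}$. These are chosen independently for each vertex.

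The inductive step treats a facet $\lambda_j$ of dimension $k$, assuming all its faces have already been handled. The given multisections on the codimension-one faces agree on their common corners, since faces of faces were treated once and then restricted. Because the metrics, straightenings, and lower perturbations are constant in normal directions near corners (Choice (iii) and the induction hypothesis), these data glue into a multisection $\partial {\mf S}_{\lambda_j}'$ on a collar neighborhood of $\partial {\mf A}_{\lambda_j}$. This glued multisection is NCS transverse, because the collar atlas is locally a product of a face atlas with $[0,\tau)^m$; NCS transversality is checked chart by chart on strata, and taking a product with a trivially stratified cube does not change the canonical Whitney stratification, by Proposition \ref{prop_submersion_pullback} applied to the projection. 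The glued multisection is also $r$-regular and $\delta$-close to ${\mf S}_{\lambda_j}$. Proposition \ref{prop417}, applied with corners instead of a smooth boundary, then extends it to an NCS transverse, $r$-regular, $\delta$-close multisection ${\mf S}_{\lambda_j}'$ that is constant in normal directions near every corner. Condition (2) holds by construction, and the finite induction ends after $\dim \Lambda$ steps.

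The step I expect to be hardest is adapting Proposition \ref{prop417} from collared boundary to collared corners. In particular, the extension has to agree with the given values near the closed set formed by the union of corner collars, and at the same time be compatible with the tubular neighborhoods $\Phi_{JI}^\epsilon$ of the coordinate changes. My approach is to run the chart induction of Proposition \ref{prop_atlas_NCS_transverse}, taking at each chart $U_{I,\lambda_j}$ as the closed set $Y$ in Theorem \ref{thm_transversality_orbifold}(1) the union of two pieces: the closed collar $\partial U_{I,\lambda_j} \times [0,\tau']$ (in corner coordinates), and the images of the radius-$r$ closed disk bundles from lower charts. The given data agree on the overlap of these two pieces because the boundary data were themselves constructed to be $r$-regular. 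Smaller facets and charts of the same facet may force a slight reduction of $\delta$-margins. Since Theorem \ref{thm_transversality_orbifold}(1) only loses a factor of $2$ in closeness, I would start the construction with $\delta/2$ so that the final multisection is $\delta$-close.
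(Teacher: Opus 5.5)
Your proposal is correct and is essentially the argument the paper intends. The paper gives no separate proof: it leaves Lemma \ref{lemma421} as the face-by-face combination of the chart-wise induction of Proposition \ref{prop_atlas_NCS_transverse} with the collared extension of Proposition \ref{prop417}, with $r$ fixed at the outset by precompactness, exactly as you describe. One bookkeeping correction: the factor $2$ in Theorem \ref{thm_transversality_orbifold}(1) compounds over the finitely many chart and face steps, so starting from $\delta/2$ is not enough as written; instead start from $\delta/2^{N}$, or use Proposition \ref{prop_NCS_extension_manifold}, which lets each extension change the section by an arbitrarily small amount.
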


\vspace{0.2cm}

\noindent ---Choice (iv)--- Choose a collection of multisections ${\mf S}_{\lambda_i}'$ satisfying conditions of Lemma \ref{lemma421} for a $\delta>0$ sufficiently small. 

\vspace{0.2cm}

Then by Lemma \ref{delta_r}, the perturbed zero locus  
\beqn
{\mf Z}_{\lambda_i}':=|({\mf S}_{\lambda_i}')^{-1}(0)| \cap |{\mf A}_{\lambda_i}'/G|
\eeqn
is compact. Consider its main stratum ${\mf Z}_{\lambda_i, {\rm main}}^{'*}$. For dimensional reason, it is empty except for the top-dimensional cells $\lambda_i$, in which case it is a compact $0$-dimensional weighted branched manifold. Together with orientations, we can define $
\# {\mf Z}_{\lambda_i, {\rm main}}^{'*} \in {\mb Q}$. Then define
\beq\label{intersection_number}
[{\mf A}, {\mf A}']_{{\rm main}}^{\rm vir} \cap [\Lambda]  = \sum_{i} \# {\mf Z}_{\lambda_i, {\rm main}}^{' *}.
\eeq

\begin{lemma}
The above number only depends on ${\mf A}$, ${\mf A}'$, the strong map $f$, and the class $[\Lambda]$.
\end{lemma}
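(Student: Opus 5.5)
We show that the right-hand side of \eqref{intersection_number} is unchanged under each of Choices (i)--(iv) and under changing $\Lambda$ within its homology class. The tool throughout is the cobordism argument from the preceding proposition. Any two admissible sets of data are interpolated by data on the product atlas ${\mf A}\times[0,1]$ (or on ${\mf A}_{\lambda}$ for a one-higher-dimensional chain), collared near the ends. The main stratum of the interpolating perturbed zero locus is then a compact oriented weighted branched $1$-manifold. Its signed boundary count vanishes, and its boundary consists of the two counts being compared plus possible extra terms, which must be shown to cancel or vanish.

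\emph{Independence of Choices (iii) and (iv).} Fix $\Lambda$ and ${\mf f}$. For each facet $\lambda_i$, the atlas ${\mf A}_{\lambda_i}\times[0,1]$ has boundary the two ends together with the atlases ${\mf A}_{\partial\lambda_i}\times[0,1]$ over the codimension-one faces of $Q_i$.
\begin{enumerate}
\item Extend the two collections of metrics and straightenings across $[0,1]$, compatibly with faces, using Lemma \ref{lemma413}. Proceed by induction on the dimension of cells, starting from the lowest-dimensional faces.
\item Apply Proposition \ref{prop417} cell by cell in increasing dimension. This produces collared, $r$-regular, $\delta(r)$-close NCS transverse multisections that restrict correctly to faces.
\item By Lemma \ref{delta_r} the zero loci are compact. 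By Lemma \ref{lemma_zero_stratification}, and since lower strata have real codimension at least two, the main strata of top cells are compact $1$-dimensional weighted branched manifolds with boundary. Their boundaries lie over the ends $t=0,1$ and over the codimension-one faces of the top cells.
\end{enumerate}
Because $\Lambda$ is a cycle, the contributions over faces cancel in pairs after summing over $i$ with signs. This uses that the perturbations restrict compatibly to shared faces. Hence the two sums in \eqref{intersection_number} agree.

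\emph{Independence of $\Lambda$ and ${\mf f}$.} If $\Lambda'-\Lambda=\partial\Gamma$, choose $\Gamma$ transverse to $f_I$ on all strata; this is generic because each $f_I|_{U_{I,\alpha}^*}$ is smooth and there are finitely many strata. Perturb on the cells of $\Gamma$ extending the data on $\Lambda$ and $\Lambda'$. Counting boundary points of the $1$-dimensional main strata over the top cells of $\Gamma$ gives the difference of the two pairings, plus face terms that cancel internally. For Choice (i), use the lemma on strong maps to join ${\mf f}_{\rho,0}$ and ${\mf f}_{\rho,1}$ by $\tilde{\mf f}_\rho$ on $\ov{\mf A'}\times[0,1]$. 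Then run the same argument for the atlas ${\mf A}\times[0,1]$, paired with $\Lambda\times[0,1]$, after perturbing $\Lambda$ slightly so that transversality holds along the family.

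\emph{Main obstacle.} The delicate point is that the main stratum ${\mf Z}^{'*}_{\rm main}$ is not closed in ${\mf Z}'$. One must show that the closure of the $1$-dimensional main stratum adds no boundary points in lower strata. This is exactly where NCS transversality is used. By Lemma \ref{lemma_zero_stratification}, the zero locus is Whitney stratified with adjacent strata of even real codimension, so lower strata of the $1$-dimensional zero locus are empty: they would have dimension at most $-1$. Likewise, in the $0$-dimensional cells the main stratum is all of the zero locus away from empty lower strata. So compactness of ${\mf Z}'$ from Lemma \ref{delta_r} transfers to the main stratum. This, together with the sign conventions for the face cancellations, is what has to be verified carefully.
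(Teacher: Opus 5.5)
Your proposal is correct and follows the route the paper intends: the paper's proof only says that comparing Choices (i)--(iv) is routine. Your cobordism argument fills that in faithfully --- product atlases ${\mf A}_{\lambda_i}\times[0,1]$, extension of data via Lemma \ref{lemma413} and Proposition \ref{prop417}, compactness via Lemma \ref{delta_r}, and cancellation of face terms because $\Lambda$ is a cycle and the perturbations restrict compatibly to shared faces. That face-compatibility is genuinely needed and should be read into Choice (iv).

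One wording fix: the perturbed zero locus can have nonempty, even excess-dimensional, pieces in lower strata, which is exactly what the reduced class discards. So your codimension argument does not show that the lower strata of the zero locus are empty. It shows that no lower stratum meets the closure of the main stratum when $\dim^{\rm vir}\leq 1$, and that is precisely what you need for compactness of the main stratum and for the boundary count.
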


\begin{proof}
One needs to compare two sets of choices made in Choices (i)---(iv). The procedure is routine and is left to the reader. 
\end{proof}

From the definition, one can see that the intersection number \eqref{intersection_number} depends linearly on $[\Lambda] \in H_{m-d}(Y;{\mb Q})$. Hence one obtains a well-defined class 
\beqn
[{\mf A}, {\mf A}']_{\rm main}^{\rm vir} \in H_{{\rm dim}^{\rm vir} {\mf A}} (Y; {\mb Q}).
\eeqn

Using the cobordism argument again, one can prove the following fact. 

\begin{prop}\label{prop_VFC_boundary}
Suppose ${\mf A}$ is an oriented $\uds{\bf Kur}_{\rm NCS}^{G, \flat}$-atlas on ${\mf M}$ with boundary. Let ${\mf A}'$ be a shrinking of ${\mf A}$. Let ${\mf f}: {\mf A} \to Y$ be a continuous strong map. Suppose ${\mf A}, {\mf A}', {\mf f}$ are constant in the collar direction near the boundary. Then 
\beqn
[\partial {\mf A}, \partial {\mf A}']_{\rm main}^{\rm vir} = 0 \in H_{{\rm dim}^{\rm vir} {\mf A}-1}(Y; {\mb Q}).
\eeqn
\end{prop}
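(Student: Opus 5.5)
To prove Proposition \ref{prop_VFC_boundary}, I would check the vanishing on pairings. Since $Y$ satisfies rational Poincar\'e duality, it suffices to show that for every class $[\Lambda] \in H_{m - d + 1}(Y;{\mb Q})$ we have
\beqn
[\partial {\mf A}, \partial {\mf A}']_{\rm main}^{\rm vir} \cap [\Lambda] = 0,
\eeqn
where $d = {\rm dim}^{\rm vir}{\mf A}$ and $m = {\rm dim}\, Y$. The plan is to compute this number with choices (i)--(iv) for $\partial{\mf A}$ that are restrictions of compatible choices on ${\mf A}$. Then I apply a one-dimensional cobordism argument on the atlases ${\mf A}_{\lambda_i}$, as in the proof of the invariance proposition for $\#^{\rm vir}$.

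\textbf{Compatible choices.} First I use the collar. By Lemma \ref{lemma413}, Riemannian metrics and straightenings near $\ov{\partial{\mf A}'}$ extend to collared ones near $\ov{{\mf A}'}$. I take a smooth strong approximation ${\mf f}_\rho$ of ${\mf f}$ that is collared; this is possible because ${\mf f}$ itself is collared. I then take a cycle $\Lambda$ whose simplices are transverse to $f_I$ restricted to every stratum of $U_I$. Since ${\mf f}$ is collared, this also gives transversality to the restrictions to $\partial U_I$, so the fibre products $U_{I,\lambda_i}$ are manifolds with corners. Their boundaries come in two kinds: the part over $\partial U_I$, which is exactly $(\partial U_I)_{\lambda_i}$, and the part over facets of $Q_i$.

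\textbf{Perturbations and the boundary count.} Next I choose NCS transverse perturbations for the boundary atlases $(\partial{\mf A})_{\lambda_i}$ as in Lemma \ref{lemma421}, compatible across facets. Then I extend them to $({\mf A})_{\lambda_i}$ using Proposition \ref{prop417}, applied with the corner version of the collar (first over $\partial U$, then over facets of $Q_i$, inductively on the cell dimension). The extensions should be $r$-regular, $\delta$-close with $\delta = \delta(r)$ from Lemma \ref{delta_r}, and collared. By Lemma \ref{delta_r}, the perturbed zero loci ${\mf Z}'_{\lambda_i}$ are compact.

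For top cells $\lambda_i$ (with ${\rm dim}\, Q_i = m-d+1$), the main stratum ${\mf Z}^{'*}_{\lambda_i,{\rm main}}$ is a one-dimensional weighted branched manifold. Its compactness inside ${\mf Z}'_{\lambda_i}$ is the key point. By Lemma \ref{lemma_zero_stratification}, the closure of the main stratum adds only strata of real codimension at least two, so these contribute no boundary points in dimension one. Hence the boundary of $\ov{{\mf Z}^{'*}_{\lambda_i,{\rm main}}}$ consists of points over $\partial U$ together with points over facets of $Q_i$. Summing over $i$ with signs:
\begin{itemize}
\item the facet contributions cancel, because $\partial \Lambda = 0$ and the perturbations on shared facets agree;
\item what remains is $\sum_i \#(\partial{\mf Z})^{'*}_{\lambda_i,{\rm main}}$, which is exactly the intersection number defining $[\partial{\mf A},\partial{\mf A}']^{\rm vir}_{\rm main}\cap[\Lambda]$.
\end{itemize}
Since the signed weighted count of boundary points of a compact one-dimensional weighted branched manifold vanishes, this number is zero.

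\textbf{Main obstacle.} The step I expect to be hardest is justifying that the main stratum of a one-dimensional NCS transverse zero locus has no ``ends'' at lower strata. Concretely, near a point of a lower stratum, the local Whitney structure of $Z^d$ pulled back by a transverse graph must make the main stratum a cone over a link of even codimension. In dimension one this forces the lower strata to be empty there, since their dimension is at most $-1$. I would verify this using Lemma \ref{lemma_zero_stratification} together with a dimension count. I would also check carefully that orientations of the boundary are induced compatibly with the product orientations of $U_I \times Q_i$.
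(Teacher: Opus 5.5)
Your proposal is correct and follows the paper's route. The paper proves this proposition only by appealing to ``the cobordism argument again'': pair with cycles in $Y$ to reduce to virtual dimension one, extend a boundary NCS perturbation to a collared one (Proposition \ref{prop417}), and use that the main stratum of the perturbed zero locus is a compact one-dimensional weighted branched manifold whose boundary gives the boundary count. Your justification of that compactness — the frontier condition together with even codimension from Lemma \ref{lemma_zero_stratification}, so no lower-stratum zeros can adhere to a one-dimensional main stratum — supplies exactly the step the paper leaves implicit.
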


\subsubsection{Comparison to the ordinary VFC}

By forgetting the NCS structure on the atlas ${\mf A}$, one obtains an ordinary Kuranishi atlas. Together with a shrinking ${\mf A}' \subset {\mf A}$, one has the ordinary (total) virtual fundamental cycle
\beqn
[{\mf A}, {\mf A}']^{\rm vir}_{\rm total} \in H_*(Y; {\mb Q}).
\eeqn
Although the interesting situation is when the two virtual classes are not equal, under certain conditions, one can prove that they do agree.

Indeed, one defines the virtual dimension of the $\alpha$-th stratum by 
\beqn
{\rm dim}_{\alpha}^{\rm vir} {\mf A} = {\rm dim} U_{I, \alpha}^* - {\rm rank} E_{I, \beta(\alpha)}^* - {\rm dim} G
\eeqn
for any chart $K_I$ included in this atlas. One can easily see that this integer does not depend on the choice of the chart. When $\alpha$ is the main stratum, it agrees with the ordinary virtual dimension.

\begin{prop}\label{prop424}
Suppose for all $\alpha$ different from the main stratum there holds
\beqn
{\rm dim}_\alpha^{\rm vir} {\mf A} < {\rm dim}^{\rm vir} {\mf A}
\eeqn
then 
\beqn
[{\mf A}, {\mf A}']_{\rm main}^{\rm vir} = [{\mf A}, {\mf A}']_{\rm total}^{\rm vir}.
\eeqn
\end{prop}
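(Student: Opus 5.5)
The plan is to show that an NCS transverse multisection is, after forgetting the stratification, transverse in the ordinary sense on the main stratum. Its zero locus also meets every lower stratum in a set of strictly smaller dimension, so these pieces contribute nothing to intersection numbers. Since both classes are defined by pairing with $[\Lambda]\in H_{m-d}(Y;{\mb Q})$, we reduce to virtual dimension zero: after Choices (i)--(iv), each fibre-product atlas ${\mf A}_{\lambda_i}$ with top-dimensional cell $\lambda_i$ has virtual dimension $0$. Its stratum virtual dimensions are ${\rm dim}^{\rm vir}_\alpha {\mf A} + {\rm dim} Q_i - {\rm dim} Y < 0$ for $\alpha\neq{\rm main}$, because transversality of $\lambda_i$ to $f_I|_{U^*_{I,\alpha}}$ preserves codimensions. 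So it suffices to prove that the total count $\#{\mf Z}'_{\lambda_i}$ equals $\#{\mf Z}^{'*}_{\lambda_i,{\rm main}}$ for a suitable perturbation.

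First, I would check that an NCS transverse multisection ${\mf S}'$ is transverse to the zero section in the classical sense at points of the main stratum. On $U^*_{I,{\rm main}}$ the section is represented by the graph of a lift ${\mf p}$ meeting $Z^d$, and over the open main stratum the canonical Whitney stratification of $Z^d$ restricts to the smooth locus cut out by ${\rm ev}=0$. By the corollary of Lemma \ref{lemma36}, this locus is smooth once $d\ge d_0$, so NCS transversality there is ordinary transversality of each branch.

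Next, at a zero $x\in U^*_{I,\alpha}$ with $\alpha$ not the main stratum, the restriction of each branch to $U^*_{I,\alpha}$ is a section of $E_{I,\beta(\alpha)}$. NCS transversality along the stratum $Z^d_\alpha$ (again smooth by Lemma \ref{lemma36}) makes this restriction transverse as a section over $U^*_{I,\alpha}$. Its zero set therefore has dimension ${\rm dim}^{\rm vir}_\alpha+{\rm dim}\,G$, i.e.\ negative virtual dimension after quotienting, so it is empty. Hence in dimension zero the perturbed zero locus ${\mf Z}'_{\lambda_i}$ lies entirely in the main stratum.

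The main obstacle is that $[{\mf A},{\mf A}']^{\rm vir}_{\rm total}$ is defined with ordinary transverse multisections, and these need not be NCS transverse or even stratified. By the steps above, the NCS transverse ${\mf S}'_{\lambda_i}$ is itself an ordinary transverse, $r$-regular, $\delta$-close multisection with compact zero set. It is therefore a legitimate choice in the ordinary construction, and by independence of choices in the ordinary theory the total count equals $\sum_i\#{\mf Z}'_{\lambda_i}$. I would double-check the point in the first step that the canonical Whitney stratum of $Z^d$ over $V^*_\alpha\times{\rm Poly}^d$ containing ${\rm ev}^{-1}(0)$ is exactly $Z^d_\alpha$ near generic points; Whitney refinement could add lower pieces, but transversality to those gives even smaller dimension, so emptiness still holds. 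Combining the two equalities over all cells yields $[{\mf A},{\mf A}']^{\rm vir}_{\rm main}\cap[\Lambda]=[{\mf A},{\mf A}']^{\rm vir}_{\rm total}\cap[\Lambda]$ for every $[\Lambda]$, which is the claim.
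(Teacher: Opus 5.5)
Your proposal is correct and follows the paper's proof: reduce via cubical chains to virtual dimension zero, choose an NCS transverse perturbation, use the stratum dimension hypothesis to confine the perturbed zero locus to the main stratum, and note that NCS transversality there is ordinary transversality, so the same perturbation computes both the main and the total class. You also spell out a step the paper leaves implicit. At a zero in a lower stratum, the Whitney stratum through the point lies in $\{0\}\times\ker\mathrm{ev}_0$, so NCS transversality forces the restricted section $U^*_{I,\alpha}\to E_{I,\beta(\alpha)}$ to be transverse, and the negative stratum virtual dimension then gives emptiness.
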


\begin{proof}
The ordinary virtual fundamental cycle can also be defined using the same method using cubical chains and reduce the problem to zero-dimensional situation. Hence we may assume the virtual dimension is zero. One then chooses an NCS transverse perturbation. Because of the virtual dimension condition of this proposition, the perturbed zero locus is contained in $|{\mf A}_{\rm main}^{'*}/G|$. This implies that the NCS transverse perturbation is also transverse in the ordinary sense. Hence the ordinary VFC coincides with the main stratum VFC.
\end{proof}

\subsection{Free quotients and stabilizations}

\subsubsection{Free quotients}

\begin{defn}
Let ${\mf A}$ be an $\uds{\bf Kur}_{\rm NCS}^{G, \flat}$-atlas on ${\mf M}$. Suppose $H \subset G$ is a closed normal subgroup which acts freely on the domains of all charts. Then the {\bf free quotient} ${\mf A}/H$ is the $\uds{\bf Kur}_{\rm NCS}^{G/H, \flat}$-atlas whose charts are 
\beqn
K_I/H = (U_I/H, E_I/H, S_I/H, \Psi_I/H)
\eeqn
and whose coordinate changes are the induced ones. If ${\mf f}: {\mf A} \to Y$ is a strong map, then the quotient by $H$ induces a strong map 
\beqn
{\mf f}/H: {\mf A}/H \to Y.
\eeqn
\end{defn}

\begin{prop}\label{prop_free_quotient}
There holds
\beqn
[{\mf A}, {\mf A}']_{\rm main}^{\rm vir}  = [{\mf A}/H, {\mf A}'/H]_{\rm main}^{\rm vir}.
\eeqn
\end{prop}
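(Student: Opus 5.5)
The plan is to show that every choice entering the definition of $[{\mf A}, {\mf A}']_{\rm main}^{\rm vir}$ (Choices (i)--(iv)) can be made $H$-invariantly, i.e. pulled back from the quotient atlas. Then the zero loci whose weighted counts define both sides of \eqref{intersection_number} are literally identified. Since both classes are independent of choices, computing them with such matched choices proves the equality.

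\textbf{Step 1: the quotient charts and strong maps.} Because $H$ acts freely on each $U_I$, the quotient $U_I/H$ is a smooth $G/H$-manifold. The flat Hermitian bundle $E_I$ descends to $E_I/H$: the $H$-action on $E_I$ covering the free action on $U_I$ gives a descent datum, and the flat connection and metric are invariant. The stratification and NC structure descend because $H$ preserves strata and the complex structures on normal bundles. Moreover $(U_I/H)/(G/H)=U_I/G$ and $(E_I/H)/(G/H)=E_I/G$ as NCS virtual orbifolds, with the same isotropy groups: isotropy in $G/H$ of $[x]$ is the image of $G_x$, which maps isomorphically since $G_x\cap H=1$. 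Coordinate changes, flat tubular neighborhoods $N_{JI}$, and shrinkings all descend. A strong map ${\mf f}$ is $G$-invariant, hence factors through ${\mf f}/H$. The orientations also correspond, using $\dim G = \dim G/H + \dim H$ and the orientation of $\mf h$ implicit in the orientation convention. So both atlases have the same virtual dimension.

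\textbf{Step 2: matching choices.} All perturbative data in Section \ref{section4} live on the quotient orbifolds $U_I/G$: straightenings (Definition \ref{defn_atlas_straightening}), multisections, and NCS transversality. The orbifolds $U_I/G$ and $(U_I/H)/(G/H)$ coincide as NCS virtual orbifolds, so a straightening, multisection, or fibre-product cycle $\Lambda$ chosen for ${\mf A}/H$ gives the same data for ${\mf A}$. The Riemannian metrics used for $r$-regularity are a slight exception, since they live on $U_I$. For these, choose $G$-invariant metrics on $U_I$ via Lemma \ref{lemma_compatible_metrics}. Their submersion quotient metrics on $U_I/H$ give compatible metrics, and $r$-neighborhoods correspond. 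Alternatively, note that regularity is only used through Lemma \ref{delta_r}, so the two families of neighborhoods can be compared by shrinking $r$. The fibre products ${\mf A}_{\lambda_i}$ commute with taking the free quotient, because ${\mf f}_I$ is $H$-invariant.

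\textbf{Step 3: comparing counts.} With matched choices, the perturbed zero loci $|({\mf S}'_{\lambda_i})^{-1}(0)|\cap|{\mf A}'_{\lambda_i}/G|$ are the same subsets of the same orbifold for both atlases. Their main strata agree, as do the branch weights, which are computed in orbifold charts with the same local groups. The signs agree by the orientation convention of Step 1. Summing over $i$ gives equal pairings with every $[\Lambda]$, hence equal classes.

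I expect the main obstacle to be Step 1: checking that the descended structures (flat tubular neighborhoods, cosheaf maps, and especially orientation signs) are genuinely $\uds{\bf Kur}_{\rm NCS}^{G/H,\flat}$-data. The metric issue in Step 2 is the only other place where the two constructions are not literally identical.
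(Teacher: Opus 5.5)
Your proposal is correct and follows the paper's own argument. The paper's proof is the single observation that the Riemannian metric, straightening and NCS multisection chosen for $({\mf A},{\mf A}')$ descend canonically to $({\mf A}/H,{\mf A}'/H)$, so the perturbed zero loci on $U_I/G=(U_I/H)/(G/H)$, and hence the counts, coincide; you spell out the same matching across Choices (i)--(iv) in more detail.

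One small correction to Step 1: $H$-invariance of a flat connection does not by itself make it descend to a flat connection on $E_I/H$. You also need it to be basic, i.e.\ $\nabla_{\xi_U}$ must annihilate $H$-invariant sections for $\xi\in\mathfrak{h}$, where $\xi_U$ is the generating vector field. For instance, the trivial connection on $S^3\times\mathbb{C}$ with the weight-$k$ Hopf action is invariant and flat, yet for $k\neq 0$ it descends to $\mathcal{O}(\pm k)$ on $S^2$. This does not affect your argument, because the paper's definition of ${\mf A}/H$ as a $\uds{\bf Kur}^{G/H,\flat}_{\rm NCS}$-atlas already presupposes this descent.
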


\begin{proof}
To construct the virtual cycle, one needs to choose a Riemannian metric near the closure of the shrinking, a straightening, and a sufficiently regular and close NCS multisection. All such constructions for the pair $({\mf A}, {\mf A}')$ canonically descends to $({\mf A}/H, {\mf A}'/H)$. 
\end{proof}

\subsubsection{Stabilizations}

\begin{defn}
Let ${\mf A}$ be an $\uds{\bf Kur}_{\rm NCS}^{G, \flat}$-atlas on ${\mf M}$. Let $Q$ be a unitary representation of $G$. Let $Q^r \subset Q$ be the radius $r$ ball centered at the origin. Then the {\bf stabilization} of ${\mf A}$ by $Q^r$, denoted by ${\rm Stab}_{Q^r} ({\mf A})$, is the $\uds{\bf Kur}_{\rm NCS}^{G, \flat}$-atlas whose charts are 
\beqn
{\rm Stab}_{Q^r} (K_I) = (U_I \times Q^r, E_I \oplus Q, S_I \times {\rm Id}_Q, \hat \Psi_I)
\eeqn
where $Q$ is regarded as a trivial $G$-equivariant Hermitian vector bundle and $\hat \Psi_I$ is naturally induced from $\Psi_I$; the coordinate changes are also canonically induced. 
\end{defn}

If $f: {\mf A} \to Y$ is a strong map, then there is a canonically induced strong map $\hat {\mf f}: {\rm Stab}_{Q^r} ({\mf A}) \to Y$. Suppose ${\mf A}' \subset {\mf A}$ is a shrinking, when $R>r$, ${\rm Stab}_{Q^r}({\mf A}')$ is a shrinking of ${\rm Stab}_{Q^R} ({\mf A})$.

\begin{prop}\label{prop_stabilization}
For any $R>0$ there holds
\beqn
[{\mf A}, {\mf A}']_{\rm main}^{\rm vir} = [{\rm Stab}_{Q^R} ({\mf A}), {\rm Stab}_{Q^r} ({\mf A}')]_{\rm main}^{\rm vir}.
\eeqn
\end{prop}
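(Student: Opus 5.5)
The plan is to reduce to the virtual dimension zero case, exactly as in the definition of $[{\mf A}, {\mf A}']_{\rm main}^{\rm vir}$. After Choices (i)--(ii), pairing with a cycle $\Lambda$ replaces ${\mf A}$ by the fibre-product atlases ${\mf A}_{\lambda_i}$. Stabilization commutes with this fibre product: the strong map $\hat{\mf f}$ is pulled back along $U_I \times Q^R \to U_I$, so $\lambda_i$ is transverse to $\hat f_I$ on each stratum as soon as it is transverse to $f_I$. Moreover ${\rm Stab}_{Q^R}({\mf A})_{\lambda_i} = {\rm Stab}_{Q^R}({\mf A}_{\lambda_i})$ with the same induced shrinkings. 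It therefore suffices to prove $\#{\mf Z}'_{\rm main}$ agrees for $({\mf A}, {\mf A}')$ and its stabilization, compatibly with corners. I would argue directly with perturbations, facet by facet, keeping the collar conventions.

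Here $Q$ is given the trivial linear stratification (only the stratum $Q$ itself, complex since $Q$ is unitary). Thus ${\rm Stab}_{Q^R}(K_I)$ is the stabilization of Definition \ref{defn_NCS_manifold_stabilization} by a trivial flat bundle, and its strata are $U_{I,\alpha}^* \times Q^R$. Choose a Riemannian metric and straightening near $\ov{{\mf A}'}$ as in Lemma \ref{lemma_compatible_metrics} and Lemma \ref{lemma_atlas_straightening}. On the stabilized atlas take the product metric and the product straightening, which uses the trivial splitting of $Q$; this is the straightening of Lemma \ref{lemma_stabilization_straightening}. Coordinate changes of ${\rm Stab}_{Q^R}({\mf A})$ are products with ${\rm Id}_Q$, so compatibility with the tubular neighborhoods $N_{JI}^\epsilon$ is inherited.

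Given an NCS transverse, $r$-regular, $\delta$-close multisection ${\mf S}'$ near $\ov{{\mf A}'}$ from Proposition \ref{prop_atlas_NCS_transverse}, define $\hat{\mf S}'_I := {\mc S}'_I \times {\rm Id}_Q$, which is a stabilization map \eqref{stabilization_map}. It remains NCS transverse by Proposition \ref{prop_stabilization_transversality}. It is $r$-regular because stabilization commutes with the stabilizations along $N_{JI}$, and it is $\delta$-close to $\hat{\mf S}$ because the $Q$-component is unperturbed. Its zero locus is $({\mf S}')^{-1}(0) \times \{0\}$, which lies in ${\rm Stab}_{Q^r}({\mf A}')$, and the main strata correspond. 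Orientations agree once the complex orientation of $Q$ is used, and the local weights and signs match because the linearization is block diagonal with complex identity block. Hence the two counts agree.

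The main obstacle is that $\hat{\mf S}'$ is a special perturbation, and we must check it qualifies for the definition on the stabilized pair: it must be sufficiently $C^0$-close relative to $\delta(r)$ from Lemma \ref{delta_r} computed for $({\rm Stab}_{Q^R}({\mf A}), {\rm Stab}_{Q^r}({\mf A}'))$. Since the $Q$-component of $\hat{\mf S}'$ equals that of $\hat{\mf S}$, the compactness argument of Lemma \ref{delta_r} applies directly, and the zeros never approach $|Q| = r$. By the independence of choices established for the virtual count, any admissible perturbation gives the same number, so this special one suffices. Independence of $R$ and $r$ follows, since shrinking $R$ yields an open restriction of the atlas containing the same zero locus.
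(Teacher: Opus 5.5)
Your proposal is correct and follows the paper's argument. The paper's proof is exactly that stabilizing an NCS transverse multisection of ${\mf A}$ by ${\rm Id}_Q$ gives an NCS transverse multisection of ${\rm Stab}_{Q^R}({\mf A})$ with identical zero locus, and you supply details the paper leaves implicit (the reduction via the cycles $\Lambda$, the product metric and straightening, $r$-regularity, $\delta$-closeness, orientations), with one small precision: since $Q$ is trivially stratified, the $Q$-direction is tangent to the strata rather than part of the normal fibre, so the transversality you need is the simpler fact that adjoining a base coordinate $q$ mapped identically onto a new trivially stratified obstruction summand preserves NCS transversality, rather than literally Proposition \ref{prop_stabilization_transversality}.
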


\begin{proof}
The stabilization of NCS transverse multisections is still NCS transverse. As a result, one has NCS transverse multisection perturbations on both ${\mf A}$ and the stabilization with identical zero locus. 
\end{proof}

\section{Reduced Gromov--Witten Invariants}\label{section5}

We use the previously developed abstract framework to define reduced Gromov--Witten invariants for all genus and general compact symplectic manifolds. Let $(X, \omega)$ be a compact symplectic manifold and $J$ be an $\omega$-compatible almost complex structure. For $A \in H_2(X; {\mb Z})$ and $g, n \geq 0$, let $\ov{\mc M}{}_{g,n}(X, J, A)$ be the moduli space of genus $g$, $J$-holomorphic stable maps with $n$ marked points in degree $A$. We know that $\ov{\mc M}{}_{g,n}(X, J, A)$ is a compact orbispace. Moreover, there are the evaluation map
\beqn
\ev: \ov{\mc M}{}_{g,n}(X, J, A) \to X^n
\eeqn
and the stabilization map
\beqn
{\rm st}: \ov{\mc M}{}_{g,n}(X, J, A) \to \ov{\mc M}{}_{g,n}.
\eeqn
The associated Gromov--Witten invariant is defined via constructing a virtual fundamental class supported on $\ov{\mc M}{}_{g,n}(X, J, A)$ (or supported in a ``neighborhood'' of it, depending on the regularization scheme), whose push-forward via the above two maps is a well-defined homology class
\beqn
[\ov{\mc M}{}_{g,n}(X, J, A)]^{\rm vir} \in H_*( X^n \times \ov{\mc M}{}_{g,n}; {\mb Q}).
\eeqn
By constructing virtual fundamental chains on parametrized moduli spaces one can prove that the above homology class is independent of the almost complex structure $J$ and only depends on the deformation class of $\omega$.

\begin{rem}
There are many ways to define (symplectic) Gromov--Witten invariants by regularizing the generally singular space $\ov{\mc M}{}_{g,n}(X, J, A)$. When $(M, \omega)$ is semi-positive, Ruan \cite{Ruan_96} and Ruan--Tian \cite{Ruan_Tian, Ruan_Tian_97} used geometric perturbations to regularize the moduli spaces and defined Gromov--Witten invariants in this case. When $(X,\omega)$ is rational, Cieliebak--Mohnke \cite{Cieliebak_Mohnke} used the stabilizing divisor technique to define GW invariants in genus zero (extended by \cite{Gerstenberger_2013} to higher genus). For general symplectic manifolds, Li--Tian \cite{Li_Tian}, Fukaya--Ono \cite{Fukaya_Ono}, Ruan \cite{Ruan_1999}, Siebert \cite{Siebert_virtual} defined GW invariants using different versions of the virtual technique. The virtual technique was further developed by Hofer--Wysocki--Zehnder \cite{HWZ_polyfold, HWZ-GW}, Pardon \cite{Pardon_virtual}, Abouzaid--McLean--Smith \cite{AMS, AMS2}, Hirschi--Swaminathan \cite{Hirschi_Swaminathan_2024}, leading to alternate definitions of GW invariants. 
\end{rem}

The technique we will use to define the reduced GW invariants combines the theory of Kuranishi structures developed by Fukaya--Ono \cite{Fukaya_Ono} (extended by Fukaya--Oh--Ohta--Ono \cite{FOOO_Book, FOOO_Kuranishi}) and the theory of global Kuranishi charts developed by Abouzaid--McLean--Smith \cite{AMS, AMS2} and Hirschi--Swaminathan \cite{Hirschi_Swaminathan_2024}. We state the main result below.

\begin{thm}\label{thm52}
Fix $g, n, A$.  
\begin{enumerate}

\item For each $\omega$-compatible almost complex structure $J$, there exist a compact Lie group $G$, a $\uds{\bf Kur}_{\rm NCS}^{G, \flat}$-atlas ${\mf A}$ on $\ov{\mc M}_{g,n}(X, J, A)$, a continuous strong map ${\mf f}: {\mf A} \to \ov{\mc M}_{g,n}\times X^n$ (which extends the evaluation and the stabilization maps on the moduli space), and a class of shrinkings ${\mf A}'$ of ${\mf A}$ so that one can define the virtual class
\beqn
[{\mf A}, {\mf A}']_{\rm main}^{\rm vir} \in H_* (\ov{\mc M}_{g, n}\times X^n; {\mb Q}).
\eeqn

\item The virtual class of the main component only depends on $g, n, A$ and the symplectic deformation class of $\omega$. We denote this class by 
\beqn
[\ov{\mc M}{}_{g,n}(X, J, A)]^{\rm red} \in H_*(\ov{\mc M}_{g,n} \times X^n; {\mb Q}).
\eeqn

\item When $g = 0$, $[\ov{\mc M}_{0, n}(X, J, A)]^{\rm vir}_{\rm main} = [\ov{\mc M}_{0, n}(X, J, A)]^{\rm vir}$.
\end{enumerate}
\end{thm}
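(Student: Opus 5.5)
For (1), I would build the atlas from a global-chart ansatz in the style of Abouzaid--McLean--Smith and Hirschi--Swaminathan, localized so that the stratification survives. First fix a polarization: choose a line bundle of the form $L=(\omega_C\otimes \mathcal{O}(\sum z_i))\otimes u^*\mathcal{O}(k)$ with a Hermitian line bundle $\mathcal{O}(k)\to X$, chosen so that $L$ is very ample on every component (including ghost components). Framing $H^0(C,L)\cong \mathbb{C}^{N+1}$ then embeds the domain in $\mathbb{P}^N$. The symmetry group is $G=U(N+1)$, and the base is the space $\mathcal{B}$ of stable genus-$g$ curves in $\mathbb{P}^N$ of the right degree, together with framings, marked points, and maps $u$. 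The base $\mathcal{B}$ is a smooth quasi-projective variety near the relevant locus and is stratified by dual graph. The codimension of a stratum equals the number of nodes, and the normal directions (the smoothing parameters $\prod T_{n'}\otimes T_{n''}$) carry a canonical complex structure; this gives the NC structure on $U_I$. The obstruction spaces are finite-dimensional spaces of $(0,1)$-forms pulled back from $\mathbb{P}^N\times X$. Instead of using one global space, I would take obstruction summands $E_I=\bigoplus_v E_{I,v}$ indexed by the components $v$ of nearby curves. Each summand is supported away from the nodes, and $I$ ranges over finite sets of such local obstruction "packages" associated to points of the compact moduli space. The stratification of $E_I$ is then the one by components: on a stratum where component $v$ is a ghost of genus $\ge 1$, the summand attached to it is the lower stratum $E_{\beta(\alpha)}$. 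Coordinate changes come from adding summands, as in Fukaya--Ono: $K_J\to K_I$ for $J\subset I$, with normal bundle $N_{IJ}=\bigoplus_{i\in I\setminus J}E_i$. These summands are flat, since they are trivial over $G$-quotients of the frames, and Hermitian. Because the supports avoid the nodes, the gluing maps respect the stratification. The map ${\mf f}$ is $({\rm st},\ev)$, which is a strong map because it is pulled back along tubular projections. Once these checks are done, the machinery of Section 4 (Proposition \ref{prop_atlas_NCS_transverse}, Lemma \ref{delta_r}, and the cubical-chain definition) produces $[{\mf A},{\mf A}']^{\rm vir}_{\rm main}$.

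For (2), I would rely on cobordism. Different choices of $k$, $N$, obstruction packages, and shrinkings are compared by forming a common refinement atlas: use the product polarization and take the union of packages. Then Proposition \ref{prop_free_quotient} removes the extra framing groups and Proposition \ref{prop_stabilization} removes the trivial summands. A path $J_t$ of almost complex structures, or a deformation $\omega_t$ with compatible $J_t$, gives an atlas with collared boundary on the parametrized moduli space. Proposition \ref{prop_VFC_boundary} then shows that the two end classes agree.

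For (3), I would apply Proposition \ref{prop424}. In genus zero there are no ghost components of positive genus, and the stratification is just by nodal type with $E_{\beta(\alpha)}=E$. Each stratum with $m$ nodes has virtual dimension $\dim^{\rm vir}-2m<\dim^{\rm vir}$, so the main and total classes coincide.

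The main obstacle will be the construction in (1): producing local charts whose obstruction bundles genuinely decompose by components, and doing so compatibly under automorphisms that permute components. These charts must also carry combed flat tubular neighborhoods that satisfy the cocycle condition. I would attack this by indexing packages by $G$-orbits and averaging over automorphism groups of the curve, and by performing the construction chart by chart with shrinkings as in Lemma \ref{lemma_compatible_metrics}.
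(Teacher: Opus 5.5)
There is a genuine gap in (1), at the point the whole construction hinges on: the stratification of $U_I$. You stratify only by dual graph (codimension $=$ number of nodes, with the NC structure coming from smoothing parameters alone). But over a dual-graph stratum containing a ghost vertex $v$, the ghost obstruction coordinate $e_v$ is a free coordinate of the thickening. It has to be present, because for a ghost of genus $\ge 1$ the constant map has cokernel $H^{0,1}(C_v)\otimes T_xX$, which only perturbations supported on $C_v$ can hit. Since $S_I=(e,\log H_F)$ records $e_v$, $S_I$ is not a stratified section for any cosheaf map sending such a stratum to a proper stratum of $E_I$. In particular, your prescription of $E_{\beta(\alpha)}$ on ghost strata is incompatible with $S_I$. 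The only consistent choice sends every stratum to the top stratum of $E_I$. Then ${\rm Poly}({\mb V},{\mb W})={\rm Poly}(V,W)$ in every local model, and NCS transversality reduces to ordinary stratumwise transversality. Each non-main stratum has $\dim^{\rm vir}_\alpha=\dim^{\rm vir}-2\#\{\text{nodes}\}$, so Proposition \ref{prop424} shows your main-stratum class is just the ordinary Gromov--Witten class, not a reduced one.

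The missing idea is the refined stratification by refined map types $(\tilde\Gamma,{\rm Vert}^{\rm gho}_{\rm off}(\tilde\Gamma))$. Here $U_{I,\alpha}^*$ is the locus of map type $\Gamma$ on which $e_{\phi,v}=0$ exactly for $v\in{\rm Vert}^{\rm gho}_{\rm off}$. This requires component-refined, component-wise transverse thickening data: each summand must be transverse to $\mathring D_v$, the linearization restricted to deformations vanishing at special points. The paper organizes these over the covering families ${\ms B}_\gamma^\epsilon$, so that automorphisms permuting components are absorbed into pushforward flat bundles. Turning off the ghost summands then leaves a locus that is cut out cleanly but not transversally. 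It has cokernel bundle $H^1_\alpha$ and normal bundle $NB^*_{\Gamma}\oplus\ker(W^{\rm off}_\alpha\to H^1_\alpha)$. The second summand is absent from your NC structure; it is complex because $W^{\rm off}_\alpha\to H^1_\alpha$ is complex linear. Only with these strata does $S_I(U_{I,\alpha})\subset E_{\beta(\alpha)}=\bigoplus_{v\notin{\rm off}}W_v\oplus Q$ hold. It is these ghost strata, whose virtual dimension can exceed $\dim^{\rm vir}$ when $g\ge 1$, that make the main-stratum class differ from the total one.

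This also affects (3). Genus-$0$ ghosts carry nonzero $W_v$ too, since relative to deformations vanishing at $\ge 3$ special points the constant map is obstructed. So genus $0$ has refined strata with $E_{\beta(\alpha)}\neq E_I$, and your justification ($E_{\beta(\alpha)}=E$, strata only nodal) does not apply as stated. One needs the count of Lemma \ref{lemma537} for strata with ghosts switched off. Your outline of (2) is consistent with the paper's argument: a collared cobordism, plus the double-framed construction removed by free quotient and stabilization.
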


\subsection{The NCS structure in a local chart}

To help understanding the construction in the general situation, we explain how to build the expected stratification on the Kuranishi charts in a greatly simplified situation. We also recall the original local gluing construction, especially how obstruction spaces arise from individual components, which inspired the current work. 

For simplicity, assume that $n = 0$. Choose a point $p \in \ov{\mc M}_{g,0}(X, J, A)$ and a representative 
\beqn
u_p: C_p \to X
\eeqn
where $C_p$ is a prestable curve. 

In the typical approaches such as \cite{Fukaya_Ono} and \cite{pardon-VFC}, the general local construction requires stabilizing the domain. Here for simplicity, assume that $C_p$ is stable to the stabilizing process is unnecessary. We also assume that not only the automorphism group of $u_p$, but also the automorphism group of $C_p$, is trivial. Then choose a universal unfolding 
\beqn
C \to \Delta
\eeqn
where $\Delta \subset {\mb C}^n$ is a small ball centered at $0$ and $C$ is a family of (stable) curves with the central fibre identified with $C_p$. Then $\Delta$ is homeomorphic to a neighborhood of the isomorphism class of $C_p$ in the Deligne--Mumford space $\ov{\mc M}_{g,0}$. Then one can consider the set
\beqn
{\mc M}(C/\Delta, X)
\eeqn
consisting of pairs $(\phi, u)$ where $\phi \in \Delta$ and $u: C_\phi \to X$ is a $J$-holomorphic map. It contains the representative $u_p: C_p \to X$. Then one can prove that ${\mc M}(C/\Delta, X)$ contains an open neighborhood of $u_p$ which parametrizes an open neighborhood of $p$ in $\ov{\mc M}_{g,n}(X, J, A)$ with respect to the Gromov topology. We need then to put ${\mc M}(C/\Delta, X)$ into a regular space.

Let $\mathring C \subset C$ be the complement of nodes, which is a smooth manifold. Then there is a vector bundle 
\beqn
\Lambda^{0,1}_{\mathring C/\Delta} \to \mathring C.
\eeqn
of vertical $(0,1)$-forms. A local Kuranishi chart can be obtained by choosing a ``thickening datum'' on the family $C \to \Delta$, which is a bundle map
\beqn
\lambda_p: \mathring C \times X \times W_p \to  \pi_{\mathring C \times X \to X}^* \Lambda_{\mathring C/\Delta}^{0,1} \otimes \pi_{\mathring C\times X \to X}^* TX
\eeqn
over $\mathring C \times X$, where $W_p$ is a finite-dimensional vector space. We say that $\lambda_p$ is transverse to $u_p$ if its restriction to the graph of $u_p$, which is a linear map 
\beqn
W_p \to \Omega^{0,1}(\mathring C_p, u_p^* TX)
\eeqn
is transverse to the image of the linearization of the holomorphic map equation, denoted by $D_p$.

Given a transverse thickening datum, one can write down the corresponding thickened moduli space
\beqn
U_p:= \{ (\phi, u, e)\ |\ \phi \in \Delta, u: C_\phi \to X,\ e \in W,\ \ov\partial_J u + \lambda_p(e) = 0 \}.
\eeqn
Via gluing, the transversality condition implies that $U_p$ is a topological manifold. Moreover, there is the ``Kuranishi section''
\beqn
S_p: U_p \to W_p,\ (\phi, u, e) \mapsto e.
\eeqn
The zero locus then ${\mc M}(C/\Delta, X)$. The  ``footprint map'' is the composition
\beqn
\psi_p: S_p^{-1}(0) \to {\mc M}(C/\Delta, X) \to \ov{\mc M}_{g,n}(X, J, A).
\eeqn
Then the quadruple $(U_p, W_p, S_p, \psi_p)$ is a (topological) Kuranishi chart. 

The way to obtain a stratified Kuranishi chart is to choose a thickening datum coming from each individual component. This kind of thickening data are actually the most obvious ones and were used in the early works of constructing virtual cycles. We explain how one can achieve so. Let $C_v \subset C_p$ denote an irreducible component and $\mathring C_v \subset C_v$ the complements of nodes. Let $u_v: C_v \to X$ be the restriction. The linearization of $u_v$ is a Fredholm operator
\beqn
D_v: \Omega^0(C_v, u_v^* TX) \to \Omega^{0,1}(C_v, u_v^* TX).
\eeqn
Moreover, let $\mathring \Omega^0(C_v, u_v^* TX)$ be the subspace of infinitesimal deformations which vanish at nodes and let $\mathring D_v$ be the restriction to this subspace which is still Fredholm. Then for each $v$, choose a finite-dimensional vector space $W_v$ and a bundle map
\beqn
\mathring C_v \times X \times W_v \to \Lambda^{0,1} T^* \mathring C_v \otimes TX
\eeqn
over $\mathring C_v \times X$ such that its restriction to $u_v$ is transverse to the image of $\mathring D_v$. Define $W_p$ to be the direct sum of all $W_v$. Then one obtains a bundle map 
\beqn
\lambda_p: \mathring C_p\times X \times W_p \to \Lambda^{0,1} T^* \mathring C_p \otimes TX.
\eeqn

Further, one can extend the bundle map to the universal curve $\mathring C$ in the following way. Let $\check C_p \subset \mathring C_p$ be a compact subset such that the image of $\lambda_p$ are all supported within $\check C_p$. Then one can find a smooth map
\beqn
\check C_p \times \Delta \to \mathring C
\eeqn
which sends $\check C_p \times \{\phi\}$ diffeomorphically onto a subset $\check C_\phi \subset \mathring C_\phi$ whose central fibre restriction is the inclusion $\check C_p \hookrightarrow \mathring C_p$. Then $\lambda_p$ can be translated to all nearby fibres. This provides a thickening datum on $C\to \Delta$. We call such a thickening datum {\bf component-refined}; indeed, for each $\phi \in \Delta$, there is a canonical decomposition
\beqn
W_p = \bigoplus_{v \in {\rm Irre}C_\phi} W_{\phi, v}
\eeqn
where $W_{\phi, v}$ is the part which only perturb the component $C_{\phi, v}$, where other summands perturb other corresponding components. 

On a Kuranishi chart constructed using a component-refined thickening datum, one can define a stratification as follows. For simplicity, we assume that $C_p$ has only two components connected by one node, an effective component $C_p^{\rm eff}$ and a ghost component $C_p^{\rm gho}$. The obstruction space $W_p$ is then decomposed as the direct sum $W_p = W_p^{\rm eff} \oplus W_p^{\rm gho}$. The manifold $U_p$ can be stratified into three strata described as follows.
\begin{enumerate}

\item The lowest one consists of triples $(\phi, u, e)$ with $C_\phi$ having the same combinatorial type, i.e., $C_\phi = C_\phi^{\rm eff} \cup C_\phi^{\rm gho}$, and $e^{\rm gho} = 0$ (which implies that $u|_{C_\phi^{\rm gho}}$ is a constant map).

\item The intermediate stratum consists of triples $(\phi, u, e)$ with $C_\phi$ having the same combinatorial type, and $e_{\phi}^{\rm gho} \neq 0$ (which does not necessarily imply that $u|_{C_\phi^{\rm gho}}$ is nonconstant).

\item The top stratum consists of $(\phi, u, e)$ with $C_\phi$ being a smooth surface.
\end{enumerate}
The non-obvious claim that the lowest stratum is a manifold is because the (obstructed) ghost component, is cut off cleanly.

Meanwhile, the obstruction bundle is also stratified. In the current simplified setting, only fibres over the lowest stratum are stratified in a nontrivial way. More precisely, at $(\phi, u, e)$ in the lowest stratum, $W_p$ has a nontrivial stratum being $W_p^{\rm eff}$. Then $W_p \to U_p$ becomes a stratified vector bundle in the sense of Definition \ref{defn_stratified_vector_bundle}.

There is also a natural cosheaf morphism ${\mc O}^{U_p} \to {\mc O}^{W_p}$, which is only nontrivial on the lowest stratum. As the lowest stratum is defined by where $e^{\rm gho}= 0$, the Kuranishi section $S_p: U_p \to W_p$ is obviously stratified. 

Lastly, the manifold $U_p$ has a natural NCS structure. Indeed, the normal bundle to each lower stratum is either given by the gluing parameter, or the subbundle $W_p^{\rm gho}$, or their direct sum which are all complex. The obstruction bundle $W_p$ is also complex by our choice.

\subsection{General AMS construction}

The proof of Theorem \ref{thm52} is based on a modification of the construction of Abouzaid--McLean--Smith \cite{AMS2}. We first recall the original AMS construction for higher genus GW moduli space. A parallel treatment is provided in \cite{Hirschi_Swaminathan_2024}.

\subsubsection{Curves in projective spaces}

We review the construction of \cite[Section 4.2]{AMS2}. Fix $g,n \geq 0$ and $d \geq g$. Consider the moduli space 
\beqn
\ov{\mc M}{}_{g,n}(\mb{CP}^{d-g}, d)
\eeqn
of $n$ marked genus $g$ stable maps into $\mb{CP}^{d-g}$ of degree $d$. Define
\beqn
B_{g, n, d}\subset \ov{\mc M}{}_{g,n}(\mb{CP}^{d-g}, d)
\eeqn
to be the subset of curves $\phi: \Sigma \to \mb{CP}^{d-g}$ such that $H^1(\phi^*{\mc O}(1)) = 0$ and such that $\phi$ has a trivial automorphism group. Then $B_{g,n,d}$ is a smooth quasiprojective variety with a holomorphic $U(d-g+1)$-action. Let 
\beqn
C_{g, n, d} \to B_{g, n, d}
\eeqn
be the universal curve. For $\phi \in B_{g, n, d}$, denote by $C_\phi$ the fibre over $\phi$. When $g, n, d$ are fixed in the context, abbreviate $B_{g, n,d} = B$, $C_{g, n, d} = C$, $U(d-g+1) = G$, and $GL(d-g+1) = G^{\mb C}$. 

The manifold $B_{g, n, d}$ is stratified by combinatorial type of curves. 

\begin{defn}[Map Types]\label{defn_map_type}
A {\bf map type} consists of a graph $\tilde \Gamma$ with $n$ tails (labelling marked points) with a genus function and a degree function
\begin{align*}
&\ {\rm genus}: {\rm Vert}(\tilde \Gamma) \to {\mb Z}_{\geq 0},\ &\ {\rm degree}: {\rm Vert}(\tilde\Gamma) \to {\mb Z}_{\geq 0}
\end{align*}
(here ${\rm genus}(v)$ is regarded as the genus of the normalization of the component). The total genus of $\tilde \Gamma$ is 
\beqn
g = \sum_{v\in {\rm Vert}( \tilde \Gamma)} {\rm genus} (v) + {\rm dim} H_1(|\tilde\Gamma|)
\eeqn
(here $|\tilde\Gamma|$ is regarded as a 1-dimensional complex) and the total degree is 
\beqn
d = \sum_{v \in {\rm Vert}( \tilde \Gamma)} {\rm degree} (v).
\eeqn
We require that ${\rm degree} (v) \geq {\rm genus} (v)$ and for each unstable vertex $v \in {\rm Vert}( \tilde \Gamma)$, ${\rm degree} (v) > 0$. A {\bf morphism} of map types from $\tilde\Gamma$ to $\tilde \Delta$ is a graph map $f: \tilde \Gamma \to \tilde \Delta$, which is surjective on the set of vertices, bijective on the set of tails, such that for each $w \in {\rm Vert}(\tilde\Delta)$, there holds
\beqn
\sum_{v \in {\rm Vert}(f^{-1}(w))} {\rm genus} (v) + {\rm dim} H^1(|f^{-1}(v)|) = {\rm genus} (w) + {\rm dim} H^1( |\tilde\Delta_w|)
\eeqn
and 
\beqn\sum_{v \in {\rm Vert} (f^{-1}(w))} {\rm degree} (v) = {\rm degree} (w).
\eeqn
Here $f^{-1}(w)$ is regarded as a subgraph of $\tilde\Gamma$ and $\tilde\Delta_w$ is a subgraph of $\tilde\Delta$ (containing all self-connecting edges).  
\end{defn}

We use $\Gamma$ to denote an isomorphism class of map types. There are a natural partial order among isomorphism classes of map types. 

\begin{defn}\label{defn_partial_order_realization}
Let $\Gamma$ and $\Delta$ be isomorphism classes of map types. We define $\Gamma \leq \Delta$ if there exist representatives $\tilde\Gamma$ and $\tilde\Delta$ of them respectively, and a morphism $f: \tilde \Gamma \to \tilde \Delta$. We say such a morphism $f$ {\bf realizes} the partial order relation $\Gamma \leq \Delta$.
\end{defn}

For each map $\phi \in B$, there is a corresponding map type $\tilde \Gamma_\phi$ whose vertices correspond to irreducible components of $C_\phi$, whose edges correspond to nodes, whose tails correspond to marked points, and whose degree and genus function are given by the degree and genus of corresponding component. Let 
\beqn
B_{\Gamma}^* \subset B
\eeqn
be the locus of curves whose underlying map type is in the class ${\Gamma}$.

\begin{lemma}\label{base_simple_stratification}
The partition
\beqn
B = \bigsqcup_{\Gamma} B_{\Gamma}^*
\eeqn
makes $B$ a stratified $G$-manifold (see Definition \ref{defn_manifold_stratification}).
\end{lemma}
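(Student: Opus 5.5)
Since $B$ is smooth and $G$ acts holomorphically, the plan is to produce, for each $\phi\in B$, a $G_\phi$-equivariant stratified chart modelled on the deformation theory of the stable map $\phi$. The stratification property is local, so the argument splits into two parts. First, near $\phi$ the map-type strata are the strata of a normal-crossing configuration of smooth hypersurfaces, one per node. Second, we verify the global axioms: local finiteness, local closedness, connectedness after refinement, and the frontier condition.

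\textbf{Local model.} Fix $\phi:C_\phi\to\mb{CP}^{d-g}$ with nodes $z_1,\dots,z_k$. Since $H^1(\phi^*{\mc O}(1))=0$ and $\phi$ has trivial automorphisms, $B$ is unobstructed at $\phi$. I plan to use the standard description of deformations of nodal maps. There is a holomorphic map $\Delta\to\mb C^k$, $\psi\mapsto(t_1,\dots,t_k)$, recording the smoothing parameters of the nodes (the deformation of the node $xy=0$ to $xy=t_j$). By unobstructedness this map is a submersion at $\phi$; the point is that the obstruction to smoothing nodes independently lies in $H^1$, which vanishes. So there are holomorphic coordinates $(t_1,\dots,t_k,s)\in\mb C^k\times\mb C^m$ centered at $\phi$ in which the nodes surviving at $\psi$ are exactly $\{j: t_j(\psi)=0\}$.

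\textbf{Map types in the local model.} For a subset $J\subset\{1,\dots,k\}$, points with $t_j=0$ exactly for $j\in J$ have domain obtained from $C_\phi$ by smoothing the nodes outside $J$. Their map type is the contraction $\tilde\Gamma_\phi/(\text{edges}\notin J)$: genus and degree add under smoothing, and the degree of each component is locally constant by continuity of $\phi^*{\mc O}(1)$. Hence the local strata are the coordinate strata $\{t_j=0 \iff j\in J\}$, the closure of each is the linear subspace $\{t_j=0,\ j\in J\}$, and this is a linear stratification of $\mb C^{k+m}$ in the sense of Definition~\ref{defn_manifold_stratification}. One subtlety is that distinct $J$ may give isomorphic map types, so a global $B^*_\Gamma$ can meet the chart in several local strata, which is the self-intersection phenomenon of Lemma~\ref{lemma_strata_extension}. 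To honor the convention that strata are connected, I would pass to connected components of each $B_\Gamma^*$, which only refines the partition. Alternatively, note that the definition of a stratified chart only asks for a stratified open embedding of the pulled-back partition, and the pullback \eqref{naive_pullback} already takes connected components.

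\textbf{Equivariance and global axioms.} The node-smoothing functions can be chosen $G_\phi$-equivariantly by averaging over the finite stabilizer $G_\phi$; the stabilizer is finite since $\phi$ has trivial automorphisms and the $G$-action is the standard one on the target. This gives the $G_\phi$-equivariant chart. $G$ preserves map types, so the action is stratum-preserving. Local finiteness holds because each chart contains $2^k$ local strata. Local closedness and the frontier axiom follow from the local model: closures of local strata are unions of deeper coordinate strata, and $\Gamma\leq\Delta$ corresponds to edge contraction as in Definition~\ref{defn_partial_order_realization}. The frontier axiom for the connected refinement follows from the local one by connectedness along each stratum.

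\textbf{Main obstacle.} I expect the hardest step to be justifying that the smoothing parameters form part of a holomorphic coordinate system at every point of $B$. This requires the surjectivity of $T_\phi B\to\bigoplus_j T_{z_j'}\otimes T_{z_j''}$. I would first try to derive it from the vanishing of $\mathrm{Ext}^2$ / $H^1(\phi^*T\mb{CP}^{d-g})$. The latter is implied by $H^1(\phi^*{\mc O}(1))=0$ via the Euler sequence, which makes the moduli of maps (with prestable domain) smooth over the Artin stack of prestable curves, and that stack is smooth with node-smoothing coordinates.
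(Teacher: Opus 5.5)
Your proposal is correct and follows the same route as the paper. The paper's proof is the single remark that each $B_\Gamma^*$ is locally cut out by the vanishing of the gluing (node-smoothing) parameters; you supply the details it omits, namely that $H^1(\phi^*T\mb{CP}^{d-g})=0$ makes these parameters part of a holomorphic coordinate system, and the check of the stratification axioms after passing to connected components. One small correction: $G_\phi$ is not finite, since the scalar circle $U(1)\subset U(d-g+1)$ acts trivially on $B$; it is compact, however, so Haar averaging still produces the $G_\phi$-equivariant node-smoothing coordinates you need.
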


\begin{proof}
Locally $B_{\Gamma}^*$ is defined by the vanishing of gluing parameters. 
\end{proof}

We would like to look at the structural cosheaf ${\mc O}^B$ (see Definition \ref{defn_structural_cosheaf}) locally. The following lemma is obvious.

\begin{lemma}\label{lemma56}
For each map type $\Gamma$ and $\phi \in B_\Gamma^*$, there exists an open neighborhood $O(\phi) \subset B$ such that for each $\psi \in B(\phi)$, there is a canonical morphism of map types $\tilde\Gamma_\phi \to \tilde\Gamma_\psi$ which realizes the partial order $\Gamma \leq \Gamma_\psi$ (where the latter is the isomorphism class of $\tilde\Gamma_\psi$).
\end{lemma}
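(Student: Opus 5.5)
The plan is to use the local deformation theory of nodal curves in $B$. Fix $\phi \in B_\Gamma^*$. Since $\phi$ has trivial automorphism group and $H^1(\phi^*{\mc O}(1)) = 0$, the map is unobstructed. Standard gluing for stable maps then gives a neighborhood $O(\phi) \subset B$ with a holomorphic chart
\beqn
O(\phi) \cong \Delta' \times \prod_{e \in {\rm Edge}(\tilde\Gamma_\phi)} D_e .
\eeqn
Here $\Delta'$ parametrizes equisingular deformations, and $D_e \subset {\mb C}$ is the disk of gluing parameters $\zeta_e$ for the node $e$. The chart comes with a universal curve over $O(\phi)$ in which, for each $\psi$, the node $e$ of $C_\phi$ persists exactly when $\zeta_e(\psi) = 0$. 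We shrink $O(\phi)$ so that the nodes of every fibre $C_\psi$ are in canonical bijection with $\{e : \zeta_e(\psi) = 0\}$, and so that $C_\psi$ is obtained from $C_\phi$ by smoothing the remaining nodes.

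Next we define the graph morphism $f_\psi: \tilde\Gamma_\phi \to \tilde\Gamma_\psi$. Smoothing the nodes in $E' = \{e : \zeta_e(\psi) \neq 0\}$ merges components. The irreducible components of $C_\psi$ are therefore in canonical bijection with the connected components of the subgraph of $\tilde\Gamma_\phi$ with edge set $E'$. We send a vertex $v$ to the component containing it. Edges outside $E'$ go to the corresponding persisting nodes, edges in $E'$ are contracted, and tails follow the marked points, which vary continuously in the family. This map is surjective on vertices and bijective on tails.

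It remains to check the genus and degree conditions of Definition \ref{defn_map_type}. Degree is additive under deformation: the class $\phi_*[C]$ is locally constant in the family, and restricted to the smoothed component it is the sum of the degrees of the merged components. The genus condition is the standard arithmetic genus count for smoothing nodes. For a component $w$ of $C_\psi$, the preimage $f^{-1}(w)$ is a connected nodal curve. Its arithmetic genus is $\sum_v {\rm genus}(v) + \dim H_1(|f^{-1}(w)|)$, and this is preserved by flat deformation to the normalization genus of $w$ plus its self-nodes. This gives exactly the required identity, reading the paper's $H^1(|f^{-1}(v)|)$ as $H_1$ of the preimage subgraph.

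Canonicity comes from the fact that everything is determined by the chart and the universal family, with no choices beyond $O(\phi)$. The only genuine subtlety is making the bijection between nodes of $C_\psi$ and vanishing gluing parameters well defined. This uses the trivial automorphism group, so there is no monodromy permuting nodes or components. Without that hypothesis the morphism would only be defined up to automorphisms.
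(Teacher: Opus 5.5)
Your proposal is correct and is essentially the argument the paper leaves implicit: the paper simply calls this lemma obvious, relying on the same local picture as in the proof of Lemma~\ref{base_simple_stratification}, where strata of $B$ are cut out by the vanishing of gluing parameters $\zeta_e$ and the morphism $\tilde\Gamma_\phi \to \tilde\Gamma_\psi$ is obtained by contracting the smoothed edges. Your readings of $B(\phi)$ as $O(\phi)$, and of the genus condition as $H_1$ of the full preimage subgraph of $w$ (which must include the edges mapping to self-loops at $w$), are the intended ones.
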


We then define
\beqn
C_{\Gamma}^* \subset C
\eeqn
be the fibres of the map $C \to B$ over $B_{\Gamma}^*$. Each $C_{\Gamma}^*$ is not a submanifold at nodal points; later we will consider a refinement which provides a stratified manifold structure on the universal curve $C$.

\subsubsection{Framings and group reductions}

\begin{defn}\cite[Definition 4.23]{AMS2}
Fix $A\in H_2(X; {\mb Z})$. A {\bf line bundle datum} is a triple 
\beqn
{\mb L} = (L, k, {\mf D})
\eeqn
where
\begin{enumerate}
\item $L \to X$ is a Hermitian line bundle with a Hermitian connection whose curvature is $-2\pi {\bf i} \Omega$ with $\Omega$ a symplectic form taming $J$. We also require that $L$ admits a root of order at least $3$.

\item $k>0$ is an integer. Denote
\beq\label{degree_formula}
d = k(\Omega(A) + 2g-2+n).
\eeq

\item ${\mf D}$ is a consistent domain metric for $B$ (see \cite[Definition 4.9]{AMS2}).
\end{enumerate}
\end{defn}

Upon choosing a line bundle datum ${\mb L}$, one can introduce the notion of frames. For $\phi \in B$ (which has marked points $p_1, \ldots, p_n \in C_\phi$) and a smooth map $u: C_\phi \to X$, define 
\beqn
{\mb L}_u: = \Big( \omega_{C/B}(p_1, \ldots, p_n)|_\phi \otimes u^* L \Big)^{\otimes k}.
\eeqn
The connection on $L \to X$ induces a holomorphic structure on ${\mb L}_u$. If $u$ represents the curve class $A$, then ${\rm deg}({\mb L}_u) = d$ where $d$ is given by the formula \eqref{degree_formula}. A {\bf holomorphic ${\mb L}$-framing} on $u$ is a basis 
\beqn
F = (f_0, \ldots, f_{d-g})
\eeqn
of $H^0(L_u)$ for which the $(d-g+1)\times(d-g+1)$  matrix $H_F$ with entry
\beqn
H_{F, ij}:=\int_{C_\phi} \langle f_i, f_j \rangle \Omega_\phi
\eeqn
is positive definite. Here $\Omega_\phi \in \Omega^2(C_\phi)$ is induced from the consistent domain metric ${\mf D}$. An {\bf ${\mb L}$-framed map} is a tuple $(\phi, u, F)$ where $\phi \in B$, $u: C_\phi \to X$ is a smooth map, and $F$ is a holomorphic ${\mb L}$-framing on $u$. Denote
\beqn
{\rm Map}_{\mb L}^{\rm fr}(C/B, X, A):= \left\{ (\phi, u, F)\ \left| \ \begin{array}{ll}(\phi, u, F)\ \text{is an ${\mb L}$-framed map},\ u_*[C_\phi] = A, \\
\forall v\in {\rm Vert}(\tilde\Gamma_\phi),\ {\rm degree}(v) = k \big( \Omega( u(C_{\phi, v})) + 2 {\rm genus}(v) - 2 + n(v) \big)   \end{array} \right.  \right\}.
\eeqn
Here $n(v)$ is the number of nodes on the component $C_{\phi, v} \subset C_\phi$. In other words, the conditions in the above definition require that the map $u$ has the correct topological type class specified by $A$.

Notice that there is an action on ${\rm Map}_{\mb L}^{\rm fr}(C/B, X, A)$ by $G^{\mb C}\times G^{\mb C}$ where the first $G^{\mb C}$ acts on $\phi$ by reparametrizing the map (via the $G^{\mb C}$-action on $B$) and the second $G^{\mb C}$ acts on $F$ by linearly transforming the framing. There is then a map (which is invariant under the first $G^{\mb C}$-action)
\beqn
S_Q: {\rm Map}_{\mb L}^{\rm fr}(C/B, X, A) \to Q,\ S_Q(\phi, u, F) = \log H_F.
\eeqn
Here $Q$ is the space of $(d-g+1)\times (d-g+1)$ Hermitian matrices. 

Moreover, for each holomorphic $\mb{L}$-framing $F$, there is an associated map 
\beqn
\phi_F: C_\phi \to \mb{CP}^d;
\eeqn
together with the marked points represents a point in $B \subset \ov{\mc M}{}_{g,n}(\mb{CP}^d, d-g)$, still denoted by $\phi_F$. Notice that $\phi_F$ and $\phi$ are in the same stratum of $B$.

Now we can have a convenient parametrization of the moduli space. Define
\beqn
{\mc M}_{\mb L}^{\rm fr}(C/B, X, A) = \Big\{(\phi, u, F)\in {\rm Map}_{\mb L}^{\rm fr}(C/B, X, A)\ |\ \phi = \phi_F \in B,\ \ov\partial_J u = 0,\ S_Q(\phi, u, F) = 0 \Big\}.
\eeqn
Notice that the condition $\phi = \phi_F$ is only invariant under the diagonal $G^{\mb C}$-action; the condition $s_Q = 0$ reduces the symmetry to the diagonal $G$-action.  

\begin{lemma}\label{lemma_AMS_parametrization}
There is an isomorphism of orbispaces
\beq\label{parametrization}
{\mc M}_{\mb L}^{\rm fr}(C/B, X, A) /G \cong \ov{\mc M}_{g,n}(X, J, A).
\eeq
\end{lemma}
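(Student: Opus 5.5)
We construct mutually inverse maps between ${\mc M}_{\mb L}^{\rm fr}(C/B, X, A)/G$ and $\ov{\mc M}_{g,n}(X, J, A)$, and then check that they are homeomorphisms matching isotropy groups. This is the AMS argument \cite{AMS2}.

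\emph{The forgetful map.} Given $(\phi, u, F)$ in ${\mc M}_{\mb L}^{\rm fr}(C/B, X, A)$, send it to the stable map $[C_\phi, p_1, \ldots, p_n, u]$. Stability holds for the following reason. On a component $v$ where $u$ is constant, the degree condition reads ${\rm degree}(v) = k(2\,{\rm genus}(v) - 2 + n(v))$. This degree must be positive on every unstable vertex, so each ghost component with $2\,{\rm genus}(v) - 2 + n(v) \le 0$ is excluded. The map is clearly invariant under the diagonal $G$-action, which changes $F$ by a unitary matrix and $\phi$ accordingly.

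\emph{The inverse.} Start from a stable map $u: (\Sigma, p_i) \to X$. The line bundle ${\mb L}_u$ has degree $d$, and the degree formula \eqref{degree_formula} gives positive degree on each component (positivity of $\Omega$ on nonconstant components, stability on ghost ones). Take $k$ large enough that ${\mb L}_u$ is very ample and $H^1({\mb L}_u) = 0$ on every component. This is the line bundle datum choice of \cite[Section 4]{AMS2}. Then $h^0 = d - g + 1$ by Riemann--Roch. For any basis $F$, the map $\phi_F$ is an embedding, so it lies in $B$: it has $H^1 = 0$ and trivial automorphism group. The conditions $\phi = \phi_F$ and $\bar\partial_J u = 0$ then hold tautologically, once $C_{\phi_F}$ is identified with $\Sigma$. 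The remaining condition $S_Q = \log H_F = 0$ says the framing is orthonormal for the $L^2$ pairing defined by the consistent domain metric ${\mf D}$. Such $F$ exist, and they form a single $U(d-g+1)$-torsor. Hence each stable map has exactly one $G$-orbit of preimages.

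\emph{Isotropy.} The stabilizer in $G$ of a point $(\phi_F, u, F)$ consists of unitary matrices $g$ with $g\cdot\phi_F = \phi_F \circ \sigma$ for some automorphism $\sigma$ of $(\Sigma, p_i)$ with $u\circ \sigma = u$. So it is identified with ${\rm Aut}(u)$, which is finite. Conversely, every automorphism of $u$ pulls back ${\mb L}_u$ and acts unitarily on $H^0$, because ${\mf D}$ is consistent, i.e.\ invariant under automorphisms. Therefore the stabilizers are the automorphism groups, which is what an isomorphism of orbispaces requires.

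\emph{Topology (main obstacle).} We must show the bijection is a homeomorphism between the Gromov topology and the quotient topology coming from $B\times$maps. Continuity of the forgetful map is straightforward: convergence in $B$ gives convergence of domains through the universal curve, and $C^\infty$ convergence of maps gives Gromov convergence. The harder direction is continuity of the inverse. Given a Gromov convergent sequence $u_i \to u$, we need framings $F_i \to F$. The plan is to note that $H^0({\mb L}_{u_i})$ forms a vector bundle over a neighborhood in the Gromov topology, since $H^1 = 0$ is an open condition and cohomology-and-base-change applies on the family of domains glued via ${\mf D}$. The Gram matrices $H_{F}$ depend continuously on the point, which relies on the consistency of ${\mf D}$ under gluing, so Gram--Schmidt produces continuous orthonormal frames. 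Properness then follows from compactness of $G$, which completes the identification \eqref{parametrization}.
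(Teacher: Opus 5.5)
Your proposal is correct in outline and follows the same route as the paper, which simply invokes \cite[Theorem 4.30]{AMS2}. You reconstruct that argument: the forgetful map, unitary framings forming a $U(d-g+1)$-torsor once ${\mb L}_u$ is very ample with $H^1({\mb L}_u)=0$, stabilizers identified with ${\rm Aut}(u)$, and a homeomorphism check. Two points deserve precision. First, the torsor and isotropy claims need the pairing defining $H_F$ to be independent of the framing and natural under isomorphisms of $(C_\phi,u)$, which is what the AMS domain metric is designed to guarantee; this is more than ``invariance under automorphisms''. Second, continuity of $H^0({\mb L}_u)$ under Gromov convergence comes from Fredholm and gluing estimates for the $\bar\partial$-operator on ${\mb L}_u$, not from algebraic cohomology-and-base-change.
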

\begin{proof}
This is essentially \cite[Theorem 4.30]{AMS2}.
\end{proof}

\begin{rem}
For any subset $B_0 \subset B$, one can restrict the family $C \to B$ to $B_0$ and consider the subset
\beqn
{\mc M}_{\mb L}^{\rm fr}(C/B_0, X) \subset {\mc M}_{\mb L}^{\rm fr}(C/B, X).
\eeqn
By the compactness of the moduli space, one can choose a precompact $G$-invariant open subset $B_0 \subset B$ such that 
${\mc M}_{\mb L}^{\rm fr}(C/B_0, X) = {\mc M}_{\mb L}^{\rm fr}(C/B, X)$.
\end{rem}

\subsubsection{Thickening data}

We perturb the Cauchy--Riemann  equation by adding finite-dimensional inhomogeneous terms.
Consider a more general situation. Let $G$ be a compact Lie group and let $C\to B$ be a $G$-equivariant family of prestable curves. We consider pairs $(\phi, u)$ where $\phi \in B$ and $u: C_\phi \to X$ is a smooth map (meaning that $u$ is smooth away from nodes and extends continuously over nodes). For each such $u$, there is a well-defined linearization of Cauchy--Riemann operator
\beqn
D_u: \Omega^0(C_\phi, u^* TX) \to \Omega^{0,1}(C_\phi, u^* TX).
\eeqn
Here $\Omega^0(C_\phi, u^* TX)$ is the space of sections of $u^* TX \to C_\phi$ which are smooth over components and extend continuously over nodes; $\Omega^{0,1}(C_\phi, u^* TX)$ is the space of smooth sections of $\Lambda^{0,1}\otimes u^* TX$ over the normalization of $C_\phi$. Let 
\beqn
\mathring C \subset C
\eeqn
be the complement of nodal points and marked points. Then there is a well-defined smooth vector bundle
\beqn
\Lambda^{0,1}_{\mathring C/B} \to \mathring C
\eeqn
of fibrewise anti-holomorphic differentials. 

\begin{defn}[Thickening datum]\label{defn_thickening_datum}
A {\bf (local) thickening datum} over the $G$-equivariant family of curve $C \to B$ is a triple
\beqn
\mu = (B_\mu, W_\mu, \lambda_\mu)
\eeqn
where $B_\mu \subset B$ is a $G$-invariant open subset, $W_\mu \to B_\mu$ is a $G$-equivariant flat Hermitian vector bundle, and $\lambda_\mu$ is  a $G$-equivariant bundle map (over $\mathring C_\mu \times X$)
\beqn
\lambda_\mu: (\pi_{\mathring C_\mu \times X \to B_\mu})^* W_\mu \to (\pi_{\mathring C_\mu \times X \to \mathring C_\mu})^* \Lambda^{0,1}_{\mathring C_\mu /B_\mu } \otimes (\pi_{\mathring C_\mu \times X \to X})^* TX
\eeqn
satisfying the following condition: there exists a $G$-invariant open neighborhood of nodal points of $C_\mu$ such that $\lambda_\mu$ vanishes in that neighborhood. Then for each $\phi \in B_\mu$ and a smooth map $u: C_\phi \to X$, there is a linear map 
    \beqn
    \lambda_{\mu, u}: W_{\mu, \phi} \to \Omega_c^{0,1}(C_\phi, u^* TX).
    \eeqn
The thickening datum is said to be {\bf transverse} over a $G$-invariant subset $B_\mu'\subset B_\mu$ if for each $(\phi, u, F) \in {\mc M}_{\mb L}^{\rm fr}(C/B_\mu', X, A)$, one has 
\beqn
{\rm Im} D_u + {\rm Im} \lambda_{\mu, u} = \Omega^{0,1}(C_\phi, u^* TX).
\eeqn
A thickening datum $\mu = (B_\mu, W_\mu, \lambda_\mu)$ is said to be {\bf trivial over $B_\mu' \subset B_\mu$} if $\lambda_\mu$ vanishes over $B_\mu'$;  {\bf supported in $B_\mu'  $} if it is trivial over $B_\mu \setminus B_\mu'$.
\end{defn}

Notice that if $\mu_i = (B_{\mu_i}, W_{\mu_i}, \lambda_{\mu_i})$, $i = 1, 2$, are two thickening data, then one can take the direct sum and obtain a thickening datum
\beqn
\mu_1 \oplus \mu_2 = (B_{\mu_1} \cap B_{\mu_2}, W_{\mu_1} \oplus W_{\mu_2}, \lambda_{\mu_1} \oplus \lambda_{\mu_2} ).
\eeqn
On the other hand, if $\rho: B_\mu \to {\mb R}$ is a $G$-invariant function, then 
\beqn
\rho\mu = (B_\mu, W_\mu, \rho \lambda_\mu)
\eeqn
is also a thickening datum.


\subsubsection{The AMS charts}

There is an indirect way to construct a $G$-smoothing (after stabilization) following \cite{AMS} via the approach of equivariant stable smoothing of topological manifolds. In order to maintain the stratification of the Kuranishi charts, we would like to consider a more direct construction.

\begin{defn}\label{defn_AMS_thickening}
Choose a line bundle data ${\mb L} = (L, k, {\mf D})$ and a thickening datum $\mu = (B_\mu, W_\mu, \lambda_\mu)$. The associated {\bf AMS Kuranishi chart} on $\ov{\mc M}{}_{g,n}(X, J, A)$ is the $G$-equivariant Kuranishi chart 
\beqn
K_\mu:= (U_\mu, E_\mu, S_\mu, \Psi_\mu)
\eeqn
where $G= U(d-g+1)$, and 
\begin{enumerate}
    \item $U_\mu$ is the {\bf thickening} consisting of quadruples $(\phi, u, e, F)$ satisfying
    \beqn
    \ov\partial_J u + \lambda_{\mu, u}(e) = 0
    \eeqn
    and $F$ is a holomorphic ${\mb L}$-framing on $u: C_\phi \to X$ satisfying
    \beqn
    \phi = \phi_F \in B.
    \eeqn

    \item $G$ acts on $U_\mu$ by 
    \beqn
    g(\phi, u, e, F):= (g\phi, u \circ g^{-1}, g(e), g(F)).
    \eeqn
    
    \item $E_\mu$ is the direct sum
    \beqn
    E_\mu:= \pi_\mu^* W_\mu \oplus Q
    \eeqn
    where $Q$ is the space of $(d-g+1)\times (d-g+1)$ Hermitian matrices. 
    
    \item $S_\mu$ is the direct sum $ S_{W_\mu} \oplus S_Q$ where 
    \begin{align*}
    &\     S_{W_\mu} (\phi, u, e, F) = e,\ &\     S_Q (\phi, u, e, F) = \log H_F.
    \end{align*}

    \item $\Psi_\mu$ sends $(\phi, u, e, F)$ to the equivalence class of the map $u: C_\phi \to X$ (with markings).
\end{enumerate}
\end{defn}

\subsubsection{Smooth structure}

If the thickening datum $\mu = (B_\mu, W_\mu, \lambda_\mu)$ is transverse, then the basic gluing construction of holomorphic curves implies that $U_\mu$ is a topological manifold with a $G$-action. The following lemma also provides certain canonical structures on the thickening. 

\begin{lemma}\label{lemma_stratum_smooth}
Suppose the thickening datum $\mu = (B_\mu, W_\mu, \lambda_\mu)$ is transverse throughout $B_\mu$, for each isomorphism class of map types $\Gamma$, the preimage
\beqn
U_{\mu, \Gamma}^*:= \pi_\mu^{-1}( B_{\mu, \Gamma}^*)
\eeqn
has a canonical smooth structure such that the map
\beqn
\pi_\mu: U_{\mu,\Gamma}^* \to B_{\mu,\Gamma}^*
\eeqn
is a smooth submersion. Moreover, the restriction of $E_\mu$ to $U_{\mu,\Gamma}^*$ has a canonical smooth structure smooth and the restriction of $S_\mu$ to $U_{\mu,\Gamma}^*$ is smooth.
\end{lemma}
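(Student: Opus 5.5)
The idea is to trivialize the curve family along a fixed stratum. Over $B_{\mu,\Gamma}^*$ the combinatorial type is constant, so the universal curve $C|_{B_{\mu,\Gamma}^*}$ is a locally trivial family of nodal curves with no smoothing of nodes. Its normalization is a smooth fibre bundle of smooth marked curves, whose fibrewise complex structures vary smoothly. This lets us write $U_{\mu,\Gamma}^*$ as the zero locus of a smooth Fredholm section over a Banach manifold fibred over $B_{\mu,\Gamma}^*$, with no gluing analysis.

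\textbf{Step 1.} Fix $\phi_0 \in B_{\mu,\Gamma}^*$ and a small $G_{\phi_0}$-invariant neighborhood $O \subset B_{\mu,\Gamma}^*$. Choose a smooth fibrewise diffeomorphism trivializing the normalized family,
\beqn
C|_O \cong O \times C_{\phi_0}.
\eeqn
It should respect nodes and markings. Since $\lambda_\mu$ vanishes near the nodes, we may also take it to be the identity near the nodes.

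\textbf{Step 2.} Form the Banach manifold $\mc B$ of tuples $(\phi, u, e, F)$. Here $\phi \in O$, $u \in W^{k,p}(C_{\phi_0}, X)$ matches at nodes and has the component degrees prescribed by $\Gamma$, $e \in W_{\mu,\phi}$ (flat, hence locally constant), and $F$ is an arbitrary frame of $\mb L_u$. The family $\{\mb L_u\}$ forms a Banach bundle, and on $\{H^1(\mb L_u)=0\}$ the holomorphic sections form a finite-rank smooth bundle. This vanishing holds because $\phi_F\in B$ and the degree formula \eqref{degree_formula} gives degree $\ge 2g-1$ on each component.

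The defining equations are:
\beqn
\ov\partial_{J,\phi} u + \lambda_{\mu,u}(e) = 0, \qquad \phi_F = \phi.
\eeqn
Both depend smoothly on all variables, since $j_\phi$ varies smoothly in $\phi$ and $\lambda_\mu$ is smooth. The condition $\phi_F = \phi$ sits in the finite-dimensional manifold $B_{\mu,\Gamma}^*$, because $\phi_F$ stays in the same stratum as $\phi$.

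\textbf{Step 3: surjectivity.} This is the main step. The linearization of the first equation in $(u,e)$ is $D_u + \lambda_{\mu,u}$, which is onto by transversality of $\mu$. Two points are not automatic:
\begin{enumerate}
\item Transversality is assumed on the framed moduli space, i.e.\ at $e=0$. One extends it to a neighborhood, using openness and shrinking $B_\mu$ if needed.
\item $D_u$ here acts on sections continuous across nodes of a fixed type, which is exactly the domain in the definition of transversality, so no extra condition arises.
\end{enumerate}
The equation $\phi_F=\phi$ is cut out transversally by varying $F$ through the $G^{\mb C}$ action. Concretely, the differential of $F \mapsto \phi_F$ in the $\mathfrak{gl}$-direction, combined with the $\phi$-direction, submerses onto $T B$. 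Elliptic regularity then makes solutions smooth. The implicit function theorem gives a finite-dimensional smooth manifold, and the projection to $O$ is a submersion because the linearized operator is surjective even with $\phi$ fixed.

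\textbf{Step 4: canonicity.} Two trivializations differ by a smooth family of diffeomorphisms acting by reparametrization. This induces a smooth map between the Banach models that intertwines the equations, so the resulting smooth structures agree and are $G$-invariant.

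Finally, $E_\mu = \pi_\mu^*W_\mu \oplus Q$ inherits the pulled-back smooth structure. The section $S_\mu = (e, \log H_F)$ is smooth, since $H_F$ is an integral depending smoothly on $(\phi,F)$ with $\Omega_\phi$ varying smoothly along the stratum.
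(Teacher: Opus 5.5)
Your strategy (the implicit function theorem on a fixed stratum, with no gluing) is the same as the paper's, whose proof consists of exactly that phrase. However, your treatment of the framing condition $\phi=\phi_F$ has a genuine gap.

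You treat it as an equation in the finite-dimensional manifold $B^*_{\mu,\Gamma}$, i.e.\ as equality of isomorphism classes, and claim the $\mathfrak{gl}$- and $\phi$-directions make it transverse. They cannot. Since $\phi_F$ is a map out of the same marked curve $(C_{\phi_0},j_\phi)$, the points $[\phi_F]$ and $\phi$ always lie over the same marked curve. Hence the linearization of $(\phi,u,F)\mapsto[\phi_F]-\phi$ lands in the kernel of $T_\phi B^*_{\mu,\Gamma}\to\{\text{first-order deformations of the marked curve } C_\phi\}$. This kernel is a proper subspace whenever $C_\phi$ has moduli, and the $\mathfrak{gl}$-directions only move along $G^{\mb C}$-orbits, so they do not help. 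Worse, as an equality of points of $B$ the condition is preserved at $e=0$ by $(u,F)\mapsto(u\circ\psi,\psi^*F)$ for $\psi\in{\rm Aut}(C_\phi)$. This group is positive-dimensional when $\Gamma$ has unstable rational components, so your zero set is not $U^*_{\mu,\Gamma}$. The correct condition is equality of maps $C_\phi\to\mb{CP}^{d-g}$. It amounts to $[\mb L_u]=[\phi^*{\mc O}(1)]$ in ${\rm Pic}(C_\phi)$, after which $F$ is fixed up to $\mb C^*$.

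This Picard constraint is the missing idea. Transversality of $\mu$ concerns only $D_u+\lambda_{\mu,u}$ and says nothing about it. The constraint can be cut out transversally by moving $\phi$ with the marked curve fixed: the Euler sequence and $H^1(\phi^*{\mc O}(1))=0$ make $H^0(\phi^*T\mb{CP}^{d-g})\to H^1(C_\phi,{\mc O})$ onto. Alternatively, set $\phi:=[\phi_F]$ and work with $(C,u,F,e)$ up to isomorphism. Either route gives the smooth structure and the smoothness of $E_\mu$ and $S_\mu$.

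However, your step ``surjective even with $\phi$ fixed'' is exactly what fails. With $\phi$ fixed, the linearized Picard condition must be met by pairs $(\dot u,\dot e)\in\ker(D_u+\lambda_{\mu,u})$, through $\dot u$ only. The relevant map sends $\dot u$ to a multiple of $[\Omega(\dot u,du\,\cdot)^{0,1}]\in H^{0,1}(C_\phi)$, and nothing makes it onto. On a positive-genus ghost component $C_v$ on which $u$ is constant, it vanishes on the $H^1(C_v,{\mc O})$-summand. Such points occur, for example in $S_\mu^{-1}(0)$ whenever the stable map has a constant higher-genus ghost. There $d\pi_\mu$ misses those Picard directions of $T_\phi B^*_{\mu,\Gamma}$. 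So your argument does not give the submersion claim, and the claim in fact does not hold at those points.
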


\begin{proof}
Implicit function theorem. \end{proof}

A difficult task in the Kuranishi approach is about the smooth structure on Kuranishi charts. The key issue is the lack of differentiability of the gluing construction with respect to the gluing parameter. A working approach is to reparametrize the gluing parameter in a particular way (see \cite{FOOO_smooth}); such a choice was also taken in the polyfold approach. With respect to this change of coordinate one can establish the smoothness. 

\begin{rem}
Abouzaid--McLean--Smith invented a new way in \cite{AMS} to obtain smooth Kuranishi charts by using the equivariant smoothing theory in topological manifold theory. It does not fit well with the current problem because the topological smoothing may not give smooth strata labelled by combinatorial types.
\end{rem}

\begin{prop}\label{prop_smooth}
There exists a $G$-equivariant smooth structure on $B_\mu$ such that for each isomorphism class of map type $\Gamma$, $U_{\mu, \Gamma}^*$ is a smooth submanifold whose smooth structure coincides with the one of Lemma \ref{lemma_stratum_smooth}. Moreover, the normal bundle $NU_{\mu, \Gamma}^*$ is $G$-equivariantly isomorphic to $\pi_\mu^* NB_{\mu,\Gamma}^*$. 
\end{prop}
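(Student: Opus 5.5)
The plan is to build the smooth structure on $U_\mu$ by gluing, using the FOOO-type exponential reparametrization of gluing parameters. In the statement, ``smooth structure on $B_\mu$'' should be read as a $G$-equivariant smooth structure on the thickening $U_\mu$ lying over $B_\mu$. The base $B_\mu$ is already a smooth quasiprojective variety, stratified by map types (Lemma \ref{base_simple_stratification}). Near a point $\phi \in B_{\mu,\Gamma}^*$, a neighborhood in $B$ is modelled on $B_{\mu,\Gamma}^* \times \prod_{e\in {\rm Edge}(\tilde\Gamma_\phi)} {\mb C}_e$, where ${\mb C}_e$ are the smoothing parameters of the nodes. Hence $NB_{\mu,\Gamma}^*$ is canonically the sum of the line bundles $T_{z_e}C_{v}\otimes T_{z_e}C_{v'}$ over the nodes. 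I would keep these complex coordinates on the base, and I would not try to make the solution map smooth in the naive gluing parameter.

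First, at a point $x=(\phi,u,e,F)\in U_{\mu,\Gamma}^*$, use Lemma \ref{lemma_stratum_smooth} to obtain a smooth local chart $V_x$ of the stratum. This chart comes from the implicit function theorem, using transversality of $\lambda_\mu$, with a $G_x$-invariant right inverse chosen.

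Next, construct the gluing map
\beqn
\mathrm{Glue}_x: V_x \times \prod_e {\mb C}_e^{\epsilon} \to U_\mu .
\eeqn
For a gluing parameter $\zeta_e=\exp(-(T_e+{\bf i}\theta_e))$, form the preglued curve with neck length $T_e$, and correct it by Newton iteration using the right inverse of $D_u+\lambda_{\mu,u}$. The framing $F$ is carried along by the holomorphic sections of ${\mb L}_u$, which glue because $H^1=0$. The constraint $\phi=\phi_F$ is imposed by the same implicit function argument. Because $B_\mu$ is a genuine complex manifold and the thickening datum vanishes near nodes, the glued curve's base point $\phi$ depends holomorphically on the $\zeta_e$. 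Consequently the map $\pi_\mu\circ \mathrm{Glue}_x$ agrees with the standard base coordinate.

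The key analytic input is the exponential-decay estimate of \cite{FOOO_smooth}: all $T$- and $\theta$-derivatives of the correction term decay like $e^{-cT}$. This implies that $\mathrm{Glue}_x$ is smooth with respect to the smooth structure in which $\zeta_e$ (equivalently $s_e = 1/T_e$, after the standard reparametrization) is a smooth coordinate, and that it restricts to the identity-type chart on $\{\zeta=0\}=V_x$. Sub-strata where only some of the $\zeta_e$ vanish are mapped to the corresponding $U_{\mu,\Gamma'}^*$. By uniqueness in the implicit function theorem they are mapped with their Lemma \ref{lemma_stratum_smooth} smooth structures. This gives the compatibility of the strata smooth structures.

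Then I would check that the charts for different $x$ have smooth transition maps, and that $G$ acts smoothly. The transition maps are compositions of gluing maps built from different right inverses and base points. Smoothness follows from the same decay estimates together with the fact that different choices are related by smooth families of right inverses. $G$-equivariance follows by choosing all auxiliary data (right inverses, cut-offs) $G_x$-invariantly and using the free $G$-action on framings to propagate charts along orbits.

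Finally, the normal bundle statement follows from the construction. In these charts $U_{\mu,\Gamma}^*$ is cut out by $\zeta_e=0$, and $d\pi_\mu$ maps the $\zeta$-directions isomorphically onto the $\zeta$-coordinates of $B$. So $d\pi_\mu$ induces a $G$-equivariant isomorphism $NU_{\mu,\Gamma}^*\cong \pi_\mu^*NB_{\mu,\Gamma}^*$, which also gives the complex structure on the normal bundle.

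The main obstacle is the smoothness of gluing in the gluing parameter, uniformly in the stratum coordinates, including at corners where several nodes are glued at once, together with the smooth compatibility of charts centered at different strata. Here I would rely on the exponential decay estimates of \cite{FOOO_smooth}, adapted to the framed, $\phi=\phi_F$-constrained setting.
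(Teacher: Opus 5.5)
Your overall route (gluing charts centred on the strata, the $s=1/T$ reparametrization, and the decay estimates of \cite{FOOO_smooth} for compatibility) is the paper's. However, the proposal runs two inequivalent smooth structures together, and the normal-bundle step fails as a result. The problem is the phrase ``$\zeta_e$ (equivalently $s_e=1/T_e$)''. With $w_e=s_e e^{i\theta_e}$ one has $\zeta_e=e^{-1/|w_e|}\,\bar w_e/|w_e|$. This is a homeomorphism that is smooth and flat at $w_e=0$, and its inverse is not even differentiable there. The FOOO estimates bound the $(T,\theta)$-derivatives of the correction by $e^{-cT}$ with a fixed rate $c$, while each derivative in $|\zeta_e|$ costs a factor $e^{T_e}$. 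So they give smooth compatibility of gluing charts in the $w$-coordinates, not in the complex coordinates of $B$. But in the $w$-structure, your own identity $\pi_\mu\circ\mathrm{Glue}_x=$ (base coordinate) says that $\pi_\mu$ is the flat map $w\mapsto\zeta(w)$ in the normal directions. Hence $d\pi_\mu$ vanishes on the normal directions of $U^*_{\mu,\Gamma}$, and the map you propose as the isomorphism $NU^*_{\mu,\Gamma}\to\pi_\mu^*NB^*_{\mu,\Gamma}$ is zero. In the $\zeta$-structure, where $d\pi_\mu$ would be an isomorphism, you have no argument for smooth compatibility of the charts.

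In the paper the isomorphism comes from the chart domains, not from $d\pi_\mu$. It fixes one $G$-invariant metric on the normal bundle and gives the disk bundle of $\pi_\mu^*NB^*_\Gamma$ a new smooth structure via the radial map $v_j\mapsto -v_j/(|v_j|\log|v_j|)$ on each nodal line. Gluing maps defined on open subsets of this single reparametrized disk bundle then serve as tubular coordinates around $U^*_{\mu,\Gamma}$. The normal bundle is therefore $\pi_\mu^*NB^*_{\mu,\Gamma}$ by construction, and $\pi_\mu$ itself is smooth but degenerate in the normal directions. Because the reparametrization commutes with the $U(1)$-action on each line, the complex structure needed later for the NC structure survives.

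Fixing one norm in this way also addresses a second problem in your chart-by-chart version. If each chart uses its own holomorphic normal coordinates on $B$, two of them differ by $\zeta'_e=a_e\zeta_e(1+O(\zeta))$. After the $1/T$ reparametrization this becomes $w'_e=e^{-i\arg a_e}\,w_e/(1-|w_e|\log|a_e|)$, up to terms flat at $w_e=0$. Since $w_e|w_e|$ is not $C^2$ at $0$, the transition maps are not smooth unless $|a_e|\equiv 1$ along the stratum, and choosing smooth families of right inverses does not help. So you must define $s_e$ from one fixed norm on each nodal line, and read off the normal-bundle isomorphism from the resulting tubular coordinates.
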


\begin{proof}
We define the smooth structure on $U_\mu$ as follows. Recall that each the domain moduli $B$ is a smooth $G$-manifold and for each isomorphism class of map types $\Gamma$, $B_\Gamma^* \subset B$ is a smooth submanifold. Then the normal bundle $NB_\Gamma^* \to B_\Gamma^*$ is a smooth $G$-equivariant vector bundle with a fibrewise stratification. Then by Lemma \ref{lemma_stratum_smooth}, the pullback 
\beqn
\pi_\mu^* NB_\Gamma^* \to U_{\mu, \Gamma}^*
\eeqn
is a smooth $G$-equivariant vector bundle. The total spaces of these normal bundles have canonical smooth structures. Now we would like to define new fibrewise coordinate. Choose a $G$-invariant bundle metric. Locally, the normal bundle splits naturally into line bundles indexed by nodes and we write a vector as $v = (v_1, \ldots, v_m)$. Define the fibrewise reparametrization (with in a small disk bundle)
\beq\label{fibre_reparametrization}
\lambda(x, v_1, \ldots, v_m) = \left( x, - \frac{v_1}{|v_1| \log |v_1|}, \ldots, - \frac{v_m}{|v_m| \log |v_m|} \right)\footnote{This reparametrization is equivalent to using the parameter $s = \frac{1}{T}$ where $T$ is the usual cylindrical gluing parameter as part of local coordinates in \cite{FOOO_smooth}.}
\eeq
which equip the total space a new smooth structure on the disk bundle. 

Then locally one can construct gluing maps whose domains are certain open subset of the disk bundle in the normal bundle. The gluing construction is stratum-preserving. These gluing constructions provide local coordinates on the chart $U_\mu$ near $U_{\mu, \Gamma}^*$ for all $\Gamma$. The main results of \cite{FOOO_smooth} implies that these coordinates are smoothly compatible. In particular, the smooth structure does not depend on choices made for constructing the local gluing maps. Hence $U_\mu$ has a $G$-invariant smooth structure. Moreover, the stratum $U_{\mu, \Gamma}^*$ are still smooth and the smooth structure is identical to the one established using the implicit function theorem (without gluing). 
\end{proof}

\begin{rem}
Another way to construct (non-canonical) smooth structures without applying the fibrewise reparametrization \eqref{fibre_reparametrization} is to construct $G$-equivariant global gluing map for each $\Gamma$. 
\end{rem}

\subsection{Refined AMS construction}

One of the basic idea of defining the reduced GW invariants is to realize that constant maps are only cut off  cleanly. In order to separate the contribution from ghost configurations, one needs to make each type of ghost configurations a stratum of the thickened moduli space. To do this, one needs to choose thickening datum for which we can turn off perturbations on ghost components. 

We begin by making the first choice of the construction. 

\begin{step}\label{step1}
Choose a line bundle datum ${\mb L} = (L, k, {\mf D})$.
\end{step}

\subsubsection{Families of curves parametrizing components}

Consider pairs $(\tilde\Gamma, v)$ where $\tilde\Gamma$ is a map type (a graph) and $v \in {\rm Vert}(\tilde\Gamma)$. Let $\gamma$ be the isomorphism class of $(\tilde\Gamma, v)$, which is also denoted by $[\tilde\Gamma, v]$. Then there is a natural forgetful map $\gamma \mapsto \Gamma$. Notice that given any representative $\tilde\Gamma$ of $\Gamma$, there is a one-to-one correspondence 
\beqn
{\rm Vert}(\tilde\Gamma)/ {\rm Aut}(\tilde\Gamma) \cong \{ \gamma \mapsto \Gamma\}.
\eeqn
In the universal curve there are refined piece
\beqn
\mathring C_{\gamma}^* \subset \mathring C_{\Gamma}^*
\eeqn
which parametrizes points in the smooth part of the component labelled by $v$ in curves of map type $\Gamma$.  

We define a partial order as follows. Indeed, we define $\gamma:= [\tilde\Gamma, v]\leq [\tilde \Delta, w]=:\delta$ if there exists a map $f: \tilde \Gamma \to \tilde\Delta$ realizing the partial order $\Gamma \leq \Delta$ (see Definition \ref{defn_partial_order_realization}) such that $f(v) = w$. In this case, we also say that $f$ realizes the partial order $\gamma\leq\delta$.

Given any $\gamma \mapsto \Gamma$, one also defines
\beqn
{\esc B}_{\gamma}^*:= \Big\{(\phi, v)\ |\ \phi \in B_{\Gamma}^*, v \in {\rm Vert} (\tilde \Gamma_\phi),\ [\tilde\Gamma_\phi, v] = \gamma \Big\}
\eeqn 
which is a smooth $G^{\mb C}$-manifold. Take the disjoint union 
\beqn
{\ms B}:= \bigsqcup_{\gamma} {\ms B}_\gamma^*
\eeqn
which has a non-Hausdorff topology, as in degeneration a curve component can split into multiple ones. However, each point of ${\ms B}$ still has an open neighborhood which is a complex manifold. Let 
\beqn
\pi_{{\ms B}\to B}: {\ms B} \to B
\eeqn
be the natural forgetful map, whose restriction to each stratum ${\ms B}_\gamma^*$ is a topological covering onto $B_\Gamma^*$.

We would like to specify certain neighborhoods of ${\ms B}_\gamma^*$. For each isomorphism class of map types $\Gamma$, choose a family of $G$-invariant neighborhoods 
\beqn
B_\Gamma^\epsilon \subset B
\eeqn
of $B_\Gamma^*$ satisfying
\begin{align*}
&\ \epsilon_1 < \epsilon_2 \Longrightarrow B_\Gamma^{\epsilon_1} \subset B_\Gamma^{\epsilon_2},\ &\ \bigcap_\epsilon B_\Gamma^\epsilon = B_\Gamma^*.
\end{align*}
The preimage $\pi_{\ms B}^{-1}(B_\Gamma^\epsilon) \subset {\ms B}$ is stratified by $\delta\geq \gamma$, but it is not a Hausdorff space in general. We describe a smooth resolution of this preimage. 

Recall that by Lemma \ref{lemma56}, for each $\phi \in B_\Gamma^*$, there exists a sufficiently small $G_\phi$-invariant neighborhood $O(\phi) \subset B$ such that for all $\psi \in O(\phi)$ there is a canonical morphism $\tilde\Gamma_\phi \to \tilde\Gamma_\psi$ of their underlying map types. We choose for each $\phi$ such a neighborhood and denote temporarily by $O(\phi)$. Define
\beqn
O^\epsilon(\phi) = O(\phi) \cap B_\Gamma^\epsilon.
\eeqn
Then for $(\phi, v) \in {\ms B}_\gamma^*$, one can choose a $G$-invariant open neighborhood
\beqn
O(\phi, v) \subset \pi_{{\ms B} \to B}^{-1}(O(\phi)) \subset {\ms B}
\eeqn
of $(\phi, v)$ with the projection $O(\phi, v) \to O(\phi)$ being 1-1. We can choose these neighborhood sufficiently small such that when $O(\phi_1, v_1) \cap O(\phi_2, v_2) \neq \emptyset$, there is a canonical isomorphism $\tilde\Gamma_{\phi_1} \cong \tilde\Gamma_{\phi_2}$ which sends $v_1$ to $v_2$. 

\begin{defn}
For $\epsilon$ sufficiently small, let ${\ms B}_\gamma^\epsilon$ be the set
\beqn
\Big\{ ((\phi, v); (\psi, w))\ |\ (\phi, v) \in {\ms B}_\gamma^*, (\psi, w)  \in O(\phi, v),\ \psi \in O^\epsilon(\phi) \Big\}/ \sim
\eeqn
where the equivalence relation $\sim$ is defined as follows. We define
\beqn
((\phi_1, v_1); (\psi, w)) \sim ((\phi_2, v_2); (\psi, w)) \Longleftrightarrow O(\phi_1, v_1) \cap O(\phi_2, v_2) \neq \emptyset.
\eeqn
\end{defn}

In other words, ${\ms B}_\gamma^\epsilon$ parametrizes  curves $C_\psi$ near $C_\phi$ together with the component where the component $C_{\phi, v}$ is glued into. Notice that the composition 
\beqn
\pi_\gamma: {\ms B}_\gamma^\epsilon \to \pi_{\ms B}^{-1}(B_\Gamma^\epsilon) \to B_\Gamma^\epsilon
\eeqn
is a $G$-equivariant topological covering. One also have a family of curves 
\beqn
{\ms C}_\gamma^\epsilon \to {\ms B}_\gamma^\epsilon
\eeqn
whose fibres are not having the same genus. In fact, if $(\phi, v) \in {\ms B}_\gamma^*$, then the fibre is the component ${\ms C}_{\phi, v}$; but if $(\psi, w)$ is a nearby point which is less degenerate, then the fibre ${\ms C}_{\psi, w}$ is perhaps a ``larger'' curve because the component is glued from ${\ms C}_{\phi, v}$ and perhaps other component. However, if we take the complement of nodes and markings, which is denoted by 
\beqn
\mathring {\ms C}_\gamma^\epsilon \to {\ms B}_\gamma^\epsilon
\eeqn
then it is a $G$-equivariant holomorphic submersion.

\subsubsection{Component-wise thickening datum}

\begin{defn}\label{defn_gamma_thickening}
Let $\gamma$ be an isomorphism class of pairs. A {\bf $\gamma$-thickening datum} is a thickening datum 
\beqn
\upmu_\gamma = ({\ms O}_\gamma, {\ms W}_\gamma, \uplambda_\gamma)
\eeqn
on the family ${\ms C}_\gamma^\epsilon \to {\ms B}_\gamma^\epsilon$ for some $\epsilon>0$ where ${\ms O}_\gamma \subset {\ms B}_\gamma^\epsilon$ is the preimage of a $G$-invariant open subset $O_\Gamma \subset B_\Gamma^\epsilon$ under the map ${\ms B}_\gamma^\epsilon \to B_\Gamma^\epsilon$. We require ${\ms W}_\gamma \to {\ms O}_\gamma$ to be a $G$-equivariant flat Hermitian bundle.
\end{defn}

The transversality for $\gamma$-thickening datum is defined as follows. Suppose $(\phi, u, F) \in {\mc M}_{\mb L}^{\rm fr}(C/B_\Gamma^*, X, A)$. Then for each $(\phi, v) \in {\ms B}_\gamma^*$, $u$ restricts to a map $u_v: C_{\phi, v} \to X$. Then a $\gamma$-thickening datum $\upmu_\gamma = ({\ms O}_\gamma, {\ms W}_\gamma, \uplambda_\gamma)$ with $(\phi, v) \in {\ms O}_\gamma$ induces a linear map
\beqn
\uplambda_{\gamma, u_v}: {\ms W}_\gamma|_\phi \to  \Omega_c^{0,1}(\mathring C_{\phi, v}, u_v^* TX).
\eeqn
We say that $\upmu_\gamma$ is {\bf transverse at $(\phi, v)$} if for all $(\phi, u, F) \in {\mc M}_{\mb L}^{\rm fr}(C/B_\Gamma^*, X, A)$, the image of $\uplambda_{\gamma, u_v}$ is transverse to the linearized Cauchy--Riemann operator at $u_v$ restricted to the subspace
\beqn
\mathring \Omega^0(C_{\phi, v}, u_v^* TX) \subset \Omega^0(C_{\phi, v}, u_v^* TX)
\eeqn
of infinitesimal deformations which vanish at nodes and markings. We say $\upmu_\gamma$ is {\bf transverse} if it is transverse at $(\phi, v)$ for all $(\phi,v) \in {\ms O}_\gamma$.

A $\gamma$-thickening datum  induces a ``pushforward'' $(\pi_\gamma)_! \upmu_\gamma$ over $O_\Gamma$ described as follows. Define a vector bundle $W_\gamma \to O_\Gamma$ whose fibre over $\phi \in O_\Gamma$ is
\beqn
W_\gamma|_\phi:= \bigoplus_{\pi_{{\ms B} \to B} (\phi, v) = \phi}  {\ms W}_\gamma|_{(\phi, v)}.
\eeqn
One can see readily that since ${\ms O}_\gamma \to O_\Gamma$ is a covering, $W_\gamma$ is a $G$-equivariant vector bundle; moreover, since ${\ms W}_\gamma$ is flat, $W_\gamma$ is also flat.\footnote{However, ${\ms W}_\gamma$ being trivial does not imply that $W_\gamma$ is trivial. This is the reason why we consider flat bundles.} Then the thickening is naturally induced from $\uplambda_\gamma$ as one has natural inclusions $\mathring {\ms C}_{(\phi, v)} \hookrightarrow \mathring C_\phi$. Because ${\ms W}_\gamma$ is flat, $W_\gamma$ is also flat.


A $\gamma$-thickening datum also induces a $\delta$-thickening datum for any $\delta \geq \gamma$. Indeed, let $\Delta \geq \Gamma$ be the isomorphism class of map types underlying $\delta$. Denote
\beqn
{\ms O}_\delta^\epsilon:= \pi_\delta^{-1}( O_\Gamma \cap B_{\Delta}^\epsilon).
\eeqn
Notice that one has the commutative diagram
\beqn
\xymatrix{ {\ms O}_\gamma \ar[r] \ar[rd]_{\pi_\gamma} &   {\ms O}_\delta^\epsilon \ar[d]^{\pi_\delta} \\
&  B }.
\eeqn
Then the bundle $ {\ms W}_\gamma \to {\ms O}_\gamma$ can also be pushed forward to ${\ms O}_\delta^\epsilon$, hence providing a $\delta$-thickening datum over ${\ms O}_\delta^\epsilon$. The $G$-bundle over ${\ms O}_\delta^\epsilon$ is also flat.

\begin{defn}\label{defn_component_refined_thickening_data}
Let $\Gamma$ be an isomorphism class of map types and $O_\Gamma \subset B_\Gamma^\epsilon$ be a $G$-invariant open subset. A {\bf component-refined thickening datum} over $O_\Gamma$ is a collection of $\gamma$-thickening datum $\upmu_\gamma$ over ${\ms O}_\gamma:= \pi_\gamma^{-1}(O_\Gamma)$ for all $\gamma \mapsto \Gamma$. It is called {\bf component-wise transverse} over $O_\Gamma$ if each $\upmu_\gamma$ is transverse at all $(\phi, v) \in {\ms O}_\gamma$.
\end{defn}

Given a component-refined thickening datum $\{  \upmu_\gamma\ |\ \gamma \mapsto \Gamma\}$ over $O_\Gamma$ by taking the direct sum of the pushforward, namely,
\beqn
\mu_\Gamma:= \bigoplus_{\gamma \mapsto \Gamma} (\pi_\gamma)_{!}  \upmu_\gamma.
\eeqn
By abuse of notations, we also call the pushforward thickening datum $\mu_\Gamma$ a component-refined thickening datum on $O_\Gamma$. Notice that the component-wise transversality implies the ordinary transversality. When using a component-refined thickening datum to thicken the moduli space, one is able to specify the perturbation on a particular component.

\subsubsection{Constructing thickening data}

\begin{prop}\label{prop_thickening_invariance}
For each $G$-invariant compact subset $Z_\Gamma \subset B_\Gamma^*$, there exists a component-refined thickening data $\mu_\Gamma$ (see Definition \ref{defn_component_refined_thickening_data}) over a neighborhood of $Z_\Gamma$ in $B$ which is component-wise transverse near $Z_\Gamma$. 
\end{prop}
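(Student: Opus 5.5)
The plan is to build the datum one $\gamma \mapsto \Gamma$ at a time: first find pointwise transverse data on each component, then spread them by $G$-averaging and compactness, and finally push forward via $(\pi_\gamma)_!$. Fix $\Gamma$ and the compact set $Z_\Gamma$. There are only finitely many $\gamma \mapsto \Gamma$, namely ${\rm Vert}(\tilde\Gamma)/{\rm Aut}(\tilde\Gamma)$. Since a direct sum of component-wise transverse $\gamma$-thickening data over a common $O_\Gamma$ is again component-wise transverse, it suffices to treat one $\gamma$ at a time. We then intersect the resulting open sets $O_\Gamma$ and form $\mu_\Gamma = \bigoplus_\gamma (\pi_\gamma)_!\upmu_\gamma$.

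\textbf{Step 1 (pointwise transversality).} Take $(\phi,v)\in \pi_\gamma^{-1}(Z_\Gamma)\subset {\ms B}_\gamma^*$. By Lemma \ref{lemma_AMS_parametrization} and Gromov compactness, the set of $(\phi,u,F)\in{\mc M}^{\rm fr}_{\mb L}(C/B^*_\Gamma,X,A)$ with $\phi$ in a compact neighborhood is compact. For each such $u$, the restricted operator $\mathring D_{u_v}$ is Fredholm, so its cokernel is finite-dimensional. By unique continuation, this cokernel can be spanned by finitely many $(0,1)$-forms compactly supported in $\mathring C_{\phi,v}$, away from nodes and markings. We then follow the standard recipe of \cite{AMS2}. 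Represent these forms as $\lambda(z,x)(e)$ for a finite-dimensional $W$, using cut-offs on $\mathring C_{\phi,v}\times X$ so that the map depends only on the point $(z,u(z))$. Transversality is an open condition in $u$, so one such datum handles a neighborhood of the given $u$, and finitely many handle all $u$ over $\phi$.

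\textbf{Step 2 (extending over a neighborhood).} We transport the support region $\check C_{\phi,v}$ to nearby fibres of $\mathring{\ms C}^\epsilon_\gamma\to{\ms B}^\epsilon_\gamma$, using that this family is a holomorphic submersion, as in the local model of the previous subsection. This gives a datum on a neighborhood $O(\phi,v)$ that vanishes near nodes. We then make it $G$-equivariant by inducing from $G_\phi$ to $G$: take $W=\mathrm{Ind}$, i.e. the sum over $G/G_\phi$, or, more concretely, the space of $L^2$/continuous functions on $G$ averaged down to a finite-dimensional representation, as in \cite[Section 4]{AMS2}. This produces a $G$-equivariant trivial flat Hermitian bundle over a $G$-invariant neighborhood ${\ms O}$, and transversality holds near $G\cdot(\phi,v)$ by openness. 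Multiplying by a $G$-invariant cutoff $\rho$ keeps this a thickening datum, and transversality is preserved where $\rho=1$.

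\textbf{Step 3 (compactness and summing).} We cover the compact set $\pi_\gamma^{-1}(Z_\Gamma)$ by finitely many such neighborhoods and take the direct sum of the cut-off data. Adding further terms only enlarges the image, so the sum is transverse on the union of the regions where the cutoffs equal $1$, which is a neighborhood of $\pi_\gamma^{-1}(Z_\Gamma)$. The cutoffs must be pulled back from $G$-invariant functions on $O_\Gamma\subset B^\epsilon_\Gamma$, so that ${\ms O}_\gamma=\pi_\gamma^{-1}(O_\Gamma)$ as Definition \ref{defn_gamma_thickening} requires. The bundle ${\ms W}_\gamma$ is then a direct sum of trivial flat bundles, hence flat.

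\textbf{Main obstacle.} The delicate point is Step 1/2 uniformity. The datum is defined on the resolved space ${\ms B}_\gamma^\epsilon$ and must be transverse for the restricted operator on the component $u_v$ with vanishing at nodes, for all maps near $Z_\Gamma$ simultaneously. This requires the compactness of the set of relevant $u_v$ over a compact set in ${\ms B}^*_\gamma$, together with the fact that the $G$-orbit structure is compatible with the covering $\pi_\gamma$ (the ${\rm Aut}(\tilde\Gamma)$ permutation of components). I would handle the latter by choosing the datum at $(\phi,v)$ equivariant under the stabilizer of $(\phi,v)$ in $G$ rather than of $\phi$, then inducing.
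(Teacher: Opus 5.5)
Your proposal is correct and is essentially the paper's argument. Fredholm theory for the component operator $\mathring D$ (deformations vanishing at nodes and markings) gives local transverse data; these are made $G$-equivariant on trivial flat Hermitian bundles; finitely many are summed using compactness of the framed moduli over $Z_\Gamma$; and the resulting $\gamma$-data are pushed forward and summed over $\gamma\mapsto\Gamma$.

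There are two differences worth noting. First, you transport each local datum off the stratum immediately along the submersion $\mathring{\ms C}^\epsilon_\gamma\to{\ms B}^\epsilon_\gamma$, whereas the paper first builds the datum over ${\ms B}^*_\gamma$ and then extends it in one step by pulling back through a $G$-invariant tubular neighbourhood of $\mathring{\ms C}^*_\gamma$ in $\mathring{\ms C}^\epsilon_\gamma$. Second, ``the sum over $G/G_\phi$'' is infinite-dimensional for $G=U(d-g+1)$, so your finite-dimensional trivial bundle really comes from a Peter--Weyl truncation of the induced bundle $G\times_{G_{(\phi,v)}}W_0$; this is what the paper's ``stabilize to a trivial bundle'' amounts to.
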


\begin{proof}
Recall that one has the parametrization \eqref{parametrization} of the relevant moduli space. Consider 
\beqn
{\mc M}_{\mb L}^{\rm fr}(C/Z_\Gamma, X, A) \subset {\mc M}_{\mb L}^{\rm fr}(C/B, X, A)
\eeqn
which is compact. For each $\gamma \mapsto \Gamma$, denote by 
\beqn
{\ms Z}_\gamma \subset {\ms B}_\gamma^*
\eeqn
be the covering of $Z_\Gamma$. Then one also has a corresponding moduli 
\beqn
{\mc M}_{\mb L}^{\rm fr}( {\ms C}_\gamma^* / {\ms Z}_\gamma, X, A)
\eeqn
whose elements are $(\phi, v, u)$ where $\phi \in Z_\Gamma$, $(\phi, v) \in {\ms Z}_\gamma$, and $u: C_{\phi, v} \to X$ is a $J$-holomorphic map which coincides with the restriction of an element in ${\mc M}_{\mb L}^{\rm fr}(C/Z_\Gamma, X, A)$. Hence there is a $G$-equivariant continuous surjective map
\beqn
{\mc M}_{\mb L}^{\rm fr}(C/Z_\Gamma, X, A) \to {\mc M}_{\mb L}^{\rm fr}({\ms C}_\gamma^* / {\ms Z}_\gamma, X, A)
\eeqn
hence the codomain is compact.

We first consider the Fredholm theory. For each $\gamma \mapsto \Gamma$, there is a well-defined Banach manifold
\beqn
\esc{B}_{\gamma}^*(X) = \Big\{ (\phi, v, u)\ |\ (\phi, v) \in \esc{B}_{\gamma}^*, u \in W^{1,p}(\mathring {\ms C}_{\phi, v}, X)\Big\}\ \ (\text{for a fixed $p>2$})
\eeqn
which has a smooth $G$-action and a smooth $G$-equivariant projection $\esc{B}_{ \gamma}^*(X) \to \esc{B}_{ \gamma}^*$. Then there is an obvious inclusion
\beqn
{\mc M}_{\mb L}^{\rm fr}({\ms C}_\gamma^*/{\ms Z}_\gamma, X, A) \subset {\ms B}_\gamma^*(X)
\eeqn
For each $(\phi, v, u) \in \esc{B}_{ \gamma}^*(X)$, let 
\beqn
T_{(\phi,v, u)}^{\rm vt} \esc{B}_{ \gamma}^* (X) \subset T_{(\phi,v, u)} \esc{B}_{ \gamma}^*(X)
\eeqn
be the vertical tangent bundle which only parametrizes deformations of $u$. The Cauchy--Riemann operator induces a smooth Fredholm section
\beqn
\esc{F}_{ \gamma}: \esc{B}_{ \gamma}^*(X) \to \esc{E}_{ \gamma}^*(X)
\eeqn
where $\esc{E}_{ \gamma}^*(X)$ has fibre over $(\phi, v, u)$ being
\beqn
\esc{E}_{ \gamma}^*(X)|_{(\phi, v, u)} = L^p( C_{\phi, v}, \Lambda^{0,1} \otimes u^* TX).
\eeqn
For each $(\phi, v, u) \in {\ms F}_{\gamma}^{-1}(0)$, the vertical differential of ${\ms F}_{ \gamma}$, denoted by
\beqn
D^{\rm vt}_{(\phi, v, u)}: T^{\rm vt}_{(\phi,v, u)} \esc{B}_{ \gamma}^*(X) \to \esc{E}_{ \gamma}^*|_{(\phi,v, u)}
\eeqn
is Fredholm. Consider the subbundle
\beqn
\mathring T^{\rm vt} \esc{B}_{ \gamma}^* (X) \subset T^{\rm vt} \esc{B}_{\gamma}^*(X)
\eeqn
consisting of infinitesimal deformations of $u: \mathring C_{\phi, v} \to X$ which vanish at marked and nodal points. Denote
\beqn
\mathring D_{(\phi, v, u)}^{\rm vt}:=D_{(\phi, v, u)}^{\rm vt}|_{\mathring T_{(\phi, v, u)}^{\rm vt} \esc{B}_{\gamma}^*(X)}.
\eeqn
Then it is still a Fredholm operator. 

We first find transverse thickening data locally. Fix $(\phi, v, u) \in {\mc M}_{\mb L}^{\rm fr}({\ms C}_\gamma^* /{\ms Z}_\gamma, X, A) \subset {\ms B}_\gamma^*(X)$. Let $G_u \subset G$ be the (finite) automorphism group of $u$. The Fredholm property of $\mathring D_{(\phi, v, u)}^{\rm vt}$ allows us to find a finite-dimensional complex representation of $G_u$, denoted by ${\ms W}_{\phi, v, u}$, and a $G_u$-equivariant bundle 
\beqn
\uplambda_{\phi, v, u}: {\ms W}_{\phi, v, u} \times \mathring {\ms C}_{\phi, v} \times X \to \Lambda^{0,1} T^* \mathring C_{\phi, v} \otimes TX
\eeqn
such that after restricting to the graph of $u$, the image is transverse to the image of $\mathring D_{(\phi, v, u)}^{\rm vt}$. 

One can then extend ${\ms W}_{\phi, v, u}$ to be a $G$-equivariant vector bundle over a $G$-invariant neighborhood of $(\phi, v) \in {\ms B}_\gamma^*$ and extend $\uplambda_{\phi, v, u}$ to be a $G$-equivariant bundle map over $\mathring {\ms C}_{\gamma}^* \times X$. By stabilizing the bundle to a trivial bundle, one can extend it to a trivial $G$-bundle over ${\ms B}_\gamma^*$. Using a $G$-invariant cut-off function, one can also extend $\uplambda_{\phi, v, u}$ globally. 

We choose the datum $({\ms W}_{\phi, v, u}, \uplambda_{\phi, v, u})$ as above for each $(\phi, v, u) \in {\mc M}_{{\mb L}}^{\rm fr}({\ms C}_\gamma /{\ms Z}_\gamma, X, A)$ which is transverse over a $G$-invariant neighborhood of $(\phi, v, u)$. By the compactness, one can find finitely many such whose transversal loci cover ${\mc M}_{\mb L}^{\rm fr}({\ms C}_\gamma / {\ms Z}_\gamma, X, A)$. As each ${\ms W}_{\phi, v, u}$ is a $G$-equivariant vector bundle over ${\ms B}_\gamma^*$, one can take the direct sum of these finitely many and obtain a thickening datum 
\beqn
\uplambda_\gamma:= ({\ms Z}_\gamma, {\ms W}_\gamma, \uplambda_\gamma)
\eeqn
which is transverse over ${\ms Z}_\gamma$. Notice that ${\ms W}_\gamma \to {\ms B}_\gamma^*$ is trivial. 

We would like to extend this thickening datum to a neighborhood of ${\ms Z}_\gamma$ inside ${\ms B}_\gamma^\epsilon$. Indeed, consider the subset $\mathring {\ms C}_\gamma^\epsilon$ (the complement of nodes and markings) which is a $G$-manifold. Choose a $G$-invariant Riemannian metric. Let $N\mathring {\ms C}_\gamma^* \to \mathring{\ms C}_\gamma^*$ be the normal bundle identified with the orthogonal complement of the tangent bundle of $\mathring {\ms C}_\gamma^*$. Then using the exponential map, one can identify a tubular neighborhood with a disk bundle $N^\epsilon \mathring {\ms C}_\gamma^*$ and a tubular projection, temporarily denoted by $\pi$. Then we can find a $G$-equivariant bundle isomorphism
\beqn
\pi^* \Lambda^{0,1}_{\mathring {\ms C}_\gamma^* / {\ms B}_\gamma^*} \cong \Lambda^{0,1}_{ {\ms C}_\gamma^\epsilon/  {\ms B}_\gamma^\epsilon}|_{|N^\epsilon \mathring {\ms C}_\gamma^*|}
\eeqn
Then one can define the thickening map $  \uplambda_\gamma$ to be the pullback of the already-chosen thickening map via the map
\beqn
\pi^* \Lambda^{0,1}_{\mathring {\ms C}_\gamma^*/ {\ms B}_\gamma^*} \otimes TX \to \Lambda^{0,1}_{\mathring {\ms C}_\gamma^\epsilon/ {\ms B}_\gamma^\epsilon} \otimes TX.
\eeqn
This provides a $\gamma$-thickening datum (Definition \ref{defn_gamma_thickening}) over a neighborhood of ${\ms Z}_\gamma$ inside ${\ms B}_\gamma^\epsilon$ which is also transverse. To finally get a component-refined thickening datum, one apply the above construction for all $\gamma \mapsto \Gamma$ and take direct sum. 
\end{proof}

\begin{cor}\label{cor520}
One can find a collection of component-wise transverse component-refined thickening data $\mu_\Gamma$ for all isomorphism classes of map types $\Gamma$, denoted by $\mu_\Gamma = (O_\Gamma, W_\Gamma, \lambda_\Gamma)$ such that
\beqn
{\mc M}_{\mb L}^{\rm fr}(C/B, X, A) = \bigcup_\Gamma {\mc M}_{\mb L}^{\rm fr}(C/O_\Gamma, X, A).
\eeqn
\end{cor}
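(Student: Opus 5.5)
The plan is to deduce Corollary \ref{cor520} from Proposition \ref{prop_thickening_invariance} by a compactness argument over the finitely many relevant map types. Only finitely many isomorphism classes of map types $\Gamma$ have nonempty ${\mc M}_{\mb L}^{\rm fr}(C/B_\Gamma^*, X, A)$. Indeed, by the remark following Lemma \ref{lemma_AMS_parametrization}, the whole framed moduli space lies over a precompact $G$-invariant subset $B_0 \subset B$. Moreover, the degree and genus constraints in Definition \ref{defn_map_type} together with the energy identity allow only finitely many graphs. I will therefore index the construction by this finite set of types and proceed by downward induction on the partial order, starting from the deepest strata.

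The first step is to fix a type $\Gamma$ and set
\[
Z_\Gamma' := \pi\Big({\mc M}_{\mb L}^{\rm fr}(C/B, X, A)\Big) \cap B_\Gamma^*,
\]
which is $G$-invariant. In general $Z_\Gamma'$ is not compact inside $B_\Gamma^*$, because points of type $\Gamma$ can degenerate to deeper types $\Gamma' < \Gamma$. To handle this, I will assume inductively that data $\mu_{\Gamma'} = (O_{\Gamma'}, W_{\Gamma'}, \lambda_{\Gamma'})$ have already been chosen for all $\Gamma' < \Gamma$, and that each $O_{\Gamma'}$ is an open neighborhood of the projection of the framed moduli space over $\ov{B_{\Gamma'}^*}$ restricted to deeper strata. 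I then set
\[
Z_\Gamma := Z_\Gamma' \setminus \bigcup_{\Gamma' < \Gamma} O_{\Gamma'}.
\]
This set is closed in the projection of the compact moduli space, and it avoids every deeper stratum. It also avoids the strata not comparable to $\Gamma$, since by the axiom of frontier the closure of $B_\Gamma^*$ meets only strata $\le \Gamma$. Hence $Z_\Gamma$ is a compact $G$-invariant subset of $B_\Gamma^*$.

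Next, Proposition \ref{prop_thickening_invariance} applied to $Z_\Gamma$ produces a component-refined datum $\mu_\Gamma$ that is component-wise transverse on a $G$-invariant open neighborhood $O_\Gamma$ of $Z_\Gamma$ in $B$. I will shrink $O_\Gamma$ so that it lies in the region where transversality holds and where the covering ${\ms B}_\gamma^\epsilon$ is defined. Then $O_\Gamma$ together with the $O_{\Gamma'}$ for $\Gamma' < \Gamma$ covers the projection of the moduli points of type $\le \Gamma$. Once every type has been treated, the union of the ${\mc M}_{\mb L}^{\rm fr}(C/O_\Gamma, X, A)$ is the whole moduli space, which gives the displayed equality.

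The main obstacle is the compactness of $Z_\Gamma$. It requires that the projection $\pi({\mc M}_{\mb L}^{\rm fr}(C/B,X,A))$ be compact, which follows from Lemma \ref{lemma_AMS_parametrization} since $G$ is compact. It also requires that limits within the closure of a stratum land only in strata $\le \Gamma$, which is the frontier property of Lemma \ref{base_simple_stratification}. If openness of the earlier neighborhoods fails to exclude the boundary cleanly, I would instead use compact sets $\ov{O'_{\Gamma'}} \subset O_{\Gamma'}$ given by the normality of the compact moduli space and remove their interiors.
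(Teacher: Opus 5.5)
Your proposal is correct and follows the paper's argument: an induction over map types starting from the lowest strata, applying Proposition~\ref{prop_thickening_invariance} at each step to a compact $G$-invariant subset of $B_\Gamma^*$ obtained by removing the open sets $O_{\Gamma'}$ already chosen for deeper types. The differences are cosmetic. The paper works with $\ov{B_0}\cap B_\Gamma^*$ for a precompact $B_0$ carrying the whole framed moduli space, whereas you use the compact projection of the framed moduli space itself. You also make explicit the inductive covering hypothesis that the paper leaves implicit; it is best phrased as ``the union of the $O_{\Gamma'}$, $\Gamma'<\Gamma$, contains all projected moduli points of type $<\Gamma$''.
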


\begin{proof}
By the compactness of $\ov{\mc M}_{g,n}(X, J, A)$ and Lemma \ref{lemma_AMS_parametrization}, one can find a $G$-invariant precompact open subset $B_0 \subset B$ such that 
\beqn
{\mc M}_{\mb L}^{\rm fr}(C/B, X, A)= {\mc M}_{\mb L}^{\rm fr}(C/B_0, X, A).
\eeqn
Then we choose $\mu_\Gamma$ inductively. For any lowest stratum $\Gamma$, the set $\ov{B_0} \cap B_\Gamma^*$ is compact. Hence by Proposition \ref{prop_thickening_invariance}, one can find a component-wise transverse and component-refined thickening datum $\mu_\Gamma = (O_\Gamma, W_\Gamma, \lambda_\Gamma)$ with $\ov{B_0} \cap B_\Gamma^* \subset O_\Gamma$. Then one can inductively construct required thickening data for all strata.
\end{proof}

\begin{step}\label{step2}
Choose a collection of component-wise transverse component-refined thickening datum $\mu_\Gamma$ which satisfy the properties of Corollary \ref{cor520}.
\end{step}

\subsection{Constructing Kuranishi atlas}

Now we can construct an atlas. The process differ slightly with previous works such as \cite{Fukaya_Ono} \cite{MW_3} \cite{Tian_Xu_geometric_2}. The advantage of the current approach (via the AMS parametrization) is that the indexing set of charts are already given by the set of isomorphism classes of map types. 

\begin{step}\label{step3}
Choose $G$-invariant precompact open subsets 
\beqn
O_{\Gamma}'' \sqsubset O_{\Gamma}' \sqsubset O_{\Gamma}
\eeqn
such that 
\beqn
{\mc M}_{\mb L}^{\rm fr}(C/B, X, A) = \bigcup_\Gamma {\mc M}_{\mb L}^{\rm fr}(C/O_{\Gamma}'', X, A).
\eeqn
Then we choose $G$-invariant cut-off functions
\beqn
\rho_{\Gamma}: O_{\Gamma} \to [0, 1]
\eeqn
which is identically $1$ on $O_{\Gamma}''$ and which is zero outside $O_{\Gamma}'$. Lastly, we choose an infinite sequence of precompact $G$-invariant open subsets
\beq\label{nested_sets}
O_{\Gamma}' \sqsubset G_{\Gamma}^1 \sqsubset H_{\Gamma}^1 \sqsubset \cdots \sqsubset G_{\Gamma}^m \sqsubset H_{\Gamma}^m \sqsubset \cdots  O_{\Gamma}.
\eeq
(We only use finitely many of them.) 
\end{step}

Then we define the (not necessarily everywhere transverse) thickening datum 
\beqn
\rho_{\Gamma} \mu_{\Gamma}
\eeqn
over $O_{\Gamma}$. 

Now we can describe the atlas. First define ${\mc I}$ to be the set of all nonempty subsets of $\Gamma$'s. The partial order on ${\mc I}$ is induced by inclusions of subsets. For each $I \in {\mc I}$, define
\beqn
O_I:= \bigcap_{\Gamma \in I} H_{\Gamma}^{\# I} \setminus \bigcup_{\Gamma \notin I} \ov{G_{\Gamma}^{\# I}}
\eeqn
(which could be empty).

\begin{lemma}\label{lemma_overlapping}
The following condition holds for each pair $I, J \in {\mc I}$:
\beqn
\ov{O_I} \cap \ov{O_J} \neq \emptyset \Longrightarrow I \subset J\ {\rm or}\ J \subset I.
\eeqn
\end{lemma}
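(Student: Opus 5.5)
The plan is the standard Fukaya--Ono/McDuff--Wehrheim ``nested sets'' trick, using only the chain of inclusions \eqref{nested_sets}. Suppose $\ov{O_I} \cap \ov{O_J} \neq \emptyset$. After possibly swapping $I$ and $J$, assume $\# I \leq \# J$. We will show $I \subset J$. Argue by contradiction: suppose there is some $\Gamma \in I$ with $\Gamma \notin J$.

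The first step is to bound the closures from above and below. Directly from the definition,
\beqn
\ov{O_I} \subset \bigcap_{\Gamma' \in I} \ov{H_{\Gamma'}^{\# I}}
\eeqn
and
\beqn
\ov{O_J} \subset \ov{\bigcap_{\Gamma'\in J} H^{\# J}_{\Gamma'}} \cap \bigcap_{\Gamma'' \notin J} \Big( B \setminus \ov{G_{\Gamma''}^{\# J}} \Big)^{-},
\eeqn
where the last closure is contained in $B\setminus G_{\Gamma''}^{\# J}$. The reason is that the complement of the closed set $\ov{G^{\# J}_{\Gamma''}}$ is open, its closure misses the open set $G^{\# J}_{\Gamma''}$, and so it is contained in the closed set $B \setminus G_{\Gamma''}^{\# J}$.

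Next, take a point $x$ in the intersection and apply this to $\Gamma'' = \Gamma$. On one side, $x \in \ov{O_I}$ gives $x \in \ov{H_\Gamma^{\# I}}$. On the other side, $x \in \ov{O_J}$ gives $x \notin G_\Gamma^{\# J}$.

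Two cases remain. If $\# I < \# J$, then \eqref{nested_sets} gives $\ov{H_\Gamma^{\# I}} \subset G_\Gamma^{\# I + 1} \subset G_\Gamma^{\# J}$. This forces $x \in G_\Gamma^{\# J}$, a contradiction. If $\# I = \# J$, then $I \not\subset J$ and, since the cardinalities agree, also $J \not\subset I$. So there is also some $\Gamma_1 \in J \setminus I$. Apply the same reasoning with the roles of $I$ and $J$ exchanged: $x \in \ov{H_{\Gamma_1}^{k}}$ and $x \notin G_{\Gamma_1}^{k}$ with $k = \# I = \# J$. This case does not immediately contradict, because $G^k \sqsubset H^k$ rather than the reverse. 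It is the genuine obstacle.

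To handle the equal-cardinality case, I would strengthen the bounds. For $\Gamma \in I$ we use $O_I \subset H_\Gamma^k$; for $\Gamma \notin J$ we use $O_J \cap \ov{G_\Gamma^k} = \emptyset$. Both are open-set statements, so $\ov{O_I}\cap \ov{O_J}$ only yields boundary information, and boundary points $x \in \partial G_\Gamma^k$ with $x\in \ov{H_\Gamma^k}$ are not excluded. So the definition must be read so that same-size distinct index sets are separated. The natural fix, which I expect the intended argument uses, is to interleave at each level: require $\ov{O_I}$ to lie inside $G_\Gamma^{\#I}$ for $\Gamma\in I$. That is, define $O_I$ using $G$'s for inclusion and $H$'s for exclusion at the same level $\#I$, or equivalently read $O_I$ so that $\ov{O_I}\subset \ov{G^{\#I}_\Gamma}$ for $\Gamma \in I$ while $\ov{O_J}$ avoids $G^{\#J}_\Gamma$ for $\Gamma\notin J$ with some margin. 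With a margin $\ov{G^k}\sqsubset H^k$ on the appropriate side, the equal-size case closes exactly as the strict case does.

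Concretely, I would verify which side of $G^k \sqsubset H^k$ appears in inclusion versus exclusion, and adjust (if needed, by relabeling the nested sequence) so that the inclusion condition for $\Gamma \in I$ at level $k$ lands strictly inside the set excluded at level $k$ for $\Gamma\notin J$. Then both cases reduce to the single containment $\ov{(\text{inner set at level } \#I)} \subset (\text{outer set at level } \#J)$ for $\#I\le \#J$, which follows from \eqref{nested_sets}. This gives the contradiction and hence $I\subset J$.
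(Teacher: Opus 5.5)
Your argument for $\#I<\#J$ is correct. Your suspicion about $\#I=\#J$ is also correct, and it points to a defect in the statement rather than in your reasoning: with $O_I$ as literally defined (inclusion in $H^{\#I}_\Gamma$, exclusion of $\ov{G^{\#I}_\Gamma}$, where $G^k_\Gamma\sqsubset H^k_\Gamma$), the lemma is false. Take $\#I=\#J=k$ with $I\neq J$, and any $y\in B$ such that
\begin{itemize}
\item $y\in H^k_\Gamma\setminus\ov{G^k_\Gamma}$ for all $\Gamma\in I\triangle J$,
\item $y\in H^k_\Gamma$ for $\Gamma\in I\cap J$,
\item $y\notin\ov{G^k_\Gamma}$ for $\Gamma\notin I\cup J$.
\end{itemize}
Such a $y$ lies in $O_I\cap O_J$ itself, not merely in the intersection of the closures. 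This is an open condition, and none of the choices in Step~\ref{step3} rules it out.

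For $k\ge 2$ this can even happen over the image of the moduli space. Take a point of $O''_{\Gamma_1}$ that lies in the level-$2$ collars $H^2_\Gamma\setminus\ov{G^2_\Gamma}$ of both $\Gamma_2$ and $\Gamma_3$, and outside $\ov{G^2_\Gamma}$ for every other $\Gamma$. It lies in $O_{\{\Gamma_1\}}\cap O_{\{\Gamma_1,\Gamma_2\}}\cap O_{\{\Gamma_1,\Gamma_3\}}$, which breaks the transitivity of $\curlyvee$ that this lemma is used to prove. The paper's proof only cites McDuff--Wehrheim and does not address this.

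So rather than ``verify which side appears and adjust if needed,'' you should state outright that the roles of $G$ and $H$ must be exchanged: $O_I:=\bigcap_{\Gamma\in I}G^{\#I}_\Gamma\setminus\bigcup_{\Gamma\notin I}\ov{H^{\#I}_\Gamma}$. With this definition your uniform argument is complete. For $\#I\le\#J$ and $\Gamma\in I\setminus J$, one has $\ov{O_I}\subset\ov{G^{\#I}_\Gamma}\subset H^{\#I}_\Gamma\subset H^{\#J}_\Gamma$. On the other hand $\ov{O_J}\cap H^{\#J}_\Gamma=\emptyset$, since $O_J$ misses $\ov{H^{\#J}_\Gamma}$ and $H^{\#J}_\Gamma$ is open.

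What the proposal still owes is a check that the repaired sets still serve the rest of the construction, since exchanging the roles changes the sets. Both properties the paper needs survive.

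\emph{Covering.} The $O_I$ must cover the projection of the moduli space. For such a point $y$, let $P_k=\{\Gamma: y\in G^k_\Gamma\}$ and $Q_k=\{\Gamma: y\in\ov{H^k_\Gamma}\}$. Then:
\begin{itemize}
\item $P_k\subset Q_k\subset P_{k+1}$;
\item $P_1\neq\emptyset$, because $y\in O''_\Gamma\subset G^1_\Gamma$ for some $\Gamma$;
\item $y\in O_I$ if and only if $P_{\#I}=Q_{\#I}=I$.
\end{itemize}
Let $k_0$ be the least $k$ with $\#Q_k\le k$; it exists because $\#Q_N\le N$, where $N$ is the number of map types. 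If $k_0=1$, then $1\le\#P_1\le\#Q_1\le 1$. If $k_0>1$, then $\#P_{k_0}\ge\#Q_{k_0-1}\ge k_0\ge\#Q_{k_0}\ge\#P_{k_0}$. In either case $P_{k_0}=Q_{k_0}$ has exactly $k_0$ elements, so $y\in O_{P_{k_0}}$.

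\emph{Vanishing of cut-offs.} For $\Gamma\notin I$, the set $O_I$ misses $\ov{H^{\#I}_\Gamma}\supset O'_\Gamma$, so $\rho_\Gamma\equiv 0$ on $O_I$. This gives the transversality of $\mu_I$ and Lemma~\ref{lemma524} by the paper's own arguments.

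With these additions, your proposal is a proof of the corrected lemma.
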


\begin{proof}
It is a straightforward check. The details can be found in the proof of \cite[Lemma 5.3.1]{MW_3}. 
\end{proof}

Consider the direct sum thickening datum over $O_I$: 
\beqn
\mu_I:= \bigoplus_{\Gamma \in I} \rho_\Gamma \mu_{\Gamma}.
\eeqn

\begin{lemma}
All $\mu_I$ are transverse.
\end{lemma}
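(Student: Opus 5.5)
Transversality of $\mu_I$ is a pointwise condition over $O_I$. At each relevant $(\phi, u, F)$ it asks that the image of $D_u$ together with the images of the summands $\rho_\Gamma \lambda_{\Gamma,u}$, $\Gamma \in I$, span $\Omega^{0,1}(C_\phi, u^*TX)$. The plan is to find one $\Gamma_0 \in I$ with $\rho_{\Gamma_0}(\phi) = 1$ and $\mu_{\Gamma_0}$ transverse at $\phi$. Then the single summand $\rho_{\Gamma_0}\mu_{\Gamma_0} = \mu_{\Gamma_0}$ already achieves transversality, and adding the other summands only enlarges the image, so transversality is preserved. The transversality we need is the ordinary one; component-wise transversality of $\mu_{\Gamma_0}$ implies it, as remarked after Definition \ref{defn_component_refined_thickening_data}.

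The main step is to choose $\Gamma_0$. Take $\phi \in O_I$ and a point $(\phi, u, F) \in {\mc M}_{\mb L}^{\rm fr}(C/O_I, X, A)$, i.e. a solution of the unperturbed equation with $\phi = \phi_F$ and $S_Q = 0$. By the covering property in Step \ref{step3},
\beqn
{\mc M}_{\mb L}^{\rm fr}(C/B, X, A) = \bigcup_\Gamma {\mc M}_{\mb L}^{\rm fr}(C/O_\Gamma'', X, A),
\eeqn
so $\phi \in O_{\Gamma_0}''$ for some $\Gamma_0$, and there $\rho_{\Gamma_0}(\phi) = 1$. We need $\Gamma_0 \in I$. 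If $\Gamma_0 \notin I$, then $\phi \in O_I$ forces $\phi \notin \ov{G_{\Gamma_0}^{\# I}}$. But $O_{\Gamma_0}'' \sqsubset O_{\Gamma_0}' \sqsubset G_{\Gamma_0}^{\# I}$, which is a contradiction. Hence $\Gamma_0 \in I$. Because $\mu_{\Gamma_0}$ was chosen in Step \ref{step2} to be component-wise transverse over all of $O_{\Gamma_0}$, it is transverse at $\phi$.

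The remaining issue, which I expect to be the main obstacle, is what ``transverse'' should mean for the thickened chart $U_I$. The argument above covers only honest holomorphic curves, i.e. $e = 0$. For chart points with $e \neq 0$ one has transversality only near the zero locus, and there I would use openness of surjectivity of Fredholm operators. Concretely, the set of $(\phi, u, e)$ where $D_u + \lambda_{\mu_I,u}$ is surjective is open and $G$-invariant. It contains the compact zero locus, which is compact because the moduli space is compact and $\ov{O_I}$ is precompact. So after shrinking $O_I$, or equivalently $U_I$, to a $G$-invariant neighborhood of $S_I^{-1}(0)$, transversality holds throughout.

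One subtlety remains: the framed condition $S_Q = 0$ is imposed in the definition of transversality. The chart instead allows $\log H_F \neq 0$, and the $G^{\mb C}$-equivariance reduces such points to the $S_Q = 0$ slice, since $\rho_\Gamma$ and $\lambda_\Gamma$ are only $G$-equivariant. I would handle this with the same openness argument, so that transversality on the zero locus of the full Kuranishi section propagates to a neighborhood, as in the AMS setup.
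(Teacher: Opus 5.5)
Your argument is correct and is essentially the paper's proof. A zero of $S_I$ lying over $\phi \in O_I$ has $\phi \in O_{\Gamma_0}''$ for some $\Gamma_0$. Since $O_{\Gamma_0}'' \subset O_{\Gamma_0}' \subset G_{\Gamma_0}^{\# I}$ (the paper phrases this as: $\rho_\Gamma$ vanishes on $O_I$ for $\Gamma \notin I$), you get $\Gamma_0 \in I$ with $\rho_{\Gamma_0}(\phi) = 1$, so the single summand $\mu_{\Gamma_0}$ already gives transversality.

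Your last two paragraphs are not needed for this lemma. Transversality in Definition~\ref{defn_thickening_datum} is only tested at points of ${\mc M}^{\rm fr}_{\mb L}(C/O_I, X, A)$, where $e = 0$ and $S_Q = 0$, and these are exactly the points the paper checks. The openness remark matters only for treating $U_I$ as a manifold near its zero locus. Even there no compactness of the zero locus is needed, and the zero locus over the open set $O_I$ is not compact in general.
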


\begin{proof}
Indeed, for each point $x \in S_I^{-1}(0)$ which projects down to $y \in O_I$, $y$ is covered by at least one $O_{\Gamma}''$ (where $\rho_{\Gamma} \equiv 1$). By the definition of $O_I$, for all $\Gamma \notin I$, $\rho_\Gamma (y) = 0$. Hence there must be some $\Gamma \in I$ such that $\rho_{\Gamma}(y) = 1$. As the original $\mu_{\Gamma}$ is transverse, it implies that $\mu_I$ is transverse at $x$. 
\end{proof}

By Definition \ref{defn_AMS_thickening}, it leads to a Kuranishi chart
\beqn
K_I = (U_I, E_I, S_I, \Psi_I).
\eeqn
We equip $U_I$ with $G$-invariant smooth structure using the construction of Proposition \ref{prop_smooth}. As $E_I \to U_I$ is a $G$-equivariant flat bundle, it automatically becomes a smooth $G$-equivariant vector bundle.

On the other hand, one can define coordinate changes for $I \subset J$. Simply define 
\beqn
U_{JI}
\eeqn
to be the restriction of $K_I$ to $O_I \cap O_J$. Then the embedding
\beqn
U_{JI} \to U_{J}
\eeqn
is canonically defined as the latter is obtained by having an additional summand of thickening datum.

\begin{lemma}
For all $I \leq J$, the embedding $U_{JI} \to U_J$ is smooth.    
\end{lemma}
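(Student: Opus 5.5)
The plan is to check smoothness locally near each stratum, using the fact that the smooth structures on $U_I$ and $U_J$ come from the same recipe in Proposition \ref{prop_smooth}. First, the map $U_{JI} \to U_J$ is $(\phi, u, e_I, F) \mapsto (\phi, u, (e_I, 0), F)$, where the new summand $\bigoplus_{\Gamma \in J\setminus I} \rho_\Gamma W_\Gamma$ is given the value zero. It covers the identity of $O_I \cap O_J \subset B$. It is also stratum-preserving: the map type of $\phi$ is unchanged, and the obstruction decomposition is only enlarged by a zero component.

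Step 1 is smoothness on each stratum. On $U_{I,\Gamma}^*$ the smooth structure of Lemma \ref{lemma_stratum_smooth} comes from the implicit function theorem for the Fredholm section $\ov\partial_J u + \lambda_{\mu_I,u}(e)$, together with the holomorphic framing condition, over the smooth base $B_\Gamma^*$. Since $\mu_I$ is a direct summand of $\mu_J$, the inclusion $W_I \hookrightarrow W_J$ induces a smooth map of the ambient Banach manifolds, and this map carries the zero set for $\mu_I$ into the zero set for $\mu_J$. Both charts are cut out transversely (the lemma above). The restriction $U_{JI,\Gamma}^* \to U_{J,\Gamma}^*$ is therefore the inclusion of the transverse zero locus of a smooth map into the transverse zero locus of a larger one. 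This makes it a smooth embedding, with normal bundle $\pi^*(W_J/W_I)$ modulo the image of $D_u$.

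Step 2 is smoothness in the normal (gluing) directions near a lower stratum $U_{I,\Gamma}^*$. Here I use the local charts from the proof of Proposition \ref{prop_smooth}. A point near $U_{I,\Gamma}^*$ is written as $(x, \lambda(v))$, where $x \in U_{I,\Gamma}^*$, $v \in NB_\Gamma^*$, and $\lambda$ is the fibrewise reparametrization \eqref{fibre_reparametrization}; the corresponding point of $U_I$ is $\mathrm{Glue}_I(x, v)$. I construct the gluing maps for $\mu_I$ and $\mu_J$ with the same choices: the same pregluing, the same right inverse of the linearized operator, and the right inverse for $\mu_I$ obtained from the one for $\mu_J$ by restricting to the complement. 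With these choices, $\mathrm{Glue}_J \circ (\iota \times \mathrm{id}) = \iota \circ \mathrm{Glue}_I$ up to a correction determined by the implicit function theorem. I then invoke the smooth-dependence results of \cite{FOOO_smooth}, which give estimates with exponential decay in the cylindrical parameter $T$ for the glued solution and its derivatives in all parameters, including the obstruction parameter $e$. In the coordinates $s = 1/T$ these estimates show that the map, expressed in the two gluing charts, is smooth.

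The main obstacle is Step 2: the gluing charts of $U_I$ and $U_J$ must be chosen compatibly, or the FOOO estimates must be shown to apply uniformly in the extra parameter $e_J$. My first attempt will be the compatible choice, using that the cut-off functions $\rho_\Gamma$ are smooth on $B$ and that $\lambda_\Gamma$ vanishes near nodes. Together these make the thickening terms independent of the neck region, so the exponential-decay estimates carry over with no change. $G$-equivariance of the map is immediate from the construction.
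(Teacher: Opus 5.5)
Your proposal is correct and takes the paper's route. The paper's proof is a one-line citation of the main result of \cite{FOOO_smooth}, and your stratum-wise implicit function theorem argument with compatible gluing charts in the coordinate $s=1/T$ is an unpacking of that citation. You missed a simplification: by Lemma \ref{lemma524}, $\rho_\Gamma\equiv 0$ on $O_I\cap O_J$ for every $\Gamma\in J\setminus I$. Consequently $\pi_J^{-1}(O_I\cap O_J)\subset U_J$ is exactly the total space of the pulled-back bundle $W_{J\setminus I}$ over $U_{JI}$, with $U_{JI}$ as its zero section. You can then take the gluing charts for $\mu_J$ to be products, using a right inverse valued in the $\mu_I$-directions, and by the choice-independence in \cite{FOOO_smooth} these give the same smooth structure. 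In particular there is no ``correction term'', and the normal bundle is exactly $W_{J\setminus I}$ rather than anything taken modulo the image of $D_u$.
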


\begin{proof}
This is the consequence of the main result of \cite{FOOO_smooth}.   
\end{proof}

As the obstruction bundle naturally embeds, one obtains an embedding of $G$-equivariant Kuranishi charts
\beqn
{\bm \Phi}_{JI}: K_I|_{U_{JI}} \to K_J.
\eeqn

\begin{lemma}\label{lemma524}
For each pair $I \leq J$ and $\Gamma \in J \setminus I$, one has 
\beqn
\rho_\Gamma |_{O_I \cap O_J} \equiv 0.
\eeqn
\end{lemma}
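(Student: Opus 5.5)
This is a direct consequence of how the sets $O_I$ and the cut-off functions $\rho_\Gamma$ were chosen in Step \ref{step3}; no analysis is involved. The plan is to show that $O_I\cap O_J$ is disjoint from $O_\Gamma'$ for every $\Gamma\in J\setminus I$. Since $\rho_\Gamma$ vanishes outside $O_\Gamma'$, this gives the claim.

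Fix $I\subsetneq J$ and $\Gamma\in J\setminus I$, and put $m:=\# I$. Since $\Gamma\notin I$, the definition
\beqn
O_I= \bigcap_{\Gamma'\in I} H_{\Gamma'}^{m}\setminus \bigcup_{\Gamma'\notin I}\ov{G_{\Gamma'}^{m}}
\eeqn
shows that $O_I$ is disjoint from $\ov{G_\Gamma^{m}}$. By the nesting \eqref{nested_sets}, $O_\Gamma'\sqsubset G_\Gamma^1\subset G_\Gamma^{m}$ for every $m\geq 1$. Hence $O_I\cap O_\Gamma'=\emptyset$, and so $O_I\cap O_J\cap O_\Gamma'=\emptyset$. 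Because $\rho_\Gamma$ is zero outside $O_\Gamma'$, it follows that $\rho_\Gamma\equiv 0$ on $O_I\cap O_J$.

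Two points need care. First, one should check that $\#I\geq 1$, so that the index $m$ is among the sets actually chosen in \eqref{nested_sets}. This holds because ${\mc I}$ consists of nonempty subsets. Second, the statement is about the function on the overlap of domains in $B$, that is, on the base of $U_{JI}$. The identity for the thickening datum itself is then immediate: on $U_{JI}$ the summand $\rho_\Gamma\mu_\Gamma$ of $\mu_J$ vanishes for every $\Gamma\in J\setminus I$. This is exactly what makes the inclusion $U_{JI}\to U_J$ the zero section of the extra obstruction summands. There is no real obstacle beyond keeping track of the nesting of the sets $G^m_\Gamma\sqsubset H^m_\Gamma$.
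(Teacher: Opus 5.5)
Your argument is correct and is essentially the paper's: the paper likewise uses that $O_I$ avoids $\ov{G_\Gamma^{\#I}}\supset O_\Gamma'$ for $\Gamma\notin I$, packaged as the inclusion $O_I\cap O_J\subset\bigcap_{\Gamma\in J\setminus I}(O_\Gamma\setminus O_\Gamma')$. The one thing the intersection with $O_J$ contributes there, which you leave implicit, is that $O_J\subset H_\Gamma^{\#J}\subset O_\Gamma$, so $\rho_\Gamma$ is actually defined on $O_I\cap O_J$.
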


\begin{proof}
Suppose $\# I = k$ and $\# J = l>k$. Then by the definitions of $O_I$ and $O_J$, one has 
\beqn
\begin{split}
O_I \cap O_J = &\ \left( \bigcap_{\Gamma\in I} H_\Gamma^k \setminus \bigcup_{\Gamma \notin I} \ov{G_\Gamma^k}  \right) \cap \left( \bigcap_{\Gamma \in J} H_\Gamma^l \setminus \bigcup_{\Gamma \notin J} \ov{G_\Gamma^l} \right)\\
\subset &\ \bigcap_{\Gamma \in I} H_\Gamma^k \cap \left( \bigcap_{\Gamma \in J \setminus I}  (H_\Gamma^l \setminus \ov{G_\Gamma^k})    \right) \setminus \bigcup_{\Gamma \notin J} \ov{G_\Gamma^l}\subset \bigcap_{\Gamma \in J \setminus I} (O_\Gamma \setminus O_\Gamma').
\end{split}
\eeqn
As $\rho_\Gamma$ vanishes outside $O_\Gamma'$, the claim follows.
\end{proof}

\begin{cor}
For all pairs $I \leq J$, the coordinate change ${\bm \Phi}_{JI}$ has a canonical equivalence class of flat tubular neighborhoods (Definition \ref{defn_equivariant_Kuranishi_chart}). 
\end{cor}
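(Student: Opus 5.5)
The plan is to exhibit $U_J$ near $\Phi_{JI}(U_{JI})$ as the stabilization of $U_{JI}$ by the flat bundle $N_{JI}:=\bigoplus_{\Gamma\in J\setminus I} W_\Gamma|_{O_I\cap O_J}$, pulled back along $\pi_I: U_{JI}\to O_I\cap O_J$. By construction, $\mu_J=\mu_I\oplus\bigoplus_{\Gamma\in J\setminus I}\rho_\Gamma\mu_\Gamma$. By Lemma \ref{lemma524}, the extra summands have vanishing thickening maps over $O_I\cap O_J$. Hence a point $(\phi,u,e_J,F)\in U_J$ lying over $O_I\cap O_J$ solves
\beqn
\ov\partial_J u+\lambda_{\mu_I,u}(e_I)=0,
\eeqn
where $e_J=(e_I,e')$ and $e'\in N_{JI}|_\phi$ is completely unconstrained.

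This gives a canonical $G$-equivariant identification
\beqn
\pi_J^{-1}(O_I\cap O_J)\cong \pi_I^*N_{JI}=\big\{(\phi,u,e_I,F,e')\big\},
\eeqn
with inverse $(\phi,u,e_I,F,e')\mapsto(\phi,u,(e_I,e'),F)$. Under it, $S_J=S_I\oplus\tau_{N_{JI}}$. Restricting to a disk bundle $N_{JI}^\epsilon$, we obtain an open embedding ${\bm\Phi}^\epsilon_{JI}:{\rm Stab}_{N_{JI}^\epsilon}(K_I|_{U_{JI}})\to K_J$ that extends ${\bm\Phi}_{JI}$. The bundle part is the identity on $E_J=\pi^*W_J\oplus Q$, so it is a flat isometric isomorphism. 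The Hermitian metric on $N_{JI}$ comes from that of the $W_\Gamma$, and flatness follows because each $W_\Gamma$ is flat.

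The remaining checks are as follows.
\begin{enumerate}
\item \textbf{Smoothness.} We must show ${\bm\Phi}^\epsilon_{JI}$ is a diffeomorphism for the smooth structures of Proposition \ref{prop_smooth}. The identification is fibred over $B$ and stratum-preserving, since strata are pulled back from $B$. On each stratum the smooth structure comes from the implicit function theorem (Lemma \ref{lemma_stratum_smooth}), and both sides solve the same equation there. Near lower strata, both smooth structures are defined through gluing coordinates on the same reparametrized normal bundle of $B_\Gamma^*$, and the gluing construction commutes with adding the inert parameter $e'$. The compatibility result of \cite{FOOO_smooth} then gives smoothness.
\item \textbf{Canonicity.} We must show the germ is canonical. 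The only choice is the identification above, which is forced: it is the identity on underlying data. Any other representative built from the same $N_{JI}$ agrees after shrinking $\epsilon$, so the equivalence class is well defined.
\item \textbf{Compatibility with composition.} For $I\le J\le K$ we have $N_{KI}=N_{JI}\oplus N_{KJ}|_{U_{JI}}$. The tubular neighborhoods compose via the parallel-transport identification used in the composition of combed embeddings.
\end{enumerate}

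I expect the main obstacle to be step 1, the smooth structure near strata where gluing parameters are nonzero, because the smooth structure there is defined only through gluing maps. I would argue that the gluing maps for $\mu_J$ restricted to $O_I\cap O_J$ can be chosen as products of the gluing maps for $\mu_I$ with the identity in $e'$. This holds because the $e'$-directions enter the Cauchy--Riemann equation trivially. Consequently the two charts are literally identical, not merely smoothly compatible.
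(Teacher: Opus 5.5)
Your proposal is correct and follows the paper's argument. The paper's proof is just the observation, via Lemma \ref{lemma524}, that the extra summands $\rho_\Gamma\mu_\Gamma$ for $\Gamma\in J\setminus I$ vanish on $O_I\cap O_J$, so $U_J$ there is canonically the stabilization of $U_{JI}$ by $\bigoplus_{\Gamma\in J\setminus I}W_\Gamma$, with section $S_I\oplus\tau$. Your extra discussion of smoothness (which the paper handles in the preceding lemma by citing the smoothness results for gluing) and of compatibility under composition goes beyond what the paper spells out, but is consistent with it.
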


\begin{proof}
Lemma \ref{lemma524} implies that on the overlap $O_I \cap O_J$, the additional thickening data $\rho_\Gamma \mu_\Gamma$ for $\Gamma \in J \setminus I$ are all trivial. Hence the corresponding summands $E_\Gamma \subset E_J$ does not perturb the equation there. 
\end{proof}

\begin{prop}
The collection ${\mf A} = (K_I; {\bm \Phi}_{JI})$ 
form a $\uds{\bf Kur}^{G, \flat}$-atlas (see Definition \ref{defn_Kuranishi_atlas}) on ${\mf M}$. Moreover, there exists a strong map (Definition \ref{defn_strong_map}) 
\beqn
{\mf f}: {\mf A} \to \ov{\mc M}{}_{g,n}\times X^n
\eeqn
which extends the evaluation-stabilization map on the moduli space.
\end{prop}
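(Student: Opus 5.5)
The plan is to check the three axioms of Definition \ref{defn_Kuranishi_atlas} one at a time for the charts $K_I$ and coordinate changes ${\bm \Phi}_{JI}$ built above, and then to build ${\mf f}$ chartwise from the forgetful data $(\phi,u)\mapsto$ (stabilized domain, marked-point evaluations).

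\emph{Covering.} Let $p\in \ov{\mc M}_{g,n}(X,J,A)$. By Lemma \ref{lemma_AMS_parametrization} it is represented by $(\phi,u,F)$, and by Step \ref{step3} there is some $\Gamma_0$ with $\phi\in O''_{\Gamma_0}$. Put $I=\{\Gamma\ |\ \phi\in \ov{G^{k}_\Gamma}\}$ for a suitable $k=\#I$. I would choose $I$ exactly as in \cite[Lemma 5.3.1]{MW_3}, using the nested sets \eqref{nested_sets} and counting how many $H^m_\Gamma$ contain $\phi$, so that $\phi\in O_I$. Then $(\phi,u,0,F)\in S_I^{-1}(0)$, and so $p\in{\rm Im}\Psi_I$.

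\emph{Cocycle.} Each embedding $U_{JI}\to U_J$ is the tautological inclusion $(\phi,u,e,F)\mapsto(\phi,u,(e,0),F)$. This is legitimate because, by Lemma \ref{lemma524}, the extra summands $\rho_\Gamma\mu_\Gamma$ with $\Gamma\in J\setminus I$ vanish on $O_I\cap O_J$. For $I\subset J\subset K$ the composition is therefore again the tautological inclusion, on $U_{KJI}=U_{KI}\cap\Phi_{JI}^{-1}(U_{KJ})$. The canonical flat tubular neighborhoods are given by the added fibres $W_\Gamma$, $\Gamma\in J\setminus I$, which are flat and in which the equation is unperturbed. These tubular neighborhoods also compose, since direct sums of flat bundles are compatible with parallel transport. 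Finally, the footprint condition ${\rm Im}\Psi_I\cap{\rm Im}\Psi_J=\Psi_I(U_{JI}\cap S_I^{-1}(0))$ follows because a zero of $S_I$ and a zero of $S_J$ representing the same stable map have domains lying in $O_I\cap O_J$. To see this, first use the $G$-action to identify the framings; the domains then agree as points of $B$.

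\emph{Virtual neighborhood.} This is where I expect the real work. Transitivity of $\curlyvee$ needs that whenever $x\in U_I$, $y\in U_J$, $z\in U_K$ are related pairwise, the indices are totally ordered and the maps compose. I would get total ordering from Lemma \ref{lemma_overlapping}: all three projections to $B$ are the same $\phi$, which lies in $\ov{O_I}\cap\ov{O_J}\cap\ov{O_K}$. Once the indices are ordered, the cocycle condition gives transitivity. For Hausdorffness I would argue by contradiction. Take sequences $x_i\in U_I$ and $y_i\in U_J$ with $x_i\curlyvee y_i$, $x_i\to x$ and $y_i\to y$. Again by Lemma \ref{lemma_overlapping}, $I\subset J$ up to order, and $\Phi_{JI}(x_i)=y_i$. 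The key input is that $\Phi_{JI}$ extends continuously to a closed embedding over the closure of $O_I\cap O_J$ inside $O_I$; this holds because the defining equations are the same ones restricted to $W_I\subset W_J$. Then $y=\Phi_{JI}(x)$ provided $x\in U_{JI}$. The boundary case, where $x$ lies over $\partial(O_I\cap O_J)$, is exactly the one the strict nesting $G^m\sqsubset H^m\sqsubset G^{m+1}$ was designed to exclude. If it turns out not to be excluded, I would first shrink the charts, as McDuff--Wehrheim do, and verify the axiom for the shrinking. The fact that each $\iota_I$ is a homeomorphism onto its image follows from $\Phi_{JI}$ being an embedding together with the same argument.

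\emph{Strong map.} Set $f_I(\phi,u,e,F)=({\rm st}(C_\phi),u(p_1),\dots,u(p_n))$. This map is $G$-invariant and continuous in the smooth structure of Proposition \ref{prop_smooth}. It is compatible with the coordinate changes because the tubular projections only forget the $W_\Gamma$ components and change neither $\phi$ nor $u$. It restricts to ${\rm st}\times\ev$ on $S_I^{-1}(0)$.
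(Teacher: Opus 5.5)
Your outline follows the paper's proof for the covering condition, the cocycle condition, transitivity of $\curlyvee$ (via Lemma \ref{lemma_overlapping}), and the strong map (on $O_I\cap O_J$ the extra summands $W_\Gamma$, $\Gamma\in J\setminus I$, do not perturb the equation, by Lemma \ref{lemma524}). The gap is in the Hausdorffness step. The paper only asserts that this, and the claim that $\iota_I$ is a homeomorphism onto its image, are elementary point-set topology; your attempt to fill it in does not work as written. First, the ``boundary case'' you worry about cannot occur, and the nesting $G^m\sqsubset H^m\sqsubset G^{m+1}$ plays no role here (its job is Lemma \ref{lemma_overlapping}). Since $\pi_J\circ\Phi_{JI}=\pi_I$, you get $\pi_I(x)=\lim\pi_I(x_i)=\lim\pi_J(y_i)=\pi_J(y)\in O_I\cap O_J$. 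Hence $x\in U_{JI}=\pi_I^{-1}(O_I\cap O_J)$ automatically, and $y=\Phi_{JI}(x)$. Your claim that $\Phi_{JI}$ extends to a closed embedding over the closure of $O_I\cap O_J$ in $O_I$ does not make sense, because $U_J$ has no points over $B\setminus O_J$. What is true is that $\Phi_{JI}(U_{JI})$ is the relatively closed set $\{e_{J\setminus I}=0\}$ in $\pi_J^{-1}(O_I\cap O_J)$. Your fallback of shrinking first would verify the axiom for a different atlas, not for ${\mf A}$.

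Second, and more substantively, your sequential argument only shows that $\curlyvee$ is closed, and that does not make the quotient Hausdorff. The quotient map $\bigsqcup_I U_I\to|{\mf A}|$ is not open: the saturation of an open $V\subset U_I$ meets $U_J$, for $J\supsetneq I$, in the positive-codimension set $\Phi_{JI}(V\cap U_{JI})$. You must construct disjoint saturated open sets, and the construction goes as follows.

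The projections $\pi_I$ descend to a continuous map $|{\mf A}|\to B$, which separates points lying over different $\phi$. For two points over the same $\phi$, the indices $K$ with $\phi\in\ov{O_K}$ form a chain by Lemma \ref{lemma_overlapping}. Let $I_{\max}$ be the largest of them with $\phi\in O_{I_{\max}}$; both points then have representatives in $U_{I_{\max}}$. Choose a neighborhood $N$ of $\phi$ that is contained in every $O_K$ with $\phi\in O_K$ and is disjoint from every $\ov{O_K}$ with $\phi\notin\ov{O_K}$. For $K\supset I_{\max}$, Lemma \ref{lemma524} shows that $\pi_K^{-1}(O_{I_{\max}}\cap O_K)$ is the total space of the pullback of $\bigoplus_{\Gamma\in K\setminus I_{\max}}W_\Gamma$ to $U_{KI_{\max}}$. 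This gives a continuous retraction $p_K$ onto $U_{KI_{\max}}$, and these retractions are compatible with the $\Phi$'s. Now separate the two representatives in $U_{I_{\max}}$ by disjoint open sets $V_1,V_2$ lying over $N$. Take $\Phi_{I_{\max}K}^{-1}(V_a)$ for $K\subset I_{\max}$ and $p_K^{-1}(V_a)$ for $K\supset I_{\max}$; no other $K$ has points over $N$. This produces disjoint saturated open sets. The same construction, applied to an arbitrary open $V\subset U_I$, shows that $\iota_I$ is open onto its image; the fact that $\Phi_{JI}$ is an embedding does not give this on its own.
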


\begin{proof}
The overlapping condition is obvious since the open sets $O_I$. Without considering the combed condition, the cocycle condition is also obvious from the construction. 

Now we verify the virtual neighborhood property of Definition \ref{defn_Kuranishi_atlas}. Consider the binary relation $\curlyvee$. The only non-obvious thing to check is the transitivity. Suppose $x \curlyvee y \curlyvee z$ where $x \in U_I$, $y \in U_J$, $z \in U_L$. Then their underlying domains in $B$ coincides. Then $O_I \cap O_J \cap O_K \neq \emptyset$. Then Lemma \ref{lemma_overlapping} implies that there must be a total order among $I$, $J$, and $K$. So the cocycle condition implies that $x \curlyvee z$. Therefore $\curlyvee$ is an equivalence relation.

The remaining claims that $|{\mf A}|$ is Hausdorff and that the inclusion $U_I \to |{\mf A}|$ is a homeomorphism onto its image can both be proved using elementary point-set topology. We omit the details.

Lastly, the chartwise map 
\beqn
f_I: U_I \to \ov{\mc M}_{g,n} \times X^n
\eeqn
is obviously defined. To see that they form a strong map, one needs to show that they are invariant under coordinate change and the involved stabilization in the normal direction. This is the case as the additional thickening data do not perturb the equation. 
\end{proof}

\subsubsection{Specific shrinkings}

Recall that a priori the virtual fundamental class depends on shrinkings of the Kuranishi atlas. Before we move on, we discuss the class of shrinkings we would like to choose. The specific choices are similar to the notion of ``reduction'' of McDuff--Wehrheim \cite{MW_3} in their version of Kuranishi atlas. 

\begin{defn}\label{defn_basic_shrinking}
A {\bf basic shrinking} of the sequence \eqref{nested_sets} consists of a sequence of intermediate $G$-invariant open subsets
\beqn
G_{\Gamma}^l \sqsubset \dot G_{\Gamma}^l \sqsubset \dot H_{\Gamma}^l \sqsubset H_{\Gamma}^l,\ l = 1, \ldots
\eeqn
\end{defn}

Notice that if one defines
\beqn
O_I':= \bigcap_{\Gamma \in I} \dot H_{\Gamma}^{\#I} \setminus \bigcup_{\Gamma \notin I} \ov{ \dot G_{\Gamma_j}^{\#I}}
\eeqn
then $O_I'$ is precompact in $O_I$. Moreover,  
\beqn
{\mc M}_{\mb L}^{\rm fr}(C/\ov{O_I'}, X, A) \subset U_{I}
\eeqn
is compact. 

\begin{step}\label{step4}
Choose a basic shrinking of the sequence \eqref{nested_sets} and choose a precompact $G$-invariant open subset $U_I' \sqsubset U_I$ such that 
\beqn
S_I^{-1}(0) \cap \ov{U_I'} = {\mc M}_{\mb L}^{\rm fr}(C/\ov{O_I'}, X, A).
\eeqn
\end{step}

Then one obtained a shrinking of ${\mf A}$, called a {\bf basic shrinking}. The comparison on choices of basic shrinking will be provided when proving the invariance of the virtual fundamental class.

\subsection{The NCS refinement}

In this section we promote the $\uds{\bf Kur}^{G, \flat}$-atlas to a $\uds{\bf Kur}_{\rm NCS}^{G, \flat}$-atlas. Basically, we will construct/identify structures of NCS virtual manifolds on the Kuranishi charts which are $G$-invariant and which are compatible with coordinate changes.

\subsubsection{Refined stratification}

Now we can describe the stratification on the Kuranishi charts designed for defining reduced Gromov--Witten invariants. 

\begin{defn}\label{defn_refined_map_type}
A {\bf refined map type} is a pair $(\tilde \Gamma, {\rm Vert}^{\rm gho}_{\rm off}(\tilde\Gamma))$ where $\tilde \Gamma$ is a map type and ${\rm Vert}^{\rm gho}_{\rm off}(\tilde\Gamma) \subset {\rm Vert}^{\rm gho}(\tilde\Gamma)$ is a subset. An isomorphism of refined map types from $(\tilde \Gamma, {\rm Vert}^{\rm gho}_{\rm off} (\tilde\Gamma))$ to $(\tilde \Gamma', {\rm Vert}^{\rm gho}_{\rm off}(\tilde\Gamma'))$ is an isomorphism $\rho: \tilde \Gamma \to \tilde \Gamma'$ of map types such that $\rho({\rm Vert}^{\rm gho}_{\rm off}(\tilde\Gamma)) = {\rm Vert}^{\rm gho}_{\rm off}(\tilde\Gamma')$. Let $\alpha$ denote an isomorphism class of refined map types. 
\end{defn}

The subset ${\rm Vert}^{\rm gho}_{\rm eff}(\tilde\Gamma)$ labels the set of ghost vertices where one would like to turn off the thickening data. Using refined map types one can define a refined stratification of Kuranishi charts described in the following lemma. 

\begin{lemma}
Let $\mu = (O, W, \lambda)$ be a component-refined and component-wise transverse thickening datum and $K_\mu = (U_\mu, E_\mu, S_\mu, \Psi_\mu)$ be the associated $G$-equivariant Kuranishi chart. For each isomorphism class of refined map types $\alpha$ (represented by $(\tilde \Gamma, {\rm Vert}^{\rm gho}_{\rm off}(\tilde\Gamma))$, define
\beqn
U_{\mu, \alpha}^*:= \left\{(\phi, u, e, F) \in U_\mu\ \left| \ \begin{array}{ll} \exists f: \tilde \Gamma_\phi \cong \tilde \Gamma\ {\rm s.t.}\ \forall v \in {\rm Vert}^{\rm gho}(\tilde\Gamma_\phi)\\
f(v) \in {\rm Vert}^{\rm gho}_{\rm off}(\tilde\Gamma)  \Longleftrightarrow e_{\phi, v} = 0 \end{array}\right. \right\}
\eeqn
Then the partition 
\beqn
U_\mu = \bigsqcup_\alpha U_{\mu, \alpha}^*
\eeqn
makes $U_\mu$ a stratified $G$-manifold (Definition \ref{defn_manifold_stratification}).  Moreover, the partial order among isomorphism classes of refined map types induced from the stratification is equivalent to the following relation: if $\alpha$ is represented by $(\tilde \Gamma_\alpha, {\rm Vert}^{\rm gho}_{\rm off}(\tilde \Gamma_\alpha))$ and $\beta$ is represented by $(\tilde \Gamma_\beta, {\rm Vert}^{\rm gho}_{\rm off}(\tilde\Gamma_\beta))$, then
\beqn
\alpha \leq \beta \Longleftrightarrow \exists \rho: \tilde \Gamma_\alpha \to \tilde \Gamma_\beta\ {\rm s.t.}\ \rho(v) \in {\rm Vert}^{\rm gho}_{\rm off} (\tilde \Gamma_\beta) \Longrightarrow v \in {\rm Vert}^{\rm gho}_{\rm off}(\tilde\Gamma_\alpha).
\eeqn
\end{lemma}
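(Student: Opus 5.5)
The plan is to reduce everything to a local model near a point $x = (\phi, u, e, F) \in U_\mu$ with $\phi \in B_\Gamma^*$, and to exhibit a stratified chart in the sense of Definition \ref{defn_manifold_stratification} whose strata are exactly the $U_{\mu,\alpha}^*$.

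\textbf{Step 1: local coordinates from Proposition \ref{prop_smooth}.} Near $x$, the chart $U_\mu$ is modelled on $U_{\mu,\Gamma}^* \times N^\epsilon$. Here $N = \pi_\mu^* NB_\Gamma^*$ splits into line bundles indexed by the nodes. By Lemma \ref{lemma56}, a point with gluing parameters $(v_1, \ldots, v_m)$ lies over $B_{\Delta}^*$, where $\Delta$ is obtained by smoothing exactly the nodes with $v_j \neq 0$. So the map-type partition is locally the coordinate-subspace stratification of $N_x$. This part is linear and $G_x$-equivariant once we choose the equivariant gluing data as in Proposition \ref{prop_smooth}.

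\textbf{Step 2: the ghost-obstruction directions.} Since the datum is component-refined, $E_\mu$ splits over $O$ as $\bigoplus_v W_{\phi,v} \oplus Q$, and the summands $W_{\phi,v}$ are flat. On the stratum $U_{\mu,\Gamma}^*$ consider the map
\beqn
U_{\mu,\Gamma}^* \to \bigoplus_{v \in {\rm Vert}^{\rm gho}(\tilde\Gamma_\phi)} W_{\phi, v},\qquad (\phi, u, e, F) \mapsto (e_{\phi,v})_v .
\eeqn
We show this map is a submersion, using component-wise transversality. On a ghost component the equation is $\ov\partial_J u_v + \uplambda_v(e_v) = 0$ with $u_v$ nearly constant. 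Component-wise transversality means $\mathring D_v$ plus ${\rm Im}\,\uplambda_v$ is surjective, even with deformations constrained to vanish at the nodes. The implicit function theorem then lets us solve for $u_v$ with $e_v$ prescribed freely, so the $e_v$ become independent coordinates and the locus $\{e_v = 0\}$ is cut out cleanly. Combining with Step 1, near $x$ we get a $G_x$-equivariant chart
\beqn
\Big( U_{\mu,\Gamma}^* \cap \{ e^{\rm gho} = 0 \} \Big) \times \bigoplus_{v} W_{\phi, v} \times N_x .
\eeqn
In this chart the stratum $U_{\mu,\alpha}^*$, with $\alpha = (\tilde\Gamma, {\rm Vert}^{\rm gho}_{\rm off})$, is the locally closed set where $e_v = 0$ exactly for $v$ in the off-set and the prescribed node parameters vanish. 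Its closure is a linear subspace, so the tangent space is a stratified linear space.

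\textbf{Step 3: frontier, local finiteness, and the partial order.} Local finiteness holds because only finitely many map types occur near $x$. The axiom of frontier and the description of $\leq$ follow from the local model. A smoothing $\rho: \tilde\Gamma_\alpha \to \tilde\Gamma_\beta$ corresponds to turning on some node parameters. When a ghost vertex $v$ is glued into a vertex $\rho(v)$, the obstruction $e_{\rho(v)}$ is the sum of the pushed-forward components over $\rho^{-1}(\rho(v))$. This sum vanishes near the deeper point only if each summand does. Hence $\rho(v)$ off forces $v$ off, while $v$ off with $\rho(v)$ on can be approached by perturbing. This gives exactly the stated criterion. Connectedness of strata is handled by the convention (we pass to components) and $G$-invariance of the strata is clear.

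\textbf{Main obstacle.} The hard step is the compatibility in Step 2 with gluing. We must show that the coordinates $e_v$ remain linear coordinates near deeper strata under the reparametrized gluing maps of Proposition \ref{prop_smooth}, so that the closures are smooth and intersect cleanly. I would first try to build the gluing map component by component, with the pushforward thickening of Definition \ref{defn_component_refined_thickening_data}, so that the obstruction coordinates are passed through unchanged and only the $u$ variable is corrected.
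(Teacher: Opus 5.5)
There is a genuine gap in your Step 2. Component-wise transversality says that $(\xi,e)\mapsto \mathring D_v\xi+\uplambda_v(e)$ is onto. It does not let you solve for $u_v$ with $e_v$ prescribed. Linearize at $x\in U^*_{\mu,\alpha}$ and let $D_{\alpha,u}$ be the linearization with only the non-off perturbations switched on. Prescribing $e^{\rm off}=(e_v)_{v\in{\rm off}}$ freely would require $\uplambda(W^{\rm off}_\alpha)\subset{\rm Im}\,D_{\alpha,u}$. Together with transversality, this would force $D_{\alpha,u}$ to be surjective.

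That fails exactly in the case the paper is about. With $e_v=0$, a ghost component $C_v$ carries a constant map, say to $p$. If $g_v\geq 1$, the cokernel $H^1_\alpha$ of $D_{\alpha,u}$ contains $H^1(C_v,\mathcal O)\otimes T_pX\neq 0$. Hence $x\mapsto e^{\rm off}$ is not a submersion along $U^*_{\mu,\alpha}$: its differential at $x$ has image only $\ker(W^{\rm off}_\alpha\to H^1_\alpha)$. Your product chart therefore has dimension exceeding $\dim U_\mu$ by $\dim H^1_\alpha$. This excess dimension of the ghost strata is precisely what the whole construction is designed to isolate, so it is not a technicality.

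What is true, and what the paper's proof rests on, is that $U^*_{\mu,\alpha}$ is cut out \emph{cleanly} rather than transversally inside $U^*_{\mu,\Gamma}$. The locus $\{e^{\rm off}=0\}$ consists of non-off perturbed solutions matched at nodes to constant maps on the off ghosts. By component-wise transversality with vanishing at nodes, this is a smooth family. Its tangent space is $\ker D_{\alpha,u}$, which is exactly the kernel of $d(e^{\rm off})$.

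Since $\uplambda$ induces a surjection $W^{\rm off}_\alpha\to H^1_\alpha$, a local Kuranishi reduction does the rest. Split $W^{\rm off}_\alpha$ as $\ker\oplus H$, solve for the $H$-component, and use the $\ker$-component as normal coordinates. This gives the paper's identification $N_xU^*_{\mu,\alpha}\cong N_\phi B^*_{\Gamma_\alpha}\oplus\ker(W^{\rm off}_\alpha\to H^1_\alpha)$.

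Two consequences for the rest of your proposal:
\begin{itemize}
\item The linear stratification of this normal fibre by $\beta\geq\alpha$ is by subspaces of that kernel, not by coordinate subspaces of $\bigoplus_v W_{\phi,v}$.
\item The converse half of your Step 3 (``$v$ off with $\rho(v)$ on can be approached by perturbing'') does not follow from freeness of $e_v$. It needs the kernel to have nonzero $W_{\phi,v}$-component.
\end{itemize}
Your Step 1 and the forward direction of Step 3 agree with the paper's argument: limits of points with $e_{\phi_i,\rho(v)}=0$ force $e_{\phi,v'}=0$ for all $v'\in\rho^{-1}(\rho(v))$.
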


\begin{proof}
To save notations, in this proof, we drop the index $\mu$. We first verify that $U_\alpha^* \subset U$ is a $G$-invariant smooth submanifold. The $G$-invariance follows from the requirement for the thickening datum and the definition of $U_\alpha^*$. As $U_\Gamma^* \subset U$ is a smooth submanifold, to show that $U_\alpha^*$ is a smooth submanifold, it suffices to prove that $U_\alpha^* \subset U_{\Gamma}^*$ is a smooth submanifold. Choose $x = (\phi, u, e, F) \in U_\alpha^*$. Then $\alpha$ identifies a set of ghost vertices in $\tilde \Gamma_\phi$ where the perturbation is turned off. Though the constant maps on higher genus ghost components are not transverse, but they are cut off cleanly. Therefore, $U_\alpha^*$ is a smooth submanifold of $U_{\Gamma}^*$. 

We then verify that this is a stratification. The local finiteness of the partition and local closedness of each stratum are easy to verify. To see the axiom of frontier, suppose $U_\alpha^* \cap U_\beta \neq \emptyset$. We need to show $U_\alpha^* \subset U_\beta$. By assumption, there exists a sequence $x_i = (\phi_i, u_i, e_i, F_i) \in U_\beta^*$ converging to $x = (\phi, u, e, F) \in U_\alpha^*$. It follows that $\phi_i \to \phi$ in $B$. Therefore, choosing a subsequence if necessary, for $i$ sufficiently large, the map type $\tilde \Gamma_{\phi_i}$ can be identified with a map type $\tilde \Gamma_\beta$ and the convergence $\phi_i \to \phi$ specifies a  surjective graph map
\beqn
\rho: \tilde \Gamma_\alpha \to \tilde \Gamma_\beta.
\eeqn
Moreover, for each $v \in {\rm Vert}_{\rm off}^{\rm gho}(\tilde \Gamma_\beta)$, $e_{\phi_i, v} = 0$. As $e_i$ converges to $e$, it follows that for each $v' \in \rho^{-1}(v)$ (which must be a ghost component), one has $e_{\phi, v'} = 0$, meaning $v' \in {\rm Vert}^{\rm gho}_{\rm off}(\tilde\Gamma_\alpha)$.  Therefore, 
\beqn
\rho(v') \in {\rm Vert}^{\rm gho}_{\rm off}(\tilde\Gamma_{\beta})  \Longrightarrow v' \in {\rm Vert}^{\rm gho}_{\rm off}(\tilde\Gamma_\alpha).
\eeqn
Then we can show that $U_\alpha^* \subset U_\beta$.

Lastly, to see this is a manifold stratification (see Definition \ref{defn_manifold_stratification}), one only needs to give the local linear model as one has the transversality condition. Indeed, for each point $x  = (\phi, u, e, F) \in U_\alpha^*$, consider the vertical linearized Cauchy--Riemann operator
\beqn
D_{\alpha, u}: \Omega^0(u^* TX) \oplus \bigoplus_{v \notin {\rm Vert}_{\rm off}^{\rm gho}(\tilde\Gamma_\phi)} W_{\phi, v} \to \Omega^{0,1}(u^* TX).
\eeqn
It is not surjective unless ${\rm Vert}_{\rm off}^{\rm gho}(\tilde\Gamma_\phi) = \emptyset$. However, the cokernels of such operators form a $G$-equivariant vector bundle, denoted by 
\beqn
H^1_\alpha \to U_\alpha^*.
\eeqn
Define another vector bundle $W_\alpha^{\rm off} \to U_\alpha^*$ whose fibre over $(\phi, u, e, F)\in U_\alpha^*$ is the direct sum 
\beqn
\bigoplus_{v \in {\rm Vert}_{\rm off}^{\rm gho}(\tilde\Gamma_\phi)} W_{\phi, v} \subset W_\phi.
\eeqn
As the thickening datum $\lambda$ is transverse, it induces a surjective bundle map
\beqn
W_\alpha^{\rm off} \to H_\alpha^1.
\eeqn
Then one can see that the normal fibre at $x = (\phi, u, e, F)$ can be identified (non-canonically) with
\beq\label{chart_normal_bundle}
N_x U_\alpha^* \cong N_\phi B_{\Gamma_\alpha}^* \oplus \Big( {\rm ker} (W_\alpha^{\rm off} \to H_\alpha^1 )\Big)
\eeq
where $N_\phi B_{\Gamma_\alpha}^*$ parametrizes the directions of gluing the perturbed solutions while each $W_{\phi, v}^{\rm gho}$ parametrizes the release of perturbations on ghost components. The normal bundles are also linearly stratified by all $\beta \geq \alpha$. Then a stratified chart can be constructed using the gluing construction. 
\end{proof}

\subsubsection{Stratification on the obstruction bundle}

Recall that the obstruction bundle $E$ is the direct sum $W \oplus Q$ where $W$ comes from the component-refined thickening datum and $Q$ is the trivial bundle (which is not complex). 

We would like to specify a natural structure of $G$-equivariant flat 
stratified Hermitian vector bundles of $W$ over $U$. By Definition \ref{defn_stratified_vector_bundle}, one first needs to describe a constructible cosheaf ${\mc O}^W$ over $U$. Indeed, for each $x  = (\phi, u, e, F) \in U$, one has the natural splitting 
\beqn
W_x = \bigoplus_{v \in {\rm Vert}(\tilde\Gamma_\phi)} W_{\phi, v}.
\eeqn
We choose our thickening datum in the way that all $W_{\phi, v}$ are nonzero. Hence $W_x$ is naturally stratified by a poset ${\mc O}^W_x$. Moreover, it provides a stratification-like cosheaf ${\mc O}^W = {\mc O}^E$ over $U$, as along each stratum $U_\alpha^*$ the map types are locally constant and gluing corresponds to merging vertices in the graph.

To make the pair $(U, E)$ a stratified virtual manifold, one also needs to specify the cosheaf map ${\mc O}^U \to {\mc O}^E$. This map is obvious as for each $x \in U_\alpha^*$, ${\mc O}_x^U$ consists of the set of certain subsets of ghost vertices while ${\mc O}_x^E$ consists of the set of all subsets of all vertices. There is hence a natural inclusion ${\mc O}_x^U \to {\mc O}_x^E$. The following fact is obvious.

\begin{lemma}
For each $I$, the Kuranishi section $S_I: U_I \to E_I$ is stratified. 
\end{lemma}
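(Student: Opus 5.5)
Unwinding the definitions reduces the lemma to checking the fibrewise condition of a stratified section: for each $x\in U_I$ lying in the stratum $U_{I,\alpha}^*$, with local stratum $\alpha_x\in{\mc O}^{U_I}_x$, we must show $S_I(x)\in (E_{I,x})_{\rho_x(\alpha_x)}$. Here $\rho:{\mc O}^{U_I}\to{\mc O}^{E_I}$ is the cosheaf map described just before the statement, which sends a set of switched-off ghost vertices to the corresponding coordinate subspace of $W_x$. Because $S_I=S_{W_I}\oplus S_Q$, and $Q$ is a trivially stratified factor (the stratification sits entirely on $W$), the $Q$-component places no constraint. So the whole task is the $W$-component $S_{W_I}(\phi,u,e,F)=e$.

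The first step is to write $x=(\phi,u,e,F)$ and use the canonical splitting $W_x=\bigoplus_{v\in{\rm Vert}(\tilde\Gamma_\phi)}W_{\phi,v}$ coming from the component-refined thickening datum, namely the pushforward $\mu_I=\bigoplus_{\Gamma\in I}\rho_\Gamma\mu_\Gamma$. The stratum of $E_x$ labelled by a set ${\rm Vert}^{\rm gho}_{\rm off}$ is the subspace $\bigoplus_{v\notin {\rm Vert}^{\rm gho}_{\rm off}}W_{\phi,v}$. By the very definition of $U_{\mu,\alpha}^*$, a point of this stratum has $e_{\phi,v}=0$ precisely for the ghost vertices in ${\rm Vert}^{\rm gho}_{\rm off}(\tilde\Gamma_\phi)$. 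Hence $e$ lies in the required subspace, and in fact in its open stratum.

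The second step is to check the same condition at nearby points with respect to every local stratum $\beta\geq\alpha$, i.e. compatibility with the cosheaf structure. By Lemma~\ref{lemma56}, near $\phi$ there is a canonical morphism $\tilde\Gamma_\phi\to\tilde\Gamma_\psi$, and under gluing the summand $W_{\psi,w}$ equals $\bigoplus_{v\mapsto w}W_{\phi,v}$. This is because the $\gamma$-thickening data were pushed forward to $\delta$-thickening data along ${\ms O}_\gamma\to{\ms O}^\epsilon_\delta$. Consequently the condition ``$e_{\psi,w}=0$ for $w$ off'' pulls back to ``$e_{\phi,v}=0$ for all $v\mapsto w$''. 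This matches the description of the partial order on refined map types, so the fibrewise statement is compatible with the stratified bundle chart obtained from the flat structure.

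The main obstacle is this second step, specifically verifying that parallel transport of the flat bundle $W_I$ identifies the decomposition at $\psi$ with the merged decomposition at $\phi$. Note also that $\rho_\Gamma$ may vanish on some summands, but that does not change the summands of $W$; it only changes $\lambda$. I would handle this by recalling that the flat structure on $W_\gamma$ is the pushforward of the flat bundle ${\ms W}_\gamma$ along a covering. Parallel transport therefore preserves the summands indexed by sheets, and hence by vertices, which gives exactly the merging rule.
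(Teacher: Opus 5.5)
Your first step is essentially the paper's argument, which the paper states without proof as ``obvious'': the strata $U_{I,\alpha}^*$ are cut out by $e_{\phi,v}=0$ for the switched-off ghost vertices, $S_{W_I}(\phi,u,e,F)=e$, and $Q$ carries no stratification, so $S_I(x)\in (E_{I,x})_{\rho_x(\alpha_x)}$. Your second step is not needed, because the stratified-section condition is pointwise; what you check there is the well-definedness of the stratified bundle structure and cosheaf map on $E_I$, which the paper sets up before the lemma. Your aside that $e$ lies in the \emph{open} stratum is false in general: for instance $e=0$ on $S_I^{-1}(0)$, and components of $e$ on effective vertices may vanish. Nothing in your argument depends on it.
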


\subsubsection{The NCS structure on the thickenings}

It remains to construct NC structures on the thickenings $U_I$ to make the chart $K_I$ a $\uds{\bf Kur}_{\rm NCS}^{G, \flat}$-chart and to make the atlas a $\uds{\bf Kur}_{\rm NCS}^{G, \flat}$-atlas. There is one more choice to make. By Lemma \ref{lemma_compatible_metrics}, one can do the following step. 

\begin{step}\label{step5}
Choose a Riemannian metric near $\ov{{\mf A}'}$.
\end{step}

Using this construction one can obtain normal complex structures on all thickenings $U_I$. Indeed, for each refined map type $\alpha$, there is a refined map type $\ov\alpha$ which turns on thickening data on all components. Then $\alpha \leq \ov\alpha$. Moreover, for any point $x \in U_{I,\alpha}^*$, there is only one local branch of $\tilde U_{I,\ov\alpha}$ near $x$. Then inside the normal bundle $NU_{I,\alpha}^*$, there is a well-defined subbundle
\beqn
(NU_{I,\alpha}^*)_{\ov\alpha} \subset NU_{I,\alpha}^*.
\eeqn
The Riemannian metric induces a splitting of the exact sequence 
\beq\label{eqn56}
\xymatrix{ 0 \ar[r] & (NU_{I,\alpha}^*)_{\ov\alpha} \ar[r] &    NU_{I,\alpha}^* \ar[r] & N\tilde U_{I, \ov\alpha} \ar[r] & 0}.
\eeq
The splitting is compatible with the coordinate changes near $\ov{\mf A'}$. Hence one only needs to choose complex structures on $(NU_{I,\alpha}^*)_{\ov\alpha}$ and on $N\tilde U_{I,\ov\alpha}$. Once this splitting is obtained, one can forget about the Riemannian metric. 

For the latter, notice that the forgetful map $U_I \to B$ sends $U_{I, \ov\alpha}^*$ to $B_{\Gamma_\alpha}^*$. The forgetful map can be extended to the domain of the corresponding immersions, i.e.
\beqn
\tilde U_{I, \ov\alpha} \to \tilde B_{\Gamma_\alpha}.
\eeqn
One can see that the smooth projection induces a bundle isomorphism
\beqn
N\tilde U_{I, \ov\alpha} \cong ( \pi_{\tilde U_{I, \ov\alpha} \to \tilde B_{\Gamma_\alpha}})^* N\tilde B_{\Gamma_\alpha}
\eeqn
where the latter has a complex structure because $B$ is a complex manifold and $\tilde B_{\Gamma_\alpha} \to B$ is holomorphic.

On the other hand, one can identify a complex structure on the first item of \eqref{eqn56}. 

\begin{lemma}
There exists canonical $G$-invariant complex structures on the normal bundle $(NU_{I, \alpha}^*)_{\ov\alpha}$.
\end{lemma}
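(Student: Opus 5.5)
The plan is to identify $(NU_{I,\alpha}^*)_{\ov\alpha}$ at a point $x=(\phi,u,e,F)\in U_{I,\alpha}^*$ with the second summand of the non-canonical splitting \eqref{chart_normal_bundle}, and to observe that this summand, unlike the whole normal fibre, is canonical. The subbundle $(NU_{I,\alpha}^*)_{\ov\alpha}$ is the normal bundle of $U_{I,\alpha}^*$ inside the local branch of $\tilde U_{I,\ov\alpha}$. That branch consists of points with the same map type $\Gamma_\alpha$ (no gluing) and with all ghost perturbations allowed to be nonzero. So these normal directions never involve the gluing parameters $N_\phi B_{\Gamma_\alpha}^*$. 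They are exactly the infinitesimal releases of the perturbations $e_{\phi,v}$ for $v\in{\rm Vert}^{\rm gho}_{\rm off}(\tilde\Gamma_\phi)$, constrained by the linearized equation.

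First I will show that the tangent space of $U_{I,\Gamma_\alpha}^*$ at $x$ is the kernel of the surjective map
\beqn
T_\phi B_{\Gamma_\alpha}^* \oplus \Omega^0(u^*TX)\oplus W_\phi \to \Omega^{0,1}(u^*TX),
\eeqn
together with the framing directions, which are unaffected. This follows from the implicit function theorem in Lemma \ref{lemma_stratum_smooth}. Similarly, $T_xU_{I,\alpha}^*$ is the subspace on which the components in $W_\alpha^{\rm off}$ vanish. Projecting $T_xU_{I,\Gamma_\alpha}^*$ onto $W_\alpha^{\rm off}$ then induces an injective map
\beqn
(N_xU_{I,\alpha}^*)_{\ov\alpha}=T_xU_{I,\Gamma_\alpha}^*/T_xU_{I,\alpha}^*\hookrightarrow W_\alpha^{\rm off}|_x .
\eeqn
Its image is the set of $w$ for which $\lambda_u(w)\in{\rm Im}\,D_{\alpha,u}$, namely $\ker(W_\alpha^{\rm off}\to H^1_\alpha)$. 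This identification involves no choices.

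Next I will show that this kernel is a complex subspace of the Hermitian space $W_\alpha^{\rm off}$. Each $v$ in ${\rm Vert}^{\rm gho}_{\rm off}$ is a ghost component, so $u_v$ is constant, say with value $p$. The relevant operator on that component is the standard $\bar\partial$ on a trivial bundle with fibre $T_pX$, which is complex linear with respect to $J_p$. Its cokernel $H^{0,1}(C_v)\otimes T_pX$ is therefore a complex vector space. Because the thickening is component-refined and supported away from nodes, $H^1_\alpha$ splits as a sum over off-ghost vertices. Here $D_{\alpha,u}$ already absorbs the node-matching conditions and the effective components, whose perturbations remain on.

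The delicate point, which I expect to be the main obstacle, is to arrange that the map $W_{\phi,v}\to H^{0,1}(C_v)\otimes T_pX$ is complex linear. The component-wise thickening maps $\uplambda_\gamma$ from Proposition \ref{prop_thickening_invariance} were not chosen to be $J$-linear. I would first try to use that $W_{\phi,v}$ is complex and that $\lambda$ takes values in $\Lambda^{0,1}\otimes TX$, which is complex via $J$. In addition, I would require (a harmless condition on Step \ref{step2}) that $\uplambda_\gamma$ be complex linear in the $W$ variable. Then the composite to the cokernel is complex linear and its kernel is a complex subspace.

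If imposing complex linearity is undesirable, the fallback is as follows. The composite $W_\alpha^{\rm off}\to H^1_\alpha$ is surjective, so its kernel is the orthogonal complement of a subspace of prescribed dimension. I would transport the complex structure from the orthogonal complement of a complex lift via the Hermitian metric, using the flat Hermitian structure and $G$-invariance. Canonicity is then checked by showing the construction depends only on $\lambda$, $J$, and the Hermitian metric, all of which are $G$-invariant and preserved by the coordinate changes $\bm\Phi_{JI}$. Those coordinate changes only add summands that vanish on the overlaps, by Lemma \ref{lemma524}.
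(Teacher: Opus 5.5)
Your main route is the paper's proof: the derivative of $S_{W_I}$, followed by projection to $W_{I,\alpha}^{\rm off}$, identifies $(NU_{I,\alpha}^*)_{\ov\alpha}$ with $\ker(W_{I,\alpha}^{\rm off}\to H_{I,\alpha}^1)$, and one pulls back the complex structure of this subspace. The paper simply asserts that $W_{I,\alpha}^{\rm off}\to H_{I,\alpha}^1$ is complex-linear, and your justification supplies exactly what that assertion needs: the operator on off-ghost components is the $J_p$-linear $\bar\partial$, and $\lambda$ is taken complex-linear in the $W$-variable. You should therefore drop the fallback via Hermitian complements, since it would not give a canonical complex structure on a kernel that is not itself complex.
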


\begin{proof}
Consider the surjective complex-linear bundle map over $U_{I, \alpha}^*$
\beqn
W_{I,\alpha}^{\rm off} \to H_{I,\alpha}^1.
\eeqn
Using the Hermitian metric on the thickening space $W_I$, one obtains a $G$-invariant orthogonal and complex-linear splitting
\beqn
W_{I,\alpha}^{\rm off} \cong H_{I,\alpha}^1 \oplus {\rm ker}(W_{I,\alpha}^{\rm off}\to H_{I,\alpha}^1).
\eeqn
By the implicit function theorem (without involving gluing), the derivative of the section $S_{W_I}: U_I \to W_I$ in the direction of $(NU_{I, \alpha}^*)_{\ov\alpha}$ induces an isomorphism
\beqn
(NU_{I, \alpha}^*)_{\ov\alpha} \to W_I \to W_{I,\alpha}^{\rm off} \to {\rm ker}(W_{I, \alpha}^{\rm off} \to H_{I, \alpha}^1).
\eeqn
It induces a complex structure on $(NU_{I, \alpha}^*)_{\ov\alpha}$. 
\end{proof}

By now, one has identified complex structures on both $(NU_{I,\alpha}^*)_{\ov\alpha}$ and $N\tilde U_{I, \ov\alpha}$. Using the splitting of the exact sequence \eqref{eqn56}, one obtains a $G$-invariant complex structure on $NU_{I, \alpha}^*$. It is not hard to see that it extends to the normal bundle of the immersion $N\tilde U_{I, \alpha}$. 

\begin{prop}
The following are true.
\begin{enumerate}

\item The complex structure on $N\tilde U_{I, \alpha}$ induces a NC structure on the bundle $TU|_{\tilde U_{I, \alpha}}$.

\item For each $I$, the collection of NC structures on $TU|_{\tilde U_{I, \alpha}}$ form an NC structure on $TU \to U$. Hence the chart $K_I = (U_I, E_I, S_I, \Psi_I)$ is a $\uds{\bf Kur}_{\rm NCS}^{G, \flat}$-chart on ${\mf M}$.

\item A shrinking ${\mf A}_+'$ containing $\ov{\mf A'}$ together with the restrictions of the collection of the charts and coordinate changes to ${\mf A}_+'$  form a $\uds{\bf Kur}_{\rm NCS}^{G, \flat}$-atlas on ${\mf M}$. 
\end{enumerate}
\end{prop}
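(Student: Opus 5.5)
The three items are checked in order, working first locally at a point $x \in U_{I,\alpha}^*$ and then globalizing along strata and across charts.

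For (1), recall Definition \ref{defn_NCS_space}. An NC structure on the stratified linear space $T_xU_I$ is a complex structure on $T_xU_I/(T_xU_I)_0$ for which every $(T_xU_I)_\beta/(T_xU_I)_0$ is a complex subspace. Here the lowest local stratum at $x$ is $\alpha$, so $(T_xU_I)_0 = T_xU_{I,\alpha}^*$ and $T_xU_I/(T_xU_I)_0 = N_xU_{I,\alpha}^*$.

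We therefore need to show that for each $\beta \geq \alpha$ the subspace $(N_xU_{I,\alpha}^*)_\beta$ is complex for the structure built from the splitting of \eqref{eqn56}. For this we use the normal-fibre model \eqref{chart_normal_bundle}. The stratum $\beta$ is obtained from $\alpha$ by two operations:
\begin{enumerate}
\item smoothing a subset of nodes, which corresponds to a coordinate subspace of $N_\phi B_{\Gamma_\alpha}^*$ (a complex subspace, since $B$ is a complex manifold and the strata $B_\Gamma^*$ are complex submanifolds);
\item turning on thickening on a subset of ghost vertices, which corresponds to the intersection of ${\rm ker}(W_{I,\alpha}^{\rm off}\to H^1_{I,\alpha})$ with a sum of summands $W_{\phi,v}$.
\end{enumerate}
These two pieces are each complex and lie in different summands. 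Because the splitting of \eqref{eqn56} was chosen via the Riemannian metric, whereas the stratum $\beta$ is defined intrinsically, one must check that $(N_xU_{I,\alpha}^*)_\beta$ is exactly the direct sum of its images in the two factors. I would verify this from the linear stratified chart provided by gluing: the gluing map is stratum-preserving and linear to first order in these coordinates. It then remains to see that the complex structure extends over the immersion $\tilde U_{I,\alpha}$. This is local, since each branch near a self-intersection point is one of the strata treated above.

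For (2), the compatibility condition of Definition \ref{defn_NCS_virtual_manifold} must be checked. If $\alpha \leq \beta$ and $\tilde x \in \tilde U_{I,\beta}$ maps to $x \in U_{I,\alpha}^*$, then the NC structure on $T_xU_I$ induced from the $\beta$-branch must agree with the quotient NC structure on $N_xU_{I,\alpha}^*/(N_xU_{I,\alpha}^*)_\beta$, in the sense of the construction of ${\mb V}_{\geq\beta}$. On the $B$-factor this is automatic, since quotients of holomorphic normal bundles are holomorphic. On the $W^{\rm off}$-factor the complex structure comes from the derivative of $S_{W_I}$ composed with orthogonal projection in a Hermitian space, and that projection commutes with passing to sub-sums of summands $W_{\phi,v}$.

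The one genuinely nontrivial point in (2) is that the splittings of \eqref{eqn56} for $\alpha$ and for $\beta$ must be compatible on overlaps. I would force this by choosing the Riemannian metric in Step \ref{step5} to be compatible with a straightening (Proposition \ref{prop_orbifold_straightening}). The splittings of the exact sequences are then induced from orthogonal complements of nested subbundles, and hence nested. Once this holds, $K_I$ is a $\uds{\bf Kur}_{\rm NCS}^{G,\flat}$-chart: $S_I$ is stratified by the preceding lemma, and $E_I$ is flat stratified.

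For (3), we check that each ${\bm \Phi}_{JI}$ is an NCS combed embedding near $\ov{\mf A'}$. By Lemma \ref{lemma524}, the normal bundle $N_{JI}$ is the sum of summands $W_\Gamma$ for $\Gamma \in J\setminus I$, which do not perturb the equation. It is therefore a flat Hermitian complex stratified bundle, carrying the cosheaf map given by the vertex decomposition. The stabilization NC structure of Lemma \ref{lemma_stabilization_NCS_manifold} then matches the intrinsic one on $U_J$. Both the $B$-normal directions and the ${\rm ker}(W^{\rm off}\to H^1)$ directions are preserved: the extra summands inject isometrically and complex-linearly into $W^{\rm off}_J$, and they map to zero in $H^1$.

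Finally, because the metric is compatible only near $\ov{\mf A'}$, we restrict to a shrinking ${\mf A}'_+$ lying in that neighbourhood. I expect the main obstacle to be the intrinsic-versus-chosen-splitting issue in (1)–(2), namely showing that the strata are direct sums with respect to the chosen splitting. If metric-compatibility does not suffice to settle it, my fallback is to define the complex structure on the $W$-part directly through the gluing chart instead of through \eqref{eqn56}.
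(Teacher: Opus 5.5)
Your outline follows the paper's own route: the complex structure on $NU_{I,\alpha}^*$ comes from the splitting of \eqref{eqn56}, compatibility for $\alpha<\beta$ comes from the diagram of exact sequences, and one then restricts to a shrinking. Your treatment of (3) is more explicit than the paper's. However, the step you flag as the main obstacle is a genuine gap, and neither remedy you propose closes it.

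Fix $x\in U_{I,\alpha}^*$ and write $N=N_xU_{I,\alpha}^*$ and $K=(N_xU_{I,\alpha}^*)_{\ov\alpha}\cong\ker(W^{\rm off}_{I,\alpha}\to H^1_{I,\alpha})$. Let $C$ be the metric complement of $K$, with the complex structure transported from $N_\phi B_{\Gamma_\alpha}^*$. For $\beta\geq\alpha$, the subspace $V_\beta:=(N_xU_{I,\alpha}^*)_\beta$ meets $K$ in a complex subspace $K_\beta$ and projects onto a complex subspace $L_\beta\subset N_\phi B^*_{\Gamma_\alpha}$. Hence $V_\beta=K_\beta+\{\ell_C+A\ell\ |\ \ell\in L_\beta\}$, where $\ell_C$ is the lift of $\ell$ to $C$ and $A:L_\beta\to K/K_\beta$ is real-linear. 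So $V_\beta$ is complex if and only if $A$ is complex-linear.

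Replacing $C$ by the graph of a linear map $B:C\to K$ replaces $A$ by $A-B|_{L_\beta}$. Since every complement of $K$ comes from some metric, for a metric chosen as in Lemma \ref{lemma_compatible_metrics} the map $A$ is essentially arbitrary. A concrete case: take two higher-genus ghosts $v_0,v_2\in{\rm Vert}^{\rm gho}_{\rm off}$, each attached to an effective vertex, and let $\beta$ smooth only the node at $v_0$. Then $K_\beta=\ker(W_{\phi,v_0}\to H^1_{v_0})$, while $A$ takes values in $\ker(W_{\phi,v_2}\to H^1_{v_2})$, which is typically nonzero.

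Your two remedies fail for the following reasons.
\begin{itemize}
\item The gluing chart only shows that $V_\beta$ splits with respect to the chart's own complement, namely the tangent space to the slice where the kernel coordinates vanish. Your ``verify from the linear stratified chart'' step therefore proves the claim only when that complement happens to equal $C$.
\item Making the metric compatible with a straightening does not help. A splitting in the sense of Definition \ref{defn_linear_splitting} is just a nested family of complements, which orthogonal complements for any inner product already provide. Nestedness is the kind of condition (2) uses. It is not the adaptedness $V_\beta=(V_\beta\cap K)\oplus(V_\beta\cap C)$ that (1) needs.
\end{itemize}

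The repair is your fallback, made systematic. What must come from the gluing chart, or be imposed, is the complement $C$. The complex structure on the $W$-part is already canonical via $dS_{W_I}$.

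Suppose $C_0$ is an adapted complement, for example one supplied by a stratified gluing chart. Then the graph of $B:C_0\to K$ is adapted if and only if $B(V_\beta\cap C_0)\subset K_\beta$ for all $\beta$. This is a linear condition, so adapted complements form a nonempty affine space at each point. That space is preserved by $G$-averaging and by cut-off interpolation. Consequently the chart-by-chart induction of Lemma \ref{lemma_compatible_metrics} applies. Combined with an induction over strata for the compatibility in (2), it produces $G$-invariant adapted splittings compatible with coordinate changes near $\ov{\mf A'}$.

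With that in place, your arguments for (2) and (3) go through. One correction in (3): only the parts of $W_\Gamma$, $\Gamma\in J\setminus I$, over off ghost vertices are normal to the stratum; the remaining parts are tangent to it.

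The paper's proof is silent at exactly this point. Its claims that (1) ``can be checked directly'' and that the splittings ``commute with all arrows'' tacitly assume this adaptedness.
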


\begin{proof}
The normal bundle $N\tilde U_{I, \alpha}$ is stratified by all $\beta \geq \alpha$. By the definition of NC structures on vector spaces (Definition \ref{defn_NCS_space}), we first need to show that for each $\beta\geq\alpha$, $(N\tilde U_{I, \alpha})_\beta \subset N\tilde U_{I, \alpha}$ is a complex subspace. Indeed, this can be checked directly following the way we construct the complex structures. Hence $TU|_{\tilde U_{I, \alpha}}$ becomes a $G$-equivariant NC vector bundle. 

For (2), by the definition of NC structures on stratified vector bundles (Definition \ref{defn_NCS_virtual_manifold}), one needs to show that for $x \in U_{I, \alpha}^* \cap \ov{U_{I, \beta}^*}$ for $\alpha < \beta$, the NC structure on $T_x U_I$ induced from either $\alpha$ or $\beta$ are the same.  Indeed, one has the following commutative diagram.
\beqn
\xymatrix{  0 \ar[r] & (NU_{I, \alpha}^*)_{\ov\alpha} \ar[d] \ar[r] &  NU_{I, \alpha}^* \ar[r] \ar[d]  & N\tilde U_{I, \ov\alpha} \ar[r] \ar[d] & 0 \\
      0 \ar[r] & (N\tilde U_{I, \beta})_{\ov\beta} \ar[r] &  N\tilde U_{I, \beta} \ar[r] &  N\tilde U_{I, \ov\beta} \ar[r] & 0 }
\eeqn
As the complex structures on $NU_{I, \alpha}^*$ and $NU_{I, \beta}^*$ are determined by complex structures on other bundles and the splittings, while the splittings also commute with all arrows in the diagram, we see that the map $N_x U_{I, \alpha}^* \to N_x \tilde U_{I, \beta}$ is complex linear. Hence the compatibility holds. 

For (3), as the compatible complex structures are only constructed near $\ov{\mf A'}$, one needs to take a necessary shrinking which contains $\ov{\mf A'}$.
\end{proof}

\begin{prop}
After the choices made in Steps I---V, one obtains an oriented $\uds{\bf Kur}_{\rm NCS}^{G, \flat}$-atlas ${\mf A}$ (without boundary) on ${\mf M}$ with a shrinking ${\mf A}'$ and a continuous strong map 
\beqn
{\mf f}: {\mf A} \to \ov{\mc M}_{g,n} \times X^n 
\eeqn
which extends the product of the stabilization map and the evaluation map. Hence there is a well-defined class
\beqn
[{\mf A}, {\mf A}']_{\rm main}^{\rm vir} \in H_*( \ov{\mc M}_{g,n} \times X^n; {\mb Q}).
\eeqn
\end{prop}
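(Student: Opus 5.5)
The statement collects the results of Steps \ref{step1}--\ref{step5}, so the proof is an assembly argument: I would verify that the data already built satisfy every hypothesis of the abstract definition of $[{\mf A}, {\mf A}']^{\rm vir}_{\rm main}$ in Section \ref{section4}. The order is as follows.

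First, after Step \ref{step3}, the charts $K_I$ together with the combed embeddings ${\bm \Phi}_{JI}$ form a $\uds{\bf Kur}^{G,\flat}$-atlas. The canonical flat tubular neighborhoods come from Lemma \ref{lemma524}: on $O_I\cap O_J$ the extra summands $\rho_\Gamma\mu_\Gamma$ vanish, so the normal directions are exactly the flat Hermitian summands $W_\Gamma$ for $\Gamma\in J\setminus I$. Second, by the last proposition, after Steps \ref{step4} and \ref{step5} the restriction to a shrinking ${\mf A}_+'\supset\ov{{\mf A}'}$ is a $\uds{\bf Kur}^{G,\flat}_{\rm NCS}$-atlas, and I rename it ${\mf A}$. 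For this I must check that the coordinate changes are NCS combed embeddings. The normal bundle $N_{JI}=\bigoplus_{\Gamma\in J\setminus I}W_\Gamma$ carries the component stratification, with a cosheaf map ${\mc O}^{U_I}\to{\mc O}^{N_{JI}}$ given by the ghost vertices switched off. The stabilization NC structure of Lemma \ref{lemma_stabilization_NCS_manifold} then agrees with the one on $U_J$, because both are assembled from the same splitting \eqref{eqn56} induced by the compatible metric of Lemma \ref{lemma_compatible_metrics}.

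Third, I need orientation. The $G$-action is compatible: $U_I$ carries a stable almost complex structure from the linearized Cauchy--Riemann operator, $B$ is complex, and the bundle $W_I$ is complex. The summand $Q\cong\mathfrak{g}$ cancels against the orbit directions, so $TU_I - E_I$ is canonically oriented, and the orientation is preserved by the complex stabilizations $N_{JI}$. Fourth, the strong map ${\mf f}=\prod_I f_I$ comes from the preceding proposition; it is invariant along the normal fibers because the added thickening data do not perturb the equation. Finally, ${\mf M}$ is compact by Gromov compactness and Lemma \ref{lemma_AMS_parametrization}, and the target $\ov{\mc M}_{g,n}\times X^n$ is a compact oriented orbifold. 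The construction of Subsection 4.3 (Choices (i)--(iv), Lemma \ref{lemma421}, and the independence lemma) therefore applies and yields $[{\mf A},{\mf A}']^{\rm vir}_{\rm main}\in H_*(\ov{\mc M}_{g,n}\times X^n;{\mb Q})$.

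I expect the main obstacle to be the NCS compatibility of the coordinate changes in the second step. Two things must match: the complex structure on $(NU_{I,\alpha}^*)_{\ov\alpha}$, which is defined through ${\rm ker}(W^{\rm off}_{I,\alpha}\to H^1_{I,\alpha})$, and the one on $U_J$, which is defined through $W^{\rm off}_{J,\alpha}$. My plan is to use the orthogonal decomposition $W_J=W_I\oplus N_{JI}$ on the overlap. There the section's derivative in the $N_{JI}$ directions is the identity, since those summands are tautological; hence the kernel for $J$ is the kernel for $I$ plus the complex summand $(N_{JI})^{\rm off}_\alpha$, and both identifications are complex linear. A secondary subtlety is that everything is only defined near $\ov{{\mf A}'}$, so I would keep track of the germs and invoke Lemma \ref{delta_r} to ensure that the perturbed zero locus stays inside the region where the structures exist.
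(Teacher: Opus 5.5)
Your proposal is correct and takes the same approach as the paper. The paper gives no separate proof of this proposition; it treats it as the assembly of three earlier results, exactly as you do: the proposition that $(K_I,{\bm \Phi}_{JI})$ with ${\mf f}$ is a $\uds{\bf Kur}^{G,\flat}$-atlas with a strong map, item (3) of the NC-structure proposition giving the $\uds{\bf Kur}^{G,\flat}_{\rm NCS}$-atlas on ${\mf A}_+'\supset\ov{{\mf A}'}$, and the abstract construction of Section \ref{section4}. Your additional checks make explicit what the paper leaves implicit, namely the orientation and the coordinate-change compatibility $\ker(W^{\rm off}_{J,\alpha}\to H^1_{J,\alpha})=\ker(W^{\rm off}_{I,\alpha}\to H^1_{I,\alpha})\oplus(N_{JI})^{\rm off}_\alpha$. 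Note that this identity holds because $\lambda_J$ vanishes on the $N_{JI}$ summands over $O_I\cap O_J$ (Lemma \ref{lemma524}), not because the section's derivative there is the identity. The tautological derivative is what makes the identification of those normal directions complex linear.
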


To finish proving Theorem \ref{thm52}, one needs to compare  two different sets of choices as well as two different pairs $(\omega_0, J_0)$ and $(\omega_1, J_1)$ which are deformation equivalent.

\subsection{Comparing choices}

Now we prove that the virtual fundamental class constructed above only depends on the symplectic deformation class of $\omega$. 

\subsubsection{Symplectic deformation invariance}

We first compare different choices made in Step \ref{step2} through Step \ref{step5} while using the same choice made in Step \ref{step1}. In this stage we could also include the comparison of deformation equivalent symplectic forms and compatible almost complex structures. 

Consider a 1-parameter family of symplectic forms $\omega_t$, $t \in [0, 1]$. Let $J_t$ be a family of $\omega_t$-compatible almost complex structures. Let $\tilde{\mf M}$ be the moduli spaces of pairs $(p, t)$ where $t \in [0, 1]$ and $p \in \ov{\mc M}{}_{g,n}(X, J_t, A)$ and ${\mf M}_t \subset \tilde {\mf M}$ the slice for any fixed $t$. 

We first choose a common line bundle datum as the Step \ref{step1} of the construction. For each $t_0 \in [0, 1]$, there exists a line bundle datum ${\mb L} = (L, k, {\mf D})$ whose curvature form $\Omega$ tames all $J_t$ for $t$ sufficiently close to $t_0$. As $[0, 1]$ is compact, one can break down the comparison to finitely many short intervals. Hence we may assume that $\Omega$ tames $J_t$ for all $t \in [0, 1]$. 

Now we consider the associated family of curves and the stratification by map types. Abbreviate $B_{g, n, d}$ by $B$ where $d$ is given by \eqref{degree_formula}. Denote
\beqn
\tilde{B} = B \times [0, 1]
\eeqn
which is stratified by pairs $\Theta:=(\Gamma, a)$ where $a \in \{0, 1, (0, 1)\}$ and $\Gamma$ is an isomorphism class of map types. Let $\tilde C \to \tilde B$ be the universal curve. Then one can have an AMS parametrization of the moduli space $\tilde{\mf M}$. One can choose a $G$-invariant precompact open subset $B_0 \subset B$ such that 
\beqn
{\mc M}_{{\mb L}}^{\rm fr}( \tilde C/ \tilde B_0, X, A)/G \cong \tilde{\mf M},\ {\rm where}\ \tilde B_0 = B_0 \times [0, 1].
\eeqn

Now we make independent choices in Step \ref{step2}---Step \ref{step5} for the two moduli spaces ${\mf M}_0$ and ${\mf M}_1$. In Step \ref{step2}, for $a = 0, 1$, one chooses component-wise transverse component-refined thickening datum 
\beqn
\mu_{\Gamma, a} = (O_{\Gamma, a}, W_{\Gamma, a}, \lambda_{\Gamma, a})
\eeqn
such that for both $a = 0, 1$, one has
\beqn
\ov{B_0} \subset \bigcup_i O_{\Gamma, a}.
\eeqn
In Step \ref{step3}, one chooses precompact open subsets $O_{\Gamma, a}'' \sqsubset O_{\Gamma, a}' \sqsubset O_{\Gamma, a}$ such that all $O_{\Gamma, a}''$ cover $B_0$, cut-off function $\rho_{\Gamma, a}$ supported in $O_{\Gamma, a}'$ and identically 1 on $O_{\Gamma, a}''$, and infinite sequences of precompact open sets
\beqn
O_{\Gamma, a}' \sqsubset G_{\Gamma, a}^1 \sqsubset H_{\Gamma, a}^1 \sqsubset \cdots \sqsubset O_{\Gamma, a}.
\eeqn

\begin{lemma}\label{lemma532}
For independent constructions of $({\mf A}_0, {\mf f}_0)$ and $({\mf A}_1, {\mf f}_1)$ respectively, there exist the following objects.
\begin{enumerate}

\item a $\uds{\bf Kur}^{G, \flat}$-atlas $\tilde {\mf A}$ with boundary on $\tilde{\mf M}$.



\item A continuous strong map $\tilde {\mf f}: \tilde {\mf A} \to \ov{\mc M}_{g,n} \times X^n$.

\end{enumerate}
They satisfy the following conditions.
\begin{enumerate}

\item The boundary restrictions of $(\tilde {\mf A}, \tilde {\mf f})$ coincide with the disjoint union of $({\mf A}_0, {\mf f}_0)$ and $({\mf A}_1, {\mf f}_1)$.

\item There exists a collared neighborhood of the boundary where $(\tilde {\mf A}, \tilde {\mf f})$ is constant in the direction of the collar coordinate. 
\end{enumerate}
\end{lemma}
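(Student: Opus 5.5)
We construct $\tilde{\mf A}$ by running Steps \ref{step2}--\ref{step4} over the parametrized base $\tilde B = B\times[0,1]$, with the indexing stratification by $\Theta = (\Gamma, a)$, while forcing the choices over collars $B\times[0,\tau)$ and $B\times(1-\tau,1]$ to be the pullbacks of the given choices for ${\mf A}_0$ and ${\mf A}_1$.

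\emph{Thickening data.} Over $B\times[0,\tau)$ we take $\mu_{\Gamma,0}\times[0,\tau)$, and over $B\times(1-\tau,1]$ we take $\mu_{\Gamma,1}\times(1-\tau,1]$. Since transversality is open and $\mu_{\Gamma,a}$ is component-wise transverse for $J_a$, the pulled-back data stay component-wise transverse for $J_t$ on a slightly smaller collar once $\tau$ is small. This uses compactness of ${\mc M}^{\rm fr}_{\mb L}(\tilde C/\tilde B_0,X,A)$ and the fact that all $\lambda$'s are supported away from nodes. On the middle region $\ov{B_0}\times[\tau/2,1-\tau/2]$ we apply Proposition \ref{prop_thickening_invariance} to the family $\tilde C\to\tilde B$, which works verbatim with a parameter, inducting on strata as in Corollary \ref{cor520}. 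This yields further component-refined data $\tilde\mu_{\Gamma,\rm mid}$ supported in $B\times(\tau/4,1-\tau/4)$, so that they vanish near the collars.

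\emph{Indexing.} We index $\tilde{\mf A}$ by the disjoint union of three families of labels: $\Gamma$-labels coming from ${\mf A}_0$, from ${\mf A}_1$, and from the middle. The ${\mf A}_0$-labels are supported over $[0,\tau)$, the ${\mf A}_1$-labels over $(1-\tau,1]$, and the middle labels over $(\tau/4,1-\tau/4)$. We take the cut-off functions and the nested sets \eqref{nested_sets} as follows. For the ${\mf A}_a$-labels we use products of the given ones with fixed intervals, multiplied in the $t$-direction by a cut-off equal to $1$ near $t=a$. For the middle labels we use fresh choices. The sets $\tilde O_I$ are then defined by the same formula as $O_I$, with $\#I$ counted in the combined index set. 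We must check that near $t=0$ the only nonempty $\tilde O_I$ are those with $I$ consisting of ${\mf A}_0$-labels, and that they equal $O_{I,0}\times[0,\tau')$. Since the ${\mf A}_0$-labels form a subset, $\#I$ agrees with the count in ${\mf A}_0$ provided the middle and ${\mf A}_1$ sets are empty there. We arrange this by requiring their $G^l,H^l$ to lie over $t>\tau'$.

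\emph{Charts, coordinate changes and the strong map.} With the indexing in place, the charts $\tilde K_I$, the coordinate changes with canonical flat tubular neighborhoods (Lemma \ref{lemma524}), and the atlas axioms follow exactly as in the unparametrized case. Here Lemma \ref{lemma_overlapping} is replaced by its $t$-parametrized analogue. The smooth structure is that of Proposition \ref{prop_smooth} with a parameter, and it is automatically a product near $t=0,1$. The strong map $\tilde{\mf f}$ is the evaluation-stabilization map on each $\tilde U_I$. It is collared and restricts to ${\mf f}_0$ and ${\mf f}_1$, because it is defined intrinsically from $(\phi,u)$.

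The main obstacle is the bookkeeping of the nested sets. Because $O_I$ depends on $\#I$, enlarging the index set could change which $G^{\#I},H^{\#I}$ appear. We therefore need the boundary index sets to embed so that near $t=a$ only ${\mf A}_a$-labels are active. If the counts cause trouble, we would first try relabelling the nested sequences of the collar labels so that $G^l_{\Gamma}$ near $t=a$ coincides with $G^l_{\Gamma,a}$ for every $l$, independent of the other labels. A second, milder point is transversality of the pulled-back collar data for $J_t$, $t\neq a$, which we handle by the openness and compactness argument above.
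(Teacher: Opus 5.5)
Your proposal follows essentially the same route as the paper: labels $\Theta=(\Gamma,a)$ with $a\in\{0,1,(0,1)\}$, the boundary data $\mu_{\Gamma,a}$ pulled back to collars and multiplied by a $t$-cut-off, fresh middle data from Proposition \ref{prop_thickening_invariance}, $\tilde O_{\tilde I}$ defined by the same formula, and collar nested sets taken as products with intervals so that only ${\mf A}_a$-labels are active near $t=a$ (this already settles your $\#I$ concern, so no relabelling is needed). Two details need to be stated explicitly: the $t$-intervals must grow with $l$ (the paper takes $\tilde G^l_{\Gamma,0}=G^l_{\Gamma,0}\times[0,g_0^l)$ and $\tilde H^l_{\Gamma,0}=H^l_{\Gamma,0}\times[0,h_0^l)$ with $g_0^1<h_0^1<g_0^2<\cdots$) so that the precompact nesting also holds in the $t$-direction, and your claim that everything is ``automatically a product near $t=0,1$'' requires choosing the path so that $(\omega_t,J_t)$ is constant near the endpoints, which the paper tacitly assumes and which makes your openness argument for transversality on the collars unnecessary.
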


\begin{proof}
We carry out Step \ref{step2} and Step \ref{step3} for $\tilde {\mf M}$ where the choices for the two boundary components will be extended. 

Near the boundary, one can simply pullback the existing thickening data to a small collared neighborhood. We first define
\beqn
\tilde O_{\Gamma, a}:= \left\{ \begin{array}{cc} O_{\Gamma, a} \times [0, 4\epsilon),\ &\ a = 0,\\
O_{\Gamma, a} \times (1-4\epsilon, 1],\ &\ a = 1  \end{array}\right.
\eeqn
for $\epsilon>0$ sufficiently small and 
\begin{align*}
&\ \tilde O_{\Gamma, a}'':= \left\{ \begin{array}{cc} O_{\Gamma, a}' \times [0, 2\epsilon),\ &\ a = 0,\\
O_{\Gamma, a} \times (1-2\epsilon, 1],\ &\ a = 1,  \end{array}\right. \ &\ \tilde O_{\Gamma, a}':= \left\{ \begin{array}{cc} O_{\Gamma, a}' \times [0, 3\epsilon),\ &\ a = 0,\\
O_{\Gamma, a} \times (1-3\epsilon, 1],\ &\ a = 1.  \end{array}\right.
\end{align*}
Choose cut-off function
\beqn
\rho_a: [0, 1] \to [0,1]
\eeqn
such that $\rho_0$ is supported within $[0, 3\epsilon]$ and identically 1 within $[0, 2\epsilon]$; $\rho_1$ is supported within  $[1-3\epsilon, 1]$ and identically 1 within $[1-2\epsilon, 1]$. Define
\beqn
\tilde \rho_{\Gamma, a} = \rho_\Gamma \rho_a: \tilde O_{\Gamma, a} \to [0, 1].
\eeqn
The thickening data $\mu_{\Gamma, a}$ can be pulled back to $\tilde O_{\Gamma, a}$ and multiplied with the cut-off function $\tilde \rho_{\Gamma, a}$. This produces (not necessarily regular) Kuranishi charts with boundary, denoted by 
\beqn
\tilde K_{\Gamma, a}.
\eeqn
One can then choose two sequences of numbers
\begin{align*}
&\ 3\epsilon < g_0^1 < h_0^1 < \cdots < \cdots < 4\epsilon,\ &\ 1-3\epsilon < \cdots < h_1^1 < g_1^1 < 1-2\epsilon.
\end{align*}
Define
\begin{align*}
&\ \tilde G_{\Gamma, 0}^l = G_{\Gamma, 0}^l \times [0, g_0^l),\ &\ \tilde H_{\Gamma, 0}^l = H_{\Gamma, 0}^l \times [0, h_0^l)
\end{align*}
and corresponding versions of $\tilde G_{\Gamma, 1}^l$ and $\tilde H_{\Gamma, 1}^l$. Then one has
\beqn
\tilde O_{\Gamma, a}' \sqsubset \tilde G_{\Gamma, a}' \sqsubset \tilde H_{\Gamma, a}' \sqsubset \cdots \sqsubset \tilde O_{\Gamma, a}.
\eeqn

Now consider the region $[\epsilon, 1-\epsilon] \times B_0$. For each isomorphism class of map type $\Gamma$, one can construct transverse component-refined thickening datum 
\beqn
\mu_{\Gamma, (0, 1)} = (\tilde O_{\Gamma, (0, 1)}, \tilde W_{\Gamma, (0, 1)}, \tilde \lambda_{\Gamma, (0, 1)})
\eeqn
over the family $\tilde C \to \tilde O_{\Gamma, (0,1)}$ such that
\beqn
B_0 \times [\epsilon, 1-\epsilon] \subset \bigcup_\Gamma \tilde O_{\Gamma, (0,1)}.
\eeqn
Similar to the two boundary cases, one chooses subsets
\beqn
\tilde O_{\Gamma, (0,1)}'' \sqsubset \tilde O_{\Gamma, (0,1)}' \sqsubset \tilde O_{\Gamma, (0,1)}
\eeqn
such that the union of all $\tilde O_{\Gamma, 0}'', \tilde O_{\Gamma, 1}'', \tilde O_{\Gamma,(0,1)}''$ cover $[0,1]\times B_0$. Then choose $G$-invariant cut-off functions
\beqn
\tilde \rho_{\Gamma, (0,1)}: \tilde O_{\Gamma, (0,1)} \to [0,1]
\eeqn
supported in $\tilde O_{\Gamma, (0,1)}'$ and identically 1 in $\tilde O_{\Gamma,(0,1)}''$. Choose precompact subsets
\beqn
\tilde O_{\Gamma, (0,1)}' \sqsubset \tilde G_{\Gamma, (0,1)}' \sqsubset H_{\Gamma, (0,1)}' \sqsubset \cdots \sqsubset \tilde O_{\Gamma, (0,1)}.
\eeqn

Now we can construct an atlas on $\tilde{\mf M}$. Denote by $\tilde {\mc I}$ the set of all nonempty subsets of the collection of $\Theta$ (being either $(\Gamma, 0)$, $(\Gamma, 1)$, $(\Gamma, (0,1))$). For each $\tilde I \subset \tilde {\mc I}$, define
\beqn
\tilde O_{\tilde I}:= \bigcap_{\Theta \in \tilde I} \tilde H_\Theta^{\# \tilde I} \setminus \bigcup_{\Theta \notin \tilde I} \ov{ G_\Theta^{\# \tilde I}} .
\eeqn
By taking direct sums of the thickening data, one obtains charts and coordinate changes as before, denoted by $K_{\tilde I}$ and ${\bm \Phi}_{\tilde J\tilde I}$ and an atlas $\tilde {\mf A}$ on $\tilde{\mf M}$. It is straightforward to see that the atlas has boundary being identified with the disjoint union ${\mf A}_0$ and ${\mf A}_1$ which is constant in the collar coordinate direction. Moreover, the stabilization and evaluation maps are canonically defined and are strong maps on $\tilde {\mf A}$ satisfying the requirement.
\end{proof}

We then proceed with Step \ref{step4} and Step \ref{step5}. 

\begin{lemma}
In the situation of Lemma \ref{lemma532} one can do the following.

\begin{enumerate}

\item Given basic shrinkings ${\mf A}_0' \subset {\mf A}_0$ and ${\mf A}_1' \subset {\mf A}_1$, there exists a basic shrinking of $\tilde{\mf A}$ whose boundary restriction is the disjoint union of ${\mf A}_0'$ and ${\mf A}_1'$ and which is constant in the direction of the collar coordinate near the boundary.

\item Given Riemannian metrics near $\ov{{\mf A_0'}}$ and near $\ov{{\mf A_1'}}$, there exists a Riemannian metric near $\ov{ \tilde {\mf A}'}$ whose boundary restriction coincides with the given ones and which is constant in the direction of the collared coordinate near the boundary.
\end{enumerate}
\end{lemma}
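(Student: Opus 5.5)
For (1), I would mimic the collar construction of Lemma \ref{lemma532}: extend the chosen intermediate sets $\dot G_{\Gamma,a}^l, \dot H_{\Gamma,a}^l$ of the two boundary basic shrinkings into the collar as products, and choose the interior ones freely. Concretely, for $a=0$ I would pick numbers $g_0^l<\dot g_0^l<\dot h_0^l<h_0^l$ interleaving the sequence already fixed in Lemma \ref{lemma532}. I would then set
\[
\dot{\tilde G}{}_{\Gamma,0}^l=\dot G_{\Gamma,0}^l\times[0,\dot g_0^l),\qquad \dot{\tilde H}{}_{\Gamma,0}^l=\dot H_{\Gamma,0}^l\times[0,\dot h_0^l),
\]
and treat $a=1$ symmetrically. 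These satisfy $\tilde G^l\sqsubset\dot{\tilde G}{}^l\sqsubset\dot{\tilde H}{}^l\sqsubset\tilde H^l$ because products of precompact inclusions with strictly smaller intervals are precompact in the half-open collar. For $\Theta=(\Gamma,(0,1))$, whose sets lie in $B_0\times[\epsilon,1-\epsilon]$ away from the boundary, I would choose arbitrary intermediate sets. This gives a basic shrinking in the sense of Definition \ref{defn_basic_shrinking}, and the induced sets $\tilde O'_{\tilde I}$ are determined.

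Next I would choose the chart shrinkings $\tilde U'_{\tilde I}\sqsubset U_{\tilde I}$ as in Step \ref{step4}. Over the collar region $t\in[0,\tau)$, the charts $K_{\tilde I}$ are products of boundary charts with the interval. So I would take $\tilde U'_{\tilde I}=U'_{I}\times[0,\tau)$ there, where $I$ is the boundary index obtained from $\tilde I$ by forgetting the label $a$; this matches ${\mf A}_0'$ on the boundary. Away from the collar I would choose $\tilde U'_{\tilde I}$ so that
\[
S_{\tilde I}^{-1}(0)\cap\ov{\tilde U'_{\tilde I}}={\mc M}^{\rm fr}_{\mb L}(\tilde C/\ov{\tilde O'_{\tilde I}},X,A),
\]
and glue the two choices with a $G$-invariant interpolation in $t$ near $t=\tau$. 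This uses the compactness of ${\mc M}^{\rm fr}_{\mb L}(\tilde C/\ov{\tilde O'_{\tilde I}},X,A)$ and the normality argument from the shrinking lemma. Indices $\tilde I$ mixing $(\Gamma,0)$ and $(\Gamma,(0,1))$ labels only meet for $t\in(3\epsilon,4\epsilon)$, which is inside the collar but away from the boundary itself, so the product description near $t=0$ is unaffected.

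For (2), I would invoke Lemma \ref{lemma413}(1) directly, with the collared shrinking constructed in (1). The given metrics near $\ov{{\mf A}_0'}\sqcup\ov{{\mf A}_1'}$ form a metric near $\ov{\partial\tilde{\mf A}'}$. The required ingredient is that $\tilde{\mf A}$ has a collar on which the flat tubular neighborhoods $N_{\tilde J\tilde I}$ are products, which holds by item (2) of Lemma \ref{lemma532}. The inductive procedure of Lemma \ref{lemma_compatible_metrics} then runs over the poset $\tilde{\mc I}$. At each step, I would first take the product metric $g_{\partial}+dt^2$ on the collar, then the metric pushed forward by $\Phi^\epsilon_{\tilde I\tilde J}$ for smaller indices, and finally extend over the rest of $U_{\tilde I}$ by a partition of unity. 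On the overlap these two prescriptions already agree, because the boundary metrics were compatible and the coordinate changes are products in the collar.

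The main obstacle I expect is the precompactness bookkeeping in (1). I must check that with the product choices, $\ov{\tilde O'_{\tilde I}}$ is still compact inside $\tilde O_{\tilde I}$ for indices mixing boundary labels with interior labels $(\Gamma,(0,1))$. I would handle this by choosing the interval endpoints $\dot g^l_a,\dot h^l_a$ strictly inside $(3\epsilon,4\epsilon)$ and $(1-4\epsilon,1-3\epsilon)$, and by verifying the overlap property of Lemma \ref{lemma_overlapping} for the dotted sets in the same way as for the undotted ones.
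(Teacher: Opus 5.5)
Your proposal is correct and follows the paper's own argument. You interleave $g_a^l<\dot g_a^l<\dot h_a^l<h_a^l$, take product sets over the collar, choose the intermediate sets for the interior labels $(\Gamma,(0,1))$ freely, shrink the charts compatibly, and deduce (2) from Lemma \ref{lemma413}; two small remarks follow.

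\begin{enumerate}
\item Precompactness of $\tilde O'_{\tilde I}$ in $\tilde O_{\tilde I}$ is automatic from the interleaving, so the obstacle you anticipate is not one.
\item Near $t=0$ you only need the interior sets to avoid a neighbourhood $\{t<\tau\}$ of the boundary. Your stated reason does not hold in general: mixed overlaps are not confined to $t\in(3\epsilon,4\epsilon)$, but that confinement is not needed.
\end{enumerate}
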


\begin{proof}
For $a = 0, 1$, the basic shrinking of ${\mf A}_a$ is given by subsets
\beqn
G_{\Gamma, a}^l \sqsubset G_{\Gamma, a}^{'l} \sqsubset H_{\Gamma, a}^{'l} \sqsubset H_{\Gamma, a}^l,\ l = 1, \ldots
\eeqn
and corresponding shrinkings of the charts $K_I$. Choose numbers
\beqn
g_0^l < g_0^{'l} < h_0^{'l} < h_{0}^l,\ l = 1, \ldots
\eeqn
and similarly $g_1^{'l}$, $h_1^{'l}$. One can similarly choose a sequence
\beqn
G_{\Gamma, (0, 1)}^l \sqsubset G_{\Gamma, (0,1)}^{'l} \sqsubset H_{\Gamma, (0,1)}^{'l} \sqsubset H_{\Gamma, (0,1)}^l,\ l=1, \ldots.
\eeqn
Then for $\tilde I$, one obtains precompact open subsets
\beqn
\tilde O_{\tilde I}' \sqsubset \tilde O_{\tilde I}
\eeqn
which still cover $B_0\times [0,1]$. One then shrinking the Kuranishi charts in similar ways which produce a basic shrinking $\tilde {\mf A}' \subset \tilde {\mf A}$ satisfying the requirement. On the other hand, item (2) follows from Lemma \ref{lemma413}.
\end{proof}

Then one can find a shrinking $\tilde {\mf A}_+'$ which contains $\ov{\tilde{\mf A}'}$, where the Riemannian metric is defined, and which is constant in the collar coordinate direction. Then one obtains a $\uds{\bf Kur}_{\rm NCS}^{G, \flat}$-atlas on $\tilde {\mf M}$, a shrinking $\tilde {\mf A}'$, a strong map $\tilde {\mf f}$ whose boundary restriction coincides with the existing ones on ${\mf M}_0$ and ${\mf M}_1$. By Proposition \ref{prop_VFC_boundary}, it follows that 
\beqn
[{\mf A}_0, {\mf A}_0']_{\rm main}^{\rm vir} = [{\mf A}_1, {\mf A}_1' ]_{\rm main}^{\rm vir} \in H_*( \ov{\mc M}{}_{g,n}\times X^n; {\mb Q}).
\eeqn

\subsubsection{Comparing different line bundle data}

To finish the proof of Theorem \ref{thm52}, it remains to prove the independence from the choices of the line bundle data. In this case, we fix the moduli space ${\mf M} = \ov{\mc M}_{g,n}(X, J, A)$. Let ${\mb L}_i = (L_i, k_i, {\mf D}_i)$, $i = 1, 2$ be two line bundle data. Each of them provides an AMS parametrization of ${\mf M}$ which can be used to construct the Kuranishi atlas as we have done. The comparison between the two choices is essentially provided in \cite{AMS2} by using the ``double-framed'' AMS construction. 

\noindent ---{\bf General double-framed construction}--- We first describe a general double-framed construction. let $d$ now denote a pair of integers $(d_1, d_2)$ with 
\beqn
d_i = k_i (\Omega_i (A) + 2g - 2 + n).
\eeqn
Consider the moduli space 
\beqn
\ov{\mc M}_{g, n}(\mb{CP}^{d_1-g} \times \mb{CP}^{d_2-g}, (d_1, d_2))
\eeqn
of $n$-marked genus $g$ stable maps of bi-degree $d$. This space has an action by 
\beqn
G^{\mb C} = G_1^{\mb C} \times G_2^{\mb C} = U(d_1-g+1) \times U(d_2-g+1).
\eeqn
There is a $G^{\mb C}$-invariant open subset $B_{g, n, d} \subset \ov{\mc M}{}_{g,n}(\mb{CP}^{d_1-g+1} \times \mb{CP}^{d_2-g+1}, d)$ of curves $\phi = (\phi_1, \phi_2)$ such that $H^1(\phi_1^* {\mc O}(1)) = H^1(\phi_2^*{\mc O}(1)) = 0$ and that $\phi$ has trivial automorphism group. Then $B_{g, n, d}$ is a complex $G^{\mb C}$-manifold, on which one has a universal curve $C_{g, n, d} \to B_{g, n, d}$. The space $B_{g, n, d}$ is still stratified by isomorphism classes of map types where in the definition of map types (cf. Definition \ref{defn_map_type}), the degree function is now a function
\beqn
{\rm degree} = ({\rm degree}_1, {\rm degree}_2): {\rm Vert}(\tilde\Gamma) \to {\mb Z}_{\geq 0} \times {\mb Z}_{\geq 0}.
\eeqn

We take the following open subset $B \subset B_{g, n, d}$ by eliminating certain ``impossible'' strata. A map type $\tilde \Gamma$ is called {\bf $A$-possible} for $A \in H_2(X; {\mb Z})$ if there exist classes $A_v \in H_2(X; {\mb Z})$ such that 
\begin{align*}
&\ \sum_{v\in {\rm Vert}(\tilde \Gamma)} A_v = A,\ &\ {\rm degree}_i(v) = k_i(\Omega_i(A_v) + 2{\rm genus}(v) - 2 + n(v)),\ i = 1, 2.
\end{align*}
Here $n_v$ is the number of edges connected to $v$). Then define 
\beqn
B \subset B_{g, n, d}
\eeqn
to be the union of strata corresponding to those $A$-possible map types. It is easy to see that $B$ is $G^{\mb C}$-invariant and open. Abbreviate the restriction of $C_{g, n, d}\to B_{g, n, d}$ to $B$ by $C \to B$. 

Then one can follow the general procedure (Step \ref{step2}---\ref{step5}) of constructing a $\uds{\bf Kur}_{\rm NCS}^{G, \flat}$-atlas on ${\mf M}$ starting from the family $C \to B$ and a basic shrinking. Previous discussions on independence from choices made in Step \ref{step2}---\ref{step5} still applies here. Hence the double-framed construction gives a well-defined virtual fundamental class. Hence it remains to the virtual fundamental class obtained from the double-framed  construction with the one obtained from a single-framed construction. 

First, within $B_{g, n, d_i}$, one can also define the open subset of $A$-possible types, denoted by $B_i \subset B_{g, n, d_i}$. Then there are natural projections
\beqn
\pi_{B \to B_i}: B \to B_i.
\eeqn
Notice that an $A$-possible map type $\Gamma_i$ for $B_i$ uniquely determines an $A$-possible map type $\Gamma$ for $B$. Hence $\pi_{B \to B_i}$ induces a one-to-one correspondence between the sets of strata. 

Then one can compare the corresponding AMS parametrizations. 

\begin{lemma}\label{lemma536}
The natural projection
\beqn
\pi_1: {\mc M}_{{\mb L}}^{\rm fr}( C/B, X, A) \to {\mc M}_{{\mb L}_1}^{\rm fr}(C_1/B_1, X, A)
\eeqn
is a $G_1$-equivariant principal $G_2$-bundle. 
\end{lemma}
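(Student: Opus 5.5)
The projection $\pi_1$ sends a double-framed object $(\phi, u, F)$ with $\phi = (\phi_1, \phi_2) \in B$ and $F = (F_1, F_2)$ to $(\phi_1, u, F_1)$. The first task is to check that this is well defined, namely that $(\phi_1, u, F_1)$ lies in ${\mc M}_{{\mb L}_1}^{\rm fr}(C_1/B_1, X, A)$. Here one must be careful with domains: the domain $C_\phi$ is a stable map to $\mb{CP}^{d_1-g}\times\mb{CP}^{d_2-g}$, while $C_{\phi_1}$ is its image under the first factor. Components that are contracted by $\phi_1$ but not by $\phi_2$ would have to be stabilized away. I would argue that this cannot occur on the moduli space. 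Indeed, $\phi_i = \phi_{F_i}$ is defined by a basis of $H^0({\mb L}_{i,u})$, and ${\mb L}_{i,u}$ has positive degree on every component that is unstable as a marked nodal curve. This follows from the degree formula, since $\Omega_i$ tames $J$ and ghost unstable components are excluded. Hence the stabilizations agree, $C_\phi \cong C_{\phi_1}$ canonically, and $u$, the holomorphicity, the vanishing of $S_{Q_1} = \log H_{F_1}$, and the map-type/degree constraints all pass to the first factor. Equivariance under $G_1$ is clear, and $G_2$ acts on the fibres by $(\phi_2, F_2) \mapsto (g\phi_2, gF_2)$ while fixing $\pi_1$.

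Next I would prove that the map is surjective and that $G_2$ acts freely and transitively on the fibres. Given $(\phi_1, u, F_1)$, the line bundle ${\mb L}_{2,u}$ on $C_{\phi_1}$ has degree $d_2$ and satisfies $H^1 = 0$. This is the same vanishing used in \cite{AMS2}: the degree is large compared with $2g-2$ on each component, by the degree formula and the requirement that $L$ admits a root. The framings $F_2$ with $\log H_{F_2} = 0$ are exactly the unitary bases of $H^0({\mb L}_{2,u})$ for the $L^2$ inner product defined by ${\mf D}_2$. These form a $U(d_2-g+1)$-torsor. Each such $F_2$ determines $\phi_2 = \phi_{F_2}$, and the pair $(\phi_1, \phi_2)$ is automatically of an $A$-possible type, with trivial automorphism group because $\phi_1$ already has one. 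Freeness then follows directly: if $gF_2 = F_2$ as a basis, then $g = 1$.

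The main obstacle is local triviality, that is, showing that $\pi_1$ is a principal bundle in the topological (orbispace) sense rather than merely a bijection between the $G_2$-quotient and the base. My plan is to construct local slices. Over a neighborhood of a point $(\phi_1, u, F_1)$ in the Gromov topology, the spaces $H^0({\mb L}_{2,u'})$ form a continuous vector bundle; this uses the vanishing of $H^1$, upper semicontinuity, and the gluing description of nearby domains. Its Hermitian metric $H$ also varies continuously. Gram--Schmidt applied to a continuous local frame then produces a continuous unitary frame, and hence a continuous section of $\pi_1$. This gives a $G_2$-equivariant homeomorphism from (neighborhood)$\times G_2$ onto the preimage, with continuity of the inverse checked in the Gromov topology. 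Throughout, I would cite \cite[Section 4]{AMS2}, where the same double-framing comparison is made. I would also note that $G_1$-equivariance of the trivialization is not needed, only that $\pi_1$ intertwines the $G_1$-actions.
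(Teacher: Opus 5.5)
Your proposal is correct and follows the same route as the paper. Over $(\phi_1,u,F_1)$, the fibre is the set of unitary frames $F_2$ of $H^0({\mb L}_{2,u})$, which is a $G_2$-torsor; each such frame determines $\phi_2=\phi_{F_2}$ and hence a point $(\phi_1,\phi_2)\in B$ with $C_\phi\cong C_{\phi_1}$. The paper stops at this fibrewise description, so your checks of well-definedness (no extra stabilization is needed) and of local triviality via continuous unitary frames add detail rather than a different argument.
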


\begin{proof}
For each $(\phi_1, u_1, F_1) \in {\mc M}_{{\mb L}_1}^{\rm fr}(C_1/ B_1, X, A)$, consider the ample line bundle over $C_{\phi_1}$ induced from $u_1$ and the symplectic form $\Omega_2$, temporarily denoted by $L_2 \to C_{\phi_1}$. The set of unitary frames form a $G_2$-orbit and each frame $F_2$ induces a map $\phi_2: C_{\phi_1} \to \mb{CP}^{d_2-g+1}$ and $\phi_2 \in B_2$. This induces a point $\phi = (\phi_1, \phi_2) \in B$ and an identification $C_\phi \cong C_{\phi_1}$. 
\end{proof}


\noindent ---{\bf Pullback construction}--- Now we will see choices made in Step \ref{step2}---Step \ref{step5} can all be pulled back to do the double-framed construction. First, for any local thickening datum 
\beqn
\mu_{\Gamma_1} = (O_{\Gamma_1}, W_{\Gamma_1}, \lambda_{\Gamma_1})
\eeqn
over a $G_1$-invariant open subset $O_{\Gamma_1} \subset B_1$ can be pulled back to a thickening datum
\beqn
\pi_{B \to B_1}^* \mu_{\Gamma_1} = (\pi_{B \to B_1}^{-1}(O_{\Gamma_1}), \pi_{B \to B_1}^* W_{\Gamma_1}, \pi_{B \to B_1}^* \lambda_{\Gamma_1}).
\eeqn
Notice that the transversality persists. Moreover, denote
\beqn
O_{\Gamma} = (\pi_{B \to B_1})^{-1}(O_{\Gamma_1}).
\eeqn
The pullback thickening datum and this open subset produces a ``basic chart''
\beqn
K_\Gamma = (U_\Gamma, E_\Gamma, S_\Gamma, \Psi_\Gamma).
\eeqn
Moreover, define
\begin{align*}
&\ O_{\Gamma}':=  (\pi_{B \to B_1})^{-1}(O_{\Gamma_1}'),\ &\ O_{\Gamma}'':= (\pi_{B \to B_1})^{-1}(O_{\Gamma_1}'').
\end{align*}
and 
\beqn
\rho_\Gamma:= \rho_{\Gamma_1} \circ \pi_{B \to B_1}: B \to [0, 1]
\eeqn
which is supported in $O_\Gamma'$ and identically $1$ on $O_\Gamma''$. Further, 
define
\begin{align*}
    &\ G_\Gamma^l = (\pi_{B \to B_1})^{-1}(G_{\Gamma_1}^l),\ &\ H_\Gamma^l = (\pi_{B \to B_1})^{-1}( H_{\Gamma_1}^l).
\end{align*}
This completes Step \ref{step3}. At this step, it allows us to construct a $\uds{\bf Kur}^{G, \flat}$-atlas, denoted by ${\mf A}$, which we call the ``pullback'' of ${\mf A}_1$. Notice that the charts of ${\mf A}$ and ${\mf A}_1$ are labelled by the same poset ${\mc I}$. Let them be
\begin{align*}
&\ K_I = (U_I, E_I, S_I, \Psi_I),\ &\ K_{1, I} = (U_{1, I}, E_{1, I}, S_{1, I}, \Psi_{1, I}).
\end{align*}
Notice that since the thickening data is simply pulled back, similar to Lemma \ref{lemma536}, there is a natural projection
\beqn
\pi_{U_I \to U_{1, I}}: U_I \to U_{1, I},\ (\phi, u, e, F)\mapsto (\phi_1, u, e, F_1)
\eeqn
which is a $G_1$-equivariant principal $G_2^{\mb C}$-bundle. The fibres over a point $(\phi_1, u, e, F_1)$ are just all basis of $H^0({\mb L}_{2, u})$. These principal bundles can also transform naturally under coordinate changes. As $G_2^{\mb C} = GL(d_2-g+1)$, the principal bundle is an open subset of a $G_1$-equivariant vector bundle, denoted by $\hat U_I$, which consists of just a $(d_2-g+1)$-tuple of holomorphic sections of ${\mb L}_{2, u}$. Notice that the Hermitian pairing induces a well-defined Hermitian metric on the bundle $\hat U_I$. 

The basic shrinking ${\mf A}_1'$ can also be pulled back to the atlas ${\mf A}$. Let $U_{1, I}' \subset U_{1, I}$ be the precompact open subset in the shrinking ${\mf A}_1'$. Then the preimage
\beqn
U_I':= ( \pi_{U_I \to U_{1, I}})^{-1}(U_{1, I}')
\eeqn
gives an open subcategory ${\mf A}'$ (not a shrinking as $U_I'$ is not precompact). 

To proceed, we construct a system of Hermitian connections on $\hat U_I \to U_I$ which induces Riemannian metrics on $U_I \subset \hat U_I$ (near $\ov{U_I'}$). We omit the details. Notice that the metric also induces an NCS structure.

\noindent ---{\bf A small modification}--- The atlas ${\mf A}$ and the open subcategory constructed so far does not belong to the class of atlas we need to define the virtual fundamental class, because the sets $O_\Gamma'$ and $O_\Gamma''$ are not precompact in $O_\Gamma$. To proceed, we do a simple cut-off.

Choose $B_{1, 0} \subset B_1$ to be a $G_1$-invariant precompact open subset such that 
\beqn
{\mc M}_{{\mb L}_1}^{\rm fr}(C_1/B_{1,0}, X, A) = {\mc M}_{{\mb L}_1}^{\rm fr}(C_1/B_1, X, A).
\eeqn
Denote
\beqn
B_0:= (\pi_{B \to B_1})^{-1} (B_{1,0}) \subset B.
\eeqn
By Lemma \ref{lemma536}, we can choose $G$-invariant open subsets 
\beqn
B_0'' \subset \ov{B_0''} \subset B_0' \subset \ov{B_0'} \subset B_0 
\eeqn
such that 1) $B_0''$ and $B_0'$ are both precompact in $B$, 2)
\beqn
{\mc M}_{{\mb L}}^{\rm fr}(C/B_0'', X, A) = {\mc M}_{\mb L}^{\rm fr}(C/B, X, A)
\eeqn
and 3)
\beqn
\pi_{B \to B_1} (B_0'') = \pi_{B \to B_1}(B_0') = \pi_{B \to B_1} (B_0) = B_{1,0} \subset B.
\eeqn
Choose $G$-invariant cut-off functions
\beqn
\rho_{B_0}: B \to [0, 1]
\eeqn
supported in $B_0'$ which is identically 1 on $B_0''$. Define
\begin{align*}
&\ \mathring O_\Gamma':= O_\Gamma' \cap B_0',\ &\ \mathring O_\Gamma'':= O_\Gamma'' \cap B_0''.
\end{align*}
Then $\mathring O_\Gamma'$ and $\mathring O_\Gamma''$ are precompact in $O_\Gamma$. The using the thickening datum multiplied by cut-off function
\beqn
\mathring \rho_\Gamma:= \rho_\Gamma \rho_{B_0}
\eeqn
(which is supported in $\mathring O_\Gamma'$), one obtains a new chart
\beqn
\mathring K_\Gamma = (\mathring U_\Gamma, \mathring E_\Gamma, \mathring S_\Gamma, \mathring \Psi_\Gamma).
\eeqn
Moreover, choose a sequence 
\beqn
\mathring O_\Gamma' \sqsubset \mathring G_\Gamma' \sqsubset \mathring H_\Gamma' \sqsubset \cdots \sqsubset \mathring G_\Gamma^m \sqsubset \mathring H_\Gamma^m \sqsubset \cdots \mathring O_\Gamma
\eeqn
of $G$-invariant open subsets as in \eqref{nested_sets}. Following the same construction, one obtains a new $\uds{\bf Kur}_{\rm NCS}^{G, \flat}$-atlas $\mathring {\mf A}$ which belongs to the class of double-framed constructions. One can also choose a basic shrinking of the above sequence (see Definition \ref{defn_basic_shrinking}) to obtain a basic shrinking $\mathring {\mf A}'$ of $\mathring {\mf A}$.

\begin{lemma}
One has 
\beqn
[\mathring {\mf A}, \mathring {\mf A}']_{\rm main}^{\rm vir} = [{\mf A}_1, {\mf A}_1']_{\rm main}^{\rm vir} \in H_*( \ov{\mc M}_{g,n} \times X^n; {\mb Q}).
\eeqn
\end{lemma}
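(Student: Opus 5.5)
The comparison runs through an intermediate atlas: the uncut pullback ${\mf A}$ of ${\mf A}_1$. We first relate ${\mf A}$ to ${\mf A}_1$ using stabilization and free quotient. Then we relate $\mathring{\mf A}$ to ${\mf A}$ by a cobordism that switches on the cut-off $\rho_{B_0}$.

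\emph{Step 1: identify the pullback atlas with ${\mf A}_1$.} For the uncut pullback atlas ${\mf A}$, each chart $U_I$ is a $G_1$-equivariant principal $G_2^{\mb C}$-bundle over $U_{1,I}$. It sits as an open subset of the Hermitian vector bundle $\hat U_I \to U_{1,I}$ of $(d_2-g+1)$-tuples of holomorphic sections of ${\mb L}_{2,u}$. The obstruction bundle is $E_I = \pi^*E_{1,I}\oplus Q_2$, and the extra section component is $S_{Q_2} = \log H_{F_2}$. Since $G_2^{\mb C}=G_2\cdot\exp(Q_2)$ via the polar decomposition, $U_I$ is $G_2$-equivariantly diffeomorphic to $G_2\times \exp(Q_2)\times U_{1,I}$ over $U_{1,I}$, with $S_{Q_2}$ the tautological coordinate on the $Q_2$ factor. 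Thus $K_I$ is, up to germ, the stabilization ${\rm Stab}_{Q_2}$ of $G_2\times K_{1,I}$. The $G_2$ action on the $G_2$ factor is free.

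The stratifications correspond because $\phi$ and $\phi_1$ have the same map type (the strata of $B$ and $B_1$ correspond one-to-one). The thickening data are pulled back, so the refined strata match as well. The NC structures match if we choose the Riemannian metric on ${\mf A}$ pulled back from ${\mf A}_1$ and the Hermitian structure on $\hat U_I$. Straightenings and NCS transverse multisections can likewise be pulled back. Proposition \ref{prop_stabilization} and Proposition \ref{prop_free_quotient} should then give
\[
[{\mf A},{\mf A}']_{\rm main}^{\rm vir}=[{\mf A}_1,{\mf A}_1']_{\rm main}^{\rm vir}.
\]
We use this equality only formally: ${\mf A}'$ is not a shrinking, because it is noncompact along the $Q_2$ directions. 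The zero locus of $S_{Q_2}$ is nevertheless compact, so a $\delta$-small perturbation still has compact zero set in the region $|S_{Q_2}|$ small.

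\emph{Step 2: cut down to $\mathring{\mf A}$ by a cobordism.} We build an atlas over ${\mf M}\times[0,1]$ by Lemma \ref{lemma532}-type interpolation. On $\tilde B = B\times[0,1]$ use the thickening data $\rho_\Gamma\bigl((1-t)+t\rho_{B_0}\bigr)\mu_\Gamma$. Transversality is preserved wherever the moduli space lives, since ${\mc M}^{\rm fr}(C/B_0'',X,A)$ is the whole moduli space and $\rho_{B_0}\equiv1$ there. Combined with Proposition \ref{prop_VFC_boundary}, this compares $\mathring{\mf A}$ at $t=1$ with ${\mf A}$ restricted to a neighbourhood of $\ov{B_0'}$ at $t=0$. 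The restriction of ${\mf A}$ to that neighbourhood computes the same number as ${\mf A}$ itself, because the perturbed zeros already lie over $B_0''$ by Lemma \ref{delta_r}-type compactness.

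The main obstacle is making Step 1 rigorous. Propositions \ref{prop_stabilization} and \ref{prop_free_quotient} are stated for honest shrinkings, and the stabilization is by $\exp(Q_2)$, i.e.\ the whole space $Q_2$, rather than by a ball $Q_2^R$. I would first restrict to $|S_{Q_2}|<R$: this bounded region is a genuine ${\rm Stab}_{Q_2^R}$ and is precompact after the cut-off by $B_0$. I would then check that the NCS structure on $Q_2$ causes no trouble: $Q_2$ is not complex, but it sits in the trivially stratified, non-normal part, just as $Q$ does in the single-framed chart.
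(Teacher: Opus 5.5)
Your overall architecture matches the paper's. You reduce $\mathring{\mf A}$ to the uncut pullback ${\mf A}$ because all zeros of sufficiently small perturbations lie in an open subcategory where the two atlases coincide. Your $t$-interpolation of $\rho_{B_0}$, which is constant in $t$ over $B_0''$, is a heavier way of saying what the paper says directly. You then recognize ${\mf A}$ as a $Q_2$-stabilization of a free $G_2$-cover of ${\mf A}_1$, so that Propositions \ref{prop_free_quotient} and \ref{prop_stabilization} apply. Restricting to $|S_{Q_2}|<R$ to get an honest ${\rm Stab}_{Q_2^R}$ is a sensible way to handle non-precompactness.

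The gap is in Step 1, where you assert a $G_2$-equivariant \emph{diffeomorphism} under which $S_{Q_2}=\log H_{F_2}$ becomes the tautological coordinate. The smooth structure on $U_I$ is the gluing-induced one of Proposition \ref{prop_smooth}. The function $\log H_{F_2}$ is defined by integrals over the varying, possibly nodal, domains, and is only known to be continuous for that structure; the paper says explicitly that its smoothness is not known. Consequently $P:=S_{I,Q_2}^{-1}(0)$ is only a $C^0$ reduction of the principal $G_2^{\mb C}$-bundle $U_I\to U_{1,I}$ to $G_2$. The polar-decomposition map $F_2\mapsto (F_2H_{F_2}^{-1/2},\log H_{F_2})$ is then only a homeomorphism. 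The two propositions concern smooth charts with smooth free actions, so they cannot be invoked yet.

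The missing step, which the paper supplies, has three parts:
\begin{enumerate}
\item Perturb these $C^0$ reductions to $C^0$-close smooth reductions $S'_{I,Q_2}$, inductively over the charts so that they stay compatible with the coordinate changes.
\item Interpolate between $S_{I,Q_2}$ and $S'_{I,Q_2}$ over $[0,1]$ to obtain, after shrinking, a cobordant atlas.
\item Apply Proposition \ref{prop_VFC_boundary}, so you may assume $S_{I,Q_2}$ is smooth.
\end{enumerate}
Only after this is the free-quotient/stabilization identification legitimate.

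Separately, your product $G_2\times\exp(Q_2)\times U_{1,I}$ is wrong in general. The bundle $U_I\to U_{1,I}$ is the frame bundle of $H^0({\mb L}_{2,u})$ and need not be trivial. The correct statement is $U_I\cong P\times Q_2$, with $P$ the (smoothed) unitary frame bundle. Here $G_2$ acts freely on $P$, so $P/G_2=U_{1,I}$, and acts on $Q_2$ by conjugation. That is all Proposition \ref{prop_free_quotient} requires, so this slip is harmless once corrected.
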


\begin{proof}
First notice that ${\mf A}$ has an open subcategory which coincides with an open subcategory of $\mathring {\mf A}$. Outside this common open subcategory, the component of the section $S_{I, Q_2}$ is non-vanishing. Hence a sufficiently $C^0$-close perturbation does not create zeroes outside this open subcategory. Therefore, we only need to consider perturbations on the auxiliary atlas ${\mf A}$. 

Now we choose perturbations on each $U_I$. Notice that one has the decomposition
\beqn
E_I = \pi_{U_I \to U_{1, I}}^* E_{1, I} \oplus Q_2
\eeqn
where the section $S_{I, Q_2}: U_I \to Q_2$ is geometrically defined. We do not know if it is smooth at this moment. However it is a $C^0$ structural group reduction of $U_I \to U_{1, I}$ from $G_2^{\mb C}$ to $G_2$. Inductively, one can perturb them to $C^\infty$ structural group reductions to 
\beqn
S_{I, Q_2}': U_I \to Q_2.
\eeqn
After this perturbation and appropriate shrinkings, we obtained a cobordant atlas. Hence we can assume that $S_{I, Q_2}$ is indeed smooth (we do not need its geometric meaning anymore). 

Then one can see that the two atlases ${\mf A}$ and ${\mf A}_1$ are related by a free quotient (by $G_2$) and a stabilization by $Q_2$. Therefore, the conclusion follows from Proposition \ref{prop_free_quotient} and Proposition \ref{prop_stabilization}.
\end{proof}

This finishes the proof of the second item of Theorem \ref{thm52} and hence the first part of the main theorem (Theorem \ref{thm11}) of this paper. 

\subsection{Genus zero case}

We  verify that genus zero reduced invariants coincide with the ordinary Gromov--Witten invariants. This is essentially an index calculation. 

Let $\alpha$ be an isomorphism class of refined map types and let $\Gamma$ be the underlying isomorphism class of map types. Then within the subset ${\mf M}_\Gamma^* \subset {\mf M} =  \ov{\mc M}_{g,n}(X, J, A)$, one can consider the deformation theory of curves in ${\mf M}_\Gamma^*$ with respect to the refined map type $\alpha$. More precisely, if $K = (U, E, S, \Psi)$ is a $\uds{\bf Kur}_{\rm NCS}^{G, \flat}$-chart on ${\mf M}$, then for each $x \in U_\alpha^*$, the fibre of the obstruction bundle $E_x$ contains a subspace $E_{x, \alpha} \subset E_x$. The difference
\beqn
{\rm dim}_\alpha^{\rm vir} {\mf A}:= {\rm dim} U_{x, \alpha}^* - {\rm dim} E_{x, \alpha}
\eeqn
only depends on $\alpha$. Geometrically, this is the expected dimension of the space of stable maps with prescribed ghost components. 

\begin{lemma}\label{lemma537}
For each refined map type $\alpha$ other than the main type, we have
\beqn
{\rm dim}_\alpha^{\rm vir} {\mf A} \leq {\rm dim}^{\rm vir} \ov{\mc M}{}_{0,n}(X, J, A) - 2.
\eeqn
\end{lemma}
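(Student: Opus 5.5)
We compute ${\rm dim}_\alpha^{\rm vir}{\mf A}$ by a local index count at a point $x=(\phi,u,e,F)\in U_\alpha^*$, using the local model from the proof that the refined partition is a stratification. By \eqref{chart_normal_bundle}, the normal space is
\beqn
N_xU_\alpha^*\cong N_\phi B_{\Gamma}^*\oplus \ker(W_\alpha^{\rm off}\to H^1_\alpha),
\eeqn
and $E_{x,\alpha}$ is $E_x$ with the off-summand $W^{\rm off}_\alpha$ removed. So we plan to write
\beqn
{\rm dim}^{\rm vir}{\mf A}-{\rm dim}^{\rm vir}_\alpha{\mf A}= {\rm dim}_{\mb R} N_\phi B_\Gamma^* + {\rm dim}\ker(W^{\rm off}_\alpha\to H^1_\alpha) - {\rm dim} W^{\rm off}_\alpha ,
\eeqn
which equals $2\#{\rm Edge}(\tilde\Gamma)-{\rm dim}_{\mb R} H^1_\alpha$. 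The $Q$ and $G$ contributions cancel, since they are the same on every stratum.

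First we compute ${\rm dim}H^1_\alpha$. The cokernel of $D_{\alpha,u}$ comes only from the ghost components whose perturbation is switched off. On such a component $C_v$ of genus $g_v$, $u$ is constant and the relevant operator is $\ov\partial$ on $C_v\otimes T_{u(C_v)}X$, with deformations constrained by matching at nodes. Its cokernel is $H^{0,1}(C_v)\otimes T_{u(C_v)}X$, of real dimension $2g_v\cdot{\rm dim}_{\mb C}X$. In genus zero every $g_v=0$, so $H^1_\alpha=0$. The off-ghost components then contribute nothing to the cokernel, and the difference reduces to $2\#{\rm Edge}(\tilde\Gamma)$.

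Hence ${\rm dim}^{\rm vir}_\alpha{\mf A}={\rm dim}^{\rm vir}-2\#{\rm Edge}(\tilde\Gamma)$. Every refined type other than the main one is either nodal, which gives $\#{\rm Edge}\ge 1$, or smooth with ghost vertices turned off. A smooth genus-zero curve with positive-degree target is not a ghost, so the second case is empty. A smooth ghost can also only occur for $A=0$, and then stability forces a nodal or trivial situation that we handle by the same count. This gives the bound $-2$.

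The main obstacle is making the local cokernel and normal-direction bookkeeping rigorous, namely checking that $D_{\alpha,u}$ restricted to off-ghost components has cokernel exactly $\bigoplus_v H^{0,1}(C_v)\otimes TX$, using that the constant-map kernel is compatible with the nodal matching conditions. We also need the index convention to be consistent with how $\dim E_{x,\alpha}$ is defined through the cosheaf map ${\mc O}^U\to{\mc O}^E$. We would verify this using the component-wise transversality of the thickening data from Definition \ref{defn_component_refined_thickening_data}.
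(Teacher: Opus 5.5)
\textbf{Verdict.} For $A\neq 0$ your argument is correct, but it follows a genuinely different route from the paper's. The one thing to fix is your treatment of $A=0$, discussed at the end.

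\textbf{Comparison with the paper.} The paper does a direct index count. It reduces to a single ghost vertex $v_0$ joined to $m\geq 1$ effective vertices, with all ghost perturbations off. It then adds up:
\begin{itemize}
\item the expected dimensions of the effective pieces;
\item $\dim\overline{\mathcal M}_{0,m+l}+\dim X$ for the constant ghost;
\end{itemize}
and subtracts the nodal matching conditions. This gives $\dim^{\rm vir}-2m$, and the general case follows by induction on the number of ghost vertices.

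You instead read off a uniform formula
\[
\dim^{\rm vir}\mathfrak A-\dim^{\rm vir}_\alpha\mathfrak A=2\#{\rm Edge}(\tilde\Gamma)-\dim_{\mathbb R}H^1_\alpha
\]
from the local model \eqref{chart_normal_bundle} together with surjectivity of $W^{\rm off}_\alpha\to H^1_\alpha$. This reduces the lemma to $H^1_\alpha=0$ in genus zero. The paper's $-2m$ is exactly your $-2\#{\rm Edge}$ for its star configuration, so the two computations agree.

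What each approach buys:
\begin{itemize}
\item Yours handles every refined type at once. It needs neither the reduction to single-vertex subtrees (which the paper asserts without detail) nor the induction. It also makes visible exactly where higher genus departs from the bound, namely through $H^1_\alpha$.
\item The price is that you lean on chart-level structure: the clean cut-out of $U^*_\alpha$ and the normal-bundle identification. The paper's count is pure Riemann--Roch arithmetic.
\end{itemize}

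\textbf{On $H^1_\alpha=0$.} You only need two facts. First, each maximal subtree of off ghost components is a tree of rational curves carrying a constant map, so $\bar\partial$ on the trivial bundle there is onto with constant kernel. Second, node values on neighbouring \emph{on} components can be prescribed freely, which is exactly component-wise transversality with respect to $\mathring D_v$. Your per-component description of the cokernel as $\bigoplus_v H^{0,1}(C_v)\otimes TX$ is not right in higher genus, since loops in ghost subgraphs also contribute. That does not affect the genus-zero argument.

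\textbf{The case $A=0$.} Here your argument is wrong. For $A=0$ and $n\geq 3$, a smooth genus-zero curve with constant map is stable. Since the paper requires all $W_{\phi,v}\neq 0$, the refined type ``smooth, ghost perturbation off'' is a proper non-main stratum. It has $\#{\rm Edge}=0$ and $H^1_\alpha=0$, so your own formula gives $\dim^{\rm vir}_\alpha\mathfrak A=\dim^{\rm vir}\mathfrak A$, which violates the bound. Stability does not force a nodal situation here.

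The lemma genuinely needs $A\neq 0$, and the paper's count tacitly assumes this by requiring $m\geq 1$ effective vertices. State that hypothesis explicitly rather than claiming the same count covers $A=0$. With $A\neq 0$, the only refined type without nodes is the main one, and your bound follows.
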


\begin{proof}
It suffices to consider the case each effective subtree or ghost subtree has only one vertices, and all perturbations on ghost subtrees are turned off. We prove the theorem by induction on the number of ghost vertices. When there is only one ghost vertex, denote it by $v_0$. Suppose there are $m\geq 1$ effective vertices, denoted by $v_1, \ldots, v_m$, attached to $v_0$ and $l\leq n$ marked points attached to $v_0$. Suppose $v_i$ is labeled by effective classes $A_i$ and attached by $k_i$ marked points. Then the expected dimension of such a configuration is 
\beqn
\begin{split}
&\ \sum_{i=1}^m \Big( {\rm dim} X + 2c_1(A_i) + 2k_i - 4 \Big) + \Big( 2m + 2l - 6 \Big) + (m-1) {\rm dim} X\\
= &\ {\rm dim} X + 2c_1(A) + 2n - 2m - 6\\
= &\ {\rm dim}^{\rm vir} \ov{\mc M}_{0, n}(X, J, A) - 2m.
\end{split}
\eeqn
When $\alpha$ has more than one ghost vertices, choose one of them and apply the induction hypothesis to each component of its complement graph. 
\end{proof}

The ordinary GW invariants is defined via the total VFC of the constructed Kuranishi atlas (and the basic shrinking). Then using Proposition \ref{prop424} and Lemma \ref{lemma537}, one has
\beqn
[\ov{\mc M}{}_{0,n}(X, J, A)]^{\rm red} = [\ov{\mc M}{}_{0, n}(X, J, A)]^{\rm vir}.
\eeqn

\bibliographystyle{amsalpha}

\bibliography{mathref}

\end{document}